
\documentclass[12pt,reqno]{amsart}
\usepackage{amssymb}
\usepackage{amsfonts}
\usepackage{amsmath}
\usepackage{amsthm}
\usepackage[hidelinks]{hyperref}
\usepackage{times}
\usepackage{latexsym}
\usepackage[mathscr]{eucal}

\setcounter{MaxMatrixCols}{10}

\DeclareMathOperator{\id}{id}
\DeclareMathOperator{\Ric}{Ric}

\DeclareMathOperator{\Trace}{Trace}
\DeclareMathOperator{\image}{image}
\DeclareMathOperator{\spann}{span}
\theoremstyle{plain}

\newtheorem{algorithm}{Algorithm}[section]
\newtheorem{thm}{Thm}

\newtheorem{corollary}[algorithm]{Corollary}

\newtheorem{definition}[algorithm]{Definition}
\newtheorem*{Remark-delta}{Remark on all things $\protect\delta$}

\newtheorem{lemma}[algorithm]{Lemma}

\newtheorem{theorem} [algorithm] {Theorem}
\newtheorem{theoremlet}[thm]{Theorem}
\newtheorem{examplelet}[thm]{Example}
\newtheorem{corollarylet}[thm]{Corollary}
\newtheorem{lemmalet}[thm]{Lemma}

\newtheorem*{definitionnonum}{Definition}

\newtheorem{proposition}[algorithm]{Proposition}

\theoremstyle{definition}
\newtheorem*{acknowledgment}{Acknowledgment}
\newtheorem{example}[algorithm]{Example}
\newtheorem{remark}[algorithm]{Remark}

\newtheorem{remarklet}[thm]{Remark}
\textwidth=6.2in
\oddsidemargin=0.15in
\evensidemargin=0.15in
\textheight=8.5in
\topmargin=0.25in

\begin{document}
\title[Focal Radius, Rigidity, and Lower Curvature Bounds]{Focal Radius,
Rigidity, and Lower Curvature Bounds}
\author{Luis Guijarro}
\address{Department of Mathematics, Universidad Aut\'{o}noma de Madrid, and
ICMAT CSIC-UAM-UCM-UC3M }
\email{luis.guijarro@uam.es}
\urladdr{http://www.uam.es/luis.guijarro}
\thanks{The first author was supported by research grants MTM2011-22612,
MTM2014-57769-3-P from the MINECO, and by ICMAT Severo Ochoa project
SEV-2015-0554 (MINECO)}
\author{Frederick Wilhelm}
\address{Department of Mathematics\\
University of California\\
Riverside, CA 92521}
\email{fred@math.ucr.edu}
\urladdr{https://sites.google.com/site/frederickhwilhelmjr/home}
\thanks{This work was supported by a grant from the Simons Foundation
(\#358068, Frederick Wilhelm)}
\date{May 18, 2016}
\subjclass[2000]{Primary 53C20}
\keywords{Focal Radius, Rigidity, Projective Space, Positive Curvature}

\begin{abstract}
We prove a new comparison lemma for Jacobi fields that exploits Wilking's
transverse Jacobi equation. In contrast to standard Riccati and Jacobi
comparison theorems, there are situations when our technique can be applied
after the first conjugate point.

Using it we show that the focal radius of any submanifold $N$ of positive
dimension in a manifold $M$ with sectional curvature greater than or equal
to $1$ does not exceed $\frac{\pi }{2}.$ In the case of equality, we show
that $N$ is totally geodesic in $M$ and the universal cover of $M$ is
isometric to a sphere or a projective space with their standard metrics,
provided $N$ is closed.

Our results also hold for $k^{th}$--intermediate Ricci curvature, provided
the submanifold has dimension $\geq k.$ Thus in a manifold with Ricci
curvature $\geq n-1,$ all hypersurfaces have focal radius $\leq \frac{\pi }{2%
},$ and space forms are the only such manifolds where equality can occur, if
the submanifold is closed.

Example \ref{hopf holn} and Remark \ref{not Berger} show that our results
cannot be proven using standard Riccati or Jacobi comparison techniques.
\end{abstract}

\maketitle

\pdfbookmark[1]{Introduction}{Introduction}

A Riemannian manifold $M$ has $k^{th}$--intermediate Ricci curvature $\geq
\ell$ if for any orthonormal $\left( k+1\right) $--frame $\left\{
v,w_{1},w_{2},\ldots ,w_{k}\right\} ,$ the sectional curvature sum, $\Sigma
_{i=1}^{k}\mathrm{sec}\left( v,w_{i}\right) ,$ is $\geq \ell$ (\cite{Wu}, 
\cite{Shen}). For brevity we write $\Ric_{k}\,M\geq \ell.$ Motivated by
Myers theorem we show that if $\Ric_{k}M\geq k,$ then all submanifolds with
dimension $\geq k$ have focal radius $\leq \frac{\pi }{2}.$

\begin{theoremlet}
\label{Intermeadiate Ricci Thm}Let $M$ be a complete Riemannian $n$%
--manifold with $\Ric_{k}\geq k$ and $N$ be any submanifold of $M$ with $%
\dim \left( N\right) \geq k.\vspace{0.1in}$

\noindent 1. Every unit speed geodesic $\gamma $ that leaves $N$
orthogonally at time $0$ has at least\linebreak\ $\dim \left( N\right) -k+1$
focal points for $N$ in $\left[ -\frac{\pi }{2},\frac{\pi }{2}\right] ,$
counting multiplicities$.$ In particular, the focal radius of $N$ is $\leq 
\frac{\pi }{2}.\vspace{0.1in}$

\noindent 2. If the focal radius of $N$ is $\frac{\pi }{2},$ then $N$ is
totally geodesic.
\end{theoremlet}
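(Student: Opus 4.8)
The plan is to avoid Jacobi fields altogether and work with the second variation of arc length --- the index form for $N$--geodesics --- in which the hypothesis $\Ric _k\geq k$ enters additively and can be handled by elementary linear algebra. Fix a unit speed geodesic $\gamma$ leaving $N$ orthogonally at $t=0$; write $p=\gamma (0)$, $v=\gamma '(0)$, $d=\dim N$, and note that $\gamma$ is defined on $[-\tfrac{\pi}{2},\tfrac{\pi}{2}]$ by completeness. Extend an orthonormal basis $E_1(0),\dots ,E_d(0)$ of $T_pN$ to parallel orthonormal fields $E_1,\dots ,E_d$ along $\gamma$, and let $\gamma ^+=\gamma |_{[0,\pi /2]}$ and $\gamma ^-=\gamma (-\,\cdot \,)|_{[0,\pi /2]}$, with their index forms $I^\pm$ on the space of $\gamma $--perpendicular piecewise smooth fields $V$ with $V(0)\in T_pN$ and $V(\tfrac{\pi}{2})=0$. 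By the Morse index theorem for submanifolds, the number $\nu ^\pm$ of focal points of $N$ along $\gamma ^\pm$ in $(0,\tfrac{\pi}{2}]$, counted with multiplicity, equals $\operatorname{ind}(I^\pm)+\operatorname{null}(I^\pm)$; since $\gamma (0)\in N$ is never a focal point, the total focal count along $\gamma$ in $[-\tfrac{\pi}{2},\tfrac{\pi}{2}]$ is $\nu ^++\nu ^-$, so part~1 reduces to $\nu ^++\nu ^-\geq d-k+1$.

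For this I would evaluate $I^\pm$ on the test fields $V_i(t)=\cos t\,E_i(t)$ and their analogues along $\gamma ^-$. A short computation, in the basis $\{E_i(0)\}$, expresses the restricted forms as symmetric $d\times d$ matrices $Q^\pm =\tfrac{\pi}{4}\,\mathrm{Id}-R^\pm \mp \Sigma$, where $\Sigma _{ij}=\langle S_vE_i(0),E_j(0)\rangle$ records the shape operator, $R(t)$ denotes the symmetric matrix $\big(\langle R(E_i(t),\gamma '(t))\gamma '(t),E_j(t)\rangle \big)_{i,j}$, and $R^\pm =\int \cos ^2t\,R(t)\,dt$ over $[0,\tfrac{\pi}{2}]$, respectively $[-\tfrac{\pi}{2},0]$. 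The decisive point is that adding the two forms cancels the shape operator:
\[
Q^++Q^-=\tfrac{\pi}{2}\,\mathrm{Id}-\int _{-\pi /2}^{\pi /2}\cos ^2t\;R(t)\,dt .
\]
Since $\Ric _k\geq k$, any $k$ of the $E_i$ together with $\gamma '$ form an orthonormal frame, so $\operatorname{Tr}(\Pi ^TR(t)\Pi )\geq k$ for every $d\times k$ matrix $\Pi$ with $\Pi ^T\Pi =\mathrm{Id}$; integrating against $\cos ^2t$ (with $\int _{-\pi /2}^{\pi /2}\cos ^2=\tfrac{\pi}{2}$) shows the $k$ smallest eigenvalues of the integral term sum to $\geq \tfrac{k\pi }{2}$, hence the $k$ largest eigenvalues of $Q^++Q^-$ sum to $\leq \tfrac{k\pi }{2}-\tfrac{k\pi }{2}=0$. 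Thus $Q^++Q^-$ has at most $k-1$ positive eigenvalues and at least $d-k+1$ nonpositive ones. A Weyl inequality then gives that the numbers of nonpositive eigenvalues of $Q^+$ and of $Q^-$ add up to at least that of $Q^++Q^-$, and a subspace on which $I^\pm \leq 0$ has dimension $\leq \operatorname{ind}(I^\pm)+\operatorname{null}(I^\pm)=\nu ^\pm$; combining, $\nu ^++\nu ^-\geq d-k+1$. This proves part~1, and the focal radius bound in particular.

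For part~2, suppose the focal radius is $\tfrac{\pi}{2}$. Then no $N$--geodesic has a focal point in $(0,\tfrac{\pi}{2})$, so for every $\gamma$ both index forms $I^+$ and $I^-$ are positive semidefinite. The same averaging over $k$--subframes also gives $\sum _{i=1}^d\mathrm{sec}(E_i,\gamma ')\geq d$, hence $\sum _iI^+(V_i,V_i)\leq -\operatorname{Tr}\!\big(S_v|_{T_pN}\big)$; running the same estimate along the reversed geodesic (whose shape operator is $-S_v$) forces $\operatorname{Tr}(S_v|_{T_pN})\geq 0$, so $\sum _iI^+(V_i,V_i)=0$. As $I^+\geq 0$, each $V_i$ is a null vector and hence lies in the radical of $I^+$; the boundary condition defining the radical at $t=0$ is the $N$--Jacobi relation $V_i'(0)^\top =-S_vV_i(0)$, and since $V_i'(0)=0$ this forces $S_vE_i(0)=0$ for every $i$, i.e.\ $S_v=0$. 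As $p$ and $v$ were arbitrary, the second fundamental form of $N$ vanishes, so $N$ is totally geodesic.

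The crux --- and the one point where a naive forward Riccati or Jacobi comparison is genuinely insufficient --- is that focal points must be counted on both sides of $N$ at once: the outward $N$--geodesic of a small geodesic sphere in $S^n$, say, has no focal point within $\tfrac{\pi}{2}$ in the forward direction, so all of the guaranteed focal points lie behind $N$. What makes this work is the cancellation of the shape operator between $I^+$ and $I^-$, which reduces the estimate to a pure curvature quantity controlled by $\Ric _k\geq k$. The remaining ingredients --- the precise Morse index theorem for submanifolds, the sign conventions in the index form, and the eigenvalue bookkeeping for sums of symmetric matrices --- are routine. (One could instead package the same comparison through a Riccati equation built on Wilking's transverse Jacobi equation, which has the advantage of remaining usable past a focal point; the second--variation argument above is a more elementary shortcut to the same conclusions.)
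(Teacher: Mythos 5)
Your proposal is correct, and it takes a genuinely different route from the paper. The paper proves Theorem~\ref{Intermeadiate Ricci Thm} by building a new Riccati comparison (Lemma~\ref{sing soon Ric_k Lemma}) on Wilking's transverse Jacobi equation: starting from a $k$--dimensional subspace $W_{0}\subset T_{p}N$ chosen orthogonal to the already--vanished $N$--Jacobi fields, the lemma forces a new singularity of $\Lambda_{N}$ in $(0,\tfrac{\pi}{2}]$, and the ``flip $\gamma_{v}\leftrightarrow\gamma_{-v}$'' step is used exactly as you use it, to arrange $\Trace(S_{0}|_{W_{0}})\le 0$. You instead work with the index form $I^{\pm}$ on the $d$--dimensional test space $\mathrm{span}\{\cos t\,E_{i}\}$ and invoke the Morse Index Theorem for submanifolds. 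The computation $Q^{+}+Q^{-}=\tfrac{\pi}{2}\,\mathrm{id}-\int_{-\pi/2}^{\pi/2}\cos^{2}t\,R(t)\,dt$ and the eigenvalue bookkeeping are sound: averaging $\Ric_{k}\ge k$ over $k$--subframes shows the $k$ largest eigenvalues of $Q^{+}+Q^{-}$ sum to $\le 0$, hence at least $d-k+1$ eigenvalues are $\le 0$; and the subadditivity $n_{\le 0}(Q^{+})+n_{\le 0}(Q^{-})\ge n_{\le 0}(Q^{+}+Q^{-})$ follows from the subspace--intersection estimate $\dim(W_{A}\cap W_{B})\ge \dim W_{A}+\dim W_{B}-d$ applied to maximal positive--definite subspaces. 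Part~2 is likewise correct: with no focal point in $(0,\tfrac{\pi}{2})$ both forms are positive semidefinite; the averaged bound $\sum_{i}\mathrm{sec}(E_{i},\gamma')\ge d$ gives $0\le\Trace Q^{\pm}\le\mp\Trace\Sigma$, so $\Trace\Sigma=0$ and each $V_{i}$ is a null vector of a positive semidefinite form, hence lies in the radical; the boundary condition at $t=0$ with $V_{i}'(0)=0$ then kills $\mathrm{S}_{v}$.

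What each approach buys: your second--variation argument is more elementary and self--contained, needing only Warner's index theorem and basic eigenvalue inequalities, and it reaches both conclusions of Theorem~\ref{Intermeadiate Ricci Thm} directly. The paper's Riccati route is structurally heavier but is not just a matter of taste: the rigidity case of Lemma~\ref{sing soon Ric_k Lemma} produces the parallel orthogonal splitting $\Lambda_{N}=\mathcal{V}\oplus\mathcal{H}$ with $\mathcal{H}$ consisting of constant--curvature Jacobi fields $\tilde f\,E$, and it is exactly this extra structure (Proposition~\ref{Jacobi Rigidity prop}) that drives the proof of Theorem~\ref{inter Ricci Rigidity thm} in Part~2. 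Your index--form shortcut recovers $\mathrm{S}_{v}\equiv 0$ and, implicitly, that the $V_{i}$ become Jacobi fields with $R(E_{i},\gamma')\gamma'=E_{i}$, but it does not by itself deliver the full parallel splitting of $\Lambda_{N}$, the transverse rigidity, or the constant--curvature totally geodesic hinges that the rigidity theorem needs. So, as you note at the end, the Wilking/Riccati framework earns its keep once one wants more than the focal radius bound and total geodesy.
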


Since $\Ric_{1}M\geq \ell$ means that all sectional curvatures of $M$ are $%
\geq $ $\ell$ and $\Ric_{n-1}M\geq \ell$ means that $M$ has Ricci curvature $%
\geq \ell,$ the theorem applies to $N\subset M$ if either the Ricci
curvature of $M$ is $\geq n-1$ and $N$ is a hypersurface, or the sectional
curvature of $M$ is $\geq 1$ and $\dim \left( N\right) \geq 1.$

We emphasize that $N$ need not be closed or even complete, and there is no
hypothesis about its second fundamental form$.$ On the other hand, if $N$
happens to be closed and have focal radius $\frac{\pi }{2},$ then we
determine $M$ up to isometry.

\begin{theoremlet}
\label{inter Ricci Rigidity thm}Let $M$ be a complete Riemannian $n$%
--manifold with $\Ric_{k}\geq k.$ If $M$ contains a closed, embedded,
submanifold $N$ with focal radius $\frac{\pi }{2}$ and $\dim \left( N\right)
\geq k,$ then $N$ is totally geodesic in $M$, and the universal cover of $M$
is isometric to the sphere or a projective space with the standard metrics.
\end{theoremlet}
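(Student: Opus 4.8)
The plan is to read the first assertion directly off Theorem~\ref{Intermeadiate Ricci Thm}(2), which already says $N$ is totally geodesic, and to get the second by promoting the equality analysis behind that theorem to a global statement. The first reduction is to assume $M$ simply connected: averaging the inequality $\Ric_k M\ge k$ over the $k$--element subsets of an orthonormal frame gives $\Ric M\ge n-1$, so by Bonnet--Myers $M$ is compact with finite fundamental group and $\widetilde M$ is compact. If $\widetilde N$ is a connected component of the preimage of $N$, then $\widetilde N$ is a closed, embedded, totally geodesic submanifold of $\widetilde M$ of the same dimension, and since the covering projection is a local isometry carrying $\widetilde N$--Jacobi fields to $N$--Jacobi fields and back, $\widetilde N$ again has focal radius exactly $\frac{\pi}{2}$. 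Hence it is enough to treat simply connected $M$.

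Next I would extract the rigidity implicit in the equality case of Theorem~\ref{Intermeadiate Ricci Thm}. Because the focal radius is $\frac{\pi}{2}$, no geodesic leaving $N$ orthogonally has a focal point of $N$ at distance $<\frac{\pi}{2}$, so the $\dim N-k+1$ focal points furnished by part~1 all occur at distance exactly $\frac{\pi}{2}$; running the comparison lemma in this borderline case should then pin the relevant $N$--Jacobi fields along each such geodesic $\gamma$ (leaving $N$ at $p$) to their model form --- $\cos t$ times the parallel extensions of $T_pN$, which vanish at $\frac{\pi}{2}$, together with $\sin t$ times the parallel extensions of $\nu_pN\cap\gamma'(0)^{\perp}$ --- and force the curvature operator $w\mapsto R(\gamma',w)\gamma'$ along $\gamma$ to equal the model one (the identity when $k=1$, that of the relevant compact rank one symmetric space for larger $k$). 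From this I would deduce that $N$ has no cut point before $\frac{\pi}{2}$ either: in the absence of earlier focal points any such cut point would be an endpoint of two distinct minimizing $N$--geodesics, and combining the model Jacobi behavior along both with a first variation/broken geodesic argument should produce a geodesic returning to $N$ in length less than the model value, a contradiction. Consequently $\dist(\cdot,N)\le\frac{\pi}{2}$ on $M$, the normal exponential map restricts to a diffeomorphism of the open radius--$\frac{\pi}{2}$ disk bundle of $\nu(N)$ onto $M\setminus N^{*}$, and $N^{*}:=\dist(\cdot,N)^{-1}(\frac{\pi}{2})$ is a closed totally geodesic submanifold, namely the common focal and cut locus of $N$; symmetrically, $N$ is the focal and cut locus of $N^{*}$.

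With this structure in hand I would write the metric on $M\setminus N^{*}$ in the normal coordinates $(q,v,r)\in N\times\nu^{1}(N)\times(0,\frac{\pi}{2})$ as $dr^{2}$ plus the $r$--dependent quadratic form determined by the $\cos$-- and $\sin$--Jacobi fields above (so $\cos^{2}r$ on the directions coming from $TN$ and $\sin^{2}r$ on those coming from $\nu(N)$, twisted by the normal holonomy of $N$), and do the symmetric thing from $N^{*}$; this exhibits $M$ as a metric double disk bundle $D(\nu N)\cup_{N^{*}}D(\nu N^{*})$, i.e. as a rigid join of $N$ and $N^{*}$. I would then recognize this metric as that of a model: when $\dim N^{*}=0$ (the hypersurface case) it comes down to checking that the sine of the signed distance to $N$ satisfies Obata's Hessian equation and invoking Obata's theorem (equivalently Cheng's maximal diameter theorem) to get the round sphere; otherwise one either recurses on the dual submanifold $N^{*}$ or appeals to the rigidity characterizations of the compact rank one symmetric spaces, with the normal holonomy of $N$ forced by the along--geodesic curvature rigidity to be the Hopf holonomy exactly in the cases where a projective space occurs. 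This identifies $\widetilde M$ with a round sphere or one of $\mathbb{RP}^{n},\mathbb{CP}^{n/2},\mathbb{HP}^{n/4},\mathbb{OP}^{2}$ in its standard metric.

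The step I expect to be hardest is precisely the passage from rigidity along each normal geodesic to the global isometry type. Ruling out cut points of $N$ before $\frac{\pi}{2}$ and showing that $N^{*}$ is a smooth totally geodesic submanifold rather than a singular subset is delicate; and, more seriously, for $k\ge2$ the equality case of Theorem~\ref{Intermeadiate Ricci Thm} yields only an intermediate Ricci identity along the normal geodesics rather than $\sec\equiv1$, so the final recognition of the metric must accommodate Fubini--Study--type curvatures. That accommodation is exactly what puts the projective spaces into the conclusion and what prevents the argument from collapsing to a one--line appeal to constant curvature.
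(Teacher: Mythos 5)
Your overall strategy --- deduce Jacobi-field rigidity from the equality case of Lemma~\ref{sing soon Ric_k Lemma}, show the $\tfrac{\pi}{2}$--level set $N^{*}$ is a smooth totally geodesic ``dual'' submanifold, and then recognize the metric --- is in the same spirit as the paper's, but the three steps you flag as hardest are exactly where the proposal has genuine gaps rather than proofs, and one concrete assertion along the way is wrong.

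First, the claim that the metric in normal coordinates is $dr^{2}+\cos^{2}r$ on $TN$--directions plus $\sin^{2}r$ on $\nu(N)$--directions is not what the rigidity gives, and cannot be right: the $\nu(N)$--directions split further into the $\mathcal{T}_{F}$ part (tangent to the dual set), carried by $\sin t$ fields, and a $\mathcal{Z}$ part carried by $\tfrac{1}{2}\sin 2t$ fields; it is precisely the $\sin 2t$ behavior that puts projective spaces into the conclusion. Proposition~\ref{Jacobi Rigidity prop} (the sharpening of Theorem~\ref{Intermeadiate Ricci Thm}(2)) only pins down the $\mathcal{T}_{N}$ fields as $\cos t$ parallel fields and gives a parallel orthogonal splitting $\mathcal{Z}_{N}\oplus\mathcal{T}_{N}$ with radial curvatures $\geq1$; it does \emph{not} determine the $\mathcal{Z}_{N}$ fields. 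Establishing $\mathcal{Z}_{N}=\{\tfrac{1}{2}\sin 2t\,E\}$ is the content of Proposition~\ref{Jacobi stru prop} and requires the whole dual--pair apparatus: showing that for $p\in N$ the set $A(p)=S(p,\tfrac{\pi}{2})$ is again a closed submanifold of focal radius $\tfrac{\pi}{2}$ (Propositions~\ref{inj p prop}, \ref{A(p) prop}), iterating with $A(q)$ for $q\in A(p)$, and using that $N$ and $F$ are bases of Riemannian submersions from round spheres with connected fibers --- hence Hopf fibrations by Gromoll--Grove and Wilking (\cite{GrvGrm1},\cite{Wilk}). None of this is present in your outline.

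Second, ``$N^{*}$ is a smooth totally geodesic submanifold'' is not a soft consequence of there being no focal or cut points before $\tfrac{\pi}{2}$. The paper needs Hebda's regular focal locus theory (\cite{Heda}) to get smoothness, and the nontrivial Lemma~\ref{F equals F_reg lemma}, which uses a limiting argument with the rigid $\mathcal{T}_{F_{\mathrm{reg}}}$ fields to show that no singular tangent focal point can exist. Your proposal simply asserts smoothness after a first--variation argument rules out shorter cut points, which does not address multiplicity jumps or singular tangent focal points. Third, the final ``recognize the metric'' step is unresolved: appealing to ``rigidity characterizations of CROSSes'' or ``recursing on $N^{*}$'' is not an argument; the paper closes via Cartan's theorem after exhibiting an explicit isometric equivalence of the Hopf fibrations $\pi_{p}$ and $\tilde\pi_{\tilde p}$, and needs the disk--bundle decomposition $M=D_{N}\cup_{\varphi}D_{F}$ from Theorem~\ref{Hebda 3.1} (together with a Mayer--Vietoris argument) to rule out exotic spheres and nonunit sphere metrics in the sectional--curvature case. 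The reduction to $M$ simply connected is fine, and the intuition about where the hard work sits is correct, but the proposal as written leaves the hard work undone.
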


In Section \ref{optimal examples for theorems A and B}, we provide examples
showing that the hypothesis on the dimension of $N$ can not be dropped from
either Theorem \ref{Intermeadiate Ricci Thm} or \ref{inter Ricci Rigidity
thm}.

In the course of proving Theorem \ref{inter Ricci Rigidity thm} we will also
establish the following corollary (see Theorem \ref{const curv thm}, below.)

\begin{corollarylet}
\label{hyper refin cor}If the submanifold $N$ of Theorem \ref{inter Ricci
Rigidity thm} is a hypersurface, then the universal cover of $M$ is
isometric to the unit sphere.
\end{corollarylet}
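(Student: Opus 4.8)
The plan is to combine Theorem \ref{inter Ricci Rigidity thm} with the nonexistence of totally geodesic hypersurfaces in the complex, quaternionic, and Cayley projective spaces. By Theorem \ref{inter Ricci Rigidity thm}, $N$ is totally geodesic in $M$, and the universal cover $\widetilde{M}$ is isometric to a round sphere or to one of these projective spaces with its standard (Fubini--Study) metric; we must rule out the projective possibilities. Let $\pi\colon\widetilde{M}\to M$ be the universal Riemannian covering. Since $\pi$ is a local isometry and $N$ is a closed, embedded, totally geodesic hypersurface of $M$, the preimage $\widetilde{N}:=\pi^{-1}(N)$ is a closed, embedded, totally geodesic submanifold of $\widetilde{M}$ of codimension one; in particular $\widetilde{M}$ contains a totally geodesic hypersurface.

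The key point is that none of the projective spaces $\mathbb{CP}^{m}$ $(m\ge 2)$, $\mathbb{HP}^{m}$ $(m\ge 2)$, or the Cayley plane has a totally geodesic hypersurface. I would argue this through curvature--invariant subspaces: if $V\subset X$ is totally geodesic, then the Gauss and Codazzi equations with vanishing second fundamental form show that $T_pV$ is a curvature--invariant subspace of $T_pX$ at every $p\in V$, i.e.\ $R^{X}(x,y)z\in T_pV$ whenever $x,y,z\in T_pV$. For $\mathbb{CP}^{m}$ the curvature--invariant subspaces of $T_p\mathbb{CP}^{m}\cong\mathbb{C}^{m}$ are exactly the complex linear subspaces and the totally real subspaces (of real dimension $\le m$); a real hyperplane has odd real dimension $2m-1$, hence is neither of these once $m\ge 2$, so $\mathbb{CP}^{m}$ contains no totally geodesic hypersurface. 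The corresponding statements for $\mathbb{HP}^{m}$ and the Cayley plane follow either from the same type of short computation with their explicit curvature tensors, or directly from the classification of totally geodesic submanifolds of compact rank one symmetric spaces, by which no proper totally geodesic submanifold of these spaces has codimension one. Hence the existence of $\widetilde{N}$ forces $\widetilde{M}$ to be a round sphere.

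It remains to identify the radius: a totally geodesic hypersurface of the round sphere $S^{n}(r)$ has focal radius $\tfrac{\pi r}{2}$, which equals $\tfrac{\pi}{2}$ precisely when $r=1$, so $\widetilde{M}$ is isometric to the unit sphere $S^{n}$, as asserted. The step I expect to be the genuine obstacle is the second paragraph, namely giving a clean self--contained proof that $\mathbb{HP}^{m}$ and the Cayley plane carry no totally geodesic hypersurface rather than invoking the classification; one also has to observe that the borderline cases $\mathbb{CP}^{1}\cong S^{2}$ and $\mathbb{HP}^{1}\cong S^{4}$ are round spheres and thus already fall under the sphere branch, so they are not counterexamples. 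A more self--contained route, which is presumably the content of Theorem \ref{const curv thm}, is to reexamine the proof of Theorem \ref{inter Ricci Rigidity thm}: the dichotomy between the sphere and the projective spaces there is governed by the codimension of $N$ via Wilking's transverse Jacobi equation, and when $N$ is a hypersurface its normal bundle is a line bundle, the transverse Jacobi construction degenerates, and the argument lands directly in the constant curvature --- hence unit sphere --- case.
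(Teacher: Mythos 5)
Your argument is correct but takes a genuinely different route from the paper's. The paper proves this corollary \emph{inside} the proof of Theorem~\ref{inter Ricci Rigidity thm}, via Theorem~\ref{const curv thm}: since $\mathcal{T}_N\oplus\mathcal{T}_F$ is an orthogonal direct sum inside the $(n-1)$--dimensional Lagrangian $\Lambda_N$, one always has $\dim N+\dim F\le n-1$; when $N$ is a hypersurface this forces $\dim F=0$ and hence $\dim N+\dim F=\dim M-1$, and Theorem~\ref{const curv thm}(2) then directly reconstructs the curvature tensor of $M$ from the rigid Jacobi fields and concludes $M$ has constant curvature $1$ (no appeal to the classification of CROSSes or to the rest of Part 2 is needed). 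Your route instead treats Theorem~\ref{inter Ricci Rigidity thm} as a black box, lifts the totally geodesic hypersurface to the universal cover, and eliminates $\mathbb{CP}^m$, $\mathbb{HP}^m$, and the Cayley plane by the fact that no compact rank one symmetric space other than the sphere admits a totally geodesic hypersurface. That is a valid and shorter deduction \emph{given} Theorem~\ref{inter Ricci Rigidity thm} and the classification of totally geodesic submanifolds of CROSSes (your curvature--invariance computation is a clean way to see the $\mathbb{CP}^m$ case; as you note, the quaternionic and Cayley cases require either the analogous computation or a citation). The trade-off: the paper's argument is internal and is in fact a \emph{step} of the proof of Theorem~\ref{inter Ricci Rigidity thm}, so it cannot be replaced by yours without circularity if one wants a self-contained proof of the main theorem; your argument is the natural one to give if Theorem~\ref{inter Ricci Rigidity thm} has already been established by other means. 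Your closing sentence correctly guesses that the degeneration at codimension one is what Theorem~\ref{const curv thm} captures, so you have essentially identified both routes.
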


It is reasonable to compare the Ricci curvature versions of Theorems \ref%
{Intermeadiate Ricci Thm} and \ref{inter Ricci Rigidity thm} with the
Bonnet-Myers Theorem and Cheng's Maximal Diameter Theorem (cf also Theorem 3
in \cite{CrKl} and Theorem 1 in \cite{EschOSu}). While an analogy can be
made between the sectional curvature version of Theorem \ref{inter Ricci
Rigidity thm} and the Diameter Rigidity Theorem (\cite{GrovGrom},\cite{Wilk}%
), the following example shows that Theorem \ref{inter Ricci Rigidity thm}
applies to more nonsimply connected manifolds.

\begin{examplelet}
\label{quat exam}Let $\mathbb{S}^{3}$ be the unit sphere in $\mathbb{C}%
\oplus \mathbb{C},$ and embed $\mathbb{S}^{1}$ as the unit circle in the
first copy of $\mathbb{C}.$ Let $Q$ be the quaternion group of order $8$ in $%
SO\left( 4\right) .$ Then the focal radius of $N=Q\left( \mathbb{S}%
^{1}\right) /Q$ in $M=\mathbb{S}^{3}/Q$ is $\frac{\pi }{2},$ and $N$ is its
own focal set. On the other hand, $M$ has diameter strictly smaller than $%
\frac{\pi }{2}$.

More generally, let $\pi :\mathbb{S}^{n}\longrightarrow \mathbb{S}^{n}/G$ be
the quotient map of a properly discontinuous action by $G$ on $\mathbb{S}%
^{n},$ and let $N$ be any closed geodesic in $\mathbb{S}^{n}/G.$ Then $\pi
^{-1}\left( N\right) $ is the disjoint union of closed geodesics in $\mathbb{%
S}^{n},$ and hence both $\pi ^{-1}\left( N\right) $ and $N$ have focal
radius $\frac{\pi }{2}.$
\end{examplelet}

Theorem \ref{inter Ricci Rigidity thm} implies that the standard unit metric
is the only one on any topological sphere with sectional curvature $\geq 1$
that has a closed submanifold with focal radius $\frac{\pi }{2}.$ In
contrast, the conclusion of the Diameter Rigidity Theorem is softer, since
there are many metrics on $\mathbb{S}^{n}$ with curvature $\geq 1$ and
diameter $\geq \frac{\pi }{2},$ and there is even the possibility of such a
metric on an exotic sphere.

It is also reasonable to compare the sectional curvature version of Theorem %
\ref{inter Ricci Rigidity thm} to the \textquotedblleft rank
rigidity\textquotedblright\ results of Schmidt and
Shankar--Spatzier--Wilking in \cite{Schm} and \cite{ShankSpWilk}. Shankar,
Spatzier, and Wilking obtained the conclusion of Theorem \ref{inter Ricci
Rigidity thm} for manifolds with curvature \emph{less }than or equal to $1$
and minimal conjugate radius $\pi .$ Schmidt proves that if $M$ has
sectional curvature $\geq 1$ and conjugate radius $\geq \frac{\pi }{2},$
then its universal cover is homeomorphic to $S^{n}$ or isometric to a
projective space. The conjugate radius hypotheses of these theorems apply to
every geodesic in $M.$ In contrast, the focal radius hypothesis of Theorem %
\ref{inter Ricci Rigidity thm} only concerns the geodesics that meet a
single submanifold orthogonally.

To prove Theorems \ref{Intermeadiate Ricci Thm} and \ref{inter Ricci
Rigidity thm}, we exploit Wilking's transverse Jacobi equation (\cite{Wilk1}%
) to get a new comparison lemma for Jacobi fields. To state it, we let $%
\gamma :\left( -\infty ,\infty \right) \longrightarrow M$ be a unit speed
geodesic in a complete Riemannian $n$--manifold $M.$ We call an $\left(
n-1\right) $--dimensional subspace $\Lambda $ of normal Jacobi fields along $%
\gamma ,$ \emph{Lagrangian, }if the restriction of the Riccati operator to $%
\Lambda $ is self adjoint, that is, if 
\begin{equation*}
\left\langle J_{1}\left( t\right) ,J_{2}^{\prime }\left( t\right)
\right\rangle =\left\langle J_{1}^{\prime }\left( t\right) ,J_{2}\left(
t\right) \right\rangle
\end{equation*}%
for all $t$ and for all $J_{1},J_{2}\in \Lambda $ (see \eqref{Rica dfn}
below for the formal definition of the Riccati operator on $\Lambda $)$.$

In Sections \ref{sect:Wilking equation} and \ref{Jacobi-comp sect}, we
review Wilking's transverse Jacobi equation, justify the name Lagrangian,
and prove a comparison lemma for intermediate Ricci curvature. In the
special case when the sectional curvature is bounded from below our
comparison result becomes the following.

\begin{lemmalet}
\label{sing soon thm}(Sectional Curvature Comparison) For $\kappa =-1,0,$ or 
$1,$ let $\gamma :\left( -\infty ,\infty \right) \longrightarrow M$ be a
unit speed geodesic in a complete Riemannian $n$--manifold $M$ with \textrm{%
sec}$\left( \dot{\gamma},\cdot \right) \geq \kappa .$ Let $J_{0}$ be a
nonzero, normal Jacobi field along $\gamma ,$ and let $\Lambda $ be a
Lagrangian subspace of normal Jacobi fields along $\gamma $ with Riccati
operator $S$ such that $J_{0}\in \Lambda .$

For $t_{0}<t_{\max },$ suppose that $\Lambda $ has no singularities on $%
\left( t_{0},t_{\max }\right) ,$ and that $\tilde{\lambda}_{\kappa }:\left[
t_{0},t_{\max }\right) \longrightarrow \mathbb{R}$ is a solution of %
\addtocounter{algorithm}{1} 
\begin{equation}
\tilde{\lambda}_{\kappa }^{\prime }+\tilde{\lambda}_{\kappa }^{2}+\kappa =0
\label{const curv ODE}
\end{equation}%
with \addtocounter{algorithm}{1} 
\begin{equation}
\left\langle S\left( J_{0}\right) ,J_{0}\right\rangle |_{t_{0}}\leq \tilde{%
\lambda}_{\kappa }\left( t_{0}\right) \left\vert J_{0}\left( t_{0}\right)
\right\vert ^{2}.  \label{sec iniitial cond}
\end{equation}%
Then for each $t_{1}\in \left[ t_{0},t_{\max }\right) $ there is a $J_{1}\in
\Lambda \setminus \left\{ 0\right\} $ so that\addtocounter{algorithm}{1} 
\begin{equation}
\left\langle S\left( J_{1}\right) ,J_{1}\right\rangle |_{t_{1}}\leq \tilde{%
\lambda}_{\kappa }\left( t_{1}\right) \left\vert J_{1}\left( t_{1}\right)
\right\vert ^{2}.  \label{sm fut sec Ineq}
\end{equation}%
In particular, if $\kappa =1$, $\alpha \in \left[ 0,\pi \right) ,$ $\tilde{%
\lambda}_{1}\left( t\right) =\cot \left( t+\alpha \right) $, and $t_{0}\in %
\left[ 0,\pi -\alpha \right) ,$ then $\Lambda $ has a singularity by time $%
\pi -\alpha ,$ that is, there is a $J\in \Lambda \setminus \left\{ 0\right\} 
$ with $J\left( t_{2}\right) =0$ for some $t_{2}\in \left( t_{0},\pi -\alpha %
\right] .$
\end{lemmalet}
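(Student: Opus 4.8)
The plan is to induct on $n=\dim M$, using Wilking's transverse Jacobi equation to reduce the dimension by one and a connectedness argument to carry the conclusion past the singularities of the transverse problem. Since \eqref{sec iniitial cond} presumes that $S$ is defined at $t_0$, I read the hypotheses as saying that $\Lambda$ is nonsingular on all of $[t_0,t_{\max})$, so in particular $J_0(t_0)\neq0$. The case $t_1=t_0$ is \eqref{sec iniitial cond} itself, so I would instead prove that $E=[t_0,t_{\max})$, where
\[
E:=\{\,t\in[t_0,t_{\max}):\ \exists\,J\in\Lambda\setminus\{0\}\ \text{with}\ \langle S_tJ(t),J(t)\rangle\le\tilde\lambda_\kappa(t)\,|J(t)|^2\,\}.
\]
Now $t_0\in E$, and $E$ is closed in $[t_0,t_{\max})$ by compactness of the unit sphere of $\Lambda$ together with smooth dependence of Jacobi fields on $t$. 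So it suffices to show that every $\tau\in E$ has a right half-neighborhood $[\tau,\tau+\delta)\subset E$; connectedness of $[t_0,t_{\max})$ then forces $E=[t_0,t_{\max})$.

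In the base case $n=2$ the space $\Lambda$ is spanned by $J_0$, which is nowhere zero on $[t_0,t_{\max})$; for $\lambda:=\langle SJ_0,J_0\rangle/|J_0|^2=(\log|J_0|)'$, the Jacobi equation and $\mathrm{sec}(\dot\gamma,\cdot)\ge\kappa$ give $\lambda'+\lambda^2+\kappa\le0$, so $\lambda$ is a subsolution of \eqref{const curv ODE}; since $\tilde\lambda_\kappa$ is a solution and $\lambda(t_0)\le\tilde\lambda_\kappa(t_0)$, the classical Riccati comparison yields $\lambda\le\tilde\lambda_\kappa$ throughout and $J_1=J_0$ works at every $t_1$. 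In dimensions $\ge3$ this classical argument fails, because the analogous differential inequality for $\langle SJ_0,J_0\rangle/|J_0|^2$ runs the wrong way once $J_0$ has a focal point; circumventing this is exactly what Wilking's transverse Jacobi equation is for.

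For the inductive step, let $n\ge3$ and assume the statement in dimension $n-1$ --- more precisely, for the self-adjoint transverse Jacobi equations $\hat\nabla_t\hat\nabla_t Y+\hat R\,Y=0$ with $\langle\hat R Y,Y\rangle\ge\kappa\,|Y|^2$ produced below, which are of the same type. Fix $\tau\in E$ with witness $J^*\in\Lambda\setminus\{0\}$; since $\Lambda$ is nonsingular at $\tau$ we have $J^*(\tau)\neq0$, and we may pick $K\in\Lambda\setminus\{0\}$ with $K(\tau)\perp J^*(\tau)$ (the space of such $K$ has dimension $(n-1)-1\ge1$) and hence $K(\tau)\neq0$. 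On a small interval $[\tau,\tau+\delta)$, $K$ and $J^*$ are nowhere zero, and I would apply Wilking's transverse Jacobi equation to $K$: the normal bundle of $\gamma$ splits as $\mathrm{span}(K)\oplus H$ with $H=\mathrm{span}(K)^{\perp}\cap\dot\gamma^{\perp}$ of rank $n-2$; the projections $\hat J:=\Pi_H J$ of $J\in\Lambda$ form an $(n-2)$-dimensional Lagrangian family $\hat\Lambda$ for the transverse equation (the map $J\mapsto\hat J$ has kernel exactly $\mathrm{span}(K)$, since $\omega(J,K)=0$ forbids any further pointwise-proportional Jacobi field in $\Lambda$); the transverse curvature operator satisfies $\langle\hat R Y,Y\rangle\ge\langle R(Y,\dot\gamma)\dot\gamma,Y\rangle\ge\kappa\,|Y|^2$ on $H$ by the O'Neill-type correction terms; and a short computation gives $\hat S=\Pi_H\circ S|_H$, hence $\langle\hat S v,v\rangle=\langle S v,v\rangle$ for $v\in H$. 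Since singular times of a Lagrangian family are isolated and $\hat\Lambda$ is nonsingular at $\tau$ (because $\mathrm{span}(K)$ is the whole preimage of the line $\mathrm{span}(K(\tau))$), I can shrink $\delta$ so that $\hat\Lambda$ is nonsingular on $(\tau,\tau+\delta)$.

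Then I would feed this transverse problem into the inductive hypothesis on $[\tau,\tau+\delta)$, with the same $\tilde\lambda_\kappa$ and with $\hat J_0:=\Pi_H J^*$, which is nonzero (as $J^*\notin\mathrm{span}(K)$) and satisfies $\hat J_0(\tau)=J^*(\tau)$ (since $K(\tau)\perp J^*(\tau)$): the transverse form of \eqref{sec iniitial cond} holds, $\langle\hat S\hat J_0,\hat J_0\rangle|_\tau=\langle S_\tau J^*(\tau),J^*(\tau)\rangle\le\tilde\lambda_\kappa(\tau)\,|\hat J_0(\tau)|^2$. By induction, for each $t\in(\tau,\tau+\delta)$ there is $\hat J_1\in\hat\Lambda\setminus\{0\}$ with $\langle\hat S\hat J_1,\hat J_1\rangle|_t\le\tilde\lambda_\kappa(t)\,|\hat J_1(t)|^2$; taking $J_1\in\Lambda$ with $J_1(t)=\hat J_1(t)\in H(t)$, the identity $\langle\hat S v,v\rangle=\langle S v,v\rangle$ on $H$ gives $\langle S_t J_1(t),J_1(t)\rangle\le\tilde\lambda_\kappa(t)\,|J_1(t)|^2$, i.e.\ $t\in E$. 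Hence $[\tau,\tau+\delta)\subset E$, closing the induction and proving the main assertion. For the ``in particular'' statement I would argue by contradiction: if $\Lambda$ had no singularity in $(t_0,\pi-\alpha]$, the main assertion with $t_{\max}=\pi-\alpha$, $\kappa=1$, $\tilde\lambda_1(t)=\cot(t+\alpha)$ would force the least eigenvalue of $S_t$ to be $\le\cot(t+\alpha)\to-\infty$ as $t\to(\pi-\alpha)^{-}$, contradicting smoothness of $S$ near $\pi-\alpha$. The main obstacle I anticipate is assembling the two structural inputs from Wilking's equation --- that the lower bound $\kappa$ descends to $\hat R$, and that $\hat S$ is the $H$-block of $S$, so that both the hypothesis on $J^*$ and the conclusion on $\hat J_1$ transfer cleanly between the original and transverse pictures --- together with the bookkeeping for the singularities of $\hat\Lambda$, which need not be singularities of $\Lambda$; the connectedness/relay step is arranged precisely so that only finitely many transverse singularities are ever relevant near a given $\tau$.
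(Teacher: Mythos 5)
Your plan is correct in outline, but it takes a genuinely different and substantially more elaborate route than the paper's. The paper proves Lemma~\ref{sing soon thm} as the $k=1$ case of Lemma~\ref{sing soon Ric_k Lemma}, and the key point is that the transverse reduction is performed \emph{once}, with $\mathcal{V}$ chosen to be the full $(n-2)$--dimensional subspace of $\Lambda$ orthogonal to $J_0(t_0)$ at time $t_0$. Then $H(t)$ is $1$--dimensional, $\hat S$ is a scalar, the Eigenvalue Transfer Lemma gives $\hat S(t_0)=\langle S J_0,J_0\rangle/|J_0|^2|_{t_0}\le\tilde\lambda_\kappa(t_0)$, and the transverse Riccati equation has nonnegative correction ($3AA^*\ge0$) and horizontal curvature $\ge\kappa$, so one application of Eschenburg--Heintze gives $\hat S(t)\le\tilde\lambda_\kappa(t)$ for every $t$. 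Transferring back via the Eigenvalue Transfer Lemma produces $J_1$ immediately --- no induction, no relay. You instead reduce by only one dimension at a time (using a $1$--dimensional $\mathcal{V}=\mathrm{span}(K)$), which forces you to (i) reformulate the statement for abstract Jacobi/Riccati systems so that it can be applied recursively to the transverse problem, and (ii) run a closed-open connectedness argument on $E$ because each application of the reduced statement is only local in $t$. This works, but it reinvents, in iterated and patched form, exactly what the single $(n-2)$--dimensional $\mathcal{V}$ and Eschenburg--Heintze give you in one shot.

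Two technical imprecisions are worth flagging. First, your sentence that ``the projections $\hat J:=\Pi_H J$ of $J\in\Lambda$ form an $(n-2)$--dimensional Lagrangian family for the transverse equation'' is not correct as stated: the transverse Jacobi fields solving $\hat\nabla_t\hat\nabla_t Y+\hat R Y=0$ are not the pointwise $H$--projections of Jacobi fields in $\Lambda$. Fortunately you do not actually need this; you only use (a) that $\hat S$ is defined and self-adjoint on $H(t)$ and satisfies the transverse Riccati equation, (b) the pointwise identity $\langle\hat S v,v\rangle=\langle S v,v\rangle$ for $v\in H(t)$, which follows because $\hat S v=(Sv)^H$ and $v\in H$, and (c) the existence of \emph{some} transverse datum with prescribed value at $\tau$, which is automatic since the transverse evaluation map is an isomorphism there. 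Second, your inductive hypothesis must be stated as an abstract Riccati/Jacobi comparison, not as a Riemannian statement, because the transverse problem lives on a bundle $H(t)$ with curvature $\hat R=R^h+3AA^*$ rather than on a Riemannian manifold; you acknowledge this in passing, but it is a real reformulation and the base case then is just the scalar Eschenburg--Heintze comparison, not the $n=2$ Riemannian case. With those adjustments the argument goes through; but the paper's single-step reduction is shorter, avoids the abstract reformulation entirely (it needs only the trace of the transverse Riccati equation), and makes the relay argument unnecessary.
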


Lemma \ref{sing soon thm} holds in certain situations where $\Lambda $ has
singularities on $\left[ t_{0},t_{\max }\right) ,$ for example when $%
\lim_{t\rightarrow t_{0}^{+}}\tilde{\lambda}_{\kappa }\left( t\right)
=\infty .$ We describe another such situation in Lemma \ref{sing soon Ric_k
Lemma}, where the reader will also find a discussion of the equality case.

The reader is probably familiar with the Riccati comparison theorem of
Eschenburg-Heintze in \cite{EschHein}. It requires the initial condition (%
\ref{sec iniitial cond}) to hold for\emph{\ all} $J_{0}\in \Lambda $, while
Lemma \ref{sing soon thm} only demands that the initial condition holds for
a \emph{single} Jacobi field. This comes at the expense that the derived
future inequality (\ref{sm fut sec Ineq}) is only guaranteed to hold for a
single Jacobi field, which moreover, is not likely to be the original field.
In Examples \ref{diff future ex} and \ref{hopf holn} (below), we show that $%
J_{1}$ can in fact be different from $J_{0}.$ A similar example can be found
on page 463 of \cite{HeiKar}. This phenomenon is tied to the nonvanishing of
Wilking's generalized $A$--tensor (see (\ref{Burk A dfn})).

The difference between Lemma \ref{sing soon thm} and the theorem of \cite%
{EschHein} is starker if one considers the contrapositives: Lemma \ref{sing
soon thm} implies that if Inequality \eqref{sm fut sec Ineq} fails for all $%
J_{1}\in \Lambda ,$ then Inequality \eqref{sec iniitial cond} fails for 
\emph{all} $J\in \Lambda .$ In contrast, the theorem of \cite{EschHein} only
gives that Inequality \ref{sec iniitial cond} fails for \emph{some} $J\in
\Lambda .$

The main tool to prove Theorem \ref{Intermeadiate Ricci Thm} is Lemma \ref%
{sing soon Ric_k Lemma}, which is a generalization of Lemma \ref{sing soon
thm} to intermediate Ricci curvature. So that we can prove Theorem \ref%
{inter Ricci Rigidity thm}, Lemma \ref{sing soon Ric_k Lemma} also includes
an analysis of the rigid situation. Other cases when rigidity occurs are
given in Lemmas \ref{lem:riccati comparison minus infinity} and \ref%
{lem:riccati comparison nnc} 

The proof of Theorem \ref{inter Ricci Rigidity thm} begins by establishing
Proposition \ref{Jacobi Rigidity prop}, which draws a strong analogy between 
$N$ and one of the dual sets in the proof of the Diameter Rigidity Theorem.
Example \ref{quat exam} shows that we can only push this analogy so far. The
dual sets of \cite{GrovGrom} are disjoint while Example \ref{quat exam}
shows that $N$ can be its own focal set. In fact, one of the challenges of
the proof of Theorem \ref{inter Ricci Rigidity thm} is showing that
phenomena like Example \ref{quat exam} do not occur in the simply connected
case. In spite of the differences, our overall strategy is similar to that
of \cite{GrovGrom}, and our proof employs ideas from there. To keep the
exposition tight, we will often refrain from giving further specific
references to \cite{GrovGrom} and have made our exposition reasonably
self-contained.

After the introduction, we establish notations and conventions. The
remainder of the paper is divided into two parts and eight sections. The
sections are subordinate to the parts. Each part and many of the sections
begin with a detailed summary of the contents, so the outline immediately
below is only meant to indicate where each result is proven.

Part 1 
contains Sections \ref{sect:Wilking equation} to \ref{focal and pos sect}.
In Section \ref{sect:Wilking equation}, we review Wilking's transverse
Jacobi equation; in Section \ref{Jacobi-comp sect} we state and prove Lemma %
\ref{sing soon Ric_k Lemma}, which is the main tool of the paper. Subsection %
\ref{example seubsection} provides examples showing that $J_{0}$ and $J_{1}$
can indeed be different in Lemma \ref{sing soon thm}. In Section \ref{focal
and pos sect}, we prove Theorem \ref{Intermeadiate Ricci Thm} and give
examples showing its optimality.

In Part 2, 
we prove Theorem \ref{inter Ricci Rigidity thm} in Sections \ref{dist from N
sect}---\ref{Rigid and Inter ric sect}. In the special case of Theorem \ref%
{inter Ricci Rigidity thm}, when the sectional curvature is $\geq 1$, the
argument can be completed a little faster by an appeal to the Diameter
Rigidity Theorem. We do this in Section \ref{M is not a sphere}, and we
complete the proof of the general case of Theorem \ref{inter Ricci Rigidity
thm} in Section \ref{Rigid and Inter ric sect}.

\begin{remarklet}
\label{radial rem}The reader may have noticed that the hypotheses $\Ric_{k}$ 
$\geq k\cdot \kappa $ of Theorems \ref{Intermeadiate Ricci Thm} and \ref%
{inter Ricci Rigidity thm} are global, whereas in Lemma \ref{sing soon thm},
we only assumed that \textrm{sec}$\left( \dot{\gamma},\cdot \right) \geq
\kappa .$

For the conclusion of Theorems \ref{Intermeadiate Ricci Thm} to hold, we in
fact, only need $\Ric_{k}$ $\left( \dot{\gamma},\cdot \right) \geq k\cdot
\kappa $ for all unit speed geodesics $\gamma $ that leave $N$ orthogonally
at time zero. That is,%
\begin{equation*}
\displaystyle\sum\limits_{i=1}^{k}\mathrm{sec}\left( \dot{\gamma}%
,E_{i}\right) \geq k\cdot \kappa
\end{equation*}%
for any orthonormal set $\left\{ \dot{\gamma},E_{1},\ldots ,E_{k}\right\} .$

On the other hand, our proof of Theorem \ref{inter Ricci Rigidity thm} uses
the global hypothesis $\Ric_{k}$ $\geq k\cdot \kappa $ and also the fact
that Lemma \ref{sing soon Ric_k Lemma} and its rigidity case are valid with
only the radial curvature lower bound.
\end{remarklet}


\begin{acknowledgment}
We are grateful to Karsten Grove and Curtis Pro for valuable critiques of
this manuscript. Special thanks go to Universidad Autonoma de Madrid for
hosting a stay by the second author during which this work was initiated.
\end{acknowledgment}

\section*{Notations and Conventions \label{notations Sections}}

Unless otherwise specified, all curves are parameterized at unit speed.
Given $v\in TM,$ we denote the unique geodesic with $\gamma _{v}^{\prime
}\left( 0\right) =v$ \/ by $\gamma _{v}.$

Let $N$ be a submanifold of the Riemannian manifold $M.$ Let $\nu \left(
N\right) $ be the normal bundle of $N\subset M.$ For every unit $v\in \nu
\left( N\right) ,$ there is a first time $t_{1}\in \left( 0,\infty \right] $
at which $\gamma _{v}\left( t_{1}\right) $ is focal for $N$ along $\gamma
_{v}.$ We set 
\begin{equation*}
\mathrm{reg}_{N}\equiv \left\{ \left. tv\in \nu \left( N\right) \text{ }%
\right\vert \text{ }\left\vert v\right\vert =1\text{ and }t\in \left[
0,t_{1}\right) \right\} .
\end{equation*}%
We let $g^{\ast }$ be the metric on the domain $\mathrm{reg}_{N}$ obtained
from pulling back $\left( M,g\right) $ via the normal exponential map. We
use the term \emph{tangent focal point} for a critical point of $\exp
_{N}^{\perp }:\nu \left( N\right) \longrightarrow M$ and the term \emph{%
focal point} for a critical value of $\exp _{N}^{\perp }.$

$\pi :\nu \left( N\right) \longrightarrow N$ will denote the projection of
the normal bundle; $N_{0}$ will be the $0$--section of $\nu \left( N\right)
, $ and $\nu ^{1}\left( N\right) $ will be the unit normal bundle of $N.$
The fibers of $\nu \left( N\right) $ and $\nu ^{1}\left( N\right) $ over $%
x\in N$ will be called $\nu _{x}\left( N\right) $ and $\nu _{x}^{1}\left(
N\right) $.

We let $\Lambda $ be any Lagrangian family of normal Jacobi fields along a
geodesic $\gamma ,$ and for any subspace $\mathcal{W}\subset \Lambda $ we
write \addtocounter{algorithm}{1} 
\begin{equation}
\mathcal{W}\left( t\right) \equiv \left\{ \left. J\left( t\right) \text{ }%
\right\vert \text{ }J\in \mathcal{W}\right\} \oplus \left\{ \left. J^{\prime
}\left( t\right) \text{ }\right\vert \text{ }J\in \mathcal{W}\text{ and }%
J\left( t\right) =0\right\} .  \label{dfn of eval eqn}
\end{equation}%
When $\gamma $ is a geodesic that leaves $N$ orthogonally at time $0$, we
will write $\Lambda _{N}$ for the Lagrangian family of normal Jacobi fields
along $\gamma $ corresponding to variations by geodesics that leave $N$
orthogonally at time $0$. We call the elements of $\Lambda _{N},$ $N$%
--Jacobi fields. According to Lemma 4.1 on page 227 of \cite{doCarm}, $%
\Lambda _{N}$ consists of the following \emph{normal} Jacobi fields $J$
along $\gamma $: \addtocounter{algorithm}{1} 
\begin{equation}
\Lambda _{N}\equiv \left\{ J|J\left( 0\right) =0\text{, }J^{\prime }\left(
0\right) \in \nu _{\gamma \left( 0\right) }\left( N\right) \right\} \oplus
\left\{ J|J\left( 0\right) \in T_{\gamma \left( 0\right) }N\text{ and }%
J^{\prime }\left( 0\right) =\mathrm{S}_{\gamma ^{\prime }\left( 0\right)
}J\left( 0\right) \right\} ,  \label{dfn of Lambda_N}
\end{equation}%
where $\mathrm{S}_{\gamma ^{\prime }\left( 0\right) }$ is the shape operator
of $N$ determined by $\gamma ^{\prime }\left( 0\right) ,$ that is, 
\begin{eqnarray*}
\mathrm{S}_{\gamma ^{\prime }\left( 0\right) } &:&T_{\gamma _{v}\left(
0\right) }N\longrightarrow T_{\gamma _{v}\left( 0\right) }N\text{ is} \\
\mathrm{S}_{\gamma ^{\prime }\left( 0\right) } &:&w\longmapsto \left( \nabla
_{w}\gamma ^{\prime }\left( 0\right) \right) ^{TN}.
\end{eqnarray*}

We write $\mathbb{S}^{n}$ for the unit sphere in $\mathbb{R}^{n+1},$ and for 
$\kappa =-1,0,$ or $1,$ we let $\mathcal{S}_{\kappa }^{2}$ be the simply
connected $2$--dimensional space form of constant curvature $\kappa .$

We use the acronym CROSS for Compact Rank One Symmetric Space. For
convenience, we normalize the nonspherical CROSSes so that their curvatures
are in $\left[ 1,4\right] ,$ and we normalize the spherical CROSSes to have
constant curvature $4.$

We write $\mathrm{sec}$ for sectional curvature and $\kappa $ for our lower
curvature bound. After rescaling, we may always assume that $\kappa $ is
either $-1,0,$ or $1.$

Given $r>0$ and $A\subset M$ we set 
\begin{eqnarray*}
B\left( A,r\right) &\equiv &\left\{ \left. x\in M\text{ }\right\vert \text{ 
\textrm{dist}}\left( x,A\right) <r\right\} ,\text{ } \\
D\left( A,r\right) &\equiv &\left\{ \left. x\in M\text{ }\right\vert \text{ 
\textrm{dist}}\left( x,A\right) \leq r\right\} ,\text{ and} \\
S\left( A,r\right) &\equiv &\left\{ \left. x\in M\text{ }\right\vert \text{ 
\textrm{dist}}\left( x,A\right) =r\right\} .
\end{eqnarray*}

Finally, we write $D_{v}\left( f\right) $ the derivative of $f$ in the
direction $v.$

\part*{{\protect\Large Part 1: Bounding the Focal Radius\label{Part 1}}}

Part 1 is divided in three sections. Section \ref{sect:Wilking equation}
reviews Wilking's transverse equation. In Section \ref{Jacobi-comp sect}, we
state and prove Lemma \ref{sing soon Ric_k Lemma}, which is a generalization
of Lemma \ref{sing soon thm} and is the main tool of the paper; in
subsection \ref{example seubsection} we give an example that shows that $%
J_{1}$ need not equal $J_{0} $ in Lemma \ref{sing soon thm}. Finally, in
Section 3, we prove Theorem \ref{Intermeadiate Ricci Thm}, and give some
examples showing its optimality.


\section{Wilking's transverse Jacobi equation}

\label{sect:Wilking equation}

\renewcommand{\theequation}{\thealgorithm}

In this section, we review Lagrangian families and Wilking's transverse
Jacobi equation.

\subsection{Lagrangian Families}

Let $\gamma $ be a unit speed geodesic in a complete Riemannian $n$%
--manifold $M,$ and let $\mathcal{J}$ be the vector space of normal Jacobi
fields along $\gamma .$ Using symmetries of the curvature tensor, we see
that for $J_{1},J_{2}\in \mathcal{J},$ 
\begin{equation*}
\omega \left( J_{1},J_{2}\right) =\left\langle J_{1}^{\prime
},J_{2}\right\rangle -\left\langle J_{1},J_{2}^{\prime }\right\rangle ,
\end{equation*}%
is constant along $\gamma $ and hence defines a symplectic form on $\mathcal{%
J}.$

Thus an $\left( n-1\right) $--dimensional subspace $\Lambda $ of $\mathcal{J}
$ on which $\omega $ vanishes is called Lagrangian. Of course this is
equivalent to saying that the restriction of the Riccati operator to $%
\Lambda $ is self-adjoint. Examples of Lagrangian families include the
Jacobi fields that are $0$ at time $0$ and those that correspond to
variations by geodesics that leave a submanifold orthogonally at time $0.$

The set of times $t$ so that

\addtocounter{algorithm}{1}

\begin{equation}
\left\{ J(t)\ |\ J\in \Lambda \right\} = \dot{\gamma}\left( t\right)^{\perp }
\label{Eval 1-1 eqn}
\end{equation}%
is open and dense (cf Lemma 1.7 of \cite{gw}). For these $t$ we get a
well-defined Riccati operator \addtocounter{algorithm}{1} 
\begin{eqnarray}
S_{t} &:&\dot{\gamma}\left( t\right)^{\perp }\longrightarrow \dot{\gamma}%
\left( t\right)^{\perp }  \notag \\
S_{t} &:&v\longmapsto J_{v}^{\prime }\left( t\right) ,  \label{Rica dfn}
\end{eqnarray}%
where $J_{v}$ is the unique $J_{v}\in \Lambda $ so that $J_{v}\left(
t\right) =v.$ The Jacobi equation then decomposes into the two first order
equations 
\begin{equation*}
S_t\left( J\right) =J^{\prime },\qquad \text{ }S_t^{\prime }+S_t^{2}+R=0,
\end{equation*}%
where $S_t^{\prime }$ is the covariant derivative of $S_t$ along $\gamma $
and $R $ is the curvature along $\gamma ,$ that is $R\left( \cdot \right)
=R\left( \cdot ,\dot{\gamma}\right) \dot{\gamma}$ (see Equation 1.7.1 in 
\cite{gw}). We will omit the dependence on $t$ if it is clear from the
context.

\begin{remark}
\label{S well dfned on W rem} Given any $\mathcal{W}\subset \Lambda $, and
some $t$ such that no Jacobi field in $\mathcal{W}\setminus \left\{
0\right\} $ vanishes at $t$, Equation \eqref{Rica dfn} gives a well defined
Riccati operator 
\begin{equation*}
S_{t}:\mathcal{W}\left( t\right) \longrightarrow \gamma ^{\prime }\left(
t\right) ^{\perp }.
\end{equation*}%
This $S_{t}$ agrees with the restriction of $S_{t}$ defined in 
\eqref{Rica
dfn} when $\Lambda $ has no zeros.

%
\end{remark}

\bigskip

%
%
%

\subsection{Singularities in the Lagrangian and the Riccati operator}

The set of times $t$ when \addtocounter{algorithm}{1} 
\begin{equation}
\dim\left\{ J(t)\ |\ J\in \Lambda \right\}< n-1  \label{Eval not 1-1 eqn}
\end{equation}
corresponds to the moments where some of the Jacobi fields in $\Lambda$
vanish. They are important since, in general, they correspond to moments
when the Riccati operator $S_t$ is not defined.

\begin{definitionnonum}
Let $\mathcal{V}$ be a subspace of $\Lambda $. We will say that $\mathcal{V}$
has full index at $\bar{t}$ if any $J\in \Lambda $ with $J(\bar{t})=0\/$
belongs to $\mathcal{V}$; we will also say that $\mathcal{V}$ has full index
on an interval $I$ if it has full index at each point of $I$.
\end{definitionnonum}

There is a different way of stating the above condition: for fixed $t\in 
\mathbb{R},$ define the \emph{evaluation map} as%
\begin{eqnarray*}
\mathcal{E}_{t} &:&\Lambda \longrightarrow T_{\gamma \left( t\right) }M \\
\mathcal{E}_{t} &:&J\longmapsto J\left( t\right).
\end{eqnarray*}

Observe that, for given $t\in I$, the kernel of $\mathcal{E}_{t}$ is the set
of those $J\in \Lambda $ vanishing at $t$. Thus a subspace $\mathcal{V}%
\subset \Lambda $ has full index in an interval if and only if $\mathcal{V}$
contains the kernel of the evaluation map $\mathcal{E}_{t}$ for every $t$ in
the interval.

\bigskip

\subsection{Wilking's Transverse Jacobi Equation}

Let $\mathcal{V}$ be any subspace of $\Lambda .$ Set %
\addtocounter{algorithm}{1} 
\begin{equation}
\mathcal{V}(t)\equiv \{J(t)\ |\ J\in \mathcal{V}\}\oplus \{J^{\prime }(t)\
|\ J\in \mathcal{V},J(t)=0\}.  \label{dfn of V(t)}
\end{equation}%
Then $\mathcal{V}(t)$ is a smooth vector bundle along $\gamma $ (Lemma 1.7.1
in \cite{gw}, or \cite{Wilk1}). Set 
\begin{equation*}
H(t)\equiv \mathcal{V}(t)^{\perp }\cap \dot{\gamma}(t)^{\perp }.
\end{equation*}

\begin{proposition}
\label{S hat well defined prop} Fix $t\in I$ and suppose that $\mathcal{V}$
has full index at $t.\vspace{0.1in}$

\noindent 1. For $x\in H\left( t\right) ,$ there is a $J\in \Lambda $ so
that $J\left( t\right) =x.\vspace{0.1in}$

\noindent 2. We have a well-defined Riccati operator%
\begin{equation*}
\hat{S}_{t}:H\left( t\right) \longrightarrow H\left( t\right)
\end{equation*}%
given by \addtocounter{algorithm}{1} 
\begin{equation}
\hat{S}_{t}\left( x\right) =J^{\prime H}\left( t\right) ,\text{ \label{dfn
of S hat}}
\end{equation}%
where $J$ is an element of $\Lambda $ so that $J\left( t\right) =x,$ and $%
J^{\prime H}\left( t\right) $ is the $H\left( t\right) $--component of $%
J^{\prime }\left( t\right) $; in other words, $\hat{S}_{t}$ is the $H(t)$%
-projection of $S_{t}|_{H\left( t\right) }$.
\end{proposition}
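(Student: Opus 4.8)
The plan is to prove the two statements essentially together, since the content of part 2 is that the assignment $x \mapsto J^{\prime H}(t)$ does not depend on the choice of $J \in \Lambda$ with $J(t) = x$, and this independence is exactly what makes part 1 useful. First I would handle part 1. Since $\mathcal{V}$ has full index at $t$, every $J \in \Lambda$ with $J(t) = 0$ lies in $\mathcal{V}$, so $\ker \mathcal{E}_t \subset \mathcal{V}$. Hence $\mathcal{E}_t$ descends to an injection $\Lambda / \ker \mathcal{E}_t \hookrightarrow \dot\gamma(t)^{\perp}$ with image $\{J(t) \mid J \in \Lambda\}$, whose dimension is $(n-1) - \dim(\ker \mathcal{E}_t)$. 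On the other hand, $\mathcal{V}(t)$ as defined in \eqref{dfn of V(t)} contains $\{J(t) \mid J \in \mathcal{V}\}$ together with $\{J^{\prime}(t) \mid J \in \mathcal{V},\ J(t)=0\}$; because $\ker \mathcal{E}_t \subset \mathcal{V}$, the second summand has dimension $\dim(\ker \mathcal{E}_t)$ (the map $J \mapsto J^{\prime}(t)$ is injective on $\ker \mathcal{E}_t$ by uniqueness of Jacobi fields), and this summand is orthogonal to $\{J(t) \mid J \in \Lambda\}$ since $\Lambda$ is Lagrangian and $J(t) = 0$. A dimension count then shows $\dim \mathcal{V}(t) + \dim H(t) = n-1$ and, more to the point, that $\{J(t) \mid J \in \Lambda\}$ together with $\{J^{\prime}(t) \mid J \in \mathcal{V},\ J(t)=0\}$ spans $\dot\gamma(t)^{\perp}$; intersecting with $H(t) = \mathcal{V}(t)^{\perp} \cap \dot\gamma(t)^{\perp}$ gives that $\{J(t) \mid J \in \Lambda\} \supset H(t)$, which is the assertion of part 1.

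For part 2, the only thing to check is well-definedness of $\hat S_t$. Suppose $J_1, J_2 \in \Lambda$ both satisfy $J_i(t) = x \in H(t)$. Then $J := J_1 - J_2 \in \Lambda$ has $J(t) = 0$, so by full index $J \in \mathcal{V}$, hence $J^{\prime}(t) \in \mathcal{V}(t)$ by the very definition \eqref{dfn of V(t)} of $\mathcal{V}(t)$. Therefore the $H(t)$-component $J^{\prime H}(t)$ vanishes, i.e. $J_1^{\prime H}(t) = J_2^{\prime H}(t)$, so $\hat S_t(x)$ is independent of the chosen $J$. To see that $\hat S_t$ genuinely maps into $H(t)$ — rather than merely into $\dot\gamma(t)^{\perp}$ — note this is immediate, since $J^{\prime H}(t)$ is by construction the $H(t)$-component of a vector in $\dot\gamma(t)^{\perp}$. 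That $\hat S_t$ is linear follows from linearity of $J \mapsto J^{\prime}(t)$ and of the orthogonal projection onto $H(t)$, combined with part 1 which guarantees every $x \in H(t)$ has a preimage. Finally the description "$\hat S_t$ is the $H(t)$-projection of $S_t|_{H(t)}$" is just a restatement: on the set of times where $S_t$ is defined on all of $\dot\gamma(t)^{\perp}$, $S_t(x) = J_x^{\prime}(t)$, and composing with the projection $\dot\gamma(t)^{\perp} \to H(t)$ yields $J_x^{\prime H}(t) = \hat S_t(x)$.

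The main obstacle, and the step I would write most carefully, is the dimension bookkeeping in part 1: one must be sure that the self-adjointness of the Riccati operator on $\Lambda$ (equivalently, vanishing of $\omega$) is what forces $\{J^{\prime}(t) \mid J \in \mathcal{V},\ J(t) = 0\} \perp \{J(t) \mid J \in \Lambda\}$, and that the two pieces in \eqref{dfn of V(t)} are in direct sum with the stated dimensions, so that the orthogonal complement $H(t)$ is correctly captured. Everything else — the quotient/injectivity arguments and the well-definedness in part 2 — is formal linear algebra once full index is invoked. I would also remark that the smoothness of $t \mapsto H(t)$, needed to make sense of $\hat S_t$ as a tensor field along $\gamma$ and to differentiate it later, is inherited from the smoothness of the bundle $\mathcal{V}(t)$ cited from Lemma 1.7.1 of \cite{gw}.
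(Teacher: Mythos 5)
Your proposal is correct and takes essentially the same route as the paper: both use full index to get $\ker\mathcal{E}_t\subset\mathcal{V}$, then Lagrangian orthogonality to show $H(t)\subset\{J(t):J\in\Lambda\}$ for Part 1, and the same $J_1-J_2\in\mathcal{V}$ argument for well-definedness in Part 2. The only difference is presentational: where you reconstruct the orthogonality and dimensions by hand, the paper invokes the orthogonal splitting $\Lambda(t)=\{J(t):J\in\Lambda\}\oplus\{J'(t):J\in\Lambda,\ J(t)=0\}$ as a single known fact about Lagrangian families and reads off Part 1 directly.
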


\begin{proof}
Since $\Lambda $ is Lagrangian, the splitting 
\begin{equation*}
\Lambda \left( t\right) =\{J(t)\ |\ J\in \Lambda \}\oplus \{J^{\prime }(t)\
|\ J\in \Lambda ,J(t)=0\}
\end{equation*}%
is orthogonal. Since the kernel of $\mathcal{E}_{t}$ lies in $\mathcal{V},$ $%
H\left( t\right) $ is contained in the first summand, and Part 1 follows.


For the second part, suppose $x=J_{1}\left( t\right) =J_{2}\left( t\right)
\in H\left( t\right) $ and $J_{1},J_{2}\in \Lambda .$ Since $J_{1}-J_{2}$
vanishes at $t$ and $\mathrm{Kernel}\left( \mathcal{E}_{t}\right) \subset 
\mathcal{V},$ we have $J_{1}-J_{2}\in \mathcal{V}.$ Together with $\left(
J_{1}-J_{2}\right) \left( t\right) =0,$ this implies that $\left(
J_{1}-J_{2}\right) ^{\prime }\left( t\right) \in \mathcal{V}(t).$ Thus $%
\left( \left( J_{1}-J_{2}\right) ^{\prime }\left( t\right) \right) ^{H}=0,$
and $\hat{S}(x)$ is independent of the choice of $J\in \Lambda $ so that $%
J(t)=x$.
\end{proof}

We will call $\hat{S}$ \emph{the Riccati operator associated to $\mathcal{V}$%
}, if it is clear which Lagrangian $\Lambda $ is being used.

Wilking also defined maps \addtocounter{algorithm}{1} 
\begin{eqnarray}
A_{t} &:&\mathcal{V}\left( t\right) \longrightarrow H\left( t\right) \text{
given by,}  \notag \\
A_{t}\left( v\right) &=&\left( J^{\prime }\right) ^{h}\left( t\right) ,\text{
where }J\in \mathcal{V},\text{ }J\left( t\right) =v.  \label{Burk A dfn}
\end{eqnarray}

A priori, $A_{t}$ is only defined at points where $\Lambda $ has no zeros;
however, $A$ extends smoothly to $\mathbb{R}$ (cf. \cite{Wilk1}). Indeed,
let $A_{t}^{\ast }:H\left( t\right) \longrightarrow \mathcal{V}\left(
t\right) $ be the adjoint of $A_{t},$ and let $X$ be a field in $H$ so that $%
\left( X^{\prime }\right) ^{H}\equiv 0.$ According to Equation 1.7.6 on page
38 of \cite{gw}, 
\begin{equation*}
X^{\prime }=-A^{\ast }X.
\end{equation*}%
Since the left-hand side is smooth, $A^{\ast }$ is smooth, and it follows
that $A$ is smooth.

\begin{theorem}[Wilking \protect\cite{Wilk1}]
\label{HCE thm}$\hat{S}$ is self-adjoint, and \addtocounter{algorithm}{1} 
\begin{equation}
\hat{S}^{\prime }+\hat{S}^{2}+\left\{ R\left( \cdot ,\dot{\gamma}\right) 
\dot{\gamma}\right\} ^{h}+3AA^{\ast }=0.  \label{Horiz Wilki}
\end{equation}
\end{theorem}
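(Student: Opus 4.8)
The plan is to get self-adjointness directly from the symplectic structure on $\mathcal{J}$, and to derive \eqref{Horiz Wilki} by differentiating $\hat{S}$ at a time where $\Lambda$ has no zeros, reducing everything to the ordinary Riccati equation $S^{\prime}+S^{2}+R=0$ for the operator $S$ of \eqref{Rica dfn} together with two elementary identities relating $S$ to $A$ and $A^{\ast}$; continuity then spreads the equation over all of $I$. For self-adjointness, fix $t$ and $x,y\in H(t)$ and use Part 1 of Proposition \ref{S hat well defined prop} to pick $J_{x},J_{y}\in\Lambda$ with $J_{x}(t)=x$, $J_{y}(t)=y$. Because $J_{x}^{\prime}(t)\perp\dot{\gamma}(t)$ and $y\perp\mathcal{V}(t)$, we get $\langle\hat{S}x,y\rangle=\langle (J_{x}^{\prime})^{H}(t),y\rangle=\langle J_{x}^{\prime}(t),J_{y}(t)\rangle$; since $\omega$ vanishes on $\Lambda$ this equals $\langle J_{x}(t),J_{y}^{\prime}(t)\rangle=\langle x,J_{y}^{\prime}(t)\rangle$, which by the same reasoning in reverse is $\langle x,\hat{S}y\rangle$. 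Hence $\hat{S}$ is self-adjoint.

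For the equation, note first that by \eqref{Eval 1-1 eqn} the set of $t\in I$ where $\Lambda$ has no zeros is dense; fix such a $t_{0}$, so that on a neighborhood of $t_{0}$ the operator $S$ is defined and smooth, $\dot{\gamma}^{\perp}$ splits smoothly as $\mathcal{V}(\cdot)\oplus H(\cdot)$, and $\hat{S}x=(Sx)^{H}$ for $x\in H(t)$. Two bundle identities will be used. If $Y$ is a field along $\gamma$ with $Y(t)\in\mathcal{V}(t)$ for all $t$, then $(Y^{\prime})^{H}=A(Y)$: at $t$ near $t_{0}$ write $Y(t)=J(t)$ for a Jacobi field $J\in\mathcal{V}$; then $Y-J$ is a section of the bundle $\mathcal{V}(\cdot)$ vanishing at $t$, hence $(Y-J)^{\prime}(t)\in\mathcal{V}(t)$, and \eqref{Burk A dfn} gives $(Y^{\prime})^{H}(t)=(J^{\prime})^{H}(t)=A(Y(t))$. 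Dualizing, since the metric is parallel, a field $X$ with $X(t)\in H(t)$ for all $t$ satisfies $X^{\prime}=(X^{\prime})^{H}-A^{\ast}X$. We also need two pointwise identities at $t_{0}$: for $v\in\mathcal{V}(t_{0})$ the definition of $A$ gives $(Sv)^{H}=A(v)$, and for $x\in H(t_{0})$, choosing $J,K\in\Lambda$ with $J(t_{0})=x$, $K(t_{0})=v\in\mathcal{V}(t_{0})$, the relation $\langle J^{\prime}(t_{0}),v\rangle=\langle x,K^{\prime}(t_{0})\rangle$ coming from $\omega(J,K)=0$ combined with $(K^{\prime})^{H}(t_{0})=A(v)$ yields $(Sx)^{\mathcal{V}}=A^{\ast}x$.

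Now give $H$ the metric connection $\nabla^{H}_{\dot{\gamma}}X=(X^{\prime})^{H}$, so that $\hat{S}^{\prime}$ is the covariant derivative of $\hat{S}$ along $\gamma$ relative to $\nabla^{H}$, and let $X$ be a $\nabla^{H}$-parallel section of $H$ near $t_{0}$, i.e. $X^{\prime}=-A^{\ast}X$. Since $\hat{S}X=(SX)^{H}$ as sections and $\nabla^{H}_{\dot{\gamma}}X=0$, we have $(\hat{S}^{\prime}X)(t_{0})=\big(((SX)^{H})^{\prime}\big)^{H}(t_{0})$. Writing $SX=(SX)^{H}+(SX)^{\mathcal{V}}$ and applying the two bundle identities to these sections converts the right-hand side into
\begin{equation*}
(\hat{S}^{\prime}X)(t_{0})=\big((SX)^{\prime}\big)^{H}(t_{0})-A\big((SX)^{\mathcal{V}}(t_{0})\big).
\end{equation*}
Substituting $(SX)^{\prime}=S^{\prime}X+SX^{\prime}=-S^{2}X-R(X,\dot{\gamma})\dot{\gamma}-S(A^{\ast}X)$, projecting to $H$, and simplifying with the pointwise identities — $(S^{2}X)^{H}=\hat{S}^{2}X+A((SX)^{\mathcal{V}})$, $(S(A^{\ast}X))^{H}=AA^{\ast}X$, $(SX)^{\mathcal{V}}(t_{0})=A^{\ast}X$ — the three copies of $AA^{\ast}X$ (from $(S(A^{\ast}X))^{H}$, from expanding $S^{2}X$, and from the correction term $-A((SX)^{\mathcal{V}})$) add, giving $(\hat{S}^{\prime}X)(t_{0})=-\hat{S}^{2}X-\{R(\cdot,\dot{\gamma})\dot{\gamma}\}^{h}X-3AA^{\ast}X$. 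As $t_{0}$ ranges over a dense set and $\hat{S}$, $A$ are smooth, \eqref{Horiz Wilki} holds on all of $I$.

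The one place where care is needed is the component bookkeeping in the third paragraph: one must track exactly which pieces land in $H(\cdot)$ and which in $\mathcal{V}(\cdot)$ when $S^{\prime}=-S^{2}-R$ is inserted and the projection $(\,\cdot\,)^{H}$ is commuted past the covariant derivative, and check that the three contributions of $AA^{\ast}X$ really combine with total coefficient $3$ rather than cancelling. Everything else — self-adjointness, the two bundle identities, the two pointwise identities, and the reduction to generic $t_{0}$ — is routine, granting the smoothness of $\hat{S}$ and $A$.
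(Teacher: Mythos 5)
Your proof is correct. Note that the paper itself gives no proof of this theorem---it is simply quoted from Wilking's paper \cite{Wilk1} (and also appears in Gromoll--Walschap \cite{gw})---so there is no in-paper argument to compare against. Your derivation is the standard one, and the bookkeeping checks out: writing $\hat S X = (SX)^H$ for a $\nabla^H$-parallel $X$ and pulling the derivative inside, the correction term $-A\bigl((SX)^{\mathcal V}\bigr)$, the off-diagonal piece of $(S^2X)^H$, and $(SA^*X)^H$ each contribute exactly one copy of $-AA^*X$, for the total coefficient $3$.

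Two small points worth tightening. First, your self-adjointness argument silently uses that $J_x'(t)\perp\dot\gamma(t)$, which holds because $\Lambda$ consists of \emph{normal} Jacobi fields (so $\langle J,\dot\gamma\rangle\equiv 0$ forces $\langle J',\dot\gamma\rangle\equiv 0$); it would be good to say so. Second, the closing sentence ``\,$\hat S$, $A$ are smooth'' overstates things: as the paper remarks right after the theorem, Proposition~\ref{S hat well defined prop} only yields that $\hat S$ is defined where $\mathcal V$ has full index, and it is $\left\{R(\cdot,\dot\gamma)\dot\gamma\right\}^{h}+3AA^{\ast}$ (equivalently $\hat S'+\hat S^2$) that extends smoothly across all of $\mathbb R$. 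What your density argument actually gives is the identity wherever $\hat S$ is defined and smooth, together with the smooth extension of the combination $\hat S'+\hat S^2$---which is exactly how the theorem is meant to be read. Neither point affects the correctness of the derivation; they are just phrasing adjustments.
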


Equation \eqref{Horiz Wilki} is known as the Transverse Jacobi Equation. It
is a vast generalization of the Horizontal Curvature Equation of \cite{Gray}
and \cite{O'Ne}. For details see \cite{GumWilh} or \cite{Ly}.

Proposition \ref{S hat well defined prop} only gives us that $\hat{S}$ is
defined almost everywhere. However, $\hat{S}^{\prime }+\hat{S}^{2}$ has a
smooth extension to all of $\mathbb{R}$, because $\left\{ R\left( \cdot ,%
\dot{\gamma}\right) \dot{\gamma}\right\} ^{h}+3AA^{\ast }$ is smooth
everywhere (see \cite{Wilk1} for an interpretation of $\hat{S}^{\prime }+%
\hat{S}^{2}$\/ as a second order differential operator $H\left( t\right)
\longrightarrow H\left( t\right) )$.

\bigskip

\subsection{Splitting of Lagrangians}

Like the Gray-O'Neill $A$--tensor, the Wilking $A$--tensor vanishes
identically along a geodesic $\gamma $ if and only if the distributions $%
\mathcal{V}\left( t\right) $ and $H\left( t\right) $ are parallel along $%
\gamma .$ In this case, it follows that the subspaces of $\Lambda ,$ 
\begin{equation*}
\left\{ \left. J\in \Lambda \text{ }\right\vert \text{ }J\left( t\right) \in
H\left( t\right) \right\} ,
\end{equation*}%
are independent of $t,$ and the parallel, orthogonal splitting $\mathcal{V}%
\left( t\right) \oplus H\left( t\right) $ is given by Jacobi fields. We make
this more rigorous in what follows.

\begin{lemma}
\label{lem:Lagrangian splitting} With the above notation, assume that $A_t=0$
for every $t\in I$. Then

\begin{enumerate}
\item $\mathcal{V}(t)$ and $H(t)$ are parallel distributions along $\gamma $.

\item If for some $\bar{t}\in I$, a Jacobi field $J\in \Lambda $ has $J(\bar{%
t})\in H(\bar{t})$, then $J(t)\in H(t)$ for every $t$.

\item There is a subspace $\mathcal{H}\subset \Lambda $ such that $\mathcal{H%
}(t)=H(t)$ for every $t$. 

%
%
\end{enumerate}
\end{lemma}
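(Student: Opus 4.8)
The plan is to prove the three assertions in order; (1) drops out of Wilking's setup, (2) is the substantive point, and (3) is then a short dimension count. For (1), recall from the discussion preceding Theorem~\ref{HCE thm} that if $X$ is a section of the smooth, constant rank bundle $H(t)$ along $\gamma|_{I}$ with $(X')^{H}\equiv 0$, then $X'=-A^{\ast}X$ (Equation~1.7.6 of \cite{gw}). Since $A_{t}\equiv 0$ on $I$ forces $A_{t}^{\ast}\equiv 0$, every such $X$ has $X'\equiv 0$ and is therefore parallel; as $\dim H(t)=(n-1)-\dim\mathcal{V}(t)$ is constant, this exhibits $H(t)$ as a parallel distribution along $\gamma|_{I}$. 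Parallel transport preserves $\dot\gamma(t)$ and the metric, hence also preserves $\mathcal{V}(t)=H(t)^{\perp}\cap\dot\gamma(t)^{\perp}$, so $\mathcal{V}(t)$ is parallel too.

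The decisive preliminary for (2) is that $A\equiv 0$ forces not only the parallel structure but also the curvature operator to respect the splitting, i.e.\ that $R(\cdot,\dot\gamma)\dot\gamma$ maps each of $\mathcal{V}(t)$, $H(t)$ into itself. On the open dense set $U\subseteq I$ of times at which no nonzero field of $\Lambda$ vanishes, the Riccati operator $S_{t}$ of \eqref{Rica dfn} is defined and self-adjoint on $\dot\gamma(t)^{\perp}=\mathcal{V}(t)\oplus H(t)$. For $v\in\mathcal{V}(t)$ with $t\in U$ choose $J_{v}\in\mathcal{V}$ with $J_{v}(t)=v$ (possible by full index and $t\in U$); then the $H(t)$-component of $S_{t}v=J_{v}'(t)$ is exactly $A_{t}(v)=0$, so $S_{t}$ preserves $\mathcal{V}(t)$, and by self-adjointness it preserves $H(t)$. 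Combined with (1), the endomorphisms $S_{t},S_{t}^{2},S_{t}'$ all preserve the parallel splitting on $U$, hence so does $R(\cdot,\dot\gamma)\dot\gamma=-(S_{t}'+S_{t}^{2})$; by continuity this persists on all of $I$.

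To prove (2), fix $\bar t\in I$ at which $\Lambda$ is nonsingular (such $\bar t$ are dense in $I$, and this is the case used in (3)), and let $J\in\Lambda$ have $J(\bar t)\in H(\bar t)$. Write $J(t)=J^{\mathcal{V}}(t)+J^{H}(t)$ for the orthogonal components of $J(t)$ in $\mathcal{V}(t)$ and $H(t)$. Since both distributions are parallel, covariant differentiation commutes with these projections, and since $R(\cdot,\dot\gamma)\dot\gamma$ respects the splitting, projecting $J''+R(J,\dot\gamma)\dot\gamma=0$ onto each summand shows $J^{\mathcal{V}}$ and $J^{H}$ are themselves normal Jacobi fields. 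As $\bar t\in U$ we have $J'(\bar t)=S_{\bar t}(J(\bar t))\in H(\bar t)$ by the previous step, so $J^{\mathcal{V}}$ vanishes together with its derivative at $\bar t$; uniqueness of Jacobi fields gives $J^{\mathcal{V}}\equiv 0$, i.e.\ $J(t)=J^{H}(t)\in H(t)$ for every $t$, which is (2). For (3), put $\mathcal{H}:=\{J\in\Lambda\mid J(\bar t)\in H(\bar t)\}$; since $\bar t$ is nonsingular, $\mathcal{E}_{\bar t}$ restricts to an isomorphism $\Lambda\to\dot\gamma(\bar t)^{\perp}$, so $\dim\mathcal{H}=\dim H(\bar t)$. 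No nonzero $J\in\mathcal{H}$ vanishes anywhere on $I$: a zero of $J$ would force $J\in\mathcal{V}$ by full index, whence $J(\bar t)\in\mathcal{V}(\bar t)\cap H(\bar t)=\{0\}$ and $J=0$ (as $\bar t\in U$). Hence $\mathcal{H}(t)=\{J(t)\mid J\in\mathcal{H}\}$, a subspace of $H(t)$ by (2), has dimension $\dim\mathcal{H}=\dim H(\bar t)=\dim H(t)$, so $\mathcal{H}(t)=H(t)$ for every $t\in I$.

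The main obstacle is the middle step: converting $A\equiv 0$ into a genuine splitting of the Jacobi equation along $\mathcal{V}(t)\oplus H(t)$, so that the $\mathcal{V}$-component of a $\Lambda$-Jacobi field is again a Jacobi field and the uniqueness argument can be run on all of $I$ rather than only on $U$. One must also be slightly careful about singular times of $\Lambda$ inside $I$: basing the argument at a nonsingular $\bar t$, and using that $J^{\mathcal{V}}$ is a genuine Jacobi field rather than merely a solution of the Riccati equation on $U$, is what allows the vanishing to propagate across the singular times.
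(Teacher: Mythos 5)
Your proof is correct and follows the paper's approach: $A\equiv 0$ yields a parallel splitting $\mathcal{V}(t)\oplus H(t)$, the curvature operator $R(\cdot,\dot{\gamma})\dot{\gamma}$ respects that splitting so the Jacobi equation decouples, and a dimension count finishes Part~3. You take a slightly more roundabout route to the $R$--invariance, passing through $S_t$, $S_t^{2}$, $S_t'$ on the nonsingular set and then $R=-(S'+S^{2})$, whereas the paper argues directly that since $\mathcal{V}(t)$ is parallel and spanned by Jacobi fields, the identity $J''=-R(J,\dot{\gamma})\dot{\gamma}$ for $J\in\mathcal{V}$ already forces $R$ to preserve $\mathcal{V}(t)$. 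Conversely, you are more careful than the paper at the step in Part~2 where one needs $(J^{\mathcal{V}})'(\bar{t})=0$ in addition to $J^{\mathcal{V}}(\bar{t})=0$; you supply this from $J'(\bar{t})=S_{\bar{t}}J(\bar{t})\in H(\bar{t})$, whereas the published proof glides over it. Restricting the base point $\bar{t}$ in Part~2 to nonsingular times is also reasonable: at a singular $\bar{t}$ the literal statement fails for a nonzero $J\in\ker\mathcal{E}_{\bar{t}}\subset\mathcal{V}$ (since $J(\bar{t})=0\in H(\bar{t})$ trivially), and it is the nonsingular case that Part~3 and the rest of the paper actually invoke.
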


\begin{proof}
By continuity, it is enough to check the first part at times $t\in I$ where $%
\Lambda $ has no zeros. Since any section of the bundle $\mathcal{V}(t)$ can
be written as 
\begin{equation*}
Y=\sum_{i}f_{i}\cdot J_{i},
\end{equation*}%
where $J_{i}$ are a basis of $\mathcal{V}$, we have that 
\begin{equation*}
Y^{\prime H}=\sum_{i}f_{i}\cdot J_{i}^{\prime H}=0
\end{equation*}%
since $A_{t}\equiv 0$. Therefore $\mathcal{V}(t)$, and consequently $%
\mathcal{V}^{\perp }=\mathcal{H}$, are both parallel, proving the first part
of the Lemma.

Since $\mathcal{V}(t)$ is parallel and spanned by Jacobi fields, it follows
that $R\left( \cdot ,\gamma ^{\prime }\right) \gamma ^{\prime }$ leaves $%
\mathcal{V}(t)$ invariant. From this it follows that $R\left( \cdot ,\gamma
^{\prime }\right) \gamma ^{\prime }$ leaves $H(t)$ invariant. Combining this
with the fact that $H(t)$ is parallel, we get Part 2.

For the last part, choose a set $\{J_{1},\dots J_{\ell }\}$ in $\Lambda $
such that for some $\bar{t}\in I$, $\{J_{1}(\bar{t}),\dots J_{\ell }(\bar{t}%
)\}$ is a basis of $H(t)$. As previously shown, $\{J_{1}(t),\dots J_{\ell
}(t)\}$ are in $H(t)$ for any $t\in I$, and it is a basis of $H(t)$ whenever 
$\Lambda $ has no zeros at that $t$. By continuity, the subspace $\mathcal{H}
$ spanned by $\{J_{1},\dots J_{\ell }\}$ satisfies the third part of the
Lemma.
\end{proof}

\section{Comparison theory for the transverse Jacobi equation\label%
{Jacobi-comp sect}}

\subsection{Riccati Comparison}

In this subsection, we review the Riccati comparison results of Eschenburg (%
\cite{Esch}) and Eschenburg--Heintze (\cite{EschHein}). For $\kappa =-1,0,$
or $1,$ let $\tilde{\lambda}_{\kappa }$ be a solution of the ODE %
\addtocounter{algorithm}{1} 
\begin{equation}
\tilde{\lambda}_{\kappa }^{\prime }+\tilde{\lambda}_{\kappa }^{2}+\kappa =0.
\label{lambda tilde dfn}
\end{equation}%
The possible $\tilde{\lambda}_{\kappa }$ are the logarithmic derivatives of
the functions \addtocounter{algorithm}{1} 
\begin{equation}
\tilde{f}\left( t\right) =\left\{ 
\begin{array}{ll}
\left( c_{1}\sin t+c_{2}\cos t\right) & \text{if }\kappa =1,\vspace{0.05in}
\\ 
\left( c_{1}t+c_{2}\right) & \text{if }\kappa =0,\vspace{0.05in} \\ 
\left( c_{1}\sinh t+c_{2}\cosh t\right) & \text{if }\kappa =-1,%
\end{array}%
\right.  \label{model Jacobi}
\end{equation}%
where $c_{1},c_{2}\in \mathbb{R}$. There are explicit formulas for $\tilde{%
\lambda}_{\kappa }$ in page 302 of \cite{Esch}.

\begin{theorem}
\label{Esch-He thm}(Eschenburg--Heintze, \cite{EschHein}, cf Proposition 2.3
in \cite{Esch}) Let $r:\mathbb{R}\longrightarrow \mathbb{R}$ be a $C^{\infty
}$--function with $r\geq \kappa $. Let $s$ be a smooth solution of the
initial value problem 
\begin{equation*}
s^{\prime }+s^{2}+r=0,\text{ }s\left( t_{0}\right) \leq \tilde{\lambda}%
_{\kappa }\left( t_{0}\right)
\end{equation*}%
on the interval $\left[ t_{0},t_{\max }\right) ,$ where $\tilde{\lambda}%
_{\kappa }$ is as in \eqref{lambda tilde dfn}. Then

\begin{enumerate}
\item \addtocounter{algorithm}{1} 
\begin{equation}
s\left( t\right) \leq \tilde{\lambda}_{\kappa }\left( t\right)
\label{small s
Inequal}
\end{equation}%
on $\left[ t_{0},t_{\max }\right) $.

\item If $s\left( t_{1}\right) =\tilde{\lambda}_{\kappa }\left( t_{1}\right) 
$ for some $t_{1}\in \left( t_{0},t_{\max }\right) ,$ then for all $t\in %
\left[ t_{0},t_{1}\right] $ \addtocounter{algorithm}{1} 
\begin{equation}
s\left( t\right) =\tilde{\lambda}_{\kappa }\left( t\right) \text{ and }r|_{%
\left[ t_{0},t_{1}\right] }\equiv \kappa .  \label{s equals lambda tilde eqn}
\end{equation}
\end{enumerate}
\end{theorem}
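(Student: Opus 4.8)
The plan is to reduce to a scalar linear first-order ODE for the difference $u \equiv s - \tilde{\lambda}_{\kappa}$ and then run the classical integrating-factor argument; this is the Eschenburg--Heintze comparison, and I would include it mainly for completeness.

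First I would differentiate $u$. Using $s' = -s^2 - r$ and $\tilde{\lambda}_{\kappa}' = -\tilde{\lambda}_{\kappa}^2 - \kappa$,
\[
u' = -(s^2 - \tilde{\lambda}_{\kappa}^2) - (r-\kappa) = -(s+\tilde{\lambda}_{\kappa})\,u - (r-\kappa),
\]
so $u$ solves the linear equation $u' + (s+\tilde{\lambda}_{\kappa})u = -(r-\kappa)$ with $u(t_0) \le 0$ and forcing term $-(r-\kappa) \le 0$. Set $\mu(t) = \exp\!\big(\int_{t_0}^t (s+\tilde{\lambda}_{\kappa})\big)$, which is smooth and positive on $[t_0,t_{\max})$ because $s$ and $\tilde{\lambda}_{\kappa}$ are smooth there. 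Then $(\mu u)' = -\mu\,(r-\kappa) \le 0$, so $\mu u$ is nonincreasing; since $\mu(t_0)u(t_0) = u(t_0) \le 0$ we get $\mu(t)u(t) \le 0$, and dividing by $\mu(t) > 0$ gives $u \le 0$ on $[t_0,t_{\max})$, which is part 1.

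For part 2, suppose $u(t_1) = 0$ for some $t_1 \in (t_0,t_{\max})$. By monotonicity, for $t \in [t_0,t_1]$ we have $0 = \mu(t_1)u(t_1) \le \mu(t)u(t) \le \mu(t_0)u(t_0) \le 0$, hence $\mu u \equiv 0$, i.e. $s \equiv \tilde{\lambda}_{\kappa}$ on $[t_0,t_1]$. Then $\mu u$ is constant there, so $0 \equiv (\mu u)' = -\mu(r-\kappa)$, and since $\mu > 0$ this forces $r|_{[t_0,t_1]} \equiv \kappa$, giving \eqref{s equals lambda tilde eqn}. Equivalently, one can write $u$ via variation of constants, $u(t) = \mu(t)^{-1}\big(u(t_0) - \int_{t_0}^t \mu\,(r-\kappa)\big)$, and read both parts off directly from the signs of $u(t_0)$ and of the integrand.

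The only point needing a word of care is that $\tilde{\lambda}_{\kappa}$ may have poles (e.g.\ $\cot$ in the $\kappa = 1$ case). But on the interval $[t_0,t_{\max})$ on which the statement is phrased, $\tilde{\lambda}_{\kappa}$ is finite: a pole strictly inside would, by part 1, force $s\to -\infty$, contradicting the assumed smoothness of $s$; alternatively one simply runs the argument above on $[t_0,t']$ for each $t' < t_{\max}$ and lets $t' \uparrow t_{\max}$. So I do not expect a genuine obstacle here --- the content of the proof is the one-line computation of $u'$ together with the positivity of the integrating factor.
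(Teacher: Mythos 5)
The paper states this theorem as a cited result (Eschenburg--Heintze, and Eschenburg) and does not give a proof of its own, so there is no internal proof to compare against. Your argument is correct and is the standard one: the substitution $u=s-\tilde{\lambda}_{\kappa}$, the linear ODE $u'+(s+\tilde{\lambda}_{\kappa})u=-(r-\kappa)$, and the integrating factor $\mu=\exp\int(s+\tilde{\lambda}_{\kappa})$ yield $(\mu u)'\le 0$, from which both the monotonicity statement of Part 1 and the rigidity of Part 2 fall out; your handling of the endpoint behaviour of $\tilde{\lambda}_{\kappa}$ is also adequate. This is precisely the proof in the cited references, so there is nothing to flag beyond the fact that you have supplied a proof the paper intentionally omits.
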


When $s$ is the trace of the Riccati operator of a Lagrangian family in $\Ric%
\geq \kappa \left( n-1\right) ,$ the rigidity of Part 2 in Theorem \ref%
{Esch-He thm} also yields rigidity of $S$ and $R\left( \cdot ,\dot{\gamma}%
\right) \dot{\gamma}.$ This idea goes back at least as far as the Splitting
Theorem (\cite{CheegGrom2}) and Cheng's Maximal Diameter Theorem, (\cite%
{Cheng}). It also appears in Croke and Kleiner's paper on rigidity of warped
products (\cite{CrKl}), in Theorem 1.7.1 of \cite{gw}, and in Theorem H of 
\cite{GumWilh}. Since our applications will be to the transverse Jacobi
equation, we formulate them in terms of abstract Riccati equations.


\begin{lemma}
\label{Ricci Rigidity Lemma}Let $\hat{S}\left( t\right) ,\hat{R}\left(
t\right) :V\longrightarrow V$ be symmetric endomorphisms of a $k$%
--dimensional vector space $V$ so that on $\left[ t_{0},t_{\max }\right) $ 
\begin{equation*}
\hat{S}^{\prime }+\hat{S}^{2}+\hat{R}=0.
\end{equation*}%
Choose $\tilde{\lambda}_{\kappa }$ a solution of $\tilde{\lambda}_{\kappa
}^{\prime }+\tilde{\lambda}_{\kappa }^{2}+\kappa =0$ defined on $\left[
t_{0},t_{\max }\right) .$ In addition, assume that\addtocounter{algorithm}{1}
\begin{equation}
\Trace\hat{S}\left( t_{0}\right) \leq k\cdot \tilde{\lambda}_{\kappa }\left(
t_{0}\right) ,\text{ and}  \label{initial tr small inequal}
\end{equation}%
\begin{equation*}
\text{ }\Trace\hat{R}\left( t\right) \geq k\cdot \kappa
\end{equation*}%
for all $t\in \left[ t_{0},t_{\max }\right) .$ Then \vspace{0.1in}

\begin{enumerate}
\item For all $t\in \lbrack t_{0},t_{max})$ 
\begin{equation*}
\Trace\hat{S}(t)\leq k\cdot \tilde{\lambda}_{\kappa }(t).
\end{equation*}%
\vspace{0.1in}

\item If\/ $\Trace\hat{S}\left( t_{1}\right) =k\tilde{\lambda}_{\kappa
}\left( t_{1}\right) \/$ for some $t_{1}\in \left( t_{0},t_{\max }\right] ,$
then \addtocounter{algorithm}{1} 
\begin{equation}
\hat{S}\equiv \tilde{\lambda}_{\kappa }\cdot \mathrm{id}\,\text{ and }\,\hat{%
R}=\kappa \cdot \mathrm{id}  \label{rigid S and R eqn}
\end{equation}%
on $\left[ t_{0},t_{1}\right] $, and the solutions of the Jacobi equation $%
J^{\prime \prime }+\hat{R}J=0$ on $\left[ t_{0},t_{1}\right] $, have the form%
\addtocounter{algorithm}{1} 
\begin{equation}
J(t)=\tilde{f}(t)\cdot E,  \label{parellel jacobi}
\end{equation}%
where $E$ is a constant vector in $V$ and $\tilde{f}$ is the function from 
%
\eqref{model Jacobi} that satisfies $\tilde{f}\left( t_{0}\right)
=\left\vert J\left( t_{0}\right) \right\vert .$
\end{enumerate}
\end{lemma}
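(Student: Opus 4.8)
The plan is to reduce Lemma~\ref{Ricci Rigidity Lemma} to the scalar comparison theorem (Theorem~\ref{Esch-He thm}) by tracing the matrix Riccati equation, and then upgrade the scalar rigidity to matrix rigidity by a convexity/Cauchy--Schwarz argument. First I would set $s(t)\equiv \tfrac1k\Trace\hat S(t)$ and $r(t)\equiv\tfrac1k\Trace\hat R(t)$. Taking the trace of $\hat S'+\hat S^2+\hat R=0$ gives $\Trace\hat S' + \Trace(\hat S^2)+\Trace\hat R=0$, i.e. $k s' + \Trace(\hat S^2) + k r=0$. Since $\hat S$ is symmetric, $\Trace(\hat S^2)=\sum_i \mu_i^2$ where $\mu_i$ are its eigenvalues, and by Cauchy--Schwarz $\sum_i\mu_i^2 \ge \tfrac1k\bigl(\sum_i\mu_i\bigr)^2 = k s^2$, with equality if and only if all $\mu_i$ are equal, i.e. $\hat S = s\cdot\id$. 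Writing $\Trace(\hat S^2)=ks^2 + k\,\varepsilon(t)$ with $\varepsilon\ge 0$, the trace equation becomes $s' + s^2 + (r+\varepsilon)=0$. By hypothesis $r\ge\kappa$, so the smooth function $\tilde r\equiv r+\varepsilon\ge\kappa$, and $s(t_0)\le\tilde\lambda_\kappa(t_0)$ by \eqref{initial tr small inequal}. Theorem~\ref{Esch-He thm} applied to $s'+s^2+\tilde r=0$ then yields Part~1 immediately: $s(t)\le\tilde\lambda_\kappa(t)$ on $[t_0,t_{\max})$, which is exactly $\Trace\hat S(t)\le k\tilde\lambda_\kappa(t)$.

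For Part~2, suppose $\Trace\hat S(t_1)=k\tilde\lambda_\kappa(t_1)$, i.e. $s(t_1)=\tilde\lambda_\kappa(t_1)$ for some $t_1\in(t_0,t_{\max}]$. (If $t_1=t_{\max}$ one first notes that $s$ extends to $t_1$; since the hypotheses are one-sided one may simply rename $t_{\max}$ or apply the comparison on $[t_0,t_1]$.) Part~2 of Theorem~\ref{Esch-He thm} gives $s(t)=\tilde\lambda_\kappa(t)$ and $\tilde r(t)=\kappa$ for all $t\in[t_0,t_1]$. But $\tilde r=r+\varepsilon$ with both $r-\kappa\ge 0$ and $\varepsilon\ge 0$; forcing their sum to vanish forces $\varepsilon\equiv 0$ and $r\equiv\kappa$ on $[t_0,t_1]$. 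From $\varepsilon\equiv 0$ and the equality discussion in Cauchy--Schwarz, $\hat S(t)=s(t)\cdot\id=\tilde\lambda_\kappa(t)\cdot\id$ on $[t_0,t_1]$. Then from the matrix Riccati equation itself, $\hat R = -\hat S'-\hat S^2 = -(\tilde\lambda_\kappa'+\tilde\lambda_\kappa^2)\cdot\id = \kappa\cdot\id$ on $[t_0,t_1]$, using \eqref{lambda tilde dfn}; this proves \eqref{rigid S and R eqn}. (Note that $r\equiv\kappa$ alone gives $\Trace\hat R=k\kappa$, but the stronger $\hat R=\kappa\cdot\id$ comes for free once $\hat S$ is known.)

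Finally, for the form \eqref{parellel jacobi} of the Jacobi fields: on $[t_0,t_1]$ we have $\hat R=\kappa\cdot\id$, so $J''+\hat R J=0$ becomes $J''+\kappa J=0$, a linear constant-coefficient ODE that decouples into $n'$ scalar copies of $\tilde f''+\kappa\tilde f=0$ where $\tilde f$ ranges over the functions in \eqref{model Jacobi}. Alternatively, and more in keeping with the Riccati viewpoint, any Jacobi field $J$ with no zero in an interval satisfies $J'=\hat S J=\tilde\lambda_\kappa J$, so $\tfrac{d}{dt}|J|^2 = 2\langle J',J\rangle = 2\tilde\lambda_\kappa|J|^2$; since $\tilde\lambda_\kappa$ is the logarithmic derivative of $\tilde f$, this gives $|J(t)| = c\,\tilde f(t)$ with $\tilde f(t_0)=|J(t_0)|$ after normalizing $c$. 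Writing $J(t)=\tilde f(t)E(t)$ with $|E(t)|\equiv 1$, one checks $E'=J'/\tilde f - (\tilde f'/\tilde f)J/\tilde f = (\tilde\lambda_\kappa - \tilde\lambda_\kappa)E = 0$ wherever $\tilde f\ne 0$, so $E$ is constant; this persists across zeros of $\tilde f$ by continuity since there $J$ vanishes to first order with $J'$ parallel to $E$. This gives \eqref{parellel jacobi}.

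\emph{Main obstacle.} The only real subtlety is the handling of the endpoint case $t_1=t_{\max}$ in Part~2 (one must either argue $s$ extends past $t_{\max}$ using that $\Trace\hat S$ is finite there, or simply observe that the equality hypothesis at $t_1=t_{\max}$ still lets one run Theorem~\ref{Esch-He thm}'s rigidity on the open interval and take limits), together with making sure the Cauchy--Schwarz defect $\varepsilon(t)$ is genuinely a smooth nonnegative function so that Theorem~\ref{Esch-He thm} applies verbatim with $\tilde r = r+\varepsilon$ in the role of $r$. Both are routine; the conceptual content is just \emph{trace the Riccati equation, absorb the eigenvalue-spread term, invoke the scalar theorem, and read the equality case back through Cauchy--Schwarz}.
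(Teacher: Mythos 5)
Your proposal is correct and follows essentially the same route as the paper: the paper defines $\hat S_0\equiv\hat S-\frac{\Trace\hat S}{k}\id$ and sets $r\equiv\frac1k(\Trace\hat R+|\hat S_0|^2)$, which is exactly your $\tilde r=r+\varepsilon$ since $k\varepsilon=\Trace(\hat S^2)-ks^2=|\hat S_0|^2$, so your Cauchy--Schwarz defect and their trace-free part are the same object. Both proofs then trace the Riccati equation, invoke Theorem~\ref{Esch-He thm}, and read the equality case back through $|\hat S_0|^2\equiv 0$; your remarks about the $t_1=t_{\max}$ endpoint and the alternative derivation of \eqref{parellel jacobi} are minor clean-ups the paper leaves implicit.
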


\begin{proof}
Set \addtocounter{algorithm}{1} 
\begin{eqnarray}
s &\equiv &\frac{1}{k}\Trace \hat{S} ,  \notag \\
\hat{S}_{0} &\equiv &\hat{S}-\frac{\Trace \hat{S} }{k}\cdot id,\text{ and %
\label{Ricati subs}} \\
r &\equiv &\frac{1}{k}\left( \Trace \hat{R} +\left\vert \hat{S}%
_{0}\right\vert ^{2}\right) .\text{ }  \notag
\end{eqnarray}

Taking the trace of 
\begin{equation*}
\hat{S}^{\prime }+\hat{S}^{2}+\hat{R}=0
\end{equation*}%
yields%
\begin{equation*}
s^{\prime }+s^{2}+r=0.
\end{equation*}%
From inequalities \eqref{small s Inequal} and 
\eqref{initial tr small
inequal}, we get that \addtocounter{algorithm}{1} 
\begin{equation}
s\left( t\right) \leq \tilde{\lambda}_{\kappa }\left( t\right)
\label{small fut eqn}
\end{equation}%
for all $t\in \left( t_{0},t_{\max }\right) $, and the first part follows.

For the second part, if\/ $\Trace\hat{S}\left( t_{1}\right) =k\tilde{\lambda}%
_{\kappa }\left( t_{1}\right)\/ $ for some $t_{1}\in \left( t_{0},t_{\max }%
\right] ,$ then Equation \eqref{s equals lambda tilde eqn} gives us $%
s(t)\equiv \tilde{\lambda}_{\kappa }\left( t\right) $ and $r\equiv \kappa $
in the subinterval $[t_0, t_1]$.

Consequently, 
\begin{equation*}
\kappa =r=\frac{\Trace\hat{R}+|\hat{S}_{0}|^{2}}{k}\geq \frac{\kappa k+|\hat{%
S}_{0}|^{2}}{k}=\kappa +\frac{|\hat{S}_{0}|^{2}}{k}.
\end{equation*}%
%
%
%
%
%
%
%
%
%
%
%
%
%
%
%
%
%
%
%
%
%
%
Thus $|\hat{S}_{0}|\equiv 0$ and 
\begin{equation*}
\hat{S}=\frac{\Trace\hat{S}}{k}\cdot \id=s\cdot \id=\tilde{\lambda}_{\kappa
}\left( t\right) \cdot \id.
\end{equation*}%
%
%
%
%
%
%
%
%
%
%
%
%
%
Substituting $\hat{S}=\tilde{\lambda}_{\kappa }\left( t\right) \cdot id$
into the Riccati equation, $\hat{S}^{2}+\hat{S}^{\prime }+\hat{R}=0,$ gives%
\begin{eqnarray*}
(\tilde{\lambda}_{\kappa }^{2}+\tilde{\lambda}_{\kappa }^{\prime })\cdot \id+%
\hat{R} &=&0, \\
-\kappa \cdot \id+\hat{R} &=&0,\text{ and} \\
\hat{R} &=&\kappa \cdot \id.
\end{eqnarray*}%
So the Jacobi fields have the form in equation \eqref{parellel jacobi}.
\end{proof}

\begin{remark}
When $\kappa =0$ and $t_{max}=\infty $, the above Lemma states that if $%
\Trace\hat{S}(t_{0})\leq 0$, then $\Trace\hat{S}(t)\leq 0$ for any $t\geq
t_{0}$, since in this case, $\tilde{\lambda}_{0}\equiv 0$ satisfies
condition \eqref{initial tr small inequal}. The following result improves
this observation.

\begin{lemma}[Long geodesics in nonnegative curvature]
\label{Ricci Rigidity Lemma_nnc_curvature}For $\hat{S}$ and $\hat{R}$ as in
Lemma \ref{Ricci Rigidity Lemma}, suppose that \addtocounter{algorithm}{1} 
\begin{equation}
\Trace\hat{S}\left( t_{0}\right) \leq 0,\text{ and }\Trace\hat{R}\left(
t\right) \geq 0  \label{initial tr small inequal_nnc}
\end{equation}%
for all $t\in \lbrack t_{0},\infty )$. If $\hat{S}$ is defined on $%
[t_{0},\infty ),$ then \addtocounter{algorithm}{1} 
\begin{equation}
\hat{S}\equiv 0\,\text{ and }\,\hat{R}=0,  \label{rigid S and R eqn_nnc}
\end{equation}%
on $\left[ t_{0},\infty \right) .$
\end{lemma}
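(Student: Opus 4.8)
The plan is to reduce to the scalar Riccati comparison already in hand. As in the proof of Lemma \ref{Ricci Rigidity Lemma}, set $s\equiv\frac1k\Trace\hat S$, $\hat S_0\equiv\hat S-s\cdot\id$, and $r\equiv\frac1k\bigl(\Trace\hat R+|\hat S_0|^2\bigr)$; taking the trace of $\hat S'+\hat S^2+\hat R=0$ gives the scalar Riccati equation $s'+s^2+r=0$ on $[t_0,\infty)$, with $r\geq 0$ and $s(t_0)\leq 0$. The key point is that a solution of $s'+s^2+r=0$ with $r\geq 0$ and a nonpositive initial value is forced to stay nonpositive for all future time (this is the $\kappa=0$, $\tilde\lambda_0\equiv 0$ case of Theorem \ref{Esch-He thm}), and, crucially, since $\hat S$ is assumed defined on all of $[t_0,\infty)$, the solution $s$ never blows up. So $s\leq 0$ on $[t_0,\infty)$.

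First I would argue that $s\equiv 0$. Suppose $s(t_2)<0$ for some $t_2>t_0$. On $[t_2,\infty)$ we have $s\leq 0$, hence $s'=-s^2-r\leq -s^2$, so $s$ satisfies $s'\leq -s^2$ with $s(t_2)<0$. Comparing with the ODE $u'=-u^2$, $u(t_2)=s(t_2)<0$, whose solution $u(t)=\frac{1}{(t-t_2)+1/s(t_2)}$ blows down to $-\infty$ at the finite time $t_2-1/s(t_2)$, we conclude that $s$ itself must blow up to $-\infty$ in finite time, contradicting the hypothesis that $\hat S$ — and hence $s=\frac1k\Trace\hat S$ — is defined on all of $[t_0,\infty)$. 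Therefore $s\equiv 0$ on $[t_0,\infty)$, and then $s'\equiv 0$ too, so from $s'+s^2+r=0$ we get $r\equiv 0$.

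Now $r\equiv 0$ means $\Trace\hat R+|\hat S_0|^2\equiv 0$; since $\Trace\hat R\geq 0$ and $|\hat S_0|^2\geq 0$, both vanish identically, so $\hat S_0\equiv 0$ and $\Trace\hat R\equiv 0$. From $\hat S_0\equiv 0$ we get $\hat S=s\cdot\id\equiv 0$, and substituting $\hat S\equiv 0$ into $\hat S'+\hat S^2+\hat R=0$ gives $\hat R\equiv 0$, which is \eqref{rigid S and R eqn_nnc}. The one step requiring a little care is the blow‑up argument: one must justify that a supersolution of $u'=-u^2$ with negative initial value genuinely forces finite‑time blow‑down of $s$ (a standard comparison, e.g. integrating $\frac{d}{dt}(1/s)=-s'/s^2\geq 1$ wherever $s<0$), and this is the only real obstacle; everything else is the same trace bookkeeping as in Lemma \ref{Ricci Rigidity Lemma}.
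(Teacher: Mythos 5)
Your proof is correct and follows essentially the same route as the paper's: reduce to the scalar Riccati equation $s'+s^2+r=0$ via the trace substitution, observe $s\leq 0$, and show that $s<0$ somewhere forces finite-time blow-down of $s$, contradicting that $\hat S$ is defined on $[t_0,\infty)$, whence $s\equiv 0$, $r\equiv 0$, and the rest is the trace bookkeeping of Lemma \ref{Ricci Rigidity Lemma}. The only cosmetic difference is that the paper obtains the blow-down by invoking Theorem \ref{Esch-He thm} with the explicit model solution $\tilde\lambda_0(t)=\frac{1}{t-c}$ chosen so that $\tilde\lambda_0(t_1)=s(t_1)$, whereas you carry out the scalar comparison by hand by integrating $\frac{d}{dt}(1/s)\geq 1$.
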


\begin{proof}
As in the previous proof, \eqref{small s Inequal} gives %
\addtocounter{algorithm}{1} 
\begin{equation}
s\left( t\right) \leq 0  \label{nonpos tr}
\end{equation}%
for all $t\in \left[ t_{0},\infty \right) .$ If for some $t_1>t_0$, $%
s(t_1)<0 $, then there is some $c>t_1$ such that 
\begin{equation*}
s( t_{1}) = \frac{1}{t_1-c} .
\end{equation*}%
Thus, for 
\begin{equation*}
\tilde{\lambda}_{0}(t) =\frac{1}{t-c},
\end{equation*}%
we get from \eqref{small s Inequal} that 
\begin{equation*}
s(t) \leq \tilde{\lambda}_{0}(t)
\end{equation*}%
for all $t\in \left[ t_{1},c\right)$, and in particular, $s(t)$ could not be
defined after $c$. Since this contradicts our hypothesis on $\hat{S}$ being
defined on $\left[ t_{0},\infty \right)$, we obtain that $s\equiv 0$ and $%
r\equiv 0 $ for all $t\in \left( t_{0},\infty\right) .$ The rest of the
proof follows as in Lemma \ref{Ricci Rigidity Lemma}.
\end{proof}
\end{remark}

For arbitrary curvature, there is also a rigidity statement:

\begin{lemma}
\label{Ricci Rigidity Lemma_t_max_infinity} Let $\tilde{\lambda}_{\kappa }$
be as in \eqref{lambda tilde dfn}, and have no singularities on $\left(
t_{0},t_{\max }\right) $. Suppose that \addtocounter{algorithm}{1} 
\begin{equation}
\mathrm{Trace}\,\hat{S}\left( t_{0}\right) \leq k\cdot \tilde{\lambda}%
_{\kappa }\left( t_{0}\right) ,\text{ and }\text{ }\mathrm{Trace}\,\hat{R}%
\left( t\right) \geq k\cdot \kappa  \label{initial tr small inequal_infinity}
\end{equation}%
for all $t\in \left[ t_{0},t_{\max }\right) .$ If $\hat{S}$ is defined on $%
[t_{0},\infty )\ $and 
\begin{equation*}
\lim_{t\rightarrow t_{\max }^{-}}\tilde{\lambda}_{\kappa }\left( t\right)
=-\infty ,
\end{equation*}%
then \addtocounter{algorithm}{1} 
\begin{equation}
\hat{S}\equiv \tilde{\lambda}_{\kappa }\cdot \mathrm{id}\,\text{ and }\,\hat{%
R}=\kappa \cdot \mathrm{id}  \label{rigid S and R eqn_infty}
\end{equation}%
holds on $\left[ t_{0},t_{\max }\right) .$
\end{lemma}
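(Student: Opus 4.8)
The plan is to reduce everything, exactly as in the proofs of Lemmas \ref{Ricci Rigidity Lemma} and \ref{Ricci Rigidity Lemma_nnc_curvature}, to the scalar Riccati function and then exploit the hypothesis $\lim_{t\to t_{\max}^-}\tilde\lambda_\kappa(t)=-\infty$ to force the trace inequality to be an equality. First I would introduce the same substitution \eqref{Ricati subs}: set $s\equiv\frac1k\Trace\hat S$, $\hat S_0\equiv\hat S-s\cdot\id$, and $r\equiv\frac1k(\Trace\hat R+|\hat S_0|^2)$. Taking the trace of $\hat S'+\hat S^2+\hat R=0$ gives $s'+s^2+r=0$, and the hypotheses \eqref{initial tr small inequal_infinity} yield $r\ge\kappa$ on $[t_0,t_{\max})$ together with $s(t_0)\le\tilde\lambda_\kappa(t_0)$. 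By Part 1 of Theorem \ref{Esch-He thm} we already get $s(t)\le\tilde\lambda_\kappa(t)$ for all $t\in[t_0,t_{\max})$.

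The heart of the argument is to show that $s$ must actually coincide with $\tilde\lambda_\kappa$ at \emph{some} interior point of $(t_0,t_{\max})$, after which Part 2 of Theorem \ref{Esch-He thm} finishes the rigidity. Suppose not, so $s(t)<\tilde\lambda_\kappa(t)$ for every $t\in(t_0,t_{\max})$; I claim this forces $s$ to blow up before $t_{\max}$, contradicting that $\hat S$ (hence $s$) is defined on $[t_0,\infty)\supset[t_0,t_{\max})$. To see the blow-up, pick any $t_1\in(t_0,t_{\max})$. Then $s(t_1)<\tilde\lambda_\kappa(t_1)$ is a strict inequality between $s(t_1)$ and the value at $t_1$ of \emph{a} solution of the model ODE $\tilde\lambda'+\tilde\lambda^2+\kappa=0$; one can therefore choose a \emph{different} solution $\bar\lambda_\kappa$ of the same model ODE with $\bar\lambda_\kappa(t_1)=s(t_1)$ and with $\bar\lambda_\kappa$ blowing up to $-\infty$ at some finite time $c\le t_{\max}$ — this uses that among the solutions \eqref{model Jacobi} the ones passing through a given value at $t_1$ below $\tilde\lambda_\kappa(t_1)$ are precisely those whose associated $\tilde f$ from \eqref{model Jacobi} has a zero in $(t_1,t_{\max}]$, which is guaranteed because $\tilde\lambda_\kappa$ itself already has $\tilde f$ vanishing exactly at $t_{\max}$ (that is what $\lim_{t\to t_{\max}^-}\tilde\lambda_\kappa=-\infty$ means) and $s(t_1)$ lies strictly below it. Applying Theorem \ref{Esch-He thm} again with $\bar\lambda_\kappa$ in place of $\tilde\lambda_\kappa$ and initial time $t_1$ gives $s(t)\le\bar\lambda_\kappa(t)$ on $[t_1,c)$, and since $\bar\lambda_\kappa\to-\infty$ as $t\to c^-$, we conclude $s$ cannot be finite past $c$, contradicting the standing hypothesis. (For $\kappa=0$ this is literally the argument already carried out in Lemma \ref{Ricci Rigidity Lemma_nnc_curvature}; here it is the same idea phrased for general $\kappa$ and for a finite $t_{\max}$.)

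Hence there is $t_1\in(t_0,t_{\max})$ with $s(t_1)=\tilde\lambda_\kappa(t_1)$, and Part 2 of Theorem \ref{Esch-He thm} gives $s\equiv\tilde\lambda_\kappa$ and $r\equiv\kappa$ on $[t_0,t_1]$. Repeating for $t_1$ arbitrarily close to $t_{\max}$ — or simply observing that once $s(t_1)=\tilde\lambda_\kappa(t_1)$ Theorem \ref{Esch-He thm}(2) forces equality on all of $[t_0,t_1]$ and then rerunning the blow-up dichotomy on $[t_1,t_{\max})$ — we obtain $s\equiv\tilde\lambda_\kappa$ and $r\equiv\kappa$ on the whole of $[t_0,t_{\max})$. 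Then, exactly as at the end of the proof of Lemma \ref{Ricci Rigidity Lemma}, $\kappa=r=\kappa+\frac{|\hat S_0|^2}{k}$ forces $\hat S_0\equiv0$, so $\hat S=\tilde\lambda_\kappa\cdot\id$, and substituting into $\hat S'+\hat S^2+\hat R=0$ together with $\tilde\lambda_\kappa'+\tilde\lambda_\kappa^2+\kappa=0$ yields $\hat R=\kappa\cdot\id$ on $[t_0,t_{\max})$, which is \eqref{rigid S and R eqn_infty}.

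The step I expect to be the main obstacle is the middle paragraph — making precise that a strict inequality $s(t_1)<\tilde\lambda_\kappa(t_1)$ at an interior point, combined with the fact that $\tilde\lambda_\kappa$ itself reaches $-\infty$ at $t_{\max}$, lets one select a comparison solution $\bar\lambda_\kappa$ of the model equation that blows up strictly before (or at) $t_{\max}$. Concretely one writes $\tilde\lambda_\kappa=\tilde f'/\tilde f$ with $\tilde f$ as in \eqref{model Jacobi} having $\tilde f(t_{\max})=0$ and $\tilde f>0$ on $[t_0,t_{\max})$, and then needs to produce $\bar f$ of the same form, positive on $[t_1,c)$ with $\bar f(c)=0$ for some $c\le t_{\max}$ and $\bar f'(t_1)/\bar f(t_1)=s(t_1)$; this is an elementary but slightly fussy computation with the two-parameter family \eqref{model Jacobi} (in the $\kappa=1$ case it amounts to a phase/amplitude bookkeeping for $c_1\sin t+c_2\cos t$, in the $\kappa=-1$ case to choosing the coefficients of $c_1\sinh t+c_2\cosh t$ so that the logarithmic derivative starts below a given value and the function still has a zero). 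Everything else is a direct transcription of the arguments already given for Lemmas \ref{Ricci Rigidity Lemma} and \ref{Ricci Rigidity Lemma_nnc_curvature}.
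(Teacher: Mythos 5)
Your proof is correct and follows essentially the same strategy as the paper's: pass to the scalar trace Riccati function $s$, observe that any strict inequality $s(t_1)<\tilde\lambda_\kappa(t_1)$ would force $s$ to blow up to $-\infty$ strictly before $t_{\max}$ (contradicting that $\hat S$ is globally defined), conclude $s\equiv\tilde\lambda_\kappa$ and $r\equiv\kappa$, and then finish exactly as in Lemma \ref{Ricci Rigidity Lemma}. The one cosmetic difference worth noting: you construct a fresh comparison solution $\bar\lambda_\kappa$ of the model equation passing exactly through $s(t_1)$, which requires the mildly fussy bookkeeping with the family \eqref{model Jacobi} that you flag as the potential obstacle. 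The paper sidesteps this by time-shifting $\tilde\lambda_\kappa$ itself: since $\tilde\lambda_\kappa$ is strictly decreasing on $(t_0,t_{\max})$ and $\tilde\lambda_\kappa(\cdot+\alpha)$ is again a solution of the same Riccati ODE, one just picks $\alpha>0$ small with $s(t_1)\le\tilde\lambda_\kappa(t_1+\alpha)$, and the blow-up of $\tilde\lambda_\kappa(\cdot+\alpha)$ at $t_{\max}-\alpha<t_{\max}$ gives the contradiction immediately, with no case-by-case computation. Also, once the contradiction rules out strict inequality at \emph{any} interior $t_1$, you already have $s\equiv\tilde\lambda_\kappa$ on the whole interval; the detour through Part 2 of Theorem \ref{Esch-He thm} on $[t_0,t_1]$ and the subsequent ``rerun the dichotomy on $[t_1,t_{\max})$'' step is harmless but unnecessary.
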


\begin{proof}
The hypothesis $\lim_{t\rightarrow t_{\max }^{-}}\tilde{\lambda}_{\kappa
}\left( t\right) =-\infty $ implies that 
\begin{equation*}
\tilde{\lambda}_{\kappa }\left( t\right) =\left\{ 
\begin{array}{ll}
\cot \left( \pi +t-t_{\max }\right) & \text{if }\kappa =1,\vspace{0.05in} \\ 
\frac{1}{t-t_{\max }} & \text{if }\kappa =0,\vspace{0.05in} \\ 
\coth \left( t-t_{\max }\right) & \text{if }\kappa =-1%
\end{array}%
\right.
\end{equation*}%
(see, e.g., page 302 of \cite{Esch}). Since $\tilde{\lambda}_{\kappa }$ has
no singularities on $\left( t_{0},t_{\max }\right) ,$ it follows that $%
\tilde{\lambda}_{\kappa }\left( t\right) $ is strictly decreasing on $\left(
t_{0},t_{\max }\right) .$ So if $s(t_1)< \tilde{\lambda}_{\kappa }(t_1)$ for
some $t_{1}\in \left( t_{0},t_{\max }\right) $, then there is an $\alpha \in
\left( 0,t_{\max }-t_{1}\right) $ so that 
\begin{equation*}
s\left( t_{1}\right) \leq \tilde{\lambda}_{\kappa }\left( t_{1}+\alpha
\right) .
\end{equation*}%
Thus by \eqref{small s Inequal}, 
\begin{equation*}
s\left( t\right) \leq \tilde{\lambda}_{\kappa }\left( t+\alpha \right)
\end{equation*}%
for all $t\in \left( t_{1},t_{\max }\right) .$ In particular, for some $%
\tilde{t}_{\max }\in \left( t_{0},t_{\max }-\alpha \right] ,$ $%
\lim_{t\rightarrow \tilde{t}_{\max }^{-}}s\left( t\right) =-\infty .$ Since
this contradicts our hypothesis that $\hat{S}$ is defined on $\left(
t_{0},t_{\max }\right) ,$ Inequality \eqref{small fut eqn} must be an
equality for all $t\in \left( t_{0},t_{\max }\right)$ and $r\equiv \kappa .$
\end{proof}


\begin{remark}
\label{initiall sing rem}In the event that $\lim_{t\rightarrow t_{0}^{+}}%
\tilde{\lambda}_{\kappa }\left( t\right) =\infty ,$ Theorem \ref{Esch-He thm}
and Lemmas \ref{Ricci Rigidity Lemma}, \ref{Ricci Rigidity
Lemma_nnc_curvature}, and \ref{Ricci Rigidity Lemma_t_max_infinity}, hold
with the hypothesis $s\left( t_{0}\right) =\frac{1}{k}$\textrm{Trace}%
\thinspace\ $\hat{S}\leq \tilde{\lambda}_{\kappa }\left( t_{0}\right) $
replaced by \addtocounter{algorithm}{1} 
\begin{equation}
\lim_{t\rightarrow t_{0}^{+}}\inf \left( \tilde{\lambda}_{\kappa }\left(
t\right) -s\left( t\right) \right) \geq 0.  \label{t_0 =0 inequal}
\end{equation}

If $s$ is the trace of the Riccati operator of the Lagrangian family $%
\left\{ \left. J\text{ }\right\vert \text{ }J\left( t_{0}\right) =0\right\} $
along a geodesic in a Riemannian manifold, then Inequality 
\eqref{t_0 =0
inequal} is satisfied with 
\begin{equation*}
\tilde{\lambda}_{\kappa }\left( t\right) =\left\{ 
\begin{array}{ll}
\cot (t-t_0) & \text{if }\kappa =1 \\ 
\frac{1}{t-t_0} & \text{if }\kappa =0 \\ 
\coth (t-t_0) & \text{if }\kappa =-1%
\end{array}%
\right.
\end{equation*}%
(see Theorem 27 on page 175 of \cite{Pet}). So, for example, in this case,
Theorem \ref{Esch-He thm} implies the classical Rauch Comparison Theorem for 
$2$--manifolds.
\end{remark}

\subsection{Statements of Comparison Lemmas}

For a subspace $\mathcal{W}\subset \Lambda $, write 
\begin{equation*}
\mathcal{W}\left( t\right) =\left\{ \left. J\left( t\right) \text{ }%
\right\vert \text{ }J\in \mathcal{W}\right\} \oplus \left\{ \left. J^{\prime
}\left( t\right) \text{ }\right\vert \text{ }J\in \mathcal{W}\text{ and }%
J\left( t\right) =0\right\} ,
\end{equation*}%
and 
\begin{equation*}
P_{\mathcal{W},t}:\Lambda \left( t\right) \longrightarrow \mathcal{W}\left(
t\right)
\end{equation*}%
for orthogonal projection. For simplicity of notation we will write 
\begin{equation*}
\Trace S_{t}|_{\mathcal{W}}\text{ for }\Trace\left( P_{\mathcal{W},t}\circ
S_{t}|_{\mathcal{W}\left( t\right) }\right) .
\end{equation*}

\begin{remark}
Choose a fixed $t_0\in\mathbb{R}$; given any subspace $W_{t_0}\perp
\gamma^{\prime }(t_0)$, $W_{t_0}$ becomes the horizontal subspace $H(t_0)$
for Wilking's equation when we choose $\mathcal{V}$ as the subset of $%
\Lambda $ formed by Jacobi fields $J$ with $J(t_0)\perp W_{t_0}$.
\end{remark}

By considering $1$--dimensional subspaces, we see that Lemma \ref{sing soon
thm} is a special case of the following result. In its statement we write $%
\Ric_{k}\left( \dot{\gamma},\cdot \right) \geq k\cdot \kappa $ to mean that
the radial intermediate Ricci curvatures along $\gamma $ are bounded from
below by $k\cdot \kappa $, that is, 
\begin{equation*}
\displaystyle\sum\limits_{i=1}^{k}\mathrm{sec}\left( \dot{\gamma}%
,E_{i}\right) \geq k\cdot \kappa
\end{equation*}%
for any orthonormal set $\left\{ \dot{\gamma},E_{1},\ldots ,E_{k}\right\} .$

\begin{lemma}[Intermediate Ricci Comparison]
\label{sing soon Ric_k Lemma} For $\kappa =-1,0,$ or $1,$ let $\gamma
:\left( -\infty ,\infty \right) \longrightarrow M$ be a unit speed geodesic
in a complete Riemannian $n$--manifold $M$ with $\Ric_{k}\left( \dot{\gamma}%
,\cdot \right) $ $\geq k\cdot \kappa .$ Let $\Lambda $ be a Lagrangian
subspace of normal Jacobi fields along $\gamma $ with Riccati operator $S$,
and let $W_{t_{0}}\perp \gamma ^{\prime }(t_{0})$ be some $k$--dimensional
subspace such that 
\addtocounter{algorithm}{1} 
\begin{equation}
\Trace(S_{t_{0}})|_{W_{t_{0}}}\leq k\cdot \tilde{\lambda}_{\kappa }\left(
t_{0}\right) ,  \label{small initial}
\end{equation}%
where $\tilde{\lambda}_{\kappa }$ is as in (\ref{lambda tilde dfn}). Denote
by $\mathcal{V}$ the subspace of $\Lambda $ formed by those Jacobi fields
that are orthogonal to $W_{t_{0}}$ at $t_{0}$ and by $H(t)$ the subspace of $%
\gamma ^{\prime \perp }$ that is orthogonal to $\mathcal{V}(t)$ at each $%
t\in (t_{0},t_{max})$ 
. Assume that $\mathcal{V}$ is of full index in the interval $%
[t_{0},t_{max}) $. Then

\begin{enumerate}
\item For all $t\in \left[ t_{0},t_{\max }\right) $, %
\addtocounter{algorithm}{1} 
\begin{equation}
\Trace S_{t}|_{H(t)}\leq k\cdot \tilde{\lambda}_{\kappa }\left( t\right) .
\label{small future Inequal}
\end{equation}

\item If for some $t_{1}\in \lbrack t_{0},t_{max})$, 
\begin{equation*}
\Trace S_{t_{1}}|_{H(t_{1})}=k\cdot \tilde{\lambda}_{\kappa }\left(
t_{1}\right) ,
\end{equation*}%
then the Jacobi equation splits orthogonally along $\gamma $ in the interval 
$[t_{0},t_{1}]$ as 
\begin{equation*}
\Lambda =\mathcal{V}\oplus \mathcal{H}
\end{equation*}%
%
%
%
%
%
%
%
%
%
%
%
%
%
where every nonzero Jacobi field in $\mathcal{H}$ is equal to $J=\tilde{f}%
\cdot E,$ 
where $E$ is a unit parallel field with $E(t_{0})\in H(t_{0})$, and $\tilde{f%
}$ is the function from \eqref{model Jacobi} %
that satisfies $\tilde{f}\left( t_{0}\right) =\left\vert J\left(
t_{0}\right) \right\vert .$
\end{enumerate}
\end{lemma}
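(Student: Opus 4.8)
The plan is to run the scalar Eschenburg--Heintze/Riccati machinery of Theorem~\ref{Esch-He thm} and Lemma~\ref{Ricci Rigidity Lemma} on the \emph{transverse} Riccati operator $\hat S$ produced by Wilking's equation, with $\mathcal V$ as the distinguished subspace. First I would set up the geometry exactly as in Proposition~\ref{S hat well defined prop}: since $\mathcal V$ has full index on $[t_0,t_{\max})$, the operator $\hat S_t : H(t)\to H(t)$ is defined wherever $\Lambda$ has no zeros, it is self-adjoint, and it satisfies the Transverse Jacobi Equation \eqref{Horiz Wilki}, namely $\hat S'+\hat S^2+\{R(\cdot,\dot\gamma)\dot\gamma\}^h+3AA^\ast=0$. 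The point is that $\hat S'+\hat S^2$ extends smoothly across the zeros of $\Lambda$ (as noted after Theorem~\ref{HCE thm}), so $\hat R_t := \{R(\cdot,\dot\gamma)\dot\gamma\}^h+3AA^\ast$ plays the role of the curvature term in an abstract Riccati equation $\hat S'+\hat S^2+\hat R=0$ on the $k$-dimensional space $H(t)$.

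The two facts I need in order to apply Lemma~\ref{Ricci Rigidity Lemma} are: (i) $\Trace\hat S_{t_0}\le k\,\tilde\lambda_\kappa(t_0)$, and (ii) $\Trace\hat R_t\ge k\kappa$ for all $t$. For (i), observe that $H(t_0)=W_{t_0}$ by the choice of $\mathcal V$ (this is the content of the Remark just before the Lemma), and that the $H(t_0)$-component of $S_{t_0}|_{W_{t_0}}$ is precisely $\hat S_{t_0}$; hence $\Trace\hat S_{t_0}=\Trace(S_{t_0})|_{W_{t_0}}\le k\tilde\lambda_\kappa(t_0)$ by hypothesis \eqref{small initial}. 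For (ii), the term $3AA^\ast$ is positive semidefinite, so $\Trace\hat R_t\ge \Trace\{R(\cdot,\dot\gamma)\dot\gamma\}^h$; choosing an orthonormal basis $\{E_1,\dots,E_k\}$ of $H(t)$ and adjoining $\dot\gamma$, this trace equals $\sum_{i=1}^k\mathrm{sec}(\dot\gamma,E_i)\ge k\kappa$ by the radial intermediate Ricci hypothesis $\Ric_k(\dot\gamma,\cdot)\ge k\kappa$. (One must be a little careful that $E_1,\dots,E_k,\dot\gamma$ form an orthonormal set; since $H(t)\subset\dot\gamma(t)^\perp$ this is automatic.) Now Lemma~\ref{Ricci Rigidity Lemma}(1) gives $\Trace\hat S_t\le k\tilde\lambda_\kappa(t)$ on $[t_0,t_{\max})$, and the last step is to identify $\Trace S_t|_{H(t)}$ with $\Trace\hat S_t$: at times where $\Lambda$ has no zeros these agree by definition, and both sides are continuous (the extension remarks again), so they agree everywhere on $[t_0,t_{\max})$. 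This proves part (1).

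For part (2): if $\Trace S_{t_1}|_{H(t_1)}=k\tilde\lambda_\kappa(t_1)$ at some $t_1\in[t_0,t_{\max})$, then $\Trace\hat S_{t_1}=k\tilde\lambda_\kappa(t_1)$, so Lemma~\ref{Ricci Rigidity Lemma}(2) applies and gives $\hat S\equiv\tilde\lambda_\kappa\cdot\id$ and $\hat R=\kappa\cdot\id$ on $[t_0,t_1]$. Since $\hat R=\{R(\cdot,\dot\gamma)\dot\gamma\}^h+3AA^\ast=\kappa\cdot\id$ forces, by the trace argument above run in the equality case, both $\sum\mathrm{sec}(\dot\gamma,E_i)\equiv k\kappa$ on an orthonormal basis of $H(t)$ and, crucially, $AA^\ast\equiv 0$, i.e. the Wilking $A$-tensor of $\mathcal V$ vanishes identically on $[t_0,t_1]$. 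Then Lemma~\ref{lem:Lagrangian splitting} kicks in: $\mathcal V(t)$ and $H(t)$ are parallel distributions, and there is a subspace $\mathcal H\subset\Lambda$ with $\mathcal H(t)=H(t)$ for all $t$, so $\Lambda=\mathcal V\oplus\mathcal H$ orthogonally. Finally, for $J\in\mathcal H$, parallelism of $H(t)$ plus $\hat S=\tilde\lambda_\kappa\cdot\id$ means $J$ satisfies $J'=\tilde\lambda_\kappa J$ with $J$ horizontal, equivalently $J^{\prime\prime}+\kappa J=0$ with $J$ along a parallel field; writing $J=fE$ for a unit parallel $E(t_0)\in H(t_0)$ one gets $f''+\kappa f=0$ with $f(t_0)=|J(t_0)|$ and $f'(t_0)/f(t_0)=\tilde\lambda_\kappa(t_0)$, which pins $f=\tilde f$ from \eqref{model Jacobi}. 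This is essentially \eqref{parellel jacobi} of Lemma~\ref{Ricci Rigidity Lemma} transported to $\mathcal H$.

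The main obstacle I expect is bookkeeping around the zeros of $\Lambda$: the Riccati operators $S_t$, $\hat S_t$, and the map $A_t$ are only defined a.e., and one must consistently invoke the smoothness of $\hat S'+\hat S^2$ and of $A$ (established after Theorem~\ref{HCE thm}) to make sense of the abstract Riccati equation $\hat S'+\hat S^2+\hat R=0$ on all of $[t_0,t_{\max})$ and to pass continuity of traces through the singular times. A secondary subtlety is verifying that $\dim H(t)=k$ throughout — this uses exactly the full-index hypothesis on $\mathcal V$, since that is what guarantees $\ker\mathcal E_t\subset\mathcal V$ and hence that $H(t)=\mathcal V(t)^\perp\cap\dot\gamma^\perp$ has the expected dimension and that Part~1 of Proposition~\ref{S hat well defined prop} applies. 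Everything else is a direct citation of the scalar lemmas already proved.
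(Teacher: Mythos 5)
Your proposal is correct and follows essentially the same route as the paper: run the scalar Riccati comparison of Theorem~\ref{Esch-He thm} and Lemma~\ref{Ricci Rigidity Lemma} on the traces produced by Wilking's transverse Jacobi equation, using the full-index hypothesis to keep $\hat S$ well-defined, the sign of $3AA^{\ast}$ to absorb it into the curvature term, and in the equality case forcing $A\equiv 0$ so that Lemma~\ref{lem:Lagrangian splitting} yields the orthogonal splitting $\Lambda=\mathcal V\oplus\mathcal H$ and the parallel form of the horizontal Jacobi fields. The only difference is organizational: where you identify $\Trace S_t|_{H(t)}$ with $\Trace\hat S_t$ by direct observation at $t_0$ together with a continuity argument, the paper isolates that step as the Eigenvalue Transfer Lemma (Lemma~\ref{eigem transfer lemma}).
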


\begin{lemma}
\label{lem:riccati comparison minus infinity} Under the hypothesis of the
first part of Lemma \ref{sing soon Ric_k Lemma}, if \/ 
$\lim_{t\rightarrow t_{\max }^{-}}\tilde{\lambda}_{\kappa }\left( t\right)
=-\infty $ then the Jacobi equation splits orthogonally along $\gamma $ in
the interval $[t_{0},t_{max})$ as 
\begin{equation*}
\Lambda =\mathcal{V}\oplus \mathcal{H}.
\end{equation*}%
Moreover, every nonzero Jacobi field $J\in \mathcal{H}$ is equal to $J=%
\tilde{f}\cdot E,$ 
where $E$ is a unit parallel field with $E(t_{0})\in W_{t_{0}}$, and $\tilde{%
f}$ is the function from \eqref{model Jacobi} %
that satisfies $\tilde{f}\left( t_{0}\right) =\left\vert J\left(
t_{0}\right) \right\vert .$
\end{lemma}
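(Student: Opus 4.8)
The plan is to reproduce the rigidity half (Part~2) of Lemma~\ref{sing soon Ric_k Lemma}, but to make the comparison collapse to equality not by hypothesizing equality at an interior time, but through the blow-up mechanism of Lemma~\ref{Ricci Rigidity Lemma_t_max_infinity}. Let $\hat S$ be the Riccati operator associated to $\mathcal V$; by the full-index hypothesis and Proposition~\ref{S hat well defined prop} it is defined on $H(t)$ for every $t\in[t_0,t_{\max})$, and by Theorem~\ref{HCE thm} it satisfies the transverse Jacobi equation
\begin{equation*}
\hat S'+\hat S^{2}+\hat R=0,\qquad \hat R:=\{R(\cdot,\dot{\gamma})\dot{\gamma}\}^{h}+3AA^{\ast},
\end{equation*}
on $[t_0,t_{\max})$. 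By the remark preceding Lemma~\ref{sing soon Ric_k Lemma} we have $H(t_0)=W_{t_0}$, so $\dim H(t)\equiv k$ and \eqref{small initial} reads $\Trace\hat S(t_0)\le k\,\tilde{\lambda}_{\kappa}(t_0)$.

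First I would establish the scalar rigidity. Setting $s:=\tfrac1k\Trace\hat S$, $\hat S_{0}:=\hat S-s\cdot\id$, and $r:=\tfrac1k\bigl(\Trace\hat R+|\hat S_{0}|^{2}\bigr)$, tracing the transverse Jacobi equation gives $s'+s^{2}+r=0$; moreover $r\ge\kappa$, since $\Trace\{R(\cdot,\dot{\gamma})\dot{\gamma}\}^{h}=\sum_{i=1}^{k}\mathrm{sec}(\dot{\gamma},e_{i})\ge k\kappa$ for any orthonormal frame $e_{1},\dots,e_{k}$ of the $k$-dimensional space $H(t)$ (this is the radial $\Ric_{k}$ bound), while $\Trace(AA^{\ast})\ge0$ and $|\hat S_{0}|^{2}\ge0$. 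Thus $s$ is a $C^{\infty}$ solution of $s'+s^{2}+r=0$ on all of $[t_0,t_{\max})$ with $r\ge\kappa$ and $s(t_0)\le\tilde{\lambda}_{\kappa}(t_0)$; since $\lim_{t\to t_{\max}^{-}}\tilde{\lambda}_{\kappa}(t)=-\infty$, the scalar Riccati argument of Lemma~\ref{Ricci Rigidity Lemma_t_max_infinity} (a strict inequality $s(t_{1})<\tilde{\lambda}_{\kappa}(t_{1})$ would, by part~(1) of Theorem~\ref{Esch-He thm} applied to a translate $\tilde{\lambda}_{\kappa}(\cdot+\alpha)$, force $s$ to $-\infty$ before $t_{\max}$, contradicting that $\hat S$ is defined on $[t_0,t_{\max})$) forces $s\equiv\tilde{\lambda}_{\kappa}$ and $r\equiv\kappa$ on $[t_0,t_{\max})$.

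Next I would transfer the rigidity to the operators and the Jacobi fields. Since $r\equiv\kappa$ and $r$ is a sum of three quantities each at least its minimal value, all three are pinned: $|\hat S_{0}|\equiv0$, $\Trace(AA^{\ast})\equiv0$, and $\Trace\{R(\cdot,\dot{\gamma})\dot{\gamma}\}^{h}\equiv k\kappa$ on $[t_0,t_{\max})$. The first gives $\hat S=\tilde{\lambda}_{\kappa}\cdot\id$; the second, since $AA^{\ast}$ is positive semidefinite, gives $A\equiv0$; feeding $\hat S=\tilde{\lambda}_{\kappa}\id$ back into the transverse Jacobi equation gives $\hat R=\kappa\cdot\id$, hence also $\{R(\cdot,\dot{\gamma})\dot{\gamma}\}^{h}=\kappa\cdot\id$ on $H(t)$. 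Now Lemma~\ref{lem:Lagrangian splitting}, applied with $A\equiv0$ on $[t_0,t_{\max})$, gives that $\mathcal V(t)$ and $H(t)$ are parallel along $\gamma$ and furnishes $\mathcal H\subset\Lambda$ with $\mathcal H(t)=H(t)$ for every $t$; comparing dimensions ($\dim\mathcal H=k$, $\dim\mathcal V=n-1-k$, using Proposition~\ref{S hat well defined prop}(1)) and using that $\mathcal E_{t}$ is an isomorphism at a generic $t$ shows $\Lambda=\mathcal V\oplus\mathcal H$, orthogonally along $\gamma$. For $J\in\mathcal H\setminus\{0\}$ we have $J(t)\in H(t)$ and, since $H$ is parallel, $J'(t)\in H(t)$, so $J'=\hat S J=\tilde{\lambda}_{\kappa}J$; integrating this first-order linear ODE in a parallel orthonormal frame of $H$ gives $J=\tilde f\cdot E$ with $E$ a unit parallel field, $E(t_0)=J(t_0)/|J(t_0)|\in H(t_0)=W_{t_0}$, and $\tilde f$ the solution of $\tilde f'/\tilde f=\tilde{\lambda}_{\kappa}$ (equivalently $\tilde f''+\kappa\tilde f=0$) normalized by $\tilde f(t_0)=|J(t_0)|$, i.e.\ the function of \eqref{model Jacobi}.

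I do not expect an essential new difficulty: the lemma is a recombination of the rigidity half of Lemma~\ref{sing soon Ric_k Lemma}, Lemma~\ref{Ricci Rigidity Lemma_t_max_infinity}, and Lemma~\ref{lem:Lagrangian splitting}. The one point that needs care is intrinsic to Wilking's method: $\hat S$ only lives on the moving bundle $H(t)$ and a priori only has the status guaranteed by Proposition~\ref{S hat well defined prop}, so the comparison must be run on the \emph{scalar} quantity $s$, whose equation $s'+s^{2}+r=0$ is a genuine scalar ODE on $[t_0,t_{\max})$; only after the scalar rigidity is in hand can one extract $|\hat S_{0}|\equiv0$, $A\equiv0$, and $\hat R=\kappa\cdot\id$, and only then invoke Lemma~\ref{lem:Lagrangian splitting} to convert $A\equiv0$ into the actual orthogonal splitting of $\Lambda$ by Jacobi fields.
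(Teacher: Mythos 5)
Your proof is correct and follows essentially the same route as the paper's: full index plus Proposition~\ref{S hat well defined prop} put $\hat S$ on all of $[t_0,t_{\max})$, the trace of the transverse Jacobi equation feeds into the scalar blow-up comparison to force $s\equiv\tilde\lambda_\kappa$ and $r\equiv\kappa$, the pinned trace then yields $\hat S_0\equiv 0$, $A\equiv 0$, $\hat R=\kappa\cdot\id$, and Lemma~\ref{lem:Lagrangian splitting} converts $A\equiv 0$ into the orthogonal splitting $\Lambda=\mathcal V\oplus\mathcal H$ with $\mathcal H$ spanned by $\tilde f\cdot E$. The only difference is presentational: where the paper cites Lemma~\ref{Ricci Rigidity Lemma_t_max_infinity} and the Eigenvalue Transfer Lemma as black boxes, you unfold the scalar Riccati argument inline and replace the transfer lemma at $t_0$ by the direct identification $H(t_0)=W_{t_0}$.
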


\begin{lemma}
\label{lem:riccati comparison nnc} Let $\gamma :[t_{0},\infty
)\longrightarrow M$ be a unit speed geodesic in a complete Riemannian $n$%
--manifold $M$ with $\Ric_{k}\left( \dot{\gamma},\cdot \right) \geq 0$. Let $%
\Lambda $ be a Lagrangian subspace of normal Jacobi fields along $\gamma $
with Riccati operator $S.$ Suppose that for some $k$--dimensional subspace $%
W_{t_{0}}\perp \gamma ^{\prime }(t_{0})$ , 
\addtocounter{algorithm}{1} 
\begin{equation}
\Trace S_{t_{0}}|_{W_{t_{0}}}\leq 0.  \label{small initial_nnc}
\end{equation}%
With $\mathcal{V}$ and $H(t)$ as in Lemma \ref{sing soon Ric_k Lemma}, the
Jacobi equation splits orthogonally along $\gamma $ in the interval $%
[t_{0},\infty )$ as 
\begin{equation*}
\Lambda =\mathcal{V}\oplus \mathcal{H}.
\end{equation*}%
Moreover, every nonzero Jacobi field $J\in \mathcal{H}$ is equal to $J=%
\tilde{f}\cdot E,$ 
where $E$ is a unit parallel field with $E(t_{0})\in W_{t_{0}}$, and $\tilde{%
f}$ is the function from \eqref{model Jacobi} %
that satisfies $\tilde{f}\left( t_{0}\right) =\left\vert J\left(
t_{0}\right) \right\vert .$
\end{lemma}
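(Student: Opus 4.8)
The plan is to treat Lemma~\ref{lem:riccati comparison nnc} as the $\kappa=0$, $t_{\max}=\infty$ counterpart of Lemma~\ref{lem:riccati comparison minus infinity}: the engine is Wilking's transverse Jacobi equation (Theorem~\ref{HCE thm}) fed into the abstract Riccati rigidity for long geodesics in nonnegative curvature, Lemma~\ref{Ricci Rigidity Lemma_nnc_curvature}, and the resulting rigidity is then turned into a splitting of $\Lambda$ via Lemma~\ref{lem:Lagrangian splitting}. Concretely, let $\mathcal{V}$, $H(t)$, and the associated Riccati operator $\hat{S}_t:H(t)\longrightarrow H(t)$ be as in Proposition~\ref{S hat well defined prop}, and set $\hat{R}_t:=\{R(\cdot,\dot{\gamma})\dot{\gamma}\}^{h}+3A_tA_t^{\ast}$, so that Theorem~\ref{HCE thm} gives $\hat{S}'+\hat{S}^{2}+\hat{R}=0$ wherever $\mathcal{V}$ has full index.

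First I would verify the hypotheses of Lemma~\ref{Ricci Rigidity Lemma_nnc_curvature} for $\hat{S}$ and $\hat{R}$ on the ray $[t_0,\infty)$. If $\{E_1,\dots,E_k\}$ is an orthonormal basis of $H(t)$, then $\{\dot{\gamma},E_1,\dots,E_k\}$ is an orthonormal $(k+1)$--frame, so
\begin{equation*}
\Trace\hat{R}_t=\sum_{i=1}^{k}\mathrm{sec}(\dot{\gamma},E_i)+3|A_t^{\ast}|^{2}\geq 0,
\end{equation*}
by $\Ric_k(\dot{\gamma},\cdot)\geq 0$ and $A_tA_t^{\ast}\geq 0$. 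For the initial condition, the remark preceding Lemma~\ref{sing soon Ric_k Lemma} shows that the choice $\mathcal{V}=\{J\in\Lambda\mid J(t_0)\perp W_{t_0}\}$ forces $H(t_0)=W_{t_0}$, and since $\hat{S}_{t_0}$ is the $H(t_0)$--projection of $S_{t_0}|_{H(t_0)}$, hypothesis~\eqref{small initial_nnc} gives $\Trace\hat{S}(t_0)=\Trace S_{t_0}|_{W_{t_0}}\leq 0$. Finally, because $\mathcal{V}$ has full index throughout $[t_0,\infty)$, Proposition~\ref{S hat well defined prop} makes $\hat{S}_t$ a well-defined finite operator for every $t$ in this interval, so $\hat{S}$ is defined on the whole ray. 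Lemma~\ref{Ricci Rigidity Lemma_nnc_curvature} now yields $\hat{S}\equiv 0$ and $\hat{R}\equiv 0$ on $[t_0,\infty)$.

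Next I would unpack this rigidity. From $\hat{R}\equiv 0$ and the displayed inequality, $3|A_t^{\ast}|^{2}\leq\Trace\hat{R}_t=0$, hence $A_t\equiv 0$ on $[t_0,\infty)$. Lemma~\ref{lem:Lagrangian splitting} then makes $\mathcal{V}(t)$ and $H(t)$ parallel along $\gamma$ and produces a subspace $\mathcal{H}\subset\Lambda$ with $\mathcal{H}(t)=H(t)$; since $\mathcal{V}(t)\perp H(t)$ and $\mathcal{V}(t)\oplus H(t)=\dot{\gamma}(t)^{\perp}$, a dimension count at a time where $\Lambda$ has no zeros gives the orthogonal splitting $\Lambda=\mathcal{V}\oplus\mathcal{H}$. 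For a nonzero $J\in\mathcal{H}$, the curve $t\mapsto J(t)$ is a section of the parallel subbundle $H(t)$, so $J'(t)$ also lies in $H(t)$, whence $\hat{S}_t(J(t))=(J'(t))^{H(t)}=J'(t)$; since $\hat{S}\equiv 0$ we conclude $J'\equiv 0$ on $[t_0,\infty)$, i.e.\ $J$ is parallel there. Then $E:=J/|J(t_0)|$ is a unit parallel field with $E(t_0)\in H(t_0)=W_{t_0}$, and $J=\tilde{f}\cdot E$ with $\tilde{f}\equiv|J(t_0)|$, which is the $c_1=0$ case of~\eqref{model Jacobi} satisfying $\tilde{f}(t_0)=|J(t_0)|$.

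The main obstacle I anticipate is the step hidden in ``$\hat{S}$ is defined on $[t_0,\infty)$'': one has to be confident that the full-index hypothesis inherited from Lemma~\ref{sing soon Ric_k Lemma} genuinely prevents $\hat{S}$ from acquiring a singularity anywhere on the ray, since otherwise the scalar Riccati comparison embedded in Lemma~\ref{Ricci Rigidity Lemma_nnc_curvature}---in which $\Trace\hat{S}(t_1)<0$ at some $t_1$ would drive $\Trace\hat{S}$ to $-\infty$ in finite time---cannot be carried through. This is exactly the spot where $t_{\max}=\infty$ and the absence of singularities of $\mathcal{V}$ must work together, just as in the proof of Lemma~\ref{Ricci Rigidity Lemma_nnc_curvature}. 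A harmless technical point is that $H(t)$ is an honest vector bundle rather than a fixed vector space; this causes no trouble because the quantities entering the argument---$\Trace\hat{S}$, $|\hat{S}_0|$, and $\Trace\hat{R}$---are frame independent.
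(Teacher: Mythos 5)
Your proposal is correct and follows essentially the same route as the paper: the paper likewise applies the Eigenvalue Transfer Lemma to get $\Trace\hat{S}_{t_0}\leq 0$, invokes Lemma~\ref{Ricci Rigidity Lemma_nnc_curvature} to obtain $\hat{S}\equiv 0$ and $\hat{R}\equiv 0$, deduces $A\equiv 0$ from $\Trace\{R(\cdot,\dot\gamma)\dot\gamma\}^h\geq 0$, and then reuses the splitting argument from Part 2 of Lemma~\ref{sing soon Ric_k Lemma}. Your final unpacking (deriving $J'\equiv 0$ for $J\in\mathcal{H}$ directly from $\hat{S}\equiv 0$ and the parallelism of $H(t)$, instead of citing Part 2 of Lemma~\ref{Ricci Rigidity Lemma}) is a harmless cosmetic variant, and your closing caution about where the full-index hypothesis and $t_{\max}=\infty$ interact is exactly the point the paper's Lemma~\ref{Ricci Rigidity Lemma_nnc_curvature} is designed to handle.
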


\subsection{Proof of the comparison Lemmas}

In this subsection, we combine Riccati comparison with the Transverse Jacobi
Equation to prove Lemmas \ref{sing soon Ric_k Lemma}, \ref{lem:riccati
comparison minus infinity}, and \ref{lem:riccati comparison nnc}. 

Recall that, for a Lagrangian $\Lambda $ and for fixed $t\in \mathbb{R},$ we
defined the \emph{evaluation map} as%
\begin{eqnarray*}
\mathcal{E}_{t} &:&\Lambda \longrightarrow T_{\gamma \left( t\right) }M \\
\mathcal{E}_{t} &:&J\longmapsto J\left( t\right) .
\end{eqnarray*}

\begin{lemma}
\label{lemma:equality of subspaces} The image of $\mathcal{E}_{t}$ is the
orthogonal complement of the subspace 
\begin{equation*}
\left\{ \,K^{\prime }(t)\,:\,K\in \ker \mathcal{E}_{t}\,\right\} .
\end{equation*}
\end{lemma}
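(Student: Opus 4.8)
The plan is to prove the two inclusions by a rank count combined with the symplectic orthogonality that the Lagrangian condition provides. Write $m=\dim\ker\mathcal{E}_t$, and set $W=\{\,K'(t):K\in\ker\mathcal{E}_t\,\}$. First I would note that the linear map $\ker\mathcal{E}_t\longrightarrow T_{\gamma(t)}M$, $K\longmapsto K'(t)$, is injective: if $K(t)=0$ and $K'(t)=0$ then $K\equiv 0$, since a Jacobi field is determined by its value and derivative at a point. Hence $\dim W=m$. Moreover, every $J\in\Lambda$ is a normal Jacobi field, so $\langle J,\dot\gamma\rangle$ is affine in $t$ and identically zero, whence $\langle J',\dot\gamma\rangle\equiv 0$ as well; thus both $\image(\mathcal{E}_t)$ and $W$ lie inside the $(n-1)$--dimensional space $\dot\gamma(t)^{\perp}$. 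Rank--nullity then gives $\dim\image(\mathcal{E}_t)=(n-1)-m$.

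Next I would use that $\Lambda$ is Lagrangian, i.e. $\omega|_\Lambda\equiv 0$, to show $\image(\mathcal{E}_t)\perp W$. For $J\in\Lambda$ and $K\in\ker\mathcal{E}_t\subset\Lambda$, constancy of $\omega$ together with $\omega(K,J)=0$ yields, evaluating at $t$, the identity $0=\langle K'(t),J(t)\rangle-\langle K(t),J'(t)\rangle=\langle K'(t),J(t)\rangle$ because $K(t)=0$. Therefore every element of $\image(\mathcal{E}_t)$ is orthogonal to every element of $W$, that is, $\image(\mathcal{E}_t)\subseteq W^{\perp}\cap\dot\gamma(t)^{\perp}$.

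Finally I would close by dimensions: the orthogonal complement of $W$ inside $\dot\gamma(t)^{\perp}$ has dimension $(n-1)-m=\dim\image(\mathcal{E}_t)$, so the inclusion just established is an equality, which is precisely the claim (the orthogonal complement being taken in $\dot\gamma(t)^{\perp}$; since $W\subseteq\dot\gamma(t)^{\perp}$, this is the only sensible ambient space, as the complement of $W$ in all of $T_{\gamma(t)}M$ would be too large by exactly the line spanned by $\dot\gamma(t)$). I do not expect a real obstacle here; the only points needing a moment's care are the injectivity of $K\mapsto K'(t)$ on $\ker\mathcal{E}_t$, which keeps the dimension count from degenerating, and being explicit that the orthogonal complement is formed inside $\dot\gamma(t)^{\perp}$.
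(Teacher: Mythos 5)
Your proof is correct and takes essentially the same route as the paper: a dimension count combined with the symplectic orthogonality coming from the Lagrangian condition. You simply spell out the details that the paper leaves implicit, namely the injectivity of $K\mapsto K'(t)$ on $\ker\mathcal{E}_t$, the rank--nullity computation, and the fact that both subspaces sit inside $\dot\gamma(t)^{\perp}$.
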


\begin{proof}
Since both subspaces have the same dimension, it suffices to check that for
any $J\in \Lambda $ and any $K\in \ker \mathcal{E}_{t}$, $\langle
J(t),K^{\prime }(t)\rangle =0$; but $\langle J(t),K^{\prime }(t)\rangle
=\langle J^{\prime }(t),K(t)\rangle =0$ since $K(t)=0$.%
%
\end{proof}

\begin{lemma}
\label{eigem transfer lemma}(Eigenvalue Transfer Lemma) Let $\gamma :\left[
0,l\right] \longrightarrow M$ and $\Lambda $ be as in Lemma \ref{sing soon
Ric_k Lemma}. Let $\mathcal{V}$ be an $\left( n-1-k\right) $--dimensional
subspace of $\Lambda $ with full index in $\left[ 0,l\right]$. %
For any subspace $\mathcal{W}$ of $\Lambda $, define $\mathcal{W}\left(
t\right) $ as in (\ref{dfn of eval eqn}).

\begin{enumerate}
\item For each fixed $\bar{t}\in \left[ 0,l\right] ,$ there is a $k$%
--dimensional subspace $\mathcal{W}$ of $\Lambda $ so that $\mathcal{W}%
\left( \bar{t}\right) $ is the orthogonal complement of $\mathcal{V}\left( 
\bar{t}\right) .$ If $\mathcal{E}_{\bar{t}}$ is one-to-one, then $\mathcal{W}
$ is unique.

\smallskip

\item Let $\hat{S}_t:H(t)\to H(t)$ be the Riccati operator defined in %
\eqref{dfn of S hat}. Then for any $\mathcal{W}$ as in Part $1,$ 
\begin{equation*}
\Trace\hat{S}_{\bar{t}}=\Trace S_{\bar{t}} |_{\mathcal{W}}.
\end{equation*}
%
%
where $\Trace S_{\bar{t}} |_{\mathcal{W}}=\mathrm{Trace}\left( P_{\mathcal{W}%
,\bar{t}}\circ S_{\bar{t}}|_{\mathcal{W}\left( \bar{t}\right) }\right) , $
and $P_{\mathcal{W},\bar{t}}:\Lambda \left( \bar{t}\right) \longrightarrow 
\mathcal{W}\left( \bar{t}\right) $ is orthogonal projection.
\end{enumerate}
\end{lemma}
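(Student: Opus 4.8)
The plan is to settle Part~1 by lifting a basis of $H(\bar t)$ to Jacobi fields in $\Lambda$ via Proposition~\ref{S hat well defined prop}(1), and to settle Part~2 by expressing both traces in one and the same orthonormal basis of $H(\bar t)$, whereupon they become visibly equal sums of inner products. For Part~1, fix $\bar t\in[0,l]$ and recall that $H(\bar t)=\mathcal{V}(\bar t)^{\perp}\cap\dot\gamma(\bar t)^{\perp}$ is the orthogonal complement of $\mathcal{V}(\bar t)$ in $\dot\gamma(\bar t)^{\perp}$; since $\Lambda$ is Lagrangian, $\mathcal{V}(\bar t)$ has dimension $\dim\mathcal{V}=n-1-k$, so $\dim H(\bar t)=k$. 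Choose a basis $x_{1},\dots,x_{k}$ of $H(\bar t)$; by Proposition~\ref{S hat well defined prop}(1) each $x_{i}$ equals $J_{i}(\bar t)$ for some $J_{i}\in\Lambda$, and I set $\mathcal{W}=\spann\{J_{1},\dots,J_{k}\}$. Because the $x_{i}$ are independent, so are the $J_{i}$, and no nonzero element of $\mathcal{W}$ vanishes at $\bar t$ (such a field would give a nontrivial vanishing combination of the $x_{i}$); hence $\mathcal{W}(\bar t)=\{J(\bar t):J\in\mathcal{W}\}=\spann\{x_{1},\dots,x_{k}\}=H(\bar t)$. If $\mathcal{E}_{\bar t}$ is injective then, as $\dim\Lambda=n-1=\dim\dot\gamma(\bar t)^{\perp}$, it is an isomorphism onto $\dot\gamma(\bar t)^{\perp}$, so $\mathcal{W}(\bar t)=\mathcal{E}_{\bar t}(\mathcal{W})$ for every subspace $\mathcal{W}$ and the requirement $\mathcal{W}(\bar t)=H(\bar t)$ pins down $\mathcal{W}=\mathcal{E}_{\bar t}^{-1}(H(\bar t))$.

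For Part~2, I would first note that \emph{any} $\mathcal{W}\subseteq\Lambda$ with $\mathcal{W}(\bar t)=H(\bar t)$ contains no nonzero Jacobi field vanishing at $\bar t$: if $J\in\mathcal{W}$ and $J(\bar t)=0$, then $J\in\mathcal{V}$ because $\mathcal{V}$ has full index at $\bar t$, hence $J'(\bar t)\in\mathcal{V}(\bar t)$; but also $J'(\bar t)\in\mathcal{W}(\bar t)=H(\bar t)\perp\mathcal{V}(\bar t)$, so $J'(\bar t)=0$ and $J\equiv 0$. Thus $\dim\mathcal{W}=\dim H(\bar t)=k$, and by Remark~\ref{S well dfned on W rem} the Riccati operator $S_{\bar t}|_{\mathcal{W}(\bar t)}\colon\mathcal{W}(\bar t)\to\dot\gamma(\bar t)^{\perp}$, $S_{\bar t}(J(\bar t))=J'(\bar t)$, is well defined. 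Now pick an orthonormal basis $x_{1},\dots,x_{k}$ of $H(\bar t)$ and let $J_{i}\in\mathcal{W}$ be the unique field with $J_{i}(\bar t)=x_{i}$. On the one hand, Proposition~\ref{S hat well defined prop}(2) gives $\hat S_{\bar t}(x_{i})=\bigl(J_{i}'(\bar t)\bigr)^{H}$, and $x_{i}\in H(\bar t)$ gives $\langle\hat S_{\bar t}(x_{i}),x_{i}\rangle=\langle J_{i}'(\bar t),x_{i}\rangle$, so $\Trace\hat S_{\bar t}=\sum_{i}\langle J_{i}'(\bar t),x_{i}\rangle$. On the other hand, $S_{\bar t}(x_{i})=J_{i}'(\bar t)$ and $P_{\mathcal{W},\bar t}$ is orthogonal projection onto $\mathcal{W}(\bar t)=H(\bar t)$, so $\langle P_{\mathcal{W},\bar t}(S_{\bar t}(x_{i})),x_{i}\rangle=\langle J_{i}'(\bar t),x_{i}\rangle$, whence $\Trace S_{\bar t}|_{\mathcal{W}}=\sum_{i}\langle J_{i}'(\bar t),x_{i}\rangle$. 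The two sums coincide, which is the assertion.

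The trace manipulation is routine; the one point needing care, and the only place where the full-index hypothesis on $\mathcal{V}$ is genuinely used, is the opening observation of Part~2, that an arbitrary $\mathcal{W}$ with $\mathcal{W}(\bar t)=H(\bar t)$ cannot contain a nonzero field vanishing at $\bar t$. Without this observation, neither ``$\mathcal{W}$ is $k$-dimensional'' nor the definitions of $S_{\bar t}|_{\mathcal{W}}$ and of $P_{\mathcal{W},\bar t}$ on a space of the right dimension would be legitimate. Once that is in hand, committing to an orthonormal basis of $H(\bar t)$ makes the equality of traces immediate.
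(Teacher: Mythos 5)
Your proof is correct and follows essentially the same route as the paper's. Part~1 is the paper's argument in slightly different words: the lift of a basis of $H(\bar t)$ via Proposition~\ref{S hat well defined prop}(1) is exactly what the paper extracts from Lemma~\ref{lemma:equality of subspaces} together with $\ker\mathcal{E}_{\bar t}\subset\mathcal{V}$. For Part~2, the paper proves the identity $\left\langle\hat S(J^{\perp}),J^{\perp}\right\rangle|_{\bar t}=\left\langle S(J),J\right\rangle|_{\bar t}$ for an arbitrary $J\in\mathcal{W}$, by splitting $J=J^{\mathcal{V}}+J^{\perp}$ along $\gamma$ and differentiating $\left\langle J^{\mathcal{V}},J^{\perp}\right\rangle\equiv 0$ to dispose of the cross term $\left\langle(J^{\mathcal{V}})',J^{\perp}\right\rangle|_{\bar t}$; you instead commit to an orthonormal basis of $H(\bar t)$ and note directly that $\left\langle\hat S_{\bar t}(x_i),x_i\right\rangle=\left\langle (J_i')^{H},x_i\right\rangle=\left\langle J_i',x_i\right\rangle=\left\langle P_{\mathcal{W},\bar t}S_{\bar t}(x_i),x_i\right\rangle$. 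Both are the same trace computation; your version is a bit more direct, the paper's is basis-free. One small inaccuracy in your closing remark: the full-index hypothesis is not used only in the opening observation of Part~2 --- it is also what makes Proposition~\ref{S hat well defined prop}(1) (used in your Part~1) and the definition of $\hat S_{\bar t}$ in Proposition~\ref{S hat well defined prop}(2) available at $\bar t$. This does not affect the correctness of the argument.
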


\begin{remark}
For any $\mathcal{W}$ as in Part $1,$ $S_{\bar{t}} |_{\mathcal{W}}$ is well
defined via Remark \ref{S well dfned on W rem}.
\end{remark}

\begin{proof}
Since $\ker \mathcal{E}_{\bar{t}}\subset \mathcal{V}$, we have that 
\begin{equation*}
\left\{ \,J^{\prime }(\bar{t})\,:\,J\in \ker \mathcal{E}_{\bar{t}}\,\right\}
\subset \mathcal{V}(\bar{t}),
\end{equation*}%
and by Lemma \ref{lemma:equality of subspaces}, 
\begin{equation*}
\mathcal{V}(\bar{t})^{\perp }\subset \image\mathcal{E}_{\bar{t}}.
\end{equation*}%
Thus there exist some $k$-dimensional subspace $\mathcal{W}\subset \Lambda $
with $\mathcal{W}(t)=\mathcal{V}(\bar{t})^{\perp }$, and if $\mathcal{E}_{%
\bar{t}}$ is one-to-one, then it is an isomorphism onto $\mathcal{V}\left( 
\bar{t}\right) ^{\perp },$ so $\mathcal{W}$ is unique. 

To prove Part 2, for $J\in \mathcal{W},$ we write%
\begin{equation*}
J^{\perp }=J-J^{\mathcal{V}},
\end{equation*}%
where $J^{\mathcal{V}}$ is the component of $J$ that lies in $\mathcal{V}%
\left( t\right) .$ Then for all $t,$%
\begin{equation*}
0=\frac{d}{dt}\left\langle J^{\mathcal{V}},J^{\perp }\right\rangle
=\left\langle \left( J^{\mathcal{V}}\right) ^{\prime },J^{\perp
}\right\rangle +\left\langle J^{\mathcal{V}},J^{\perp \prime }\right\rangle .
\end{equation*}

Since $J\in \mathcal{W},$ $J^{\mathcal{V}}\left( \bar{t}\right) =0,$ and $%
\left\langle J^{\mathcal{V}},J^{\perp \prime }\right\rangle |_{\bar{t}}=0.$
So the previous display evaluated at $\bar{t}$ becomes%
\begin{equation*}
\left. \left\langle \left( J^{\mathcal{V}}\right) ^{\prime },J^{\perp
}\right\rangle \right\vert _{\bar{t}}=0.
\end{equation*}%
For $J\in \mathcal{W},$ it follows that \addtocounter{algorithm}{1} 
\begin{eqnarray}
\left. \left\langle \hat{S}\left( J^{\perp }\right) ,J^{\perp }\right\rangle
\right\vert _{\bar{t}} &=&\left. \left\langle \left( J^{\prime }-\left( J^{%
\mathcal{V}}\right) ^{\prime }\right) ,J^{\perp }\right\rangle \right\vert _{%
\bar{t}}  \notag \\
&=&\left. \left\langle J^{\prime },J^{\perp }\right\rangle \right\vert _{%
\bar{t}}  \notag \\
&=&\left. \left\langle S\left( J\right) ,J^{\perp }\right\rangle \right\vert
_{\bar{t}}  \notag \\
&=&\left. \left\langle S\left( J\right) ,J\right\rangle \right\vert _{\bar{t}%
}.  \label{S hat vs S}
\end{eqnarray}%
So 
\begin{equation*}
\Trace\hat{S}_{\bar{t}}=\Trace S_{\bar{t}}|_{\mathcal{W}(\bar{t})}.
\end{equation*}
\end{proof}

\begin{proof}[Proof of Lemma \protect\ref{sing soon Ric_k Lemma}]
We combine Theorem \ref{Esch-He thm} and Lemma \ref{Ricci Rigidity Lemma}
with the Transverse Jacobi Equation, and the Eigenvalue Transfer Lemma \ref%
{eigem transfer lemma}.

Observe first that Theorem \ref{Esch-He thm} holds on intervals where the
function $s$ is smooth. In our context, this happens as long as $\hat{S}$ is
well-defined. According to Proposition \ref{S hat well defined prop}, $\hat{S%
}$ is well-defined at all times $t$ where $\mathcal{V}$ has full index at $t$%
, and therefore we can apply it in the situation of Lemma \ref{sing soon
Ric_k Lemma}.

Recall that 
\begin{equation*}
\mathcal{V}\equiv \left\{ \left. X\in \Lambda \text{ }\right\vert \text{ }%
X\left( t_{0}\right) \perp J\left( t_{0}\right) \text{ for all }J\in
W_{t_{0}}\right\} .
\end{equation*}

Let $\hat{S}:H\left( t\right) \longrightarrow H\left( t\right) $ be as in
Equation \eqref{dfn of S hat}. It follows from the Eigenvalue Transfer Lemma %
\ref{eigem transfer lemma} that 
\begin{equation*}
\Trace\hat{S}_{t_{0}}\leq k\cdot \tilde{\lambda}_{\kappa }\left(
t_{0}\right) .
\end{equation*}%
The Transverse Jacobi Equation says,\addtocounter{algorithm}{1} 
\begin{equation}
\hat{S}^{\prime }+\hat{S}^{2}+\left\{ R\left( \cdot ,\dot{\gamma}(t)\right) 
\dot{\gamma}(t)\right\} ^{h}+3AA^{\ast }=0.  \label{Wilk-Gr-On eqn}
\end{equation}

Since $\Ric_{k}$ $\geq k$, $AA^{\ast }$ is nonnegative, and $W_{t_{0}}$ is $%
k $--dimensional, when we take the trace of Equation \eqref{Wilk-Gr-On eqn},
divide by $k,$ and make the substitutions of \eqref{Ricati
subs}, we get an equation that satisfies the hypotheses of Theorem \ref%
{Esch-He thm}. Thus for all $t\in \left[ t_{0},t_{\max }\right) ,$ %
\addtocounter{algorithm}{1} 
\begin{equation}
\frac{1}{k}\Trace\hat{S}_t \leq \tilde{\lambda}_{\kappa }\left( t\right) .
\label{Tr S hat eqn}
\end{equation}

By combining this with the Eigenvalue Transfer Lemma \ref{eigem transfer
lemma} and the fact that $\mathcal{V}$ has full index on $\left(
t_{0},t_{\max }\right) ,$ we have 
\begin{equation*}
\Trace S|_{H(t)}\leq k\cdot \lambda _{\kappa },
\end{equation*}%
as claimed.

To prove the rigidity statement, suppose that 
\begin{equation*}
\Trace S|_{H(t_1)} =k\cdot \tilde{\lambda}_{\kappa }\left( t_{1}\right)
\end{equation*}%
for some $t_{1}\in \left( t_{0},t_{\max }\right) .$

It follows from Lemma \ref{eigem transfer lemma} that 
\begin{equation*}
\Trace\hat{S}_{t_1}=k\cdot \tilde{\lambda}_{\kappa }\left( t_{1}\right) .
\end{equation*}

Writing $\hat{R}$ for \/$\left\{ R\left( \cdot ,\dot{\gamma}(t)\right) \dot{%
\gamma}(t)\right\} ^{h}+3AA^{\ast }$, 
we see from Theorem \ref{Esch-He thm} that 
\begin{equation*}
\Trace\hat{S}_t\equiv k\cdot \tilde{\lambda}_{\kappa }\left( t\right) \text{
and }\Trace \hat{R} \equiv k\cdot \kappa
\end{equation*}%
for all $t\in \left[ t_{0},t_{1}\right] .$

Our hypothesis that $\Ric_{k}$ $\geq k\cdot \kappa $ implies that $\Trace%
\left\{ R\left( \cdot ,\dot{\gamma}(t)\right) \dot{\gamma}(t)\right\}
^{h}\geq k\cdot \kappa .$ Combining this with $\Trace\hat{R}\equiv k\cdot
\kappa $ and the fact that $AA^{\ast }$ is nonnegative, we see that $A\equiv
0$. So Lemma \ref{lem:Lagrangian splitting} guarantees the existence of a
subspace $\mathcal{H}$ in $\Lambda $ such that $\mathcal{H}(t)=H(t)$ at
every $t\in \lbrack t_{0},t_{1}]$, and $\Lambda $ splits orthogonally as 
\begin{equation*}
\Lambda =\mathcal{V}\oplus \mathcal{H}.
\end{equation*}

By Part 2 of Lemma $\ref{Ricci Rigidity Lemma},$ $\hat{S}\equiv \tilde{%
\lambda}_{\kappa }\cdot \mathrm{id}$ and $\hat{R}=\kappa \cdot \mathrm{id.}$
So it follows that $\mathcal{H}$ consists of Jacobi fields whose
restrictions to $\left[ t_{0},t_{1}\right] $ have the form%
\begin{equation*}
J=\tilde{f}E,
\end{equation*}%
where $E$ is a parallel field and $\tilde{f}$ is the function from \eqref
{model Jacobi} that satisfies $\tilde{f}\left( t_{0}\right) =\left\vert
J\left( t_{0}\right) \right\vert .$
\end{proof}

\begin{proof}[Proof of Lemma \protect\ref{lem:riccati comparison minus
infinity}]
Since $\mathcal{V}$ has full index, Proposition \ref{S hat well defined prop}
implies that $\hat{S}$ is defined on $\left[ t_{0},t_{\max }\right) $. As
above, the Eigenvalue Transfer Lemma \ref{eigem transfer lemma} gives us
that 
\begin{equation*}
\Trace\hat{S}_{t_{0}}\leq k\cdot \tilde{\lambda}_{\kappa }\left(
t_{0}\right) .
\end{equation*}

%
%
Once again, $\Ric_{k}\geq k\cdot \kappa $ implies that $\Trace\left\{
R\left( \cdot ,\dot{\gamma}(t)\right) \dot{\gamma}(t)\right\} ^{h}\geq
k\cdot \kappa $ and $\hat{R}\geq k\cdot \kappa $. So by Lemma \ref{Ricci
Rigidity Lemma_t_max_infinity}, $\hat{S}\equiv \tilde{\lambda}_{\kappa
}\cdot \mathrm{id}$ and $\hat{R}=\kappa \cdot \mathrm{id}$ on $\left[
t_{0},t_{\max }\right) $. This implies, as in the proof of Part 2 of Lemma %
\ref{sing soon Ric_k Lemma}, that $A=0$ in $[t_{0},t_{max})$. The remainder
of the argument is exactly the same as the proof of Part 2 of Lemma \ref%
{sing soon Ric_k Lemma}.
%
\end{proof}

\begin{proof}[Proof of Lemma \protect\ref{lem:riccati comparison nnc}]
Since $\mathcal{V}$ has full index, Proposition \ref{S hat well defined prop}
gives that $\hat{S}$ is defined on $\left[ t_{0},\infty \right) .$ As above,
the Eigenvalue Transfer Lemma \ref{eigem transfer lemma} gives us that 
\begin{equation*}
\Trace\hat{S}_{t_{0}}\leq 0.
\end{equation*}

So by Lemma \ref{Ricci Rigidity Lemma_nnc_curvature}, $\hat{S}\equiv 0$ and $%
\hat{R}\equiv 0$ on $\left[ t_{0},\infty \right) .$ As before, our
hypothesis that $\Ric_{k}$ $\geq 0$ implies that $\mathrm{Trace}\left\{
R\left( \cdot ,\dot{\gamma}(t)\right) \dot{\gamma}(t)\right\} ^{h}\geq 0.$
Combining this with $\hat{R}\equiv 0$ and the fact that $AA^{\ast }$ is
nonnegative, we see that $A\equiv 0.$ The remainder of the argument is
exactly the same as the proof of Part 2 of Lemma \ref{sing soon Ric_k Lemma}.

%
\end{proof}


\begin{remark}
\label{iniital sing remark}If $\lim_{t\rightarrow t_{0}^{+}}\tilde{\lambda}%
_{\kappa }\left( t\right) =\infty ,$ then, using Remark \ref{initiall sing
rem}, Lemmas \ref{sing soon Ric_k Lemma} to \ref{lem:riccati comparison nnc}
hold with the hypothesis $\Trace(S_{t_0})|_{W_{t_0}} \leq k\tilde{\lambda}%
_{\kappa }\left( t_0\right) $ replaced with \addtocounter{algorithm}{1} 
\begin{equation}
\lim_{t\rightarrow t_{0}^{+}}\inf \left( \Trace S_t|_{H(t)} -k\tilde{\lambda}%
_{\kappa }\left( t\right) \right) \geq 0,  \label{sing 0 inequal}
\end{equation}%
and 
\begin{equation*}
\tilde{\lambda}_{\kappa }\left( t\right) =\left\{ 
\begin{array}{ll}
\cot (t-t_0) & \text{if }\kappa =1 \\ 
\frac{1}{t-t_0} & \text{if }\kappa =0 \\ 
\coth (t-t_0) & \text{if }\kappa =-1.%
\end{array}%
\right.
\end{equation*}

If $N$ is a smooth submanifold of $M$, then Inequality \eqref{sing 0 inequal}
holds for 
\begin{equation*}
W_{0}=\left\{ J|J\left( 0\right) =0\text{, }J^{\prime }\left( 0\right) \in
\nu _{\gamma \left( 0\right) }\left( N\right) \right\} \subset \Lambda _{N}
\end{equation*}%
(see Part 3 of Lemma 2.7 in \cite{SearW} and also Remark 3 in \cite{EschHein}%
).
\end{remark}

\subsection{Why $J_{1}$ need not be $J_{0}$}

\label{example seubsection} This subsection neither depends on nor is used
in the rest of the paper. In it we give examples showing that the field $%
J_{1}$ in Lemma \ref{sing soon thm} can indeed be different from the field $%
J_{0}.$ A similar example can be found on page 463 of \cite{HeiKar}.

\begin{example}
\label{diff future ex}Let $E_{1}$ and $E_{2}$ be parallel orthonormal fields
along a geodesic $\gamma $ in $\mathbb{R}^{3}$ with $E_{1},E_{2}\perp \gamma
.$ Let $\Lambda $ be the Lagrangian family 
\begin{equation*}
\Lambda =\mathrm{span}\left\{ tE_{1},\text{ }\left( t+1\right) E_{2}\right\}
.
\end{equation*}%
Let 
\begin{equation*}
J_{0}=tE_{1}+\left( t+1\right) E_{2}.
\end{equation*}%
Then%
\begin{equation*}
\left\langle J_{0}^{\prime }\left( 0\right) ,J_{0}\left( 0\right)
\right\rangle =\tilde{\lambda}_{0}\left( 0\right) \left\langle J_{0}\left(
0\right) ,J_{0}\left( 0\right) \right\rangle =1,
\end{equation*}%
where $\tilde{\lambda}_{0}=\frac{1}{t+1}$ comes from the model Jacobi field
on $\mathbb{R}^{2}$ given by $\tilde{J}=\left( t+1\right) \tilde{E}$ with $%
\tilde{E}$ a parallel field$.$ In particular, $J_{0}$ satisfies Inequality (%
\ref{sec iniitial cond}) with $t_{0}=0.$

On the other hand, 
\begin{equation*}
\left\langle J_{0}^{\prime }\left( t\right) ,J_{0}\left( t\right)
\right\rangle =\left\langle E_{1}+E_{2},tE_{1}+\left( t+1\right)
E_{2}\right\rangle =2t+1,
\end{equation*}%
and for $t>0,$ 
\begin{equation*}
\tilde{\lambda}_{0}\left( t\right) \left\langle J_{0}\left( t\right)
,J_{0}\left( t\right) \right\rangle =\frac{1}{t+1}\left( t^{2}+\left(
t+1\right) ^{2}\right) =\frac{t^{2}}{t+1}+t+1<2t+1=\left\langle
J_{0}^{\prime }\left( t\right) ,J_{0}\left( t\right) \right\rangle .
\end{equation*}%
To verify the validity of Lemma \ref{sing soon thm} for this example, take $%
J_{1}\left( t\right) =\left( t+1\right) E_{2}$ and note that Inequality \ref%
{sm fut sec Ineq} is an equality for all $t>0.$
\end{example}

\begin{example}
\label{hopf holn}Let $E_{1}$ and $E_{2}$ be parallel orthonormal fields
along a geodesic $\gamma $ in $\mathbb{S}^{3}$ with $E_{1},E_{2}\perp \gamma
.$ Let $\Lambda $ be the Lagrangian family 
\begin{equation*}
\Lambda =\mathrm{span}\left\{ \sin tE_{1},\text{ }\cos tE_{2}\right\} .
\end{equation*}%
Let 
\begin{equation*}
J=\sin tE_{1}+\cos tE_{2}.
\end{equation*}%
Then%
\begin{equation*}
\left\langle J^{\prime }\left( 0\right) ,J\left( 0\right) \right\rangle =0.
\end{equation*}%
So $J$ satisfies Inequality \ref{sec iniitial cond} where $\tilde{\lambda}%
=\cot \left( t+\frac{\pi }{2}\right) $ comes from the model Jacobi field on $%
\mathbb{S}^{2}$ given by $\tilde{J}=\cos \left( t\right) \tilde{E}$ with $%
\tilde{E}$ a parallel field$.$ On the other hand, for $t\in \left( 0,\frac{%
\pi }{2}\right) ,$ 
\begin{eqnarray*}
\left\langle J^{\prime }\left( t\right) ,J\left( t\right) \right\rangle
&\equiv &0 \\
&>&\cot \left( t+\frac{\pi }{2}\right) \left\langle J\left( t\right)
,J\left( t\right) \right\rangle ,
\end{eqnarray*}%
and Inequality \ref{sm fut sec Ineq} does not hold with $J_{0}=J_{1}=J$, $%
\tilde{\lambda}=\cot \left( t+\frac{\pi }{2}\right) ,$ and $t\in \left( 0,%
\frac{\pi }{2}\right) .$

In contrast, the field $\cos \left( t+\frac{\pi }{2}\right) E_{2}$ satisfies
Inequality \ref{sm fut sec Ineq} for all $t\in \left( 0,\frac{\pi }{2}%
\right) .$
\end{example}

\section{Focal Radius and Positive Curvature\label{focal and pos sect}}

In this section, we prove Theorem \ref{Intermeadiate Ricci Thm}, and give
examples showing the hypotheses on the dimension of $N$ can not be removed.

\begin{proof}[Proof of Theorem \protect\ref{Intermeadiate Ricci Thm} (cf
Theorem 3.5 in \protect\cite{GonGui})]
Let $v\in \nu \left( N\right) $ be any unit vector. Recall that we denoted
by $\Lambda _{N}$ the Lagrangian of normal Jacobi fields along $\gamma _{v}$
given by 
\begin{equation*}
\Lambda _{N}=\left\{ J\,|\,J\left( 0\right) \in T_{\gamma _{v}\left(
0\right) }N\text{ and }J^{\prime }\left( 0\right) =\mathrm{S}_{v}J\left(
0\right) \right\} .
\end{equation*}%
It suffices to show that for the subspace 
\begin{eqnarray*}
\mathcal{K} &\equiv &\spann\left\{ \left. J\in \Lambda _{N}\text{ }%
\right\vert \text{ }J\left( t_{i}\right) =0\text{ for some nonzero }t_{i}\in %
\left[ -\frac{\pi }{2},\frac{\pi }{2}\right] \right\} , \\
\dim \mathcal{K} &\geq &\mathrm{\dim }\left( N\right) -k+1.
\end{eqnarray*}%
The definition of $\mathcal{K}$ implies that $\mathcal{K}$ has full index
for all $t\in \left[ -\frac{\pi }{2},\frac{\pi }{2}\right] $.

Suppose, by way of contradiction, that $\dim \mathcal{K}\leq \mathrm{\dim }%
\left( N\right) -k,$ and set 
\begin{equation*}
\mathcal{K}\left( t\right) \equiv \left\{ \left. J\left( t\right) \text{ }%
\right\vert \text{ }J\in \mathcal{K}\right\} \oplus \left\{ \left. J^{\prime
}\left( t\right) \text{ }\right\vert \text{ }J\in \mathcal{K}\text{ and }%
J\left( t\right) =0\right\} .
\end{equation*}

Since $\dim \mathcal{K}\leq \mathrm{\dim }\left( N\right) -k,$ there is a $k$%
--dimensional subspace $W_{0}\subset T_{\gamma _{v}\left( 0\right) }N$
orthogonal to $\mathcal{K}\left( 0\right) $. Replacing $\gamma _{v}$ with $%
\gamma _{-v}$ if necessary we may assume that \addtocounter{algorithm}{1} 
\begin{equation}
\Trace\left( S_{0}|_{W_{0}}\right) \leq 0.  \label{trace 0 ineq}
\end{equation}

Let $\mathcal{V}\subset \Lambda _{N}$ be the subspace so that $\mathcal{V}%
\left( 0\right) \perp W\left( 0\right) ,$ and notice that $\mathcal{K}%
\subset \mathcal{V}$. From \eqref{trace 0 ineq}, we see that Lemma \ref{sing
soon Ric_k Lemma} applies to $\Lambda _{N}$ and $W_{0}$ on $\left[ 0,\frac{%
\pi }{2}\right] $. So for all $t\in \left[ 0,\frac{\pi }{2}\right] ,$ there
is a $k$--dimensional space $H(t)\subset \gamma _{v}^{\prime }(t)^{\perp }$
so that \addtocounter{algorithm}{1} 
\begin{equation}
\Trace S_{t}|_{H(t)}\leq k\cot \left( t+\frac{\pi }{2}\right) \text{ and }%
H(t)\perp \mathcal{V}\left( t\right) .
\label{small future prf lem Q Inequal}
\end{equation}%
It follows from Inequality \eqref{small
future prf lem Q Inequal} that there is a $Z\in \Lambda \setminus \mathcal{V}
$ with\addtocounter{algorithm}{1} 
\begin{equation}
Z\left( t\right) =0\text{ for some }t\in \left( 0,\frac{\pi }{2}\right] .
\label{Z_i sing}
\end{equation}%
Since $Z\notin \mathcal{V},$ it follows that $Z\notin \mathcal{K},$ and %
\eqref{Z_i sing} contradicts the definition of $\mathcal{K}.$

To prove Part 2, assume that the focal radius of $N$ is $\frac{\pi }{2}.$ If
necessary we replace $\gamma _{v}$ with $\gamma _{-v}$ to arrange that 
\begin{equation*}
\Trace\left( S_{0}|_{W_{0}}\right) \leq 0.
\end{equation*}%
This allows us to apply Lemma \ref{lem:riccati comparison minus infinity}
with $W_{0}=T_{\gamma _{v}\left( 0\right) }N$, $\kappa =1,$ $t_{0}=0,$ $%
t_{\max }=\frac{\pi }{2},$ and $\tilde{\lambda}_{1}=\cot \left( t+\frac{\pi 
}{2}\right) $, to conclude that 
\begin{equation*}
\left\{ J\,|\,J\left( 0\right) \in T_{\gamma _{v}\left( 0\right) }N\text{
and }J^{\prime }\left( 0\right) =\mathrm{S}_{v}J\left( 0\right) \right\}
\end{equation*}%
is spanned by Jacobi fields of the form $\sin \left( t+\frac{\pi }{2}\right)
E$ where $E$ is a parallel field. In particular, $\mathrm{S}_{v}\equiv 0,$
and since this holds for all unit vectors $v$ orthogonal to $N,$ $N$ is
totally geodesic.
\end{proof}

\begin{remark}
\label{not Berger}Although the Ricci curvature version of Theorem \ref%
{Intermeadiate Ricci Thm} can be proven via standard Riccati comparison (see
e.g. \cite{EschHein}), its statement does not seem to be in the literature.
In contrast, it does not seem possible to prove the sectional curvature
version of Theorem \ref{Intermeadiate Ricci Thm} with existing Jacobi or
Riccati comparison results. In the special case when $N$ is known to be
totally geodesic, there are $J$ in $\Lambda _{N}$ with $J^{\prime }\left(
0\right) =0.$ Berger's version of the Rauch Comparison Theorem then gives $%
\left\langle J^{\prime }\left( t\right) ,J\left( t\right) \right\rangle \leq
\cot \left( \frac{\pi }{2}+t\right) $ for all $t\in \left( 0,\frac{\pi }{2}%
\right) .$ In particular, $\gamma $ would have a focal point in $\left[ 0,%
\frac{\pi }{2}\right] $ (see Theorem 1.29 in \cite{CheegEbin} and Theorem
4.9 on page 234 of \cite{doCarm})$.$

For a general submanifold, we can always flip the parameterization of a
geodesic as in the proof of Theorem \ref{Intermeadiate Ricci Thm}, to obtain 
$\left\langle J^{\prime }\left( 0\right) ,J\left( 0\right) \right\rangle
\leq 0$ for some $J\in \Lambda _{N}.$ However, Example \ref{hopf holn} shows
that $\left\langle J^{\prime }\left( t\right) ,J\left( t\right)
\right\rangle $ can exceed $\cot \left( \frac{\pi }{2}+t\right) $ if $%
J^{\prime }\left( 0\right) \neq 0.$ In fact, the $J$ of Example \ref{hopf
holn} never vanishes! Thus it does not seem possible to prove Theorem \ref%
{Intermeadiate Ricci Thm} using only Berger's Theorem in place of Lemma \ref%
{sing soon Ric_k Lemma}.
\end{remark}

\subsection{Examples}

\label{optimal examples for theorems A and B} Next we give examples showing
that the hypotheses about the dimension of the submanifolds in Theorems \ref%
{Intermeadiate Ricci Thm} and \ref{inter Ricci Rigidity thm} cannot be
removed. For the sectional curvature versions of the theorems, a point in
small perturbation of $\mathbb{S}^{n}$ shows that the conclusions can be
false if $N$ does not have positive dimension. For the Ricci curvature
versions of the theorems, we have the following examples.

\begin{example}
Let $S_{k}^{n}$ be the $n$--sphere with constant curvature $k.$ The product
metric on $S_{\frac{n+1}{n-1}}^{n}\times S_{n+1}^{2}$ satisfies %
\addtocounter{algorithm}{1} 
\begin{eqnarray}
\Ric\left( S_{\frac{n+1}{n-1}}^{n}\times S_{n+1}^{2}\right) &=&n+1=\Ric%
\left( \mathbb{S}^{n+2}\right) ,\text{ and\label{gen ricci ex}} \\
\mathrm{FocalRadius}\left( \left\{ pt\right\} \times S_{n+1}^{2}\right)
&=&\pi \sqrt{\frac{n-1}{n+1}}\longrightarrow \pi \text{ as }n\rightarrow
\infty .  \notag
\end{eqnarray}

Thus the focal radius of $N$ in the Ricci curvature version of Theorem \ref%
{Intermeadiate Ricci Thm} can converge to $\pi $ if the hypothesis that $N$
is a hypersurface is removed and the dimension of $M$ is allowed to go to $%
\infty ,$ while the dimension of $N$ is fixed.

On the other hand, if we take $n=2$ or $3$, then \eqref{gen ricci ex}
becomes 
\begin{eqnarray*}
\Ric\left( S_{3}^{2}\times S_{3}^{2}\right) &\equiv &3\equiv \Ric\left( 
\mathbb{S}^{4}\right) ,\text{ } \\
\mathrm{FocalRadius}\left( \left\{ pt\right\} \times S_{3}^{2}\right) &=&\pi 
\sqrt{\frac{1}{3}}>\frac{\pi }{2},\text{ and}
\end{eqnarray*}%
\begin{eqnarray*}
\Ric\left( S_{2}^{3}\times S_{4}^{2}\right) &=&4=\Ric\left( \mathbb{S}%
^{5}\right) \text{,} \\
\mathrm{FocalRadius}\left( S_{2}^{3}\times \left\{ pt\right\} \right) &=&%
\frac{\pi }{2}.
\end{eqnarray*}%
So the hypothesis that $N$ is a hypersurface in Ricci curvature versions of
Theorem \ref{Intermeadiate Ricci Thm} cannot be replaced with the hypothesis
that $N$ is a codimension $2$ submanifold. Similarly, in the Ricci curvature
version of Theorem \ref{inter Ricci Rigidity thm}, the hypersurface can not
be replaced with a codimension 2 submanifold.
\end{example}

For our intermediate Ricci curvature results we have

\begin{example}
For $k>\frac{4}{3}p$ and $p\geq 2,$ $M=S_{\frac{k}{k-p}}^{k-1}\times
S_{k}^{p}$ satisfies 
\begin{eqnarray*}
\Ric_{k}\left( M\right) &\geq &k\text{ and} \\
\mathrm{FocalRadius}\left( \left\{ pt\right\} \times S_{k}^{p}\right) &=&\pi 
\sqrt{\frac{k-p}{k}}>\frac{\pi }{2}\text{, if }k>\frac{4}{3}p.
\end{eqnarray*}

Thus $\left\{ pt\right\} \times S_{k}^{p}\subset S_{\frac{k}{k-p}%
}^{k-1}\times S_{k}^{p}$ is a closed submanifold of a $\left( k+p-1\right) $%
--manifold with $\Ric_{k}\left( M\right) \geq k$ and focal radius $>\frac{%
\pi }{2},$ and the focal radius of $N$ in Theorem \ref{Intermeadiate Ricci
Thm} can exceed $\frac{\pi }{2}$ if the hypothesis that $\dim \left(
N\right) \geq k$ is replaced with $\dim \left( N\right) \geq p$ where $\frac{%
3}{4}k>p. $

By sending $k\rightarrow \infty $ while keeping $p$ fixed, we see that 
\begin{equation*}
\mathrm{FocalRadius}\left( \left\{ pt\right\} \times S_{k}^{p}\right) =\pi 
\sqrt{\frac{k-p}{k}}\longrightarrow \pi .
\end{equation*}%
So in Theorem \ref{Intermeadiate Ricci Thm}, the focal radius of $N$ can
converge to $\pi ,$ if there is no hypothesis about the dimension of $N,$
and the dimension of $M$ is allowed to go to $\infty .$
\end{example}

\part*{{\protect\Large Part 2: Focal Rigidity\label{focal rig Part}}}


Let $M$ be a complete Riemannian manifold $M$ with $\Ric_{k}\geq k,$ and let 
$N$ be a closed submanifold of $M$ of dimension at least $k$ and focal
radius $\frac{\pi }{2}.$ Since each connected component of $N$ has focal
radius $\frac{\pi }{2},$ we may assume that $N$ is connected.

In the second part of the paper, we prove Theorem \ref{inter Ricci Rigidity
thm} by showing that the universal cover of $M$ is isometric to the unit
sphere or to a projective space with the standard metric, with $N$ totally
geodesic in $M.$

In Section \ref{dist from N sect}, we exploit Lemma %
\ref{lem:riccati comparison minus infinity} to prove a rigidity result for
the Jacobi fields of $\Lambda _{N}$ (see Proposition \ref{Jacobi Rigidity
prop}). This allows us to prove, in Section \ref{F structure}, that every
first focal point of $N$ is regular in the sense of \cite{Heda}. With this
it follows rather easily that $F,$ the focal set of $N,$ is a totally
geodesic closed submanifold with focal radius $\frac{\pi }{2}.$ We thus
further the analogy between the pair $\left( N,F\right) $ and the dual sets
in the proof of the Diameter Rigidity Theorem. In particular, we establish,
as in \cite{GrovGrom}, that $F$ (resp. $N$) is the base of a Riemannian
submersion from the unit normal sphere to any point of $N$ (resp. $F$). In
Section \ref{F structure}, we also show that if \textrm{dim}$\left( F\right)
+\mathrm{\dim }\left( N\right) =\dim \left( M\right) -1,$ then $M$ has
constant curvature $1,$ which in particular yields Corollary \ref{hyper
refin cor}.

To show that phenomena like Example \ref{quat exam} do not occur in the
simply connected case, we prove, in Section \ref{simply connected sect},
that our focal set $F$ is \emph{very regular} in the sense of Hebda (\cite%
{Heda}). This allows us to appeal to Theorem 3.1 in \cite{Heda} and
conclude, in Theorem \ref{Hebda 3.1}, that $M$ is the union of two disk
bundles. Using this we prove that if the codimension of $F$ (resp. $N$) is $%
\geq 3,$ then $N$ (resp. $F$) is simply connected; hence the fibers of the
Riemannian submersion to $N$ (resp. $F$) are connected.

All of the above allows us to complete the proof of Theorem \ref{inter Ricci
Rigidity thm} along the lines of the proof of the Diameter Rigidity Theorem.
In the sectional curvature case, the argument can be concluded more rapidly.
We prove that the diameter of the universal cover of $M$ is $\geq \frac{\pi 
}{2},$ and appeal to the Diameter Rigidity Theorem, after making a further
topological argument that rules out exotic spheres and nonunit metrics on $%
\mathbb{S}^{n}.$ We give the details of this in Section \ref{M is not a
sphere}. In Section \ref{Rigid and Inter ric sect}, we complete the proof of
Theorem \ref{inter Ricci Rigidity thm} for intermediate Ricci curvature.

\section{The Distance from $N$}

\label{dist from N sect}

With the exception of Proposition \ref{Jacobi Rigidity prop}, we assume
throughout Sections \ref{dist from N sect}--\ref{Rigid and Inter ric sect}
that $M$ is a complete Riemannian manifold with $\Ric_{k}\geq k,$ and that $%
N $ is a connected, closed submanifold of $M$ of dimension at least $k$ and
focal radius $\frac{\pi }{2}.$

In this section, we apply Lemma \ref{lem:riccati comparison minus infinity} 
to prove Proposition \ref{Jacobi Rigidity prop}, which says, among other
things, that the radial sectional curvatures from $N$ are all $\geq 1.$

We start by reviewing the notion of horizontally homothetic submersions.

\begin{definition}
(\cite{BaEells}, \cite{BaWo}) A submersion $\pi :M\longrightarrow B$ of
Riemannian manifolds is called horizontally homothetic if and only if there
is a smooth function $\lambda :M\longrightarrow \left( 0,\infty \right) $
with vertical gradient so that for all horizontal vectors $x$ and $y$%
\begin{equation*}
\lambda ^{2}\left\langle x,y\right\rangle _{M}=\left\langle D\pi \left(
x\right) ,D\pi \left( y\right) \right\rangle _{B}.
\end{equation*}
\end{definition}

We also use the following result from \cite{OW}.

\begin{proposition}
\label{HHS is Riem prop}Let $\pi :M\longrightarrow B$ be a horizontally
homothetic submersion with dilation $\lambda $ and let $r$ be a regular
value of $\lambda $ so that $\lambda ^{-1}\left( r\right) $ is nonempty.
Then 
\begin{equation*}
\pi |_{\lambda ^{-1}\left( r\right) }:\left( \lambda ^{-1}\left( r\right)
,\left\langle \cdot ,\cdot \right\rangle _{M}\right) \longrightarrow \left(
B,\frac{1}{\lambda \left( r\right) ^{2}}\left\langle \cdot ,\cdot
\right\rangle _{B}\right)
\end{equation*}%
is a Riemannian submersion$.$
\end{proposition}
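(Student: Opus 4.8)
The plan is to exhibit the horizontal distribution $\mathcal{H}$ of $\pi$ as being tangent to the level set $\Sigma = \lambda^{-1}(r)$ and as serving simultaneously as the horizontal distribution of the restriction $\pi|_{\Sigma}$; the homothety identity then yields the metric statement at once. First I would record that, since $r$ is a regular value and $\lambda^{-1}(r)$ is nonempty, $\Sigma$ is an embedded hypersurface with $T_{p}\Sigma = \ker d\lambda_{p}$ and $d\lambda_{p}\neq 0$ for every $p\in\Sigma$. Because $\grad\lambda$ is vertical, for any horizontal $x\in\mathcal{H}_{p}$ one has $d\lambda_{p}(x) = \langle\grad\lambda_{p},x\rangle_{M} = 0$, so $\mathcal{H}_{p}\subseteq T_{p}\Sigma$. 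In particular $D\pi_{p}$ already maps $\mathcal{H}_{p}$ isomorphically onto $T_{\pi(p)}B$ inside $T_{p}\Sigma$, so $\pi|_{\Sigma}$ is a submersion onto an open subset of $B$.

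Next I would pin down the horizontal space of $\pi|_{\Sigma}$ at a point $p\in\Sigma$, i.e.\ the orthogonal complement in $T_{p}\Sigma$ of $\mathcal{V}_{p}\cap T_{p}\Sigma$, where $\mathcal{V}_{p} = \ker D\pi_{p}$ is the vertical space of $\pi$. Since $\grad\lambda_{p}$ is a nonzero vertical vector, $d\lambda_{p}$ does not vanish on $\mathcal{V}_{p}$, whence $\dim(\mathcal{V}_{p}\cap T_{p}\Sigma) = \dim\mathcal{V}_{p} - 1$; its orthogonal complement in $T_{p}\Sigma$ therefore has dimension $(\dim M - 1) - (\dim\mathcal{V}_{p} - 1) = \dim\mathcal{H}_{p}$. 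As $\mathcal{H}_{p}\subseteq T_{p}\Sigma$ is orthogonal to $\mathcal{V}_{p}$ and has this same dimension, it must coincide with the horizontal space of $\pi|_{\Sigma}$ at $p$.

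Finally, given $x,y\in\mathcal{H}_{p}$ with $p\in\Sigma$, the defining identity of horizontal homothety together with $\lambda(p) = r$ on $\Sigma$ gives $\langle D\pi(x),D\pi(y)\rangle_{B} = \lambda(p)^{2}\langle x,y\rangle_{M} = \lambda(r)^{2}\langle x,y\rangle_{M}$, equivalently $\langle x,y\rangle_{M} = \langle D\pi(x),D\pi(y)\rangle_{\frac{1}{\lambda(r)^{2}}\langle\cdot,\cdot\rangle_{B}}$. Thus $D\pi_{p}$ restricts to a linear isometry from $\mathcal{H}_{p}$ onto $(T_{\pi(p)}B,\frac{1}{\lambda(r)^{2}}\langle\cdot,\cdot\rangle_{B})$, which is precisely the condition for $\pi|_{\Sigma}$ to be a Riemannian submersion.

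I do not expect a genuine obstacle here: the only points needing care are the transversality of $\Sigma$ to the fibers of $\pi$ — which is exactly the assertion that the nonzero vector $\grad\lambda_{p}$ is vertical — so that the dimension count above is valid at every $p\in\Sigma$, and, should ``Riemannian submersion'' be understood to include surjectivity onto $B$, the minor addendum that $\pi|_{\Sigma}$ has open image and hence, under whatever connectedness or completeness hypothesis on $B$ is in force, is onto.
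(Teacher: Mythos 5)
The paper does not prove this proposition: it is stated as a cited result from \cite{OW}, so there is no in-paper argument to compare against. Your proof is correct and is the natural argument one expects in \cite{OW}: since $r$ is a regular value, $\Sigma=\lambda^{-1}(r)$ is an embedded hypersurface with $T_{p}\Sigma=\ker d\lambda_{p}$; verticality of $\grad\lambda$ puts $\mathcal{H}_{p}\subseteq T_{p}\Sigma$; the dimension count using $\grad\lambda_{p}\neq 0$ and $\grad\lambda_{p}\in\mathcal{V}_{p}$ identifies $\mathcal{H}_{p}$ with the horizontal space of $\pi|_{\Sigma}$; and the homothety identity with $\lambda\equiv r$ on $\Sigma$ delivers the isometry of horizontal spaces. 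One small notational remark: the paper's expression $\lambda(r)^{2}$ in the target metric is best read simply as $r^{2}$, since $r$ is a value of $\lambda$ rather than a point of $M$; your chain $\lambda(p)^{2}=\lambda(r)^{2}$ silently makes this identification, which is fine but worth stating plainly. Your closing caveat about surjectivity is also appropriate if one's convention for ``Riemannian submersion'' demands it, though in this paper's applications the level sets considered are distance spheres in a tubular neighborhood and surjectivity onto the base is automatic.
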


For a unit speed geodesic $\gamma _{v}$ that leaves $N$ orthogonally at time 
$0,$ we set\addtocounter{algorithm}{1} 
\begin{eqnarray}
\mathcal{Z}_{N} &\equiv &\left\{ J|J\left( 0\right) =0\text{, }J^{\prime
}\left( 0\right) \perp \mathrm{span}\left\{ T_{\gamma \left( 0\right) }N%
\text{,}\gamma ^{\prime }\left( 0\right) \right\} \right\}  \notag \\
\mathcal{T}_{N} &\equiv &\left\{ J|J\left( 0\right) \in T_{\gamma \left(
0\right) }N\text{ and }J^{\prime }\left( 0\right) =\mathrm{S}_{v}J\left(
0\right) \right\} ,\text{ and}  \label{Lambda dfn} \\
\Lambda _{N} &\equiv &\mathcal{Z}_{N}\oplus \mathcal{T}_{N},  \notag
\end{eqnarray}%
where $\mathrm{S}_{v}$ is the shape operator of $N$ determined by $v,$ that
is, $\mathrm{S}_{v}:T_{\gamma _{v}\left( 0\right) }N\longrightarrow
T_{\gamma _{v}\left( 0\right) }N,$ is $\left( \nabla _{\cdot }v\right)
^{TN}. $

Our first consequence of $\mathrm{FocalRadius}\left( N\right) =\frac{\pi }{2}
$ holds even if $N$ is not closed, and it only requires that the radial
intermediate Ricci curvatures from $N$ are $\geq k\cdot \kappa .$ Theorem %
\ref{Intermeadiate Ricci Thm} and Remark \ref{radial rem} imply that such an 
$N$ is totally geodesic.

\begin{proposition}
\label{Jacobi Rigidity prop}Let $M$ be a complete Riemannian $n$--manifold,
and let $N$ be a submanifold of $M$ with focal radius $\frac{\pi }{2}$ and $%
\dim \left( N\right) \geq k.$ Suppose that along each unit speed geodesic $%
\gamma :\left[ 0,\frac{\pi }{2}\right] \longrightarrow M$ that leaves $N$
orthogonally at time $0$ we have $\Ric_{k}\left( \dot{\gamma},\cdot \right)
\geq k\cdot \kappa ,$ that is 
\begin{equation*}
\displaystyle\sum\limits_{i=1}^{k}\mathrm{sec}\left( \dot{\gamma}%
,E_{i}\right) \geq k\cdot \kappa ,
\end{equation*}%
for any orthonormal set $\left\{ \dot{\gamma},E_{1},\ldots ,E_{k}\right\} .$

\noindent 1. All $J\in \mathcal{T}_{N}$ have the form $J\left( t\right)
=\cos tE$ where $E$ is a parallel field along $\gamma .$

\noindent 2. $\mathcal{Z}_{N}\oplus \mathcal{T}_{N}$ is a parallel,
orthogonal splitting along $\left[ 0,\frac{\pi }{2}\right] .$

\noindent 3. Let $g^{\ast }$ be the metric on $\mathrm{reg}_{N}\subset \nu
\left( N\right) $ obtained from pulling back $\left( M,g\right) $ via the
normal exponential map, and let $\pi :\mathrm{reg}_{N}\longrightarrow N$ be
the projection of the normal bundle. Then with respect to $g^{\ast }$, $\pi $
is a horizontally homothetic submersion with scaling function $\cos (\mathrm{%
dist}(N_{0},\cdot ))$, where $N_{0}$ is the $0$--section of the normal
bundle, $\nu \left( N\right) .$

\noindent 4. If $c:I\longrightarrow N$ is a unit speed geodesic in $N,$ and $%
V$ is a parallel normal unit field along $c,$ then 
\begin{equation*}
\Phi :I\times \left( 0,\frac{\pi }{2}\right) \longrightarrow M,\text{\hspace{%
0.6in}}\Phi \left( s,t\right) =\exp _{c\left( s\right) }^{\perp }\left(
tV\left( s\right) \right)
\end{equation*}%
is a totally geodesic immersion whose image has constant curvature $1.$

\noindent 5. With respect to $g^{\ast },$ every plane tangent to $\mathrm{reg%
}_{N}\setminus N_{0}$ that contains $X\equiv \mathrm{\mathop{\rm grad}}%
\left\{ \mathrm{dist}\left( N_{0},\cdot \right) \right\} $ has sectional
curvature $\geq 1.$
\end{proposition}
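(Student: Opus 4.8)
The plan is to deduce all five assertions from Theorem~\ref{Intermeadiate Ricci Thm} together with a single application of the rigid comparison Lemma~\ref{lem:riccati comparison minus infinity}. Since the hypothesis here is exactly the radial curvature bound of Remark~\ref{radial rem}, Theorem~\ref{Intermeadiate Ricci Thm} applies and shows that $N$ is totally geodesic; hence $\mathrm S_v\equiv 0$, so the fields of $\mathcal T_N$ are the normal Jacobi fields with $J(0)\in T_{\gamma_v(0)}N$ and $J'(0)=0$. Fix a unit $v\in\nu(N)$. For any $k$--dimensional $W_0\subset T_{\gamma_v(0)}N$ we have $\Trace(S_0)|_{W_0}=0$, and $N$ has no focal point in $(0,\tfrac\pi2)$, so the full--index hypothesis of Lemma~\ref{sing soon Ric_k Lemma} holds (as $\mathcal Z_N\subset\mathcal V$) and Lemma~\ref{lem:riccati comparison minus infinity} applies on $[0,\tfrac\pi2)$ with $t_0=0$, $t_{\max}=\tfrac\pi2$, and $\tilde\lambda_1(t)=\cot(t+\tfrac\pi2)$, whose limit at $\tfrac\pi2$ is $-\infty$. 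Its conclusion is a parallel orthogonal splitting $\Lambda_N=\mathcal V\oplus\mathcal H$ in which every nonzero $J\in\mathcal H$ equals $\tilde f\cdot E$ with $E$ parallel, $\tilde f''+\tilde f=0$, and $\tilde f(0)=|J(0)|$. Because $N$ is totally geodesic, every $J\in\Lambda_N$ has $J'(0)\perp T_{\gamma_v(0)}N$, while $E(0)\in W_0\subset T_{\gamma_v(0)}N$; so $\tilde f'(0)E(0)=J'(0)$ is both in and orthogonal to $T_{\gamma_v(0)}N$, forcing $\tilde f'(0)=0$, i.e. $J=\cos t\,E$. Letting $W_0$ run over all $k$--planes in $T_{\gamma_v(0)}N$ (possible since $\dim N\ge k$), every field of $\mathcal T_N$ is of this form, which is Part~1. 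Part~2 follows at once: $\mathcal T_N(t)$ is spanned by parallel fields, hence is a parallel distribution on which $R(\cdot,\dot\gamma)\dot\gamma$ acts as the identity, so its orthogonal complement in $\dot\gamma(t)^{\perp}$ is parallel, $R$--invariant, contains $\mathcal Z_N(t)$, and by dimension count equals it.

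For Part~3 I would use Part~1 directly: under $\exp_N^\perp$ the vertical subspace of $\pi$ at $tv$ is carried to $\mathbb R\dot\gamma(t)\oplus\mathcal Z_N(t)$, its $g^{\ast}$--orthogonal complement (the $\pi$--horizontal space) to $\mathcal T_N(t)$, and for $J\in\mathcal T_N$ one has $|J(t)|=\cos t\,|J(0)|=\cos t\,|D\pi_{tv}(\cdot)|$. This is precisely the horizontally homothetic condition with scaling function $\cos(\dist(N_0,\cdot))$; the gradient of $\cos(\dist(N_0,\cdot))$ is vertical since $\dist(N_0,\cdot)$ is the radial parameter, constant along fibers. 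Proposition~\ref{HHS is Riem prop} then makes $\pi$ restricted to each tube $S(N,r)$, $r<\tfrac\pi2$, a Riemannian submersion onto $N$ equipped with a constant rescaling of $g|_N$.

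Part~4 is the crux, and I would get it from Part~3 rather than by a bare second--fundamental--form computation. With $\Phi(s,t)=\exp_{c(s)}^\perp(tV(s))$: each $t$--curve is a unit geodesic, so $\nabla_{\partial_t}\partial_t\Phi=0$; each $s$--curve has velocity the $\mathcal T_N$--field $J_s$ along $\gamma_{V(s)}$ with $J_s(0)=c'(s)$ and $J_s'(0)=\mathrm S_{V(s)}c'(s)=0$, so by Part~1 $\partial_s\Phi=\cos t\,E_s$ with $E_s$ the parallel field extending $c'(s)$, whence $\nabla_{\partial_t}\partial_s\Phi=\nabla_{\partial_s}\partial_t\Phi=J_s'(t)=-\tan t\,\partial_s\Phi$; in particular $\Phi$ is an immersion on $I\times(0,\tfrac\pi2)$ with tangent plane $\spann\{\dot\gamma(t),E_s(t)\}$. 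For $\nabla_{\partial_s}\partial_s\Phi$ I note that in $\mathrm{reg}_N$ the curve $s\mapsto tV(s)$ has vanishing vertical part ($t\,\nabla^\perp_{c'}V=0$) and hence is $\pi$--horizontal, while $c$ is a geodesic of $N$ and so of every constant rescaling of $g|_N$; thus by Part~3 and Proposition~\ref{HHS is Riem prop} the curve $s\mapsto\Phi(s,t)$ is a geodesic of the tube $S(N,t)$, which forces $\nabla_{\partial_s}\partial_s\Phi$ to point along that tube's unit normal $\dot\gamma(t)$ and hence to be tangent to $\mathrm{Im}\,\Phi$. Therefore $\Phi$ is totally geodesic, and its intrinsic curvature equals the ambient curvature of $\spann\{\dot\gamma(t),E_s(t)\}$; since $\cos t\,E_s$ is a Jacobi field one has $R(E_s,\dot\gamma)\dot\gamma=E_s$, so that curvature is $|E_s|^2=1$.

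Part~5 is then immediate from Part~1: a plane tangent to $\mathrm{reg}_N\setminus N_0$ containing $X$ has $g^{\ast}$--curvature equal to the $M$--curvature of $\spann\{\dot\gamma(t),z\}$ for some unit $z\perp\dot\gamma(t)$, and if this were $<1$ we could complete $z$ to an orthonormal frame $\{\dot\gamma(t),z,E_2,\dots,E_k\}$ with $E_2,\dots,E_k$ in the ($\ge k$)--dimensional subspace $\mathcal T_N(t)$; since $\mathrm{sec}(\dot\gamma,E_i)=1$ for each such $E_i$ by Part~1, the sum $\mathrm{sec}(\dot\gamma,z)+\sum_{i=2}^k\mathrm{sec}(\dot\gamma,E_i)$ would be $<k$, contradicting $\Ric_k(\dot\gamma,\cdot)\ge k$. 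I expect the one genuine obstacle to be the total geodesy of $\Phi$ in Part~4: a direct computation stalls on $\nabla_{\partial_s}\partial_s\Phi$ because the relevant curvature term is not controlled by Part~1 alone, and the way around it is to route the argument through the horizontally homothetic submersion structure of Part~3.
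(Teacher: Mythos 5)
Your proof is correct and tracks the paper's argument very closely, differing only in expository choices rather than in substance.

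The paper obtains Parts~1 and~2 by a single application of Lemma~\ref{lem:riccati comparison minus infinity} with $W_0=T_{\gamma_v(0)}N$ (the full tangent plane of $N$, using the fact that $\Ric_k\ge k\kappa$ implies $\Ric_{k'}\ge k'\kappa$ for $k'\ge k$), which simultaneously delivers $\mathcal T_N=\mathcal H=\{\cos t\,E\}$ and the parallel orthogonal splitting $\Lambda_N=\mathcal Z_N\oplus\mathcal T_N$. You instead first invoke Theorem~\ref{Intermeadiate Ricci Thm} (via Remark~\ref{radial rem}) to get total geodesy, then sweep $k$--dimensional $W_0$'s over $T_{\gamma_v(0)}N$ to recover Part~1 and deduce Part~2 from the parallel, $R$--invariant nature of $\mathcal T_N(t)$ plus a dimension count. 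This works, though you leave the orthogonality $\mathcal Z_N(t)\perp\mathcal T_N(t)$ for $t>0$ unjustified — it does follow, either from the orthogonality in the lemma's conclusion (taking $\mathcal H(t)$ to sweep out $\mathcal T_N(t)$ as $W_0$ varies) or from the Lagrangian identity $\langle J',K\rangle=\langle J,K'\rangle$ together with $J(0)\cdot K(0)=0$. Your treatment of Part~4 is identical in content to the paper's: lift $\partial_s\Phi$ to $\mathcal T_N^*$, use the Riemannian submersion on each level set from Part~3 (via Proposition~\ref{HHS is Riem prop}) to conclude $s\mapsto\Phi(s,t)$ is a geodesic of the tube, hence $\nabla_{\partial_s}\partial_s\Phi$ is radial. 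For Part~5, the paper proceeds in two steps (first bound $\sec(X,Z)$ for $Z\in\mathcal Z_N^*$, then use the $R$--invariance of the splitting from Part~2 to pass to general $Y\perp X$), while you fold these into a single application of the intermediate-Ricci bound; your version is slightly slicker but needs the remark that for any $z\perp\dot\gamma$ one can find $k-1$ orthonormal vectors in $\mathcal T_N(t)$ orthogonal to $z$, which holds because the projection of $z$ to $\mathcal T_N(t)$ spans at most one dimension.
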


\begin{proof}
Parts 1 and 2 follow from 
the special case of Lemma \ref{lem:riccati comparison minus infinity} when $%
\kappa =1,$ $t_{0}=0,$ $t_{\max }=\frac{\pi }{2},$ and $\tilde{\lambda}%
_{1}=\cot \left( t+\frac{\pi }{2}\right) $ (cf. also Theorem B in \cite%
{GumWilh}).

For Part 3, we let $\mathcal{Z}_{N}^{\ast }$ and $\mathcal{T}_{N}^{\ast }$
be the pullbacks of $\mathcal{Z}_{N}$ and $\mathcal{T}_{N}$ to $\mathrm{reg}%
_{N}$ via $\exp _{N}^{\perp }.$ Observe that by Part 2, 
\begin{equation*}
\mathrm{\mathop{\rm grad}}\left\{ \mathrm{dist}\left( N_{0},\cdot \right)
\right\} \oplus \mathcal{Z}_{N}^{\ast }\oplus \mathcal{T}_{N}^{\ast }
\end{equation*}%
is an orthogonal splitting of $T\mathrm{reg}_{N}$ with respect to $g^{\ast
}. $ Since the vertical space of $\pi $ is spanned by $\mathrm{%
\mathop{\rm
grad}}\left\{ \mathrm{dist}\left( N_{0},\cdot \right) \right\} \oplus 
\mathcal{Z}_{N}^{\ast },$ the horizontal space of $\pi $ with respect to $%
g^{\ast }$ is spanned by $\mathcal{T}_{N}^{\ast }.$ Since the fields of $%
\mathcal{T}_{N}^{\ast }$ come from variations of geodesics that leave $N_{0}$
orthogonally, they are $\pi $--basic. Part 3 follows by combining this with
Part 1.

For Part 4, observe that Part 1 gives us that $\Phi $ is an immersion. Let $%
\mathrm{II}$ be the second fundamental form of $\mathrm{image}\left( \Phi
\right) .$ By construction, $\frac{\partial \Phi }{\partial t}$ is a
geodesic field so $\mathrm{II}\left( \frac{\partial \Phi }{\partial t},\frac{%
\partial \Phi }{\partial t}\right) =0.$ It follows from Part 1 that $\mathrm{%
II}\left( \frac{\partial \Phi }{\partial t},\frac{\partial \Phi }{\partial s}%
\right) =0.$

To see that $\mathrm{II}\left( \frac{\partial \Phi }{\partial s},\frac{%
\partial \Phi }{\partial s}\right) =0,$ we note that, since $\frac{\partial
\Phi }{\partial t}$ is normal to $\exp _{N}^{\perp }\left( S\left(
N_{0},r\right) \right) ,$ it suffices to verify that %
\addtocounter{algorithm}{1} 
\begin{equation}
g\left\langle \mathrm{II}\left( \frac{\partial \Phi }{\partial s},\frac{%
\partial \Phi }{\partial s}\right) ,Z\right\rangle =0  \label{tang enough}
\end{equation}%
for all $Z\in T\exp _{N}^{\perp }\left( S\left( N_{0},r\right) \right) $
that are normal to the image of $\Phi .$ Since $\exp _{N}^{\perp }:\left( 
\mathrm{reg}_{N},g^{\ast }\right) \longrightarrow \left( M,g\right) $ is a
local isometry, to prove \eqref{tang enough}, it suffices to do the
corresponding calculation in $\left( \mathrm{reg}_{N},g^{\ast }\right) .$
From Part 3 we have that the restriction of $\pi :\left( \mathrm{reg}%
_{N},g^{\ast }\right) \longrightarrow \left( N,\frac{1}{\cos \left( t\right)
^{2}}g\right) $ to the $t$--level set of $\mathrm{dist}(N_{0},\cdot )$ is a
Riemannian submersion. Let $\widetilde{\frac{\partial \Phi }{\partial s}}$
be a lift of $\frac{\partial \Phi }{\partial s}$ to $\mathrm{reg}_{N}$ via $%
\exp _{N}^{\perp }.$ Then $\widetilde{\frac{\partial \Phi }{\partial s}}$ is
a $\pi $--basic horizontal, geodesic field, so $\mathrm{II}\left( \frac{%
\partial \Phi }{\partial s},\frac{\partial \Phi }{\partial s}\right) =\left(
D\exp _{N}^{\perp }\right) \left( \mathrm{II}\left( \widetilde{\frac{%
\partial \Phi }{\partial s}},\widetilde{\frac{\partial \Phi }{\partial s}}%
\right) \right) \equiv 0.$ Hence the $\mathrm{image}\left( \Phi \right) $ is
totally geodesic. It follows from Part 1 that $\mathrm{image}\left( \Phi
\right) $ has constant curvature $1.$

To prove Part 5, we let $\left\{ J_{1}^{\ast },\ldots ,J_{k-1}^{\ast
}\right\} $ be any $k-1$ linearly independent Jacobi fields in $\mathcal{T}%
_{N}^{\ast }$. It follows from Part 1 that for all $i,$ 
\begin{equation*}
\mathrm{sec}\left( \mathrm{\mathop{\rm grad}}\left\{ \mathrm{dist}\left(
N_{0},\cdot \right) \right\} ,J_{i}^{\ast }\right) \equiv 1.
\end{equation*}%
Together with Part 2 and our hypothesis that $\Ric_{k}\left( \dot{\gamma}%
,\cdot \right) \geq k$, we conclude that for all $J^{\ast }\in $ $\mathcal{Z}%
_{N}^{\ast },$ $\mathrm{sec}\left( \mathrm{\mathop{\rm grad}}\left\{ \mathrm{%
dist}\left( N_{0},\cdot \right) \right\} ,J^{\ast }\right) \geq 1.$

It follows from Part 2 that $\mathcal{Z}_{N}^{\ast }\oplus \mathcal{T}%
_{N}^{\ast }$ is a splitting of $\Lambda _{N_{0}}$ into orthogonal,
invariant subspaces for $R\left( \cdot ,\mathrm{\mathop{\rm grad}}\left\{ 
\mathrm{dist}\left( N_{0},\cdot \right) \right\} \right) \mathrm{%
\mathop{\rm
grad}}\left\{ \mathrm{dist}\left( N_{0},\cdot \right) \right\} .$ So 
\begin{equation*}
\mathrm{sec}\left( \mathrm{\mathop{\rm grad}}\left\{ \mathrm{dist}\left(
N_{0},\cdot \right) \right\} ,Y\right) \geq 1
\end{equation*}%
for all vectors $Y$ orthogonal to $\mathrm{\mathop{\rm grad}}\left\{ \mathrm{%
dist}\left( N_{0},\cdot \right) \right\} .$
\end{proof}

\section{The Structure of the Focal Set\label{F structure}}

This section begins with a review of Hebda's notion of regular tangent focal
points that generalizes a notion of Warner for conjugate points (\cite{Heda}%
, \cite{Warn2}). We next exploit the rigidity in Proposition \ref{Jacobi
Rigidity prop} to show that every tangent focal point at time $\frac{\pi }{2}
$ is regular. This allows us to apply a result of Hebda and conclude that
our focal set $F\equiv \exp _{N}^{\perp }\left( S\left( N_{0},\frac{\pi }{2}%
\right) \right) $ is a smooth submanifold of $M.$ The rigid structure also
yields that $F$ has focal radius $\frac{\pi }{2}.$ We then further the
analogy between the pair $\left( N,F\right) $ and the dual sets in the proof
of the Diameter Rigidity Theorem by showing that $F$ (resp. $N$) is the base
of a Riemannian submersion from the unit normal sphere to any point of $N$
(resp. $F$).

\begin{definition}
(\cite{Heda}, cf \cite{Warn2}) A tangent focal point $v\in \nu \left(
N\right) $ is called regular if and only if there is a neighborhood $U$ of $%
v $ so that every ray in $\nu \left( N\right) $ that intersects $U$ has at
most one tangent focal point in $U,$ not counting multiplicities. Otherwise $%
v$ is called singular.
\end{definition}

Continuity of the curvature tensor implies that every $v\in \nu \left(
N\right) $ has a neighborhood $U$ so that every ray meeting $U$ has the same
number of tangent focal points, counting multiplicities. So if $v$ is a
regular tangent focal point, then every ray $tu$ in $\nu \left( N\right) $
that intersects $U$ has exactly one focal point $t_{0}u$, and the
multiplicities of $t_{0}u$ and $v$ coincide$.$ Thus regular tangent focal
points have locally maximal order. Using this and ideas of \cite{Warn2},
Hebda showed the following.

\begin{theorem}
(\cite{Heda}, cf \cite{Warn2}) The set of regular tangent focal points is a
smooth codimension 1 submanifold of $\nu \left( N\right) $ that is an open,
dense subset of the set of all tangent focal points.
\end{theorem}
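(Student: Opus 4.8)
The plan is to follow Warner's analysis of the tangent conjugate locus in \cite{Warn2}, transported to the normal exponential map as in Hebda \cite{Heda} (the statement is, after all, Hebda's), and below I indicate why Warner's method carries over with only cosmetic changes. First I would fix a neighborhood $\mathcal{O}$ in $\nu(N)\setminus N_{0}$ and trivialize the bundle $tv\mapsto \dot{\gamma}_{v}(t)^{\perp}$ over it by parallel transport along the rays together with a local frame of $\nu^{1}(N)$. This turns the normal Jacobi tensor into a smooth family $x\mapsto A(x)$ of symmetric endomorphisms of one fixed $(n-1)$--dimensional space, defined on $\mathcal{O}$, with $\widetilde{F}\equiv\{\det A=0\}$ the tangent focal locus and $\dim\ker A(x)$ the order of $x$; the order function is upper semicontinuous. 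The one genuinely geometric input I would isolate is a \emph{radial nondegeneracy lemma}: along each ray, $\det A$ has a zero of order exactly equal to the order of the focal point. This is elementary: $A$ solves the second order Jacobi equation $A''+RA=0$, so $A(t_{0})u=A'(t_{0})u=0$ with $u\neq 0$ would make the corresponding Jacobi field vanish identically; hence $A'(t_{0})$ carries $\ker A(t_{0})$ isomorphically onto $(\image A(t_{0}))^{\perp}$ (using the Lagrangian identity $\langle A(t_{0})v,A'(t_{0})u\rangle=\langle A'(t_{0})v,A(t_{0})u\rangle$), and the claim on the vanishing order of $\det A$ follows. Only the initial data $A(0),A'(0)$ differ between the conjugate--point case and the submanifold case (the shape operator $\mathrm{S}_{v}$ enters $A'(0)$), so this lemma, and with it all of Warner's subsequent local analysis, is insensitive to the change; this is Hebda's observation.

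Openness of the set $\widetilde{F}_{\mathrm{reg}}$ of regular tangent focal points inside $\widetilde{F}$ is immediate from the paragraph preceding the theorem: if $U$ is a neighborhood of a regular $v$ on which the total focal multiplicity along rays is locally constant and on which every ray has at most one focal point not counting multiplicity, then every smaller neighborhood inherits both properties, so every tangent focal point in $U$ is again regular. For smoothness near a regular $v$ of order $m$, I would combine local constancy of the total multiplicity $m_{U}$ along rays with ``at most one focal point per ray'': this forces $m_{U}=m$ and forces every tangent focal point near $v$ to have order exactly $m$. Schur--reduce $A$ near $v$ as an invertible $(n-1-m)$--block plus the Schur complement $\widetilde{B}$, an $m\times m$ symmetric smooth map with $\widetilde{B}(v)=0$ and $\widetilde{F}=\{\det\widetilde{B}=0\}$ locally; the radial nondegeneracy lemma gives that $\partial_{s}\widetilde{B}(v)$ is invertible ($s$ the ray parameter) and that $\det\widetilde{B}$ vanishes to order exactly $m$ along each nearby ray. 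Since every nearby tangent focal point has order $m$, i.e.\ $A$ has corank $m$ there, $\widetilde{B}$ itself vanishes on $\widetilde{F}$ near $v$, so $\widetilde{F}=\{\widetilde{B}=0\}$ locally; this set meets each nearby ray in the single simple zero of $\partial_{s}^{\,m-1}(\det\widetilde{B})$, which by the implicit function theorem is the graph $\{s=g(y)\}$ of a smooth function of the transverse variables $y$. Hence $\widetilde{F}_{\mathrm{reg}}$ is a smooth codimension one submanifold of $\nu(N)$; equivalently, one obtains Warner's normal form $\exp_{N}^{\perp}\sim (u,s,w)\mapsto (u,sw,s)$ near $v$.

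The remaining, and hardest, point is density of $\widetilde{F}_{\mathrm{reg}}$ in $\widetilde{F}$. Given $x_{0}\in\widetilde{F}$ and a small neighborhood $U$ with locally constant total multiplicity, I would pick an interior point $x_{1}$ at which the order function attains its maximum $m_{1}$ over $\widetilde{F}\cap U$ (possible by upper semicontinuity and compactness, after shrinking so that the maximum is interior), and argue that $x_{1}$ is regular. If it were not, some ray arbitrarily near $x_{1}$ would carry at least two tangent focal points near $x_{1}$; the radial nondegeneracy lemma controls the multiplicity each contributes, and maximality of $m_{1}$ together with the bound $m_{U}$ on the total forces at most one of them to have order $m_{1}$ (i.e.\ to satisfy $\widetilde{B}=0$), so an induction on $m_{1}$ — peeling off the lower--order focal branches one at a time and applying the smoothness argument above to each — produces a regular point in $U$. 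Structurally this amounts to showing that near $x_{0}$ the focal locus $\widetilde{F}$ is a locally finite union of smooth hypersurfaces whose pairwise intersections have dimension $\le n-2$, so that the locus lying on a single sheet, which is precisely $\widetilde{F}_{\mathrm{reg}}$, is open and dense. Pushing this bookkeeping of coranks and vanishing orders through carefully is the technical heart of \cite{Warn2} and \cite{Heda}, and is where I expect the real work to lie.
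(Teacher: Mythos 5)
The paper does not supply a proof of this statement: it is quoted from Hebda \cite{Heda}, who in turn defers to Warner's analysis of the tangent conjugate locus \cite{Warn2}, so there is no in-paper argument to compare against. Judged on its own, your reconstruction is largely faithful to that source. The radial nondegeneracy lemma is correct --- the Lagrangian identity does give that $A'(t_0)$ carries $\ker A(t_0)$ isomorphically onto $(\image A(t_0))^\perp$, hence that $\det A$ vanishes along the ray to order exactly the corank --- and, as you say, this is the only place where the initial data (and so the shape operator $\mathrm{S}_v$) enter, which is precisely Hebda's observation. The openness argument is correct, and the Schur complement argument for smoothness is sound once the implicit function theorem is applied to the scalar $\partial_s^{\,m-1}\bigl(\det\widetilde{B}\bigr)$ rather than to the matrix $\widetilde{B}$ itself, which is what you intend.

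The density sketch, though, points in the wrong direction. You propose to take a point $x_1$ where $\mathrm{ord}$ attains its \emph{maximum} on $\widetilde{F}\cap U$; but upper semicontinuity only gives $\mathrm{ord}\le m_1$ near a maximizer, which is perfectly compatible with the order dropping on nearby sheets, so nothing forces $x_1$ to be regular, and the ``peel off lower-order branches'' induction you then invoke is exactly the unproved content. The clean route is to \emph{minimize}. First note that $v$ is a regular tangent focal point if and only if $\mathrm{ord}$ is locally constant near $v$: shrink $U$ so that $\gamma_v$ carries a single focal point in $U$, so $m_U=\mathrm{ord}(v)$; if $\mathrm{ord}$ is constant on $\widetilde{F}\cap U$, constancy of the total ray multiplicity $m_U$ forces each nearby ray to carry exactly one focal point, hence $v$ is regular, and conversely. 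Since $\mathrm{ord}$ is upper semicontinuous, integer valued, and bounded between $1$ and $n-1$, the set of local constancy is open and dense: in any relatively open $V\subset\widetilde{F}$ take $y$ where $\mathrm{ord}|_V$ attains its minimum $m^*$; upper semicontinuity gives $\mathrm{ord}\le m^*$ near $y$ and minimality gives $\mathrm{ord}\ge m^*$, so $\mathrm{ord}\equiv m^*$ near $y$ and $y$ is regular. This descent on the corank is exactly what the paper itself carries out two paragraphs later in Lemma \ref{dim drop lem}, which you may use as a check.
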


On $\mathrm{reg}_{N}\subset \nu \left( N\right) ,$ we set 
\begin{equation*}
X\equiv \mathop{\rm grad}\left( \mathrm{dist}\left( N_{0},\cdot \right)
\right) .
\end{equation*}

Along a fixed geodesic, focal points are isolated, so it follows that the
set of regular, first-tangent focal points is an open, dense subset of the
set of first-tangent focal points. It follows from the Gauss Lemma that $%
\ker \left( D\exp _{N}^{\perp }\right) _{X}\perp X.$ Since the first-tangent
focal set of our $N\subset M$ is $S\left( N_{0},\frac{\pi }{2}\right) ,$ it
follows that $\ker \left( D\exp _{N}^{\perp }\right) \subset TS\left( N_{0},%
\frac{\pi }{2}\right) .$ Combining this with the Rank Theorem we get

\begin{corollary}
\label{F_reg is open dense cor}Let $\tilde{F}_{\mathrm{reg}}$ be the set of
regular first-tangent focal points, and let 
\begin{equation*}
F_{\mathrm{reg}}\equiv \exp _{N}^{\perp }\left( \tilde{F}_{\mathrm{reg}%
}\right) .
\end{equation*}%
Then $F_{\mathrm{reg}}$ is a smooth submanifold of $M$ that is open and
dense inside of $F.$
\end{corollary}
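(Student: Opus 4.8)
The plan is to study $\exp_{N}^{\perp}$ restricted to the first--tangent focal set and to apply the Rank Theorem at its regular points. Because $\mathrm{FocalRadius}(N)=\frac{\pi}{2}$, the first--tangent focal set of $N$ is exactly $S(N_{0},\frac{\pi}{2})=\frac{\pi}{2}\,\nu^{1}(N)$, which is a smooth submanifold of $\nu(N)$ of codimension one and is compact since $N$ is closed. By Hebda's theorem together with the fact that focal points along any fixed geodesic are isolated, $\tilde{F}_{\mathrm{reg}}$ is open and dense in $S(N_{0},\frac{\pi}{2})$. So the corollary reduces to showing that $\exp_{N}^{\perp}\bigl(\tilde{F}_{\mathrm{reg}}\bigr)$ is a smooth submanifold of $M$ that is open and dense in $F=\exp_{N}^{\perp}\bigl(S(N_{0},\frac{\pi}{2})\bigr)$.

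First I would check that $\exp_{N}^{\perp}|_{S(N_{0},\pi/2)}$ has locally constant rank along $\tilde{F}_{\mathrm{reg}}$. Fix $v\in\tilde{F}_{\mathrm{reg}}$. Regularity forces the order of $v$ to be locally maximal among tangent focal points, and by the continuity of the curvature tensor the number of tangent focal points counted with multiplicity is locally constant, so the order is locally constant, say equal to $m$; thus $\dim\ker\bigl(D\exp_{N}^{\perp}\bigr)=m$ on a neighborhood of $v$ in $S(N_{0},\frac{\pi}{2})$. The Gauss Lemma gives $\ker\bigl(D\exp_{N}^{\perp}\bigr)\perp X$, where $X=\grad\bigl(\dist(N_{0},\cdot)\bigr)$ is the unit normal field of the distance spheres $S(N_{0},r)$, so $\ker\bigl(D\exp_{N}^{\perp}\bigr)\subset TS(N_{0},\frac{\pi}{2})$; hence $\exp_{N}^{\perp}|_{S(N_{0},\pi/2)}$ has the same kernel and therefore constant rank $n-1-m$ near $v$. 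By the Rank Theorem, $v$ has a neighborhood $U\subset S(N_{0},\frac{\pi}{2})$ on which $\exp_{N}^{\perp}$ factors as an open submersion onto a smooth embedded $(n-1-m)$--dimensional submanifold $W_{v}\subset M$. Density of $F_{\mathrm{reg}}$ in $F$ is then immediate from continuity of $\exp_{N}^{\perp}$ and $\overline{\tilde{F}_{\mathrm{reg}}}=S(N_{0},\frac{\pi}{2})$ (compactness of $N$), which give $F=\exp_{N}^{\perp}\bigl(\overline{\tilde{F}_{\mathrm{reg}}}\bigr)\subset\overline{F_{\mathrm{reg}}}$.

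I expect the main obstacle to be assembling the local sheets $W_{v}$ into a single embedded submanifold that is open in $F$, i.e.\ coping with the non--injectivity of $\exp_{N}^{\perp}|_{S(N_{0},\pi/2)}$ (which genuinely occurs, cf.\ Example \ref{quat exam}): if $p=\exp_{N}^{\perp}(v_{1})=\exp_{N}^{\perp}(v_{2})$ for distinct $v_{1},v_{2}\in\tilde{F}_{\mathrm{reg}}$, one must show $W_{v_{1}}$ and $W_{v_{2}}$ coincide near $p$, equivalently that $T_{p}W_{v_{i}}$, which is the image of the evaluation $J\mapsto J(\frac{\pi}{2})$ on $\Lambda_{N}$ along $\gamma_{v_{i}}$, is independent of the geodesic joining $N$ to $p$. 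To handle this I would feed in the rigidity of Proposition \ref{Jacobi Rigidity prop}: the splitting $\Lambda_{N}=\mathcal{Z}_{N}\oplus\mathcal{T}_{N}$ is $\exp_{N}^{\perp}$--parallel and the fields in $\mathcal{T}_{N}$ have the explicit form $\cos(t)E$, so they vanish at $\frac{\pi}{2}$; together with the constant--curvature--$1$ picture of that proposition, this is the input needed to pin down $T_{p}W_{v_{i}}$ in terms of the parallel $\mathcal{Z}_{N}$--distribution and so to make it independent of $v_{i}$. Combined with the properness of $\exp_{N}^{\perp}|_{S(N_{0},\pi/2)}$, this then yields that $F_{\mathrm{reg}}$ is a smooth embedded submanifold of $M$ that is open in $F$; identifying its dimension is routine bookkeeping.
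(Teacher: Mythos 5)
Your overall approach matches the paper's exactly: the paper's only stated argument for this corollary is the short preamble that precedes it, namely that Hebda's theorem together with isolation of focal points along each ray makes $\tilde F_{\mathrm{reg}}$ open and dense in the first--tangent focal set $S(N_0,\tfrac{\pi}{2})$, that the Gauss Lemma forces $\ker(D\exp_N^\perp)\subset TS(N_0,\tfrac{\pi}{2})$, and then an appeal to the Rank Theorem. The local--constant--rank verification, the factorization through a submersion and embedding, and the density computation $F=\exp_N^\perp\bigl(\overline{\tilde F_{\mathrm{reg}}}\bigr)\subset\overline{F_{\mathrm{reg}}}$ that you supply are precisely what that sentence is meant to compress, and they are correct.

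Where you go beyond the paper is in noticing that the Rank Theorem only produces the local sheets $W_v$, and that non--injectivity of $\exp_N^\perp|_{S(N_0,\pi/2)}$ (which does occur, as Example \ref{quat exam} shows) leaves both the assembly of the sheets into a single embedded submanifold and the openness of $F_{\mathrm{reg}}$ in $F$ unaddressed: a priori two regular $v_1,v_2$ over the same $p$ could give transverse sheets, and a priori a point of $F_{\mathrm{reg}}$ could be a limit of points of $F$ reachable only via singular tangent focal vectors. The paper does not spell this out. Your plan to feed in Proposition \ref{Jacobi Rigidity prop} --- the $\exp_N^\perp$--parallel splitting $\Lambda_N=\mathcal{Z}_N\oplus\mathcal{T}_N$ with $\mathcal{T}_N$ vanishing at $\tfrac{\pi}{2}$ --- is pointed in the right direction and is exactly the structure the paper later exploits in Lemma \ref{duality lemma copy(1)}, but as written it stops short: identifying $T_pW_{v_i}$ with a $\mathcal{Z}_N$--image does not yet show the images along $\gamma_{v_1}$ and $\gamma_{v_2}$ agree, equality of tangent planes would not by itself show the sheets agree, and you do not address openness at all. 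So you have correctly reproduced the paper's intended proof and have correctly flagged a genuine subtlety that the paper leaves implicit, but the resolution you sketch is not yet a proof of embeddedness or of openness --- it is a program, not an argument.
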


\begin{lemma}
\label{dim drop lem}Let $v\in \nu \left( N\right) $ be a singular tangent
focal point. For every neighborhood $U$ of $v,$ there is a regular tangent
focal point $w\in U$ so that 
\begin{equation*}
\mathrm{\dim }\left( \ker \left( D\exp _{N}^{\perp }\right) _{w}\right) <%
\mathrm{\dim }\left( \ker \left( D\exp _{N}^{\perp }\right) _{v}\right) .
\end{equation*}
\end{lemma}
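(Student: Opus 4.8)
The plan is to prove the lemma by bookkeeping the multiplicities of tangent focal points in a small neighborhood. Write $m \equiv \dim\ker\left(D\exp_N^\perp\right)_v \geq 1$. First I would shrink $U$: by the continuity of the curvature tensor recalled above, after shrinking $U$ we may assume that every ray in $\nu(N)$ meeting $U$ has the same number of tangent focal points in $U$, counted with multiplicity. Shrinking $U$ further so that $v$ is the only tangent focal point of the ray through $v$ that lies in $U$, this common number is forced to equal $m$. Consequently, for any ray $\rho$ meeting $U$, the multiplicities $\dim\ker\left(D\exp_N^\perp\right)_{w}$ of the tangent focal points $w$ of $\rho$ lying in $U$ sum to $m$; since each such multiplicity is at least $1$, if $\rho$ has two or more distinct tangent focal points in $U$, then each of them has multiplicity at most $m-1$.

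Next I would use the hypothesis that $v$ is singular. By definition this means that for every neighborhood of $v$ -- in particular for the $U$ just chosen -- there is a ray $\rho$ meeting $U$ that has at least two distinct tangent focal points $w_1 \neq w_2$ in $U$. By the previous paragraph, $\dim\ker\left(D\exp_N^\perp\right)_{w_1} \leq m-1 < m$.

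Finally I would promote $w_1$ to a regular point without increasing the kernel dimension. Since rank is lower semicontinuous, the function $w \mapsto \dim\ker\left(D\exp_N^\perp\right)_{w}$ is upper semicontinuous, so there is a neighborhood $U_1 \subset U$ of $w_1$ on which $\dim\ker\left(D\exp_N^\perp\right) \leq m-1$. By the theorem of Hebda recalled above, the regular tangent focal points are dense in the set of all tangent focal points, so $U_1$ contains a regular tangent focal point $w$. Then $w \in U$, $w$ is regular, and $\dim\ker\left(D\exp_N^\perp\right)_w \leq m-1 < \dim\ker\left(D\exp_N^\perp\right)_v$, which is the assertion. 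The argument is light; the only points needing care are choosing $U$ small enough that the constant total multiplicity equals $m$ rather than something larger, and invoking upper semicontinuity so that a regular point near $w_1$ still has kernel dimension $< m$. I expect the mild obstacle to be purely organizational: making sure the three cited facts -- local constancy of the total multiplicity, density of the regular tangent focal set, and semicontinuity of the kernel dimension -- are applied on compatible neighborhoods; there is no genuine analytic difficulty.
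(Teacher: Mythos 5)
Your proof is correct, and the first part (shrinking $U$ so that total multiplicity is constant, then using singularity to produce a nearby tangent focal point $w_1$ with strictly smaller nullity) is identical to the paper's. The difference is in the final step, where you must convert $w_1$, which may itself be singular, into a regular tangent focal point. The paper handles this by iterating the entire argument: if $w_1$ is singular, apply the lemma to $w_1$ inside a smaller neighborhood contained in $U$, and since the nullity strictly decreases and is bounded below by $1$, the process terminates in a regular point after finitely many steps. You instead invoke upper semicontinuity of $w \mapsto \dim \ker \left( D\exp_N^\perp \right)_w$ to get a neighborhood $U_1 \subset U$ of $w_1$ on which the nullity stays $\leq m-1$, and then appeal to Hebda's density statement to find a regular tangent focal point inside $U_1$. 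Both routes work; yours is arguably slightly cleaner since it replaces the induction with a single appeal to semicontinuity and density, at the cost of invoking one more piece of the Warner--Hebda machinery (density of the regular focal set) that the paper's iteration does not strictly need for this lemma.
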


\begin{proof}
Let $U$ be any neighborhood of $v.$ Replacing $U$ with a possibly smaller
neighborhood, we may assume that the total multiplicity of the focal points
on each ray that intersects $U$ is constant, and that the ray $tv$ contains
only one focal point in $U$. Since $v$ is singular, $U$ contains a ray with
more than one focal point $w_{1}\neq w_{2}$, which by hypothesis is not the
ray through $v.$ Since the multiplicity of the focal points in $tw_{1}\cap U$
and $tv\cap U$ is the same, it follows that\addtocounter{algorithm}{1} 
\begin{equation}
\mathrm{\dim }\left( \ker \left( D\exp _{N}^{\perp }\right) _{w_{1}}\right) <%
\mathrm{\dim }\left( \ker \left( D\exp _{N}^{\perp }\right) _{v}\right) .
\label{dropping dim ker}
\end{equation}%
It might be that $w_{1}$ is not regular; however, since (\ref{dropping dim
ker}) holds for some $w_{1}$ in any neighborhood of $v,$ by repeating this
argument a finite number of times, we get the desired conclusion.
\end{proof}

Let $\gamma $ be a unit speed geodesic that leaves $N$ orthogonally at time $%
0$ with $\gamma \left( \frac{\pi }{2}\right) \in F_{\mathrm{reg}}.$ Recall
that the elements of $\Lambda _{N}$ are called $N$--Jacobi fields. We set%
\begin{eqnarray*}
\mathcal{Z} &\equiv &\left\{ \left. J\in \Lambda _{N}\text{ }\right\vert 
\text{ }J\left( 0\right) =J\left( \frac{\pi }{2}\right) =0\right\} , \\
\mathcal{T}_{N} &\equiv &\left\{ J\in \Lambda _{N}|J\left( 0\right) \in
T_{\gamma \left( 0\right) }N\text{ and }J^{\prime }\left( 0\right)
=S_{\gamma ^{\prime }\left( 0\right) }\left( J\left( 0\right) \right)
\right\} ,\text{ and} \\
\mathcal{T}_{F_{\mathrm{reg}}} &\equiv &\left\{ J|\text{ }J\left( \frac{\pi 
}{2}\right) \in T_{\gamma \left( \frac{\pi }{2}\right) }F_{\mathrm{reg}}%
\text{ and }J^{\prime }\left( \frac{\pi }{2}\right) =-S\left( J\left( \frac{%
\pi }{2}\right) \right) \right\} ,
\end{eqnarray*}%
where $S_{\gamma ^{\prime }\left( 0\right) }$ in the definition of $\mathcal{%
T}_{N}$ is the shape operator of $N$ and $S$ in the definition of $\mathcal{T%
}_{F_{\mathrm{reg}}}$ is the Riccati operator of $\Lambda _{N}.$ The next
lemma shows that the $S$ in the definition of $\mathcal{T}_{F_{\mathrm{reg}%
}} $ is also the shape operator of $F_{\mathrm{reg}}$ with respect to $%
\gamma ^{\prime }\left( \frac{\pi }{2}\right) .$

\begin{lemma}
\label{duality lemma copy(1)}For $\gamma $ as above:

\noindent 1. $\gamma ^{\prime }\left( \frac{\pi }{2}\right) \in \nu _{\gamma
\left( \frac{\pi }{2}\right) }F_{\mathrm{reg}}.$

\noindent 2. The $N$--Jacobi fields along $\gamma $ are the $F_{\mathrm{reg}%
} $--Jacobi fields along 
\begin{equation*}
\gamma ^{-1}:t\mapsto \gamma \left( \frac{\pi }{2}-t\right) .
\end{equation*}

\noindent 3. The subspaces $\mathcal{T}_{N}$ and $\mathcal{T}_{F_{\mathrm{reg%
}}}$ are rigid, that is, 
\begin{eqnarray*}
\mathcal{T}_{N} &=&\left\{ \cos t\,E|\text{ }E\text{ is parallel and tangent
to }N\text{ at time }0\right\} ,\text{ and} \\
\mathcal{T}_{F_{\mathrm{reg}}} &=&\left\{ \sin t\,E|\text{ }E\text{ is
parallel and tangent to }F_{\mathrm{reg}}\text{ at time }\frac{\pi }{2}%
\right\} .
\end{eqnarray*}

\noindent 4. Writing $\Lambda _{N}$ for the $N$--Jacobi fields along $\gamma
,$ we have orthogonal splittings 
\begin{eqnarray*}
\Lambda _{N} &=&\mathcal{T}_{N}\oplus \mathcal{T}_{F_{\mathrm{reg}}}\oplus 
\mathcal{Z}\text{ and} \\
\mathcal{Z}_{N} &=&\mathcal{T}_{F_{\mathrm{reg}}}\oplus \mathcal{Z},
\end{eqnarray*}%
where $\mathcal{Z}_{N}$ is as in Equation \eqref{Lambda dfn}.
\end{lemma}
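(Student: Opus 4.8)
The plan is to read the lemma as a \emph{duality} statement identifying $N$ with its first focal set $F_{\mathrm{reg}}$ along $\gamma$ --- the analogue of the duality of dual sets in the proof of the Diameter Rigidity Theorem --- and to prove it by combining the Gauss Lemma (Part 1), a variational computation at the focal time $t=\frac{\pi}{2}$ (Part 2), a second application of the rigidity/comparison machinery of Proposition \ref{Jacobi Rigidity prop} and Lemma \ref{lem:riccati comparison minus infinity}, now along $\gamma^{-1}$ (Part 3), and elementary linear algebra of Lagrangian families (Part 4). A recurring convenience is that by Proposition \ref{Jacobi Rigidity prop}(5) the radial sectional curvatures along $\gamma$, hence along $\gamma^{-1}$, are all $\geq 1$, so $\Ric_{\ell}(\dot\gamma,\cdot)\geq\ell$ for every $\ell$ and we never need to know $\dim F_{\mathrm{reg}}$.

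For Part 1, observe first that, since Proposition \ref{Jacobi Rigidity prop}(1) forces every $J\in\mathcal{T}_N$ to be of the form $\cos t\,E$, every $N$-orthogonal geodesic has its first focal point exactly at $t=\frac{\pi}{2}$; thus the first-tangent focal set of $N$ is all of $S(N_0,\frac{\pi}{2})$. By the Gauss Lemma $\ker(D\exp_N^\perp)_{(\pi/2)v}\perp X$, so this kernel is tangent to $S(N_0,\frac{\pi}{2})$, whence $T_{\gamma(\pi/2)}F_{\mathrm{reg}}=D\exp_N^\perp\bigl(T_{(\pi/2)v}S(N_0,\tfrac{\pi}{2})\bigr)$; applying the Gauss Lemma once more, $D\exp_N^\perp(X)=\gamma'(\frac{\pi}{2})$ is orthogonal to this image, so $\gamma'(\frac{\pi}{2})\in\nu_{\gamma(\pi/2)}F_{\mathrm{reg}}$. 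For Part 2 I would work at $t=\frac{\pi}{2}$, using that a Lagrangian family along a geodesic is determined by its value-space and its Riccati operator at any single time (together with the reparametrization $t\mapsto\frac{\pi}{2}-t$, which flips the relevant signs). Since the $\mathcal{T}_N$-fields vanish at $\frac{\pi}{2}$, $\{J(\frac{\pi}{2}):J\in\Lambda_N\}=\{J(\frac{\pi}{2}):J\in\mathcal{Z}_N\}$; realizing $J\in\mathcal{Z}_N$ as the variation field of $\alpha(s,t)=\exp_N^\perp(t\,v_s)$ with $v_s$ a curve in $\nu^1_{\gamma(0)}(N)$ through $v$, the curve $s\mapsto\alpha(s,\frac{\pi}{2})$ lies in $F_{\mathrm{reg}}$ --- here Corollary \ref{F_reg is open dense cor} and $(\pi/2)v\in\tilde F_{\mathrm{reg}}$ are used --- so $J(\frac{\pi}{2})\in T_{\gamma(\pi/2)}F_{\mathrm{reg}}$, and the dimension count $\dim F_{\mathrm{reg}}=(n-1)-\dim\{J\in\Lambda_N:J(\tfrac{\pi}{2})=0\}=\dim\mathcal{Z}_N-\dim\mathcal{Z}$ upgrades this to an equality. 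Differentiating, $J'(\frac{\pi}{2})=\frac{D}{ds}\big|_0\gamma_{v_s}'(\frac{\pi}{2})$, and $s\mapsto\gamma_{v_s}'(\frac{\pi}{2})$ is a unit field normal to $F_{\mathrm{reg}}$ extending $\gamma'(\frac{\pi}{2})$ (by Part 1), so the $TF_{\mathrm{reg}}$-component of $J'(\frac{\pi}{2})$ is the shape operator of $F_{\mathrm{reg}}$ applied to $J(\frac{\pi}{2})$; this identifies the Riccati operator of $\Lambda_N$ at $\frac{\pi}{2}$ on $T_{\gamma(\pi/2)}F_{\mathrm{reg}}$ with the shape operator of $F_{\mathrm{reg}}$ with respect to $\gamma'(\frac{\pi}{2})$, and hence (after reparametrizing by $\gamma^{-1}$) $\Lambda_N=\Lambda_{F_{\mathrm{reg}}}$.

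For Part 3 the form of $\mathcal{T}_N$ is exactly Proposition \ref{Jacobi Rigidity prop}(1), together with $\mathrm{S}_{\gamma'(0)}\equiv 0$ since $N$ is totally geodesic. For $\mathcal{T}_{F_{\mathrm{reg}}}$: by Part 2 these are the tangential $F_{\mathrm{reg}}$-Jacobi fields along $\gamma^{-1}$; since no field of $\Lambda_N$ vanishes on $(0,\frac{\pi}{2})$ (else $\gamma$ would have an $N$-focal point before $\frac{\pi}{2}$), $\gamma^{-1}$ realizes focal radius $\frac{\pi}{2}$ for $F_{\mathrm{reg}}$, and the radial curvatures along $\gamma^{-1}$ are $\geq 1$, so Lemma \ref{lem:riccati comparison minus infinity} applies along $\gamma^{-1}$ with $W_0=T_{\gamma(\pi/2)}F_{\mathrm{reg}}$ --- equivalently Proposition \ref{Jacobi Rigidity prop} and Theorem \ref{Intermeadiate Ricci Thm}(2) apply to $F_{\mathrm{reg}}$, after flipping to the extension of $\gamma$ past $\frac{\pi}{2}$ if a sign on the initial trace requires it --- forcing $F_{\mathrm{reg}}$ totally geodesic and these fields to be $\cos t'\,E$ along $\gamma^{-1}$, i.e. $\sin t\,E$ along $\gamma$ with $E(\frac{\pi}{2})\in T_{\gamma(\pi/2)}F_{\mathrm{reg}}$. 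For Part 4, Part 2 gives $\Lambda_N=\Lambda_{F_{\mathrm{reg}}}=\mathcal{Z}_{F_{\mathrm{reg}}}\oplus\mathcal{T}_{F_{\mathrm{reg}}}$ orthogonally; the fields of $\Lambda_N$ vanishing at $\frac{\pi}{2}$ are exactly $\mathcal{T}_N\oplus\mathcal{Z}$, and these make up $\mathcal{Z}_{F_{\mathrm{reg}}}$, so $\Lambda_N=\mathcal{T}_N\oplus\mathcal{Z}\oplus\mathcal{T}_{F_{\mathrm{reg}}}$; intersecting this with the orthogonal splitting $\Lambda_N=\mathcal{Z}_N\oplus\mathcal{T}_N$ of Proposition \ref{Jacobi Rigidity prop}(2) yields $\mathcal{Z}_N=\mathcal{Z}\oplus\mathcal{T}_{F_{\mathrm{reg}}}$, with all summands mutually orthogonal by the Lagrangian property.

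The main obstacle is Part 2 --- specifically, turning the variational picture into a rigorous statement at a focal time. One must invoke Hebda's regularity theory (through Corollary \ref{F_reg is open dense cor}) to know that $\exp_N^\perp$ carries $S(N_0,\frac{\pi}{2})$ onto $F$ and its regular locus submersively onto the submanifold $F_{\mathrm{reg}}$, so that $s\mapsto\exp_N^\perp(\frac{\pi}{2}v_s)$ is a genuine curve in $F_{\mathrm{reg}}$ with the expected velocity; and one must carry out carefully the dimension bookkeeping relating $\dim F_{\mathrm{reg}}$, $\dim\ker D\exp_N^\perp$, and $\dim\mathcal{Z}_N$, $\dim\mathcal{Z}$, $\dim\mathcal{T}_N$. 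The Riccati-versus-shape-operator comparison and the sign changes produced by the reparametrization $t\mapsto\frac{\pi}{2}-t$ are then routine, but must be tracked with care.
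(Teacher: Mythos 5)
Your Parts 1, 3, and 4 are essentially the paper's argument: Part 1 by the Gauss Lemma (the paper's phrasing is just ``Part 1 is a consequence of the Gauss Lemma and the fact that $F\equiv\exp_N^\perp(S(N_0,\frac{\pi}{2}))$''), and Parts 3 and 4 by applying Proposition \ref{Jacobi Rigidity prop}(1)--(2) to $F_{\mathrm{reg}}$ along $\gamma^{-1}$, using Part 5 of that proposition to transfer the radial curvature bound and Part 2 to transfer the focal radius bound.

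Where you genuinely diverge is Part 2. The paper's proof is shorter and avoids all the machinery you invoke: it simply recalls that $\Lambda_N$ consists of variation fields of variations by geodesics leaving $N$ orthogonally at time $0$, that $\Lambda_{F_{\mathrm{reg}}}$ (along $\gamma^{-1}$) consists of variation fields of variations by geodesics arriving at $F_{\mathrm{reg}}$ orthogonally at time $\frac{\pi}{2}$, and that Part 1 (applied to all geodesics near $\gamma$ whose tangent focal vectors lie in the open set $\tilde F_{\mathrm{reg}}$) gives the containment $\Lambda_N\subset\Lambda_{F_{\mathrm{reg}}}$; equality then follows from $\dim\Lambda_N=n-1=\dim\Lambda_{F_{\mathrm{reg}}}$. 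You instead compute the Lagrangian data of $\Lambda_N$ at $t=\frac{\pi}{2}$ --- the image of the evaluation map and the shape operator of $F_{\mathrm{reg}}$ --- and appeal to the principle that a Lagrangian family is determined by its value space and (value-space--projected) Riccati operator at a single time. This works, but the version of the principle you need is the one where the value space is a \emph{proper} subspace of $\dot\gamma(\frac{\pi}{2})^\perp$ (your $\mathcal{T}_N\oplus\mathcal{Z}$ vanishes there), and you only control the $TF_{\mathrm{reg}}$--component of $J'(\frac{\pi}{2})$; so the conclusion is really ``two Lagrangians with the same value space $V$ at $\bar t$ and the same $V$--projected Riccati operator on $V$ are equal,'' which should be stated and verified via the symplectic form $\omega(J_1,K_2)=\langle(J_1-J_2)'(\bar t),K_2(\bar t)\rangle$ (and the observation that, within either Lagrangian, fields vanishing at $\bar t$ have derivative in $V^\perp$). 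It is true, but it is a nontrivial Lagrangian-algebra fact that you glossed over; the paper's containment-plus-dimension argument sidesteps it entirely and also spares you the bookkeeping relating $\dim F_{\mathrm{reg}}$, $\dim\ker(D\exp_N^\perp)$, and $\dim\mathcal{Z}$. In exchange, your approach makes explicit the identification of the Riccati operator of $\Lambda_N$ at $\frac{\pi}{2}$ with the shape operator of $F_{\mathrm{reg}}$, which the paper only records implicitly (in the sentence preceding the lemma).
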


\begin{proof}
Part 1 is a consequence of the Gauss Lemma and the fact that $F\equiv \exp
_{N}^{\perp }\left( S\left( N_{0},\frac{\pi }{2}\right) \right) $. The space 
$\Lambda _{N}$ of $N$--Jacobi fields along $\gamma $ are precisely the
variation fields of variations by geodesics that leave $N$ orthogonally at
time $0.$ Similarly, the space $\Lambda _{F_{\mathrm{reg}}}$ of $F_{\mathrm{%
reg}}$--Jacobi fields along $\gamma $ are precisely the variation fields of
variations by geodesics that arrive at $F_{\mathrm{reg}}$ orthogonally at
time $\frac{\pi }{2}.$ It follows from Part 1 that $\Lambda _{N}\subset
\Lambda _{F_{\mathrm{reg}}}.$ Since $\dim \left( \Lambda _{N}\right)
=n-1=\dim \left( \Lambda _{F_{\mathrm{reg}}}\right) ,$ $\Lambda _{N}=\Lambda
_{F_{\mathrm{reg}}}.$ This proves Part 2.

Since $\gamma $ has no focal points for $N$ on $\left( 0,\frac{\pi }{2}%
\right) ,$ it follows from Part 2 that $\gamma ^{-1}\left( t\right) =\gamma
\left( \frac{\pi }{2}-t\right) $ has no focal points for $F_{\mathrm{reg}}$
on $\left( 0,\frac{\pi }{2}\right) .$ By Part 5 of Proposition \ref{Jacobi
Rigidity prop}, all the radial sectional curvatures along $\gamma $ are $%
\geq 1.$ Thus Parts 3 and 4 follow from Parts 1 and 2 of Proposition \ref%
{Jacobi Rigidity prop} and the fact that $\gamma ^{-1}\left( t\right)
=\gamma \left( \frac{\pi }{2}-t\right) $ has no focal points for $F_{\mathrm{%
reg}}$ on $\left( 0,\frac{\pi }{2}\right) .$
\end{proof}

\begin{lemma}
\label{F equals F_reg lemma}$F_{\mathrm{reg}}=F.$
\end{lemma}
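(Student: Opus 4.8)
The plan is to reduce, via Corollary \ref{F_reg is open dense cor} (which gives $F_{\mathrm{reg}}$ open and dense in $F$), to showing that there is no singular first‑tangent focal point — equivalently, that the smooth sphere bundle $S(N_0,\frac{\pi}{2})$, which by $\mathrm{FocalRadius}(N)=\frac{\pi}{2}$ and Theorem \ref{Intermeadiate Ricci Thm} is precisely the first‑tangent focal set of $N$, is contained in $\tilde F_{\mathrm{reg}}$. So I would argue by contradiction: assume $\tilde v=\frac{\pi}{2}v$ is a singular tangent focal point and set $\gamma=\gamma_v$.

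First I would record the order of an arbitrary first‑tangent focal point. For every unit $u\in\nu(N)$, Proposition \ref{Jacobi Rigidity prop}(1)--(2) supplies the parallel orthogonal splitting $\Lambda_{N,u}=\mathcal{Z}_{N,u}\oplus\mathcal{T}_{N,u}$ with $\mathcal{T}_{N,u}=\{\cos t\,E\}$; since every field of $\mathcal{T}_{N,u}$ vanishes at $\frac{\pi}{2}$,
$$\dim\ker(D\exp_N^{\perp})_{\frac{\pi}{2}u}=\dim N+\dim\mathcal{Z}_u,\qquad \mathcal{Z}_u:=\{\,J\in\Lambda_{N,u}\mid J(0)=J(\tfrac{\pi}{2})=0\,\},$$
and when $\frac{\pi}{2}u\in\tilde F_{\mathrm{reg}}$, Lemma \ref{duality lemma copy(1)} identifies this number with $n-1-\dim F_{\mathrm{reg}}$ at the corresponding point of $F_{\mathrm{reg}}$. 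Then I would apply Lemma \ref{dim drop lem}: every neighborhood of $\tilde v$ contains a regular tangent focal point $w$ with $\dim\ker(D\exp_N^{\perp})_w<\dim\ker(D\exp_N^{\perp})_{\tilde v}$. Picking $w_j\to\tilde v$ and passing to a subsequence I may take $\dim\ker(D\exp_N^{\perp})_{w_j}$ equal to a fixed $m_0<\dim\ker(D\exp_N^{\perp})_{\tilde v}$. Since each $w_j$ is regular, it has locally maximal order $m_0$; combining this with the displayed formula, with the fact that the first focal time along every geodesic leaving $N$ orthogonally is identically $\frac{\pi}{2}$, and with the continuity‑of‑curvature statement that the total tangent‑focal multiplicity inside a small fixed neighborhood of $\tilde v$ is constant along nearby rays, one is driven to the conclusion that the order $\dim N+\dim\mathcal{Z}_u$ equals $m_0$ for all $u$ in a neighborhood of $v$ in $\nu^1(N)$ — i.e. that $\dim\mathcal{Z}_u$ strictly drops in passing from $\tilde v$ to all nearby directions.

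The last step is to rule this out, and the mechanism should be the rigid refocusing in Proposition \ref{Jacobi Rigidity prop}(4): for each unit‑speed geodesic $c$ in $N$ and parallel unit normal field $V$ along $c$, the totally geodesic, constant‑curvature‑$1$ surface $\Phi(s,t)=\exp_{c(s)}^{\perp}(tV(s))$ satisfies $|\partial_s\Phi(s,t)|=\cos t\,|\dot c(s)|$ by Part 1 of the Proposition, so $\Phi(\cdot,\frac{\pi}{2})$ is constant — every such geodesic reaches a single point of $F$. Feeding this refocusing together with the parallel splitting $\mathcal{Z}_{N,u}\oplus\mathcal{T}_{N,u}$ back in, one identifies $\mathcal{Z}_u$ with the fibre through $u$ of the Riemannian submersion $\nu^1_x(N)\to F_{\mathrm{reg}}$ that the duality of Lemma \ref{duality lemma copy(1)} produces, and shows that this fibre dimension cannot drop as $u$ crosses the singular set; hence $\dim\mathcal{Z}_u$ is locally constant near $v$. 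Local constancy of the order at $\tilde v$, together with the first focal time being identically $\frac{\pi}{2}$, forces every ray near $\tilde v$ to have its only near‑$\frac{\pi}{2}$ tangent focal point at radius exactly $\frac{\pi}{2}$, so $\tilde v$ is regular — contradicting the choice of $\tilde v$. Thus $F_{\mathrm{reg}}=F$. The hard part will be precisely this last step: establishing that $\dim\mathcal{Z}_u$ is lower semicontinuous (upper semicontinuity being automatic), since that is exactly the point at which the positive‑curvature rigidity of Proposition \ref{Jacobi Rigidity prop}, rather than any soft Jacobi/Riccati comparison, must be used; everything else is bookkeeping with focal multiplicities and the already‑established parallel splittings.
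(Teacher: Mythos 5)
Your reduction to a contradiction and the bookkeeping preceding it are correct and agree with the paper: you set up $\tilde v$ singular, use Lemma~\ref{dim drop lem} to find nearby regular tangent focal points $w$ with strictly smaller kernel, and reduce, via the formula
$\dim\ker(D\exp_N^{\perp})_{\frac{\pi}{2}u}=\dim N+\dim\mathcal{Z}_u$
(the paper's Equation~\eqref{ker Dexp-2}), to showing that $\dim\mathcal{Z}_u$ cannot jump up at the singular direction. This is exactly where the paper works too. However, there is a genuine gap at the step you flag yourself as ``the hard part'': you do not prove it, and the mechanism you sketch does not work as stated. You want to identify $\mathcal{Z}_u$ with fibres of ``the Riemannian submersion $\nu^1_x(N)\to F_{\mathrm{reg}}$'' and say the fibre dimension cannot drop as $u$ crosses the singular set. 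But that map is a submersion only over $F_{\mathrm{reg}}$, i.e.\ only where the rank is already known to be locally maximal; extending it to a submersion on all of $\nu^1_x(N)$ is precisely equivalent to $\dim\ker D\exp_N^{\perp}$ being locally constant, which is what you are trying to prove. In fact the submersion structure is established in the paper only \emph{after} Lemma~\ref{F equals F_reg lemma}. What is automatic is upper semicontinuity of $\dim\ker$, i.e.\ $\dim\mathcal{Z}_u$ can a~priori jump up at a singular $u$; ruling that out needs an argument, not an appeal to fibre dimensions.

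The paper's mechanism, which your sketch does not capture, is the following. Along a regular direction $\gamma_{\mathrm{reg}}$, Lemma~\ref{duality lemma copy(1)} gives the orthogonal splitting $\mathcal{Z}^{\mathrm{reg}}_{N}=\mathcal{T}^{\mathrm{reg}}_{F_{\mathrm{reg}}}\oplus\mathcal{Z}^{\mathrm{reg}}$, where $\mathcal{T}^{\mathrm{reg}}_{F_{\mathrm{reg}}}$ consists of fields $\sin t\,E$ with $E$ parallel, each of which spans with $\dot\gamma_{\mathrm{reg}}$ a plane of constant curvature~$1$. Passing to the limit along a sequence of $\gamma_{\mathrm{reg}}$'s approaching $\gamma_{\mathrm{sng}}$ and using continuity of the curvature tensor, $\mathcal{Z}^{\mathrm{sng}}_{N}$ inherits a subspace $\mathcal{T}^{\mathrm{sng}}$ of fields of the same form $\sin t\,E$ with $\dim\mathcal{T}^{\mathrm{sng}}=\dim\mathcal{T}^{\mathrm{reg}}_{F_{\mathrm{reg}}}$. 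The rigidity case of the comparison lemma (Part~2 of Lemma~\ref{sing soon Ric_k Lemma} together with Remark~\ref{iniital sing remark} and Part~5 of Proposition~\ref{Jacobi Rigidity prop}) then forces $\Lambda^{\mathrm{sng}}_{N}$ to split orthogonally with $\mathcal{T}^{\mathrm{sng}}$ as a factor. Since $\sin(\pi/2)=1\neq 0$, no nonzero field of $\mathcal{T}^{\mathrm{sng}}$ vanishes at $\pi/2$, so $\mathcal{Z}^{\mathrm{sng}}$ is contained in the complementary summand $\mathcal{U}^{\mathrm{sng}}$, giving $\dim\mathcal{Z}^{\mathrm{sng}}\le(n-1)-\dim N-\dim\mathcal{T}^{\mathrm{reg}}_{F_{\mathrm{reg}}}=\dim\mathcal{Z}^{\mathrm{reg}}$, which contradicts the strict inequality supplied by Lemma~\ref{dim drop lem}. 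So you need the persistence-and-splitting of the $\sin t\,E$ family, not a submersion structure, to close the argument; your Proposition~\ref{Jacobi Rigidity prop}(4) observation about refocusing addresses the $\mathcal{T}_N$ part of the kernel, which is already constant of dimension $\dim N$ and is not where the difficulty lives.
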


\begin{proof}
We set 
\begin{equation*}
F_{\mathrm{sng}}\equiv F\setminus F_{\mathrm{reg}},
\end{equation*}%
and suppose, by way of contradiction, that $F_{\mathrm{sng}}\neq \emptyset .$

Let $\gamma _{\mathrm{reg}}$ and $\gamma _{\mathrm{sng}}$ be geodesics that
leave $N$ orthogonally at time $0$ with 
\begin{equation*}
\gamma _{\mathrm{reg}}\left( \frac{\pi }{2}\right) \in F_{\mathrm{reg}}\text{
and }\gamma _{\mathrm{sng}}\left( \frac{\pi }{2}\right) \in F_{\mathrm{sng}}.
\end{equation*}

The idea of the proof is to examine how the splitting $\mathcal{Z}_{N}=%
\mathcal{T}_{F_{\mathrm{reg}}}\oplus \mathcal{Z}$ behaves as a sequence of $%
\gamma _{\mathrm{reg}}$'s approaches $\gamma _{\mathrm{sng}}.$ In
particular, by Lemma \ref{duality lemma copy(1)}, $\mathcal{T}_{F_{\mathrm{%
reg}}}$ is spanned by constant curvature $1$ Jacobi fields. By continuity, $%
\gamma _{\mathrm{sng}}$ inherits such a family, and this forces $\frac{\pi }{%
2}\gamma _{\mathrm{sng}}^{\prime }\left( 0\right) $ to actually be regular.
The details follow.

By appealing to Lemma \ref{dim drop lem}, we can assume that %
\addtocounter{algorithm}{1} 
\begin{equation}
\mathrm{\dim }\left( \ker \left( D\exp _{N}^{\perp }\right) _{\frac{\pi }{2}%
\gamma _{\mathrm{reg}}^{\prime }\left( 0\right) }\right) <\mathrm{\dim }%
\left( \ker \left( D\exp _{N}^{\perp }\right) _{\frac{\pi }{2}\gamma _{%
\mathrm{sng}}^{\prime }\left( 0\right) }\right) .  \label{kernel goes donw}
\end{equation}

For either $\gamma _{\mathrm{reg}}$ or $\gamma _{\mathrm{sng}}$ we have the
four spaces of Jacobi fields, $\Lambda _{N},$ $\mathcal{T}_{N},$ $\mathcal{Z}%
_{N},$ and $\mathcal{Z}.$ We will distinguish the versions of the spaces
along $\gamma _{\mathrm{reg}}$ from those along $\gamma _{\mathrm{sng}}$
with the superscripts $^{\mathrm{reg}}$ and $^{\mathrm{sng}}.$ When no
superscript is present, the statement applies to either case.

For either $\gamma _{\mathrm{reg}}$ or $\gamma _{\mathrm{sng}},$ %
\addtocounter{algorithm}{1} 
\begin{equation}
\ker \left( D\exp _{N}^{\perp }\right) _{\frac{\pi }{2}\gamma ^{\prime
}\left( 0\right) }=\left\{ \left. J\left( 0\right) \text{ }\right\vert \text{
}J\in \mathcal{T}_{N}\right\} \oplus \left\{ \left. J^{\prime }\left(
0\right) \text{ }\right\vert \text{ }J\in \mathcal{Z}\right\} .
\label{ker Dexp-2}
\end{equation}%
Thus 
\begin{eqnarray*}
\mathrm{\dim }\left( \ker \left( D\exp _{N}^{\perp }\right) _{\frac{\pi }{2}%
\gamma ^{\prime }\left( 0\right) }\right) &=&\dim \left( \mathcal{T}%
_{N}\right) +\dim \left( \mathcal{Z}\right) \\
&=&\dim \left( N\right) +\dim \left( \mathcal{Z}\right) .
\end{eqnarray*}%
Since $\mathrm{\dim }\left( \ker \left( D\exp _{N}^{\perp }\right) _{\frac{%
\pi }{2}\gamma _{\mathrm{sng}}^{\prime }\left( 0\right) }\right) >\mathrm{%
\dim }\left( \ker \left( D\exp _{N}^{\perp }\right) _{\frac{\pi }{2}\gamma _{%
\mathrm{reg}}^{\prime }\left( 0\right) }\right) ,$ the dimensions of $%
\mathcal{Z}^{\mathrm{reg}}$ and $\mathcal{Z}^{\mathrm{sng}}$ satisfy%
\addtocounter{algorithm}{1} 
\begin{equation}
\mathrm{\dim }\left( \mathcal{Z}^{\mathrm{sng}}\right) >\mathrm{\dim }\left( 
\mathcal{Z}^{\mathrm{reg}}\right) .  \label{relate the Zs}
\end{equation}

Along $\gamma _{\mathrm{reg}},$ Lemma \ref{duality lemma copy(1)} gives us
an orthogonal splitting\addtocounter{algorithm}{1} 
\begin{equation}
\mathcal{Z}_{N}^{\mathrm{reg}}=\mathcal{T}_{F_{\mathrm{reg}}}^{\mathrm{reg}%
}\oplus \mathcal{Z}^{\mathrm{reg}},  \label{Z_N^reg split}
\end{equation}%
where \addtocounter{algorithm}{1} 
\begin{equation}
\mathcal{T}_{F_{\mathrm{reg}}}^{\mathrm{reg}}=\left\{ \sin t\,E|\text{ }E%
\text{ is parallel and tangent to }F_{\mathrm{reg}}\text{ at time }\frac{\pi 
}{2}\right\} .  \label{T_F reg eqn}
\end{equation}%
Combined with Inequality \eqref{relate the Zs}, this gives %
\addtocounter{algorithm}{1} 
\begin{eqnarray}
\mathrm{\dim }\left( \mathcal{Z}^{\mathrm{sng}}\right) &>&\mathrm{\dim }%
\left( \mathcal{Z}^{\mathrm{reg}}\right)  \notag \\
&=&\mathrm{\dim }\left( \mathcal{Z}_{N}^{\mathrm{reg}}\right) -\dim \left( 
\mathcal{T}_{F_{\mathrm{reg}}}^{\mathrm{reg}}\right) ,\text{ by \ref{Z_N^reg
split}}  \notag \\
&=&\left( n-1\right) -\dim \left( N\right) -\mathrm{\dim }\left( \mathcal{T}%
_{F_{\mathrm{reg}}}^{\mathrm{reg}}\right) .  \label{rel Zs-2}
\end{eqnarray}

Note that $\gamma _{\mathrm{sng}}$ is a limit of $\gamma _{\mathrm{reg}}$'s
that satisfy (\ref{kernel goes donw}). Further note that a $J\in \mathcal{T}%
_{F_{\mathrm{reg}}}^{\mathrm{reg}}$ together with $\gamma _{\mathrm{reg}%
}^{\prime }$ spans a plane of constant curvature $1.$ Thus by continuity, $%
\mathcal{Z}_{N}^{\mathrm{sng}}$ contains a subspace $\mathcal{T}^{\mathrm{sng%
}}$ of the form 
\begin{equation*}
\mathcal{T}^{\mathrm{sng}}=\left\{ \sin t\,E|\text{ }E\text{ is parallel}%
\right\} \subset \mathcal{Z}_{N}^{\mathrm{sng}}\setminus \mathcal{Z}^{%
\mathrm{sng}}
\end{equation*}%
with \addtocounter{algorithm}{1} 
\begin{equation}
\mathrm{\dim }\left( \mathcal{T}^{\mathrm{sng}}\right) =\mathrm{\dim }\left( 
\mathcal{T}_{F_{\mathrm{reg}}}^{\mathrm{reg}}\right) .  \label{Ts same eqn}
\end{equation}%
Moreover, by Remark \ref{iniital sing remark}, Part 2 of Lemma \ref{sing
soon Ric_k Lemma}, and Part 5 of Proposition \ref{Jacobi Rigidity prop}, $%
\Lambda _{N}^{\mathrm{sng}}$ splits orthogonally with one factor being $%
\mathcal{T}^{\mathrm{sng}}.$ Since $\mathcal{T}^{\mathrm{sng}}$ is a
subspace of $\mathcal{Z}_{N}^{\mathrm{sng}},$ we get %
\addtocounter{algorithm}{1} 
\begin{equation}
\mathcal{Z}_{N}^{\mathrm{sng}}=\mathcal{T}^{\mathrm{sng}}\oplus \mathcal{U}^{%
\mathrm{sng}},  \label{Z_N^sing split}
\end{equation}%
where $\mathcal{U}^{\mathrm{sng}}$ is a space of Jacobi fields in $\mathcal{Z%
}_{N}^{\mathrm{sng}}$ that is orthogonal to $\mathcal{T}^{\mathrm{sng}}$
throughout $\left( 0,\frac{\pi }{2}\right) .$

The splitting \eqref{Z_N^sing split} combined with $\mathcal{T}^{\mathrm{sng}%
}=\left\{ \sin t\,E|\text{ }E\text{ is parallel}\right\} $ gives that $%
\mathcal{Z}^{\mathrm{sng}}$ is a subspace of $\mathcal{U}^{\mathrm{sng}},$
so 
\begin{eqnarray*}
\dim \left( \mathcal{Z}^{\mathrm{sng}}\right) &\leq &\dim \left( \mathcal{U}%
^{\mathrm{sng}}\right) \\
&=&\dim \left( \mathcal{Z}_{N}^{\mathrm{sng}}\right) -\mathrm{\dim }\left( 
\mathcal{T}_{F_{\mathrm{reg}}}^{\mathrm{reg}}\right) ,\text{ by \eqref{Ts
same eqn} and \eqref{Z_N^sing split}} \\
&=&\left( n-1\right) -\dim \left( N\right) -\mathrm{\dim }\left( \mathcal{T}%
_{F_{\mathrm{reg}}}^{\mathrm{reg}}\right) .
\end{eqnarray*}%
Since this contradicts Inequality \eqref{rel Zs-2}, the result is proven.%
{\huge \ }
\end{proof}

\begin{lemma}
\label{Focal F is pi/2 lemma}$F$ is a totally geodesic closed submanifold of 
$M$ with focal radius $\frac{\pi }{2}.$
\end{lemma}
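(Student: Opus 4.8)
The plan is to derive every clause of the Lemma from Lemma~\ref{F equals F_reg lemma}, Corollary~\ref{F_reg is open dense cor}, Proposition~\ref{Jacobi Rigidity prop}, and Lemma~\ref{duality lemma copy(1)}, applied along the geodesics that leave $N$ orthogonally. By Lemma~\ref{F equals F_reg lemma} we have $F=F_{\mathrm{reg}}$, and $F_{\mathrm{reg}}$ is a smooth submanifold of $M$ by Corollary~\ref{F_reg is open dense cor}; since $F=\exp_{N}^{\perp}\big(S(N_{0},\tfrac{\pi}{2})\big)$ is the continuous image of the compact unit normal bundle $\nu^{1}(N)$ of the compact manifold $N$, it is compact, hence a closed subset of $M$. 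So $F$ is a closed submanifold.

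To see that $F$ is totally geodesic, fix $p\in F$ and a unit speed geodesic $\gamma$ leaving $N$ orthogonally at time $0$ with $\gamma(\tfrac{\pi}{2})=p$. By Part~1 of Lemma~\ref{duality lemma copy(1)}, $\gamma'(\tfrac{\pi}{2})\in\nu^{1}_{p}F$, and by Part~3 every $J\in\mathcal{T}_{F_{\mathrm{reg}}}$ has the form $J=\sin t\,E$ with $E$ parallel and $E(\tfrac{\pi}{2})\in T_{p}F$; in particular $J'(\tfrac{\pi}{2})=0$. On the other hand, by Lemma~\ref{duality lemma copy(1)} the Riccati operator $S$ of $\Lambda_{N}$ at time $\tfrac{\pi}{2}$, restricted to $T_{p}F$, is the shape operator of $F$ in the direction $\gamma'(\tfrac{\pi}{2})$, and by the definition of $\mathcal{T}_{F_{\mathrm{reg}}}$ we have $J'(\tfrac{\pi}{2})=-S\big(J(\tfrac{\pi}{2})\big)$; since $\{J(\tfrac{\pi}{2}):J\in\mathcal{T}_{F_{\mathrm{reg}}}\}=T_{p}F$, it follows that $S\equiv 0$ on $T_{p}F$, i.e. the shape operator of $F$ vanishes in every direction of the form $\gamma'(\tfrac{\pi}{2})$.

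It remains to know that these directions exhaust $\nu^{1}_{p}F$; in codimension one there is nothing to prove, and in general I expect this to be \emph{the main obstacle}. The way I would handle it: Proposition~\ref{Jacobi Rigidity prop}(1) gives $\mathcal{T}_{N}=\{\cos t\,E\}$, so every geodesic leaving $N$ orthogonally has a nonzero $N$--Jacobi field vanishing at $\tfrac{\pi}{2}$, and since $\mathrm{FocalRadius}(N)=\tfrac{\pi}{2}$ it has its unique first focal point precisely at time $\tfrac{\pi}{2}$, lying in $F$. Using this together with the orthogonal splitting of Lemma~\ref{duality lemma copy(1)}(4), one checks that $v\mapsto\gamma_{v}'(\tfrac{\pi}{2})$ is a local diffeomorphism $\nu^{1}(N)\to\nu^{1}(F)$ (both have dimension $n-1$); compactness of $\nu^{1}(N)$ and $\nu^{1}(F)$ then makes its image open and closed, hence all of $\nu^{1}(F)$. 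This gives $S\equiv 0$ on $\nu_{p}F$ for every $p$, so $F$ is totally geodesic.

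Finally, for the focal radius, fix $p$ and $\gamma$ as above and put $\gamma^{-1}(t)=\gamma(\tfrac{\pi}{2}-t)$. By Parts~1 and 2 of Lemma~\ref{duality lemma copy(1)}, $\gamma^{-1}$ leaves $F$ orthogonally at time $0$ and its $F$--Jacobi fields are the reparametrizations of the $N$--Jacobi fields along $\gamma$. Since $\mathrm{FocalRadius}(N)=\tfrac{\pi}{2}$, $\gamma$ has no focal point for $N$ in $(0,\tfrac{\pi}{2})$, so $\gamma^{-1}$ has none for $F$ there; combined with the surjectivity just discussed this yields $\mathrm{FocalRadius}(F)\ge\tfrac{\pi}{2}$. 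Conversely, the reparametrization of a nonzero $\sin t\,E\in\mathcal{T}_{F_{\mathrm{reg}}}$ is an $F$--Jacobi field along $\gamma^{-1}$ equal to $\cos t\,E$, which vanishes at $t=\tfrac{\pi}{2}$; hence $\gamma^{-1}(\tfrac{\pi}{2})=\gamma(0)\in N$ is a focal point of $F$ along $\gamma^{-1}$, so $\mathrm{FocalRadius}(F)\le\tfrac{\pi}{2}$. (In the hypersurface case $\mathcal{T}_{F_{\mathrm{reg}}}=0$ and $F$ is $0$--dimensional; that degenerate situation is absorbed into the constant curvature conclusion proved later in Section~\ref{F structure}.) Combining the three steps proves the Lemma.
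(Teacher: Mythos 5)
Your proof follows the same strategy as the paper's (which is essentially a one-liner built on $F=F_{\mathrm{reg}}$, $\Lambda_{N}=\Lambda_{F}$, and $J'(\tfrac{\pi}{2})=0$ for $J\in\mathcal{T}_{F}$), but you go a step further and flag, then fill, a point the paper's proof leaves implicit: that \emph{every} unit normal to $F$ arises as $\gamma'(\tfrac{\pi}{2})$ for some geodesic $\gamma$ leaving $N$ orthogonally. This is a real concern: without it, $\Lambda_{N}=\Lambda_{F}$ only bounds the focal radius of $F$ along the reversed $N$--geodesics, and $S\equiv 0$ is only known in those normal directions; the paper does rely on the full statement later (e.g.\ for the Riemannian submersions $\nu^{1}_{x}(F)\to N$ and for applying Proposition~\ref{Jacobi Rigidity prop} with $F$ in place of $N$). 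Your remedy — show that $v\mapsto\gamma_{v}'(\tfrac{\pi}{2})$ is a local diffeomorphism $\nu^{1}(N)\to\nu^{1}(F)$ (injective differential because a normal Jacobi field with $J(\tfrac{\pi}{2})=J'(\tfrac{\pi}{2})=0$ is identically zero, and $J(\tfrac{\pi}{2})=0$ forces $J'(\tfrac{\pi}{2})\in\nu_{p}F\cap\gamma'(\tfrac{\pi}{2})^{\perp}$ since $\Lambda_{N}=\Lambda_{F}$), then use compactness — is correct in outline.

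Two small points you should make explicit. First, open plus closed image gives surjectivity only once you know $\nu^{1}(F)$ is connected; this does hold, because the multiplicity of the focal point at $\tfrac{\pi}{2}$ is at least $\dim\mathcal{T}_{N}=\dim N\geq k\geq 1$, so $\operatorname{codim}F\geq 2$ and the normal spheres $\nu^{1}_{x}(F)$ are connected (and $F$ itself is connected as the continuous image of $\nu^{1}(N)$ except in the codimension-one-$N$, trivial-normal-bundle case, which is subsumed in Theorem~\ref{const curv thm}). Second, your deferral of the degenerate case is correctly located: the lower bound $\mathrm{FocalRadius}(F)\leq\tfrac{\pi}{2}$ via $\cos t\,E$ requires $\mathcal{T}_{F}\neq 0$ or $\mathcal{Z}\neq 0$; $\mathcal{T}_{F}=\mathcal{Z}=0$ forces $\Lambda_{N}=\mathcal{T}_{N}$, i.e.\ $N$ a hypersurface, and that case is indeed handled by Theorem~\ref{const curv thm} (Part 2). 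So your argument is sound, and slightly more careful than the printed one.
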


\begin{proof}
Since $F_{\mathrm{reg}}=F,$ it is a submanifold. Since $F=\exp _{N}^{\perp
}\left( S\left( N_{0},\frac{\pi }{2}\right) \right) ,$ it is closed. It
follows from Part 2 of Lemma \ref{duality lemma copy(1)} that $\Lambda
_{N}=\Lambda _{F}.$ Therefore the focal radius of $F$ is $\frac{\pi }{2}.$

We have $F=F_{\mathrm{reg}},$ so from Part 3 of Lemma \ref{duality lemma
copy(1)}, 
\begin{equation*}
\mathcal{T}_{F}=\left\{ \sin t\,E|\text{ }E\text{ is parallel and tangent to 
}F\text{ at time }\frac{\pi }{2}\right\} .
\end{equation*}%
In particular, for $J\in \mathcal{T}_{F},$ $J^{\prime }\left( \frac{\pi }{2}%
\right) =0.$ So $F$ is totally geodesic.
\end{proof}

The next result will lead to the spherical rigidity portion of the
conclusion of Theorem \ref{inter Ricci Rigidity thm}, and also gives us
Corollary \ref{hyper refin cor}.

\begin{theorem}
\label{const curv thm}$M$ has constant curvature $1$ if either of the
following holds:

\noindent 1. $F$ is not connected.

\noindent 2. \textrm{dim}$\left( F\right) +\mathrm{\dim }\left( N\right)
=\dim \left( M\right) -1.$
\end{theorem}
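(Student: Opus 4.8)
The plan is to reduce case 1 to case 2 and then extract constant curvature from the infinitesimal rigidity forced by the dimension equality. First I would handle the reduction. Suppose $F$ is not connected. If $n-\dim(N)\ge 2$, the unit normal bundle $\nu^{1}(N)$ is connected (a sphere bundle with connected fibers over the connected manifold $N$), so $F=\exp_{N}^{\perp}\!\big(S(N_{0},\tfrac{\pi}{2})\big)$ is the continuous image of a connected space, hence connected -- a contradiction. Therefore $N$ is a hypersurface, so $\dim\mathcal{T}_{N}=\dim(N)=n-1=\dim\Lambda_{N}$; by the orthogonal splitting $\Lambda_{N}=\mathcal{T}_{N}\oplus\mathcal{T}_{F}\oplus\mathcal{Z}$ of Lemma \ref{duality lemma copy(1)} this forces $\mathcal{T}_{F}=\mathcal{Z}=0$, so $\dim(F)=\dim\mathcal{T}_{F}=0$ and $\dim(F)+\dim(N)=\dim(M)-1$. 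Thus in both cases I may assume $\dim(F)+\dim(N)=\dim(M)-1$, and I will prove $M$ has constant curvature $1$ under this hypothesis.

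Under it, $\dim\mathcal{Z}=(n-1)-\dim(N)-\dim(F)=0$ along every unit speed geodesic $\gamma$ leaving $N$ orthogonally at time $0$, so Lemma \ref{duality lemma copy(1)} gives the orthogonal splitting $\Lambda_{N}=\mathcal{T}_{N}\oplus\mathcal{T}_{F}$, and every $N$--Jacobi field along $\gamma$ has the form $\cos t\,E_{1}+\sin t\,E_{2}$ with $E_{1},E_{2}$ parallel. Substituting into the Jacobi equation shows $R(\cdot,\dot\gamma(t))\dot\gamma(t)=\id$ on $\dot\gamma(t)^{\perp}$ for every $t\in[0,\tfrac{\pi}{2}]$; in particular every plane containing some $\dot\gamma(t)$ has sectional curvature $1$. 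At an interior point $p=\gamma_{v}(t_{0})$, $t_{0}\in(0,\tfrac{\pi}{2})$, write $f=\gamma_{v}(\tfrac{\pi}{2})\in F$; parallel transport along $\gamma_{v}$ identifies $\dot\gamma_{v}(t_{0})^{\perp}=P_{N}\oplus P_{F}$, where $P_{N}$ and $P_{F}$ are the parallel transports to $p$ of $T_{x}N$ and $T_{f}F$. This sum is orthogonal because $\mathcal{T}_{N}\oplus\mathcal{T}_{F}$ is an orthogonal splitting, and it is direct because a common parallel direction would yield an $N$--Jacobi field vanishing somewhere in $(0,\tfrac{\pi}{2})$, contradicting $\mathrm{FocalRadius}(N)=\tfrac{\pi}{2}$.

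The next step is to show that $N$ and $F$ are round of curvature $1$. Because $\dim(F)+\dim(N)=n-1$, the round unit normal sphere $\nu_{x}^{1}(N)$ has dimension $(n-\dim N)-1=\dim(F)$, so the Riemannian submersion $\nu_{x}^{1}(N)\to F$ (the $F$--analogue of the structure on $N$, valid by Lemma \ref{duality lemma copy(1)} and Proposition \ref{Jacobi Rigidity prop}) is a submersion between manifolds of equal dimension, hence a local isometry, hence a Riemannian covering; since $\nu_{x}^{1}(N)$ is round of curvature $1$, $F$ has constant curvature $1$, and symmetrically so does $N$ (the cases $\dim F\le 1$ or $\dim N\le 1$ being trivial). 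Moreover, in this rigid situation the relevant Wilking $A$--tensor vanishes, exactly as in the proof of the rigidity part of Lemma \ref{sing soon Ric_k Lemma}; combined with Proposition \ref{Jacobi Rigidity prop}(3) and Proposition \ref{HHS is Riem prop}, this makes the horizontal distribution of $\pi\colon\mathrm{reg}_{N}\to N$ (whose value at $p$ is $P_{N}$) integrable with totally geodesic leaves, and dually its orthogonal complement, so each level hypersurface $\rho^{-1}(t_{0})$ of $\rho=\dist(N,\cdot)$ carries two complementary totally geodesic foliations and is therefore locally a Riemannian product of rescaled copies of $(N,g|_{N})$ and $(F,g|_{F})$.

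Finally I would compute the sectional curvature at a point $p=\gamma_{v}(t_{0})$ of the dense open set $M\setminus(N\cup F)$ using the orthogonal splitting $T_{p}M=\R\dot\gamma_{v}(t_{0})\oplus P_{N}\oplus P_{F}$. Planes containing $\dot\gamma_{v}(t_{0})$ have curvature $1$ by the second step. The Riccati equation with $R=\id$ gives that the shape operator of $\rho^{-1}(t_{0})\subset M$ is $-\tan t_{0}$ on $P_{N}$ and $\cot t_{0}$ on $P_{F}$, while the local product structure (and O'Neill's formulas, with vanishing submersion $A$--tensor) give $\mathrm{sec}_{\rho^{-1}(t_{0})}$ equal to $\sec^{2}t_{0}$ on $P_{N}$--planes, $\csc^{2}t_{0}$ on $P_{F}$--planes, and $0$ on mixed planes; the Gauss equation then yields $\mathrm{sec}_{M}\equiv 1$ (for instance $\sec^{2}t_{0}-\tan^{2}t_{0}=1$ on $P_{N}$--planes and $0-(-\tan t_{0})(\cot t_{0})=1$ on mixed ones). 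By continuity $M$ has constant curvature $1$, and when $N$ is a hypersurface this is precisely Corollary \ref{hyper refin cor}. I expect the main obstacle to be the third step: promoting the infinitesimal rigidity -- the splitting of $\Lambda_{N}$, the identity $R=\id$, and the vanishing Wilking $A$--tensor -- to the genuinely metric statements that $N$ and $F$ are round and that the $\rho$--level sets are locally metric products; once those are secured, the curvature identities of the last step are forced.
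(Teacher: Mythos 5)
Your reduction of Case 1 to Case 2 is essentially the paper's reduction (a disconnected $F$ forces $N$ to be a hypersurface, whence the dimension equality), and the identification of the $N$--Jacobi fields as $\cos t\,E_{1}+\sin t\,E_{2}$ with $E_{1},E_{2}$ parallel, forcing $R(\cdot,\dot\gamma)\dot\gamma=\mathrm{id}$ along radial geodesics, is an accurate and nice consequence of $\mathcal{Z}=0$. The closing Gauss-equation arithmetic is also correct, \emph{granting} the local product structure on the level hypersurfaces. The genuine problem is that this product structure is the crux, and your justification of it does not hold up.

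You cite the vanishing of the Wilking $A$--tensor (via Lemma \ref{lem:Lagrangian splitting}), Proposition \ref{Jacobi Rigidity prop}(3), and Proposition \ref{HHS is Riem prop} to conclude that the horizontal distribution $P_{N}$ of $\pi|_{\rho^{-1}(t_{0})}$ is integrable with totally geodesic leaves, and dually for $P_{F}$. But the Wilking $A$--tensor vanishing only says that $\mathcal{V}(t)$ and $H(t)$ are parallel \emph{along each individual radial geodesic} $\gamma$; it is not the Gray--O'Neill $A$-- or $T$--tensor of the submersion $\pi|_{\rho^{-1}(t_{0})}\colon\rho^{-1}(t_{0})\to N$, and it gives neither integrability of $P_{N}$ as a distribution on $\rho^{-1}(t_{0})$ nor total geodesy of the fibers $S_{N}(x,t_{0})$ inside $\rho^{-1}(t_{0})$. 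Propositions \ref{Jacobi Rigidity prop}(3) and \ref{HHS is Riem prop} only say the restricted $\pi$ is a Riemannian submersion; they control the metric on horizontal and vertical subspaces but say nothing about mixed second-order data. Without total geodesy of $S_{F}(y,\tfrac{\pi}{2}-t_{0})$ in $\rho^{-1}(t_{0})$, the intrinsic curvature of $\rho^{-1}(t_{0})$ on a $P_{N}$--plane need not be $\sec^{2}t_{0}$ (the Gauss equation for $S_{F}\subset\rho^{-1}(t_{0})$ introduces an undetermined correction), and the claim that mixed $P_{N}\oplus P_{F}$ planes have intrinsic curvature $0$ is, via the Gauss equation with the known $\mathrm{II}_{r}$ of \eqref{II spherical}, \emph{equivalent} to $\mathrm{sec}_{M}=1$ on those planes — the very thing being proved. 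So as written the argument is circular at exactly the spot you yourself flag as the main obstacle.

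The paper avoids this by never asserting that $\rho^{-1}(r)$ is a local product. Instead it records precisely the data it can establish — the round intrinsic metrics on $S_{N}(x,r)$ and $S_{F}(y,r)$ from \eqref{F metric sphere II}--\eqref{N metric sphere II}, the orthogonal intersection \eqref{orthog inter}, and the explicit $\mathrm{II}_{r}$ of \eqref{II spherical} — notes that the identical data hold for the join $\mathbb{S}^{n}=\mathbb{S}^{\dim N}\ast\mathbb{S}^{\dim F}$, and argues these data together with the Gauss, radial, and Codazzi--Mainardi equations determine the full curvature tensor. That route is genuinely different from yours and sidesteps the need to know the intrinsic geometry of $\rho^{-1}(r)$ beyond what the Jacobi fields directly give. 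To salvage your approach you would need an independent proof that the two foliations of $\rho^{-1}(t_{0})$ are totally geodesic, which the cited results do not supply.
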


\begin{proof}
We have that $\exp ^{\perp }:S\left( N_{0},\frac{\pi }{2}\right)
\longrightarrow F$ is onto. So if $F$ is not connected, then $S\left( N_{0},%
\frac{\pi }{2}\right) $ is not connected, and it follows that $N$ is
codimension 1 and has a trivial normal bundle. Since 
\begin{equation*}
\mathrm{dim}\left( F\right) +\mathrm{\dim }\left( N\right) \leq \dim \left(
\Lambda _{N}\right) =\dim \left( M\right) -1,
\end{equation*}%
to prove Part 1, it is enough to prove Part 2.

In general, we have an orthogonal splitting of $\Lambda _{N}=\mathcal{T}%
_{N}\oplus \mathcal{T}_{F}\oplus \mathcal{Z}$ along any one of our normal
geodesics$.$ Since $\mathrm{\dim }\left( \mathcal{T}_{F}\right) =\mathrm{%
\dim }\left( F\right) $, $\mathrm{\dim }\left( \mathcal{T}_{N}\right) =%
\mathrm{\dim }\left( N\right) ,$ and \textrm{dim}$\left( F\right) +\mathrm{%
\dim }\left( N\right) =\dim \left( M\right) -1,$ $\mathcal{Z}=0,$ and our
geodesic is spanned by constant curvature $1$ Jacobi fields. Moreover, 
\begin{equation*}
\mathcal{T}_{N}=\mathcal{Z}_{F}\text{ and }\mathcal{T}_{F}=\mathcal{Z}_{N}.
\end{equation*}%
Combining this with Part 1 of Proposition \ref{Jacobi Rigidity prop}, it
follows that along a geodesic leaving $F$ orthogonally at time $0,$%
\addtocounter{algorithm}{1} 
\begin{equation}
\mathcal{Z}_{F}=\left\{ \left. \sin tE\text{ }\right\vert \text{ }E\text{ is
parallel}\right\} ,  \label{F metric spheres}
\end{equation}%
and along a geodesic leaving $N$ orthogonally at time $0,$%
\addtocounter{algorithm}{1} 
\begin{equation}
\mathcal{Z}_{N}=\left\{ \left. \sin tE\text{ }\right\vert \text{ }E\text{ is
parallel}\right\} .  \label{Z metric spheres}
\end{equation}%
It follows from Equation \eqref{F metric spheres} that for all $x\in F$ and
all $r\in \left( 0,\frac{\pi }{2}\right) ,$ the intrinsic metrics on %
\addtocounter{algorithm}{1} 
\begin{equation}
S_{F}\left( x,r\right) \equiv \exp _{x}^{\perp }\left\{ \left. v\in \nu
_{x}\left( F\right) \text{ }\right\vert \text{ }\left\vert v\right\vert
=r\right\}  \label{F metric sphere II}
\end{equation}%
are locally isometric to $S^{\mathrm{\dim N}}\left( \sin r\right) ,$ that
is, to the sphere of radius $\sin r$ in $\mathbb{R}^{\mathrm{\dim N}+1}.$
Similarly, it follows that for all $x\in N$ and all $r\in \left( 0,\frac{\pi 
}{2}\right) ,$ the intrinsic metrics on \addtocounter{algorithm}{1} 
\begin{equation}
S_{N}\left( x,r\right) \equiv \exp _{x}^{\perp }\left\{ \left. v\in \nu
_{x}\left( N\right) \text{ }\right\vert \text{ }\left\vert v\right\vert
=r\right\}  \label{N metric sphere II}
\end{equation}%
are locally isometric to $S^{\mathrm{\dim F}}\left( \sin r\right) .$ Since $%
\mathcal{T}_{N}\oplus \mathcal{Z}_{N}$ is an orthogonal splitting, if $%
\gamma $ leaves $N$ orthogonally at time $0,$ then %
\addtocounter{algorithm}{1} 
\begin{equation}
S_{N}\left( \gamma \left( 0\right) ,r\right) \text{ and }S_{F}\left( \gamma
\left( \frac{\pi }{2}\right) ,\frac{\pi }{2}-r\right) \text{ intersect
orthogonally at }\gamma \left( r\right) .  \label{orthog inter}
\end{equation}

Let 
\begin{equation*}
S_{N}\left( r\right) \equiv \exp _{N}^{\perp }\left\{ \left. v\in \nu \left(
N\right) \text{ }\right\vert \text{ }\left\vert v\right\vert =r\right\} ,
\end{equation*}%
and let $\mathrm{II}_{r}$ be the second fundamental form of $S_{N}\left(
r\right) ,$ that is 
\begin{equation*}
\mathrm{II}_{r}\left( U,V\right) \equiv g\left( \nabla _{U}V,\gamma ^{\prime
}\left( r\right) \right) ,
\end{equation*}%
where $\gamma $ leaves $N$ orthogonally at time $0.$

Combining \eqref{F metric spheres}, \eqref{Z metric spheres} and 
\eqref{orthog
inter}, we have for $Y\in \mathcal{T}_{N},$ and $W\in \mathcal{Z}_{N}$, %
\addtocounter{algorithm}{1} 
\begin{eqnarray}
\mathrm{II}_{r}\left( Y,Y\right) &=&\left\vert Y\right\vert ^{2}\tan \left(
r\right)  \notag \\
\mathrm{II}_{r}\left( W,W\right) &=&-\left\vert W\right\vert ^{2}\cot \left(
r\right) ,\text{ and}  \notag \\
\mathrm{II}_{r}\left( Y,W\right) &=&0.  \label{II spherical}
\end{eqnarray}

Now view $\mathbb{S}^{n}$ as a join, $\mathbb{S}^{n}=\mathbb{S}^{\mathrm{%
\dim N}}\ast \mathbb{S}^{\dim \mathrm{F}},$ and let $\tilde{\gamma}$ be a
geodesic that leaves $\mathbb{S}^{\mathrm{\dim N}}$ orthogonally at time $0.$
Setting $\tilde{M}\equiv \mathbb{S}^{n},$ $\tilde{N}\equiv \mathbb{S}^{%
\mathrm{\dim N}},$ and $\tilde{F}\equiv \mathbb{S}^{\dim \mathrm{F}},$
observe that \eqref{F metric sphere II}, \eqref{N metric sphere II}, 
\eqref{orthog
inter}, and \eqref{II spherical} hold with $M$, $N,$ and $F$ replaced by $%
\tilde{M}$, $\tilde{N},$ and $\tilde{F}.$ Observe further that Equations %
\eqref{F metric sphere II}, \eqref{N metric sphere II}, \eqref{orthog inter}%
, and \eqref{II spherical} together with the Gauss, Radial, and
Codazzi-Mainardi Equations (\cite{Pet}) determine the curvature tensor of $%
\tilde{M}\equiv S^{n}.$ Similarly, they determine the curvature tensor of $%
M. $ Thus $M$ has constant curvature 1.
\end{proof}

Throughout the remainder of Part 2, we assume that $F$ is connected and %
\addtocounter{algorithm}{1} 
\begin{equation}
\mathrm{dim}\left( F\right) +\mathrm{\dim }\left( N\right) \leq \dim \left(
M\right) -2.  \label{dim hypoth eqn}
\end{equation}

\begin{lemma}
\noindent 1. Let $x\in N.$ With respect to the constant curvature $1$ metric
on the unit normal sphere, $\nu _{x}^{1}\left( N\right) ,$ the map 
\begin{eqnarray*}
\pi _{x} &:&\nu _{x}^{1}\left( N\right) \longrightarrow F \\
\pi _{x} &:&v\mapsto \exp _{N}\left( \frac{\pi }{2}v\right)
\end{eqnarray*}%
is a Riemannian submersion onto $F.$

\noindent 2. Let $x\in F.$ With respect to the constant curvature $1$ metric
on the unit normal sphere, $\nu _{x}^{1}\left( F\right) ,$ the map 
\begin{eqnarray*}
\pi _{x} &:&\nu _{x}^{1}\left( F\right) \longrightarrow N \\
\pi _{x} &:&v\mapsto \exp _{F}\left( \frac{\pi }{2}v\right)
\end{eqnarray*}%
is a Riemannian submersion onto $N.$
\end{lemma}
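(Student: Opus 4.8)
The plan is to compute the differential of $\pi_x$ in terms of $N$--Jacobi fields and then read off the Riemannian submersion property directly from the rigid splitting of $\Lambda_N$ furnished by Proposition \ref{Jacobi Rigidity prop} and Lemma \ref{duality lemma copy(1)}.

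First I would fix $x\in N$ and a unit $v\in\nu_x^{1}(N)$, put $\gamma=\gamma_v$, and observe that $\pi_x(v)=\gamma(\tfrac{\pi}{2})$ depends smoothly on $v$ and lies in $F=\exp_N^{\perp}(S(N_0,\tfrac{\pi}{2}))$. I would then identify $T_v\nu_x^{1}(N)$ with $\mathcal{Z}_N$ of \eqref{Lambda dfn}: a tangent vector $w\in T_v\nu_x^{1}(N)$ is an element of $\nu_x(N)$ orthogonal to $v$, hence to $\mathrm{span}\{T_xN,\gamma'(0)\}$, and the map $w\mapsto J_w$, where $J_w$ is the Jacobi field with $J_w(0)=0$ and $J_w'(0)=w$, is a linear isomorphism onto $\mathcal{Z}_N$ because $\dim\nu_x^{1}(N)=n-1-\dim N=\dim\mathcal{Z}_N$. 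Differentiating a geodesic variation $s\mapsto\gamma_{v(s)}$ with $\dot v(0)=w$ gives $D\pi_x(w)=J_w(\tfrac{\pi}{2})\in T_{\gamma(\pi/2)}F$.

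Next I would invoke the rigidity. Since $F=F_{\mathrm{reg}}$ by Lemma \ref{F equals F_reg lemma}, every such $\gamma$ meets $F$ orthogonally at time $\tfrac{\pi}{2}$, so Lemma \ref{duality lemma copy(1)} gives the orthogonal splitting $\mathcal{Z}_N=\mathcal{T}_{F}\oplus\mathcal{Z}$ with $\mathcal{T}_{F}=\{\sin t\,E:\ E\ \text{parallel and tangent to }F\text{ at }\tfrac{\pi}{2}\}$ and $\mathcal{Z}=\{J:\ J(0)=J(\tfrac{\pi}{2})=0\}$. Each $J\in\mathcal{Z}$ is killed by $D\pi_x$, so these span the vertical directions; for $J=\sin t\,E\in\mathcal{T}_F$ one has $J'(0)=E(0)$ and $D\pi_x(E(0))=E(\tfrac{\pi}{2})$, with $|E(\tfrac{\pi}{2})|=|E(0)|$ since $E$ is $\gamma$--parallel. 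As $E$ ranges over $\mathcal{T}_F$ the vectors $E(\tfrac{\pi}{2})$ span $T_{\gamma(\pi/2)}F$ and $\dim\mathcal{T}_F=\dim F$, so $D\pi_x$ has constant rank $\dim F$, its kernel is exactly the image of $\mathcal{Z}$, and on the orthogonal complement of the kernel --- the image of $\mathcal{T}_F$, by orthogonality of the splitting at $t=0$ --- it restricts to a linear isometry onto $T_{\gamma(\pi/2)}F$; here I use that the round metric of $\nu_x^{1}(N)$ and the induced metric of $F\subset M$ are both restrictions of $g$ and that $\gamma$--parallel transport is a $g$--isometry. Finally, a constant rank map is a submersion, so the image of $\pi_x$ is open in $F$; it is closed because $\nu_x^{1}(N)$ is compact; and $F$ is connected under our standing assumption \eqref{dim hypoth eqn} (the disconnected case having been settled in Theorem \ref{const curv thm}), so $\pi_x$ is onto. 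This proves Part 1.

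For Part 2 I would run the same computation for a unit normal geodesic of $F$; by Lemma \ref{duality lemma copy(1)} this is the reparametrization $t\mapsto\tfrac{\pi}{2}-t$ of one of the geodesics above, along which $\mathcal{Z}_F=\mathcal{T}_N\oplus\mathcal{Z}$ with $\mathcal{T}_N$ spanned in the new parameter by fields of the form $\sin t\,E$, and since $F$ is a closed, totally geodesic submanifold with focal radius $\tfrac{\pi}{2}$ (Lemma \ref{Focal F is pi/2 lemma}) the rigidity statements apply with $N$ and $F$ interchanged. The main obstacle is not analytic --- all the substantive work is already done in the rigidity results --- but rather the tangent-space bookkeeping and, in particular, checking that $D\pi_x$ is a genuine isometry rather than merely a conformal map on the horizontal space, which hinges on the precise form $\sin t\,E$ of the fields in $\mathcal{T}_F$ together with the orthogonality of the splitting at $t=0$.
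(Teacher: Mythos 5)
Your argument is essentially the paper's: identify $T_v\nu_x^1(N)$ with $\mathcal{Z}_N$ via $w\mapsto J_w$, note $D\pi_x(J'(0))=J(\tfrac{\pi}{2})$, use the orthogonal splitting $\mathcal{Z}_N=\mathcal{Z}\oplus\mathcal{T}_F$ of Lemma \ref{duality lemma copy(1)} to identify vertical and horizontal spaces, and read off the isometry from the rigid form $J=\sin t\,E$. You are a bit more explicit than the paper on two points that are worth stating: you spell out the isometry check (computing $J'(0)=E(0)$, $J(\tfrac{\pi}{2})=E(\tfrac{\pi}{2})$, and invoking parallelism), and you supply the surjectivity argument (constant rank gives open image, compactness gives closed image, connectedness of $F$ finishes), which the paper leaves implicit. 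Both proofs agree on the substance.
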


\begin{proof}
The proofs are identical, except for notation . We give the details for Part
1.

Let $\gamma $ be a geodesic that leaves $N$ orthogonally at time $0.$ Then %
\addtocounter{algorithm}{1} 
\begin{eqnarray}
T_{\gamma ^{\prime }\left( 0\right) }\left( \nu _{\gamma \left( 0\right)
}^{1}\left( N\right) \right) &=&\left\{ \left. J^{\prime }\left( 0\right) 
\text{ }\right\vert \text{ }J\in \mathcal{Z}_{N}\right\} ,\text{ and}  \notag
\\
D\pi _{\gamma \left( 0\right) }\left( J^{\prime }\left( 0\right) \right)
&=&J\left( \frac{\pi }{2}\right)  \label{Normal fiber eqn}
\end{eqnarray}%
for all $J\in \mathcal{Z}_{N}.$

Since $\mathcal{T}_{F}\subset \mathcal{Z}_{N},$ the splittings $\Lambda _{N}=%
\mathcal{T}_{N}\oplus \mathcal{Z}_{N}=\mathcal{T}_{N}\oplus \mathcal{T}%
_{F}\oplus \mathcal{Z}$ give us an orthogonal splitting 
\begin{equation*}
\mathcal{Z}_{N}=\mathcal{Z}\oplus \mathcal{T}_{F}.
\end{equation*}%
Combined with Equation \eqref{Normal fiber eqn} this gives an orthogonal
splitting \addtocounter{algorithm}{1} 
\begin{equation}
T_{\gamma ^{\prime }\left( 0\right) }\left( \nu _{\gamma \left( 0\right)
}^{1}\left( N\right) \right) =\left\{ \left. J^{\prime }\left( 0\right) 
\text{ }\right\vert \text{ }J\in \mathcal{Z}\right\} \oplus \left\{ \left.
J^{\prime }\left( 0\right) \text{ }\right\vert \text{ }J\in \mathcal{T}%
_{F}\right\}  \label{normal sphere splitting eqn}
\end{equation}%
into the vertical and horizontal spaces, respectively, for $\pi _{\gamma
\left( 0\right) }.$ Since $\mathrm{\dim }\left( \mathcal{T}_{F}\right) =\dim
\left( F\right) ,$ it follows from Equation \eqref{Normal fiber eqn} that $%
\pi _{\gamma \left( 0\right) }$ is a submersion. By Part 1 of Proposition %
\ref{Jacobi Rigidity prop}, with $F$ playing the role of $N,$ the
restriction of $D\pi _{\gamma \left( 0\right) }$ to the second summand in
Equation \eqref{normal sphere splitting eqn} is an isometry. Thus $\pi
_{\gamma \left( 0\right) }$ is a Riemannian submersion.
\end{proof}

\section{The Simply Connected Case\label{simply connected sect}}

Let $\pi :\tilde{M}\longrightarrow M$ be the universal cover of $M$. Then
each component $\pi ^{-1}\left( N\right) $ is a submanifold with focal
radius $\frac{\pi }{2}$ and dimension at least $k.$ In particular, $\tilde{M}
$ contains a closed, connected, embedded submanifold with focal radius $%
\frac{\pi }{2}$ and dimension at least $k.$ So to prove Theorem \ref{inter
Ricci Rigidity thm}, it suffices to consider the case when $M$ is simply
connected and $N$ is connected.

In this section, we will combine our simply connected hypothesis with
Hebda's theorem on \textquotedblleft very regular\textquotedblright\ focal
loci. This will allow us to assert that, topologically, $M$ is the union of
two disk bundles, and the fibers of our Riemannian submersions, 
\begin{eqnarray*}
\pi _{x} &:&\nu _{x}^{1}\left( N\right) \longrightarrow F \\
\pi _{x} &:&v\mapsto \exp _{N}\left( \frac{\pi }{2}v\right)
\end{eqnarray*}%
and 
\begin{eqnarray*}
\pi _{x} &:&\nu _{x}^{1}\left( F\right) \longrightarrow N \\
\pi _{x} &:&v\mapsto \exp _{F}\left( \frac{\pi }{2}v\right)
\end{eqnarray*}%
are connected. We start with a review of Hebda's result.

\begin{definition}
(Hebda, \cite{Heda}) Consider geodesics $\gamma $ that leave $N$
orthogonally at time $0.$ $N$ has a very regular first focal locus if the
multiplicity of the first focal point is independent of $\gamma $ and, in
case the multiplicity is one, $\mathrm{\ker }\left( D\exp _{N}^{\perp
}\right) $ is contained in the tangent space to the tangent focal locus at
every first-tangent focal point.
\end{definition}

Along a geodesic that leaves our $N$ orthogonally at time $0,$ the
multiplicity of the focal point at time $\frac{\pi }{2}$ is 
\begin{eqnarray*}
\dim \mathcal{Z}_{F} &\mathcal{=}&\dim \left( \Lambda _{N}\right) -\dim 
\mathcal{T}_{F} \\
&=&\mathrm{\dim }\left( M\right) -1-\dim \left( F\right) ,
\end{eqnarray*}%
and hence is constant. Since the focal radius of $N$ along every geodesic is 
$\frac{\pi }{2},$ it follows from the Gauss Lemma that our $N$ has a very
regular first focal locus. Therefore, since $M$ is simply connected, we can
apply the following result of Hebda. (See Theorem 3.1 in \cite{Heda} and the
first line of its proof.)

\begin{theorem}
\label{Hebda 3.1}(Hebda, \cite{Heda}) Suppose $M$ is a connected, compact
Riemannian manifold, and $N$ is a connected, compact submanifold having a
very regular first focal locus such that the inclusion $\iota
:N\hookrightarrow M$ induces a surjection of fundamental groups.

If the multiplicity of the first focal points of $N$ is $s-1,$ then the
first focal locus $F$ of $N$ in $M$ is a submanifold of codimension $s$ that
coincides with the cut locus of $N$ in $M.$ Moreover, the tangent cut locus
of $N$ coincides with the first-tangent focal locus of $N,$ and $M$ is the
union of two disk bundles 
\begin{equation*}
M=D_{N}\cup _{\varphi }D_{F}
\end{equation*}%
over $N$ and $F$ respectively, where 
\begin{equation*}
\varphi :\partial D_{N}\longrightarrow \partial D_{F}
\end{equation*}%
is a diffeomorphism.
\end{theorem}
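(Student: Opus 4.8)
I would reconstruct Hebda's argument in three stages: first the tangential picture in $\nu(N)$, then the identification $\operatorname{Cut}(N)=F$ (where the $\pi_1$--hypothesis enters), and finally the assembly of the two disk bundles. \emph{Stage 1.} Since the multiplicity of the first focal point is the constant $s-1$, Warner's structure theorem (quoted above) shows that \emph{every} first--tangent focal point is regular, so the first--tangent focal locus $\tilde F\subset\nu(N)$ is a smooth compact hypersurface --- the graph of the smooth function $v\mapsto t_1(v)$ on $\nu^1(N)$ --- and the ``very regular'' hypothesis says $\ker\bigl(D\exp_N^\perp\bigr)$ is the $(s-1)$--dimensional subspace tangent to $\tilde F$ at each point (automatic when $s-1\ge 2$ by Warner's analysis, assumed when $s-1=1$). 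Hence $\exp_N^\perp|_{\tilde F}$ has constant rank $(n-1)-(s-1)=n-s$, and by the Rank Theorem $F=\exp_N^\perp(\tilde F)$ is locally an embedded codimension--$s$ submanifold of $M$, with $\exp_N^\perp|_{\tilde F}$ a local submersion onto it.

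\emph{Stage 2.} Let $c:\nu^1(N)\to(0,\infty)$ be the cut function; one always has $c\le t_1$ (a normal geodesic cannot minimize past its first focal point), and the point is to prove $c\equiv t_1$. I would first reduce to $M$ simply connected: $\iota_*\pi_1(N)=\pi_1(M)$ is equivalent to the full preimage $\tilde N=p^{-1}(N)$ being connected in the universal cover $p:\tilde M\to M$, and since $p$ is a Riemannian covering and $\tilde N$ is the entire preimage, distances to $\tilde N$, minimizing normal geodesics, cut loci and first focal loci of $\tilde N$ are exactly the $p$--preimages of those of $N$. So assume $M$ simply connected, and suppose some direction has an early cut: there is then a non--focal cut point $q$ joined to $N$ by two distinct minimizing normal geodesics of common length $\ell$, whose reversed concatenation is a curve from $N$ to $N$. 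Because $\pi_1(M,N)=0$, the space of curves from $N$ to $N$ is connected, so this curve deforms rel $N$ to a constant; feeding that deformation into a first--variation / curve--shortening argument on the $N$--to--$N$ path space (following Warner) contradicts the minimality of $\ell$ unless the first--variation obstruction concentrates at a genuine first focal point, which is impossible since $q$ is pre--focal. Hence $c\equiv t_1$, so $\operatorname{Cut}(N)=\exp_N^\perp(\{c(v)v\})=F$ and the tangent cut locus of $N$ equals $\tilde F$; in particular $F$ is compact, which together with Stage 1 upgrades $F$ to a globally embedded closed codimension--$s$ submanifold and $q:=\exp_N^\perp|_{\tilde F}:\tilde F\to F$ to a proper constant--rank map, hence an Ehresmann fiber bundle with $(s-1)$--dimensional fibers.

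\emph{Stage 3.} Put $D_N=\{tv:\ v\in\nu^1(N),\ 0\le t\le c(v)\}$; as $c=t_1$ is smooth and positive this is a smooth disk bundle over $N$. Now $\exp_N^\perp:D_N\to M$ is surjective, carries $\partial D_N=\tilde F$ onto $F$ via $q$, and restricts to a diffeomorphism of $\operatorname{int}(D_N)$ onto $M\setminus F$ (a local diffeomorphism there since $t<t_1$, and injective there since for $t<c(v)$ the segment $\gamma_v|_{[0,t]}$ is the unique minimizer from $N$ to its endpoint). Thus $M$ is obtained from $D_N$ by the boundary identification $x\sim x'$ whenever $q(x)=q(x')$; equivalently $M\setminus N$ is a bundle over $F$ with fiber the open cone on a fiber of $q$. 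Running the identical analysis with $F$ in place of $N$ --- using the Gauss Lemma to see the minimizing normal geodesics of $N$ meet $F$ orthogonally, hence are precisely the minimizing normal geodesics of $F$ reversed, and that $F$ inherits a very regular first focal locus whose first focal locus is $N$ --- shows $M\setminus F$ is an open rank--$s$ disk bundle over $F$; in particular the fibers of $q$ are $(s-1)$--spheres, so $D_F$ is a genuine disk bundle, and matching the two descriptions along $\partial D_N=\tilde F=\partial D_F$ yields a diffeomorphism $\varphi:\partial D_N\to\partial D_F$ with $M=D_N\cup_\varphi D_F$.

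The hard part is Stage 2 --- excluding pre--focal crossings of minimizing geodesics. This is precisely where the fundamental--group hypothesis is indispensable (the conclusion $\operatorname{Cut}(N)=F$ fails without it, e.g.\ for a point in a flat torus), and the delicate point is turning ``the $N$--to--$N$ path space is connected'' into an honest contradiction rather than a mere homotopy. The same ideas also drive the duality used in Stage 3 --- that $F$ is very regular with focal locus $N$ --- which is what makes the fibers of $q$ spheres and $D_F$ an actual disk bundle rather than just a cone bundle.
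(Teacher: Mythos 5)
The paper offers no proof of this statement: it is Hebda's Theorem~3.1, quoted as a black box (the parenthetical reference to ``the first line of its proof'' only reconciles Hebda's stated hypothesis with the simply connected setting in which the paper applies it). So there is no in-paper argument to compare yours against; what follows is an assessment of your reconstruction on its own terms.

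Your Stages 1 and 3 give a reasonable outline of the tangential picture and of the assembly of the two disk bundles, but Stage 2 --- the only place the $\pi_{1}$--hypothesis is used, and the heart of the theorem --- has a genuine gap. The step ``contract the $N$--to--$N$ curve formed by the two minimizing normal geodesics, run curve--shortening, and derive a contradiction unless the obstruction concentrates at a first focal point'' is not a valid argument: shortening in the $N$--to--$N$ path space gets stuck at arbitrary geodesic chords (critical points of the energy functional), not at focal points, and no contradiction with the minimality of $\ell$ results. Indeed, as written your Stage 2 invokes only simple connectivity and the pre--focality of $q$, never the very--regular hypothesis, so it would prove that the cut locus of any point in any simply connected manifold coincides with its first conjugate locus --- which is false (an ellipsoid already fails; it of course also fails very--regularity, which is precisely the point: that hypothesis must enter Stage 2). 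The mechanism that actually works, and the one Hebda uses, is a covering--space argument rather than a path--space one: collapse the leaves of the kernel foliation of $\exp_{N}^{\perp}$ on $\partial D_{N}=\tilde F$ to form an identification space $\hat M$, use the local structure of $\exp_{N}^{\perp}$ at regular first focal points to show that $\hat M$ is a closed manifold and that the induced map $\hat M\longrightarrow M$ is a local homeomorphism, hence a covering; surjectivity of $\pi_{1}(N)\rightarrow \pi_{1}(M)$ together with the fact that $N$ embeds in $\hat M$ then forces the covering to be one--sheeted, and the injectivity of $\exp_{N}^{\perp}$ below $\tilde F$, the identification of the cut locus with $F$, and the disk--bundle decomposition all follow simultaneously. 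Relatedly, your Stage 3 obtains that the fibers of $q:\tilde F\rightarrow F$ are spheres by ``running the identical analysis with $F$ in place of $N$,'' which presupposes that $F$ is already known to be a closed submanifold with the dual regularity properties; that is exactly the output of the covering argument, so as organized Stage 3 is circular until Stage 2 is repaired.
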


By combining transversality and Theorem \ref{Hebda 3.1} we get following.

\begin{corollary}
\label{pi_1 lemma}Suppose that $M$ is simply connected.

\noindent 1. If \textrm{codim}$\left( F\right) \geq 3,$ then $N$ is simply
connected.

\noindent 2. If \textrm{codim}$\left( N\right) \geq 3,$ then $F$ is simply
connected.
\end{corollary}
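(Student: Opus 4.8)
The plan is to combine the disk-bundle decomposition of Theorem \ref{Hebda 3.1} with a routine general-position argument. As observed above, Theorem \ref{Hebda 3.1} applies to $N\subset M$ (here $M$ is compact, since $\Ric_{k}\geq k$ forces $\Ric\geq n-1$; it is simply connected by hypothesis, so $\iota_{*}$ is trivially onto; and $N$ has a very regular first focal locus), so we may write
\[
M=D_{N}\cup_{\varphi}D_{F},
\]
where $D_{N}$ and $D_{F}$ are the normal disk bundles of radius $\tfrac{\pi}{2}$ of $N$ and of $F$, glued along their common boundary sphere bundle by a diffeomorphism $\varphi$, and where $N$ (resp.\ $F$) is the zero section of $D_{N}$ (resp.\ $D_{F}$).

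I would first record the homotopy content of this splitting. Radially retracting each punctured fibre of $D_{F}$ onto its boundary sphere gives a deformation retraction of $D_{F}\setminus F$ onto $\partial D_{F}$ that restricts to the identity on $\partial D_{F}$; extending by the identity over $D_{N}$ yields a deformation retraction of $M\setminus F$ onto $D_{N}$, and $D_{N}$ in turn deformation retracts onto its zero section $N$. Hence $M\setminus F$ is homotopy equivalent to $N$, and, interchanging the roles of $N$ and $F$, $M\setminus N$ is homotopy equivalent to $F$.

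For Part 1, assume $\mathrm{codim}(F)\geq 3$ and fix a basepoint $p\in M\setminus F$. Since $F$ is a closed smooth submanifold of $M$ with $1+\dim F<\dim M$, every loop at $p$ is homotopic rel $p$ to a loop disjoint from $F$; and since $2+\dim F<\dim M$, any homotopy rel endpoints between two such loops can be perturbed, rel its boundary, to miss $F$. Thus the inclusion $M\setminus F\hookrightarrow M$ induces an isomorphism on $\pi_{1}$, and combining this with $M\setminus F\simeq N$ and the simple connectivity of $M$ gives $\pi_{1}(N)\cong\pi_{1}(M\setminus F)\cong\pi_{1}(M)=1$. Part 2 follows in exactly the same way, using $\mathrm{codim}(N)\geq 3$ and $M\setminus N\simeq F$ in place of the corresponding statements for $F$.

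The one place to be careful is the transversality step: codimension $\geq 2$ is needed to sweep a loop off the submanifold, but codimension $\geq 3$ is needed to sweep a homotopy of loops off it, which is exactly why the hypothesis is $\mathrm{codim}\geq 3$ and not $\mathrm{codim}\geq 2$ — the unknot $S^{1}\subset S^{3}$ shows codimension $2$ genuinely fails. Granting Theorem \ref{Hebda 3.1}, everything else here is formal.
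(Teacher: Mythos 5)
Your proof is correct and follows exactly the paper's approach: apply Theorem \ref{Hebda 3.1} to get the disk-bundle decomposition so that $M\setminus F$ deformation retracts onto $N$, then use transversality (with $\mathrm{codim}(F)\geq 3$) to conclude $\pi_1(M\setminus F)\cong\pi_1(M)$. You have simply written out the retraction and the general-position argument that the paper states in a single line.
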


\begin{proof}
The two statements have dual proofs. We give the details for Part 1.

Transversality gives us 
\begin{equation*}
\pi _{1}\left( M\right) \cong \pi _{1}\left( M\setminus F\right) ,
\end{equation*}%
and by Theorem \ref{Hebda 3.1}, $M\setminus F$ deformation retracts to $N.$
\end{proof}

Similarly, the cut locus statements in Theorem \ref{Hebda 3.1} gives us

\begin{corollary}
\label{exp tilde prop}If $M$ is simply connected, then $\exp _{N}^{\perp }$
is injective on $B\left( N_{0},\frac{\pi }{2}\right) .$
\end{corollary}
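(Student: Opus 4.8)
The plan is to read this off from Theorem \ref{Hebda 3.1} together with the classical relationship between the cut locus of a submanifold and injectivity of the normal exponential map. First I would check that Hebda's theorem applies: $M$ is compact (Myers, since $\Ric_{k}\geq k$ forces $\Ric\geq n-1$) and $N$ is compact by hypothesis; $N$ has a very regular first focal locus, as observed earlier in this section; and the inclusion $\iota\colon N\hookrightarrow M$ induces a surjection on fundamental groups because $\pi_{1}(M)=0$. Theorem \ref{Hebda 3.1} then yields that the tangent cut locus of $N$ coincides with the first-tangent focal locus of $N$. Since the focal radius of $N$ is $\frac{\pi}{2}$ and every unit normal geodesic has its first focal point exactly at time $\frac{\pi}{2}$, the first-tangent focal locus is $S\!\left(N_{0},\tfrac{\pi}{2}\right)$; equivalently, the cut time of $N$ along every unit $v\in\nu(N)$ equals $\frac{\pi}{2}$.

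Granting this, injectivity on $B\!\left(N_{0},\tfrac{\pi}{2}\right)$ follows from standard cut-locus bookkeeping. Suppose $\exp_{N}^{\perp}(t_{1}v_{1})=\exp_{N}^{\perp}(t_{2}v_{2})=p$ with $|v_{i}|=1$ and $t_{i}<\frac{\pi}{2}$. Since each $t_{i}$ is strictly below the cut time along $\gamma_{v_{i}}$, the segment $\gamma_{v_{i}}|_{[0,t_{i}]}$ realizes $\dist(p,N)$, so $t_{1}=t_{2}=\dist(p,N)$. Moreover $p$ does not lie on the cut locus of $N$, because $\dist(p,N)<\frac{\pi}{2}$ while the cut locus sits at distance $\frac{\pi}{2}$; hence the minimizing geodesic from $N$ to $p$ is unique, which forces $\gamma_{v_{1}}=\gamma_{v_{2}}$, i.e. $v_{1}=v_{2}$, and so $t_{1}v_{1}=t_{2}v_{2}$.

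The only real work is in confirming the hypotheses of Theorem \ref{Hebda 3.1} and extracting from its conclusion that the cut time along every unit normal geodesic is $\frac{\pi}{2}$; after that there is essentially no obstacle, since the reduction to injectivity of $\exp_{N}^{\perp}$ on the open $\frac{\pi}{2}$--disk bundle is merely the classical statement that $\exp_{N}^{\perp}$ is a diffeomorphism from the interior of the tangent cut domain onto the complement of the cut locus.
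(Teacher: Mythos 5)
Your proposal is correct and follows the paper's own route: the paper cites the cut-locus statement in Theorem~\ref{Hebda 3.1} in a single line, and you simply unwind why ``tangent cut locus $=$ first-tangent focal locus $= S(N_0,\tfrac{\pi}{2})$'' yields injectivity of $\exp_N^\perp$ on the open $\tfrac{\pi}{2}$--disk bundle, after confirming the hypotheses of Hebda's theorem (which the paper also verifies just before stating it). No gaps; this is the same argument, spelled out at greater length.
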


\begin{lemma}
\label{connected fibers}Let $M$ be simply connected.

\noindent 1. If \textrm{codim}$\left( N\right) \geq 3,$ then the Riemannian
submersions 
\begin{equation*}
\nu _{x}^{1}\left( N\right) \longrightarrow F,\text{ }x\in N\text{ }
\end{equation*}%
have connected fibers with positive dimension.

\noindent 2. If \textrm{codim}$\left( F\right) \geq 3,$ then the Riemannian
submersions 
\begin{equation*}
\nu _{x}^{1}\left( F\right) \longrightarrow N,\text{ }x\in F\text{ }
\end{equation*}%
have connected fibers with positive dimension.
\end{lemma}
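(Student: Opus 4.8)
The plan is to feed the Riemannian submersions of the preceding lemma into the long exact homotopy sequence, using Corollary~\ref{pi_1 lemma} to kill the fundamental group of the base and the standing hypothesis~\eqref{dim hypoth eqn} to produce a positive--dimensional fiber. The two assertions are dual, so I will carry out Part~1 and obtain Part~2 by interchanging the roles of $N$ and $F$.

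First I would observe that the total space $\nu^1_x(N)$ is a round sphere of dimension $\mathrm{codim}(N)-1\geq 2$, hence compact and connected. Since $\pi_x\colon\nu^1_x(N)\longrightarrow F$ is a surjective submersion out of a compact manifold, Ehresmann's theorem makes it a locally trivial fiber bundle, with all fibers diffeomorphic; denote one of them by $\Phi$. The homotopy exact sequence then gives exactness of
\[
\pi_1\big(\nu^1_x(N)\big)\longrightarrow\pi_1(F)\longrightarrow\pi_0(\Phi)\longrightarrow\pi_0\big(\nu^1_x(N)\big),
\]
and since $\pi_0\big(\nu^1_x(N)\big)$ is trivial, $\pi_0(\Phi)$ is a quotient of $\pi_1(F)$. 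As $\mathrm{codim}(N)\geq 3$, Part~2 of Corollary~\ref{pi_1 lemma} shows that $F$ is simply connected, so $\pi_0(\Phi)$ is trivial and $\Phi$ is connected.

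It remains to see that $\Phi$ has positive dimension. Along a geodesic $\gamma$ that leaves $N$ orthogonally at time $0$, the splitting~\eqref{normal sphere splitting eqn} identifies the vertical space of $\pi_{\gamma(0)}$ with $\{J'(0)\mid J\in\mathcal{Z}\}$, and $J\mapsto J'(0)$ is injective on $\mathcal{Z}$; hence $\dim\Phi=\dim\mathcal{Z}=\dim\Lambda_N-\dim\mathcal{T}_N-\dim\mathcal{T}_F=(n-1)-\dim N-\dim F$, which is $\geq 1$ by~\eqref{dim hypoth eqn}. For Part~2 the same argument applies after swapping $N$ and $F$: now $\nu^1_x(F)$ is a sphere of dimension $\mathrm{codim}(F)-1\geq 2$, Part~1 of Corollary~\ref{pi_1 lemma} supplies that $N$ is simply connected, and the fiber dimension is again $(n-1)-\dim N-\dim F\geq 1$. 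The only genuine point to verify is that $\pi_x$ is an honest fiber bundle so that the homotopy sequence is available --- but this is immediate from the compactness of the sphere $\nu^1_x(N)$ together with Ehresmann's theorem, and the rest is bookkeeping plus the appeal to Corollary~\ref{pi_1 lemma}.
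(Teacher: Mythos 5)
Your argument is correct and follows essentially the same route as the paper's: positive fiber dimension comes from the standing dimension hypothesis $\dim N + \dim F \leq n-2$, and connectedness comes from the tail $\pi_1(F) \to \pi_0(\text{fiber}) \to \pi_0(\nu^1_x(N))$ of the homotopy exact sequence together with Corollary~\ref{pi_1 lemma}. The only cosmetic differences are that you make the appeal to Ehresmann's theorem explicit (the paper leaves the fibration property implicit) and you compute $\dim(\text{fiber})$ via $\dim\mathcal{Z}$ and the splitting of $\Lambda_N$, whereas the paper computes $\dim\nu^1_x(N) - \dim F$ directly; both give $(n-1)-\dim N - \dim F \geq 1$.
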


\begin{proof}
Since we have assumed that $\mathrm{dim}\left( F\right) +\mathrm{\dim }%
\left( N\right) \leq \dim \left( M\right) -2,$ 
\begin{eqnarray*}
\mathrm{\dim }\left( \nu _{x}^{1}\left( N\right) \right) &=&\dim \left(
M\right) -\mathrm{\dim }\left( N\right) -1 \\
&>&\mathrm{\dim }\left( F\right) .
\end{eqnarray*}%
Thus the fibers of $\nu _{x}^{1}\left( N\right) \longrightarrow F$ have
positive dimension.

By Corollary \ref{pi_1 lemma}, $F$ is simply connected if \textrm{codim}$%
\left( N\right) \geq 3.$ In this case, the long exact homotopy sequence for $%
\nu _{x}^{1}\left( N\right) \longrightarrow F,$ $x\in N$ gives 
\begin{equation*}
\pi _{1}\left( F\right) \longrightarrow \pi _{0}\left( \mathrm{fiber}\right)
\longrightarrow 0,
\end{equation*}%
since $\pi _{0}\left( \nu _{x}^{1}\left( N\right) \right) $ is trivial. Thus
the first conclusion holds. A similar argument gives us the second
conclusion, if \textrm{codim}$\left( F\right) \geq 3$.
\end{proof}

Since we have assumed that \addtocounter{algorithm}{1} 
\begin{equation}
\mathrm{dim}\left( F\right) +\mathrm{\dim }\left( N\right) \leq \dim \left(
M\right) -2,  \label{dim sum hyp}
\end{equation}%
\textrm{codim}$\left( N\right) \geq 2.$ Since $\mathrm{\dim }\left( N\right)
\geq 1,$ we have \textrm{codim}$\left( F\right) \geq 3.$ Combining this with
Lemma \ref{Focal F is pi/2 lemma}, Corollary \ref{exp tilde prop}, and Lemma %
\ref{connected fibers}, we have

\begin{theorem}
\label{to date thm}Let $M$ be a complete Riemannian $n$--manifold with $\Ric%
_{k}\geq k$ and $N$ any closed, connected, submanifold of $M$ with $\dim
\left( N\right) \geq k$ and focal radius $\frac{\pi }{2}.$ If $M$ is simply
connected and not isometric to the unit sphere, then:

\noindent 1. 
\begin{equation*}
\dim \left( N\right) +\dim \left( F\right) \leq n-2.
\end{equation*}

\noindent 2. $N$ is totally geodesic and isometric to an even dimensional
CROSS$.$

\noindent 3. The focal set $F$ of $N$ is totally geodesic and is either a
point or is isometric to an even dimensional CROSS.

\noindent 4. The normal exponential maps of $N$ and $F$ are injective on the 
$\frac{\pi }{2}$--balls around the zero sections of the normal bundles of $N$
and $F.$

\noindent 5. The conclusions of Proposition \ref{Jacobi Rigidity prop} hold
with $N$ replaced by $F.$

\noindent 6. For every $x\in F$ the map%
\begin{eqnarray*}
\pi _{x} &:&\nu _{x}^{1}\left( F\right) \longrightarrow N \\
\pi _{x} &:&v\mapsto \exp _{N}\left( \frac{\pi }{2}v\right)
\end{eqnarray*}%
is a Riemannian submersion whose fibers are connected and have positive
dimension.

\noindent 7. For every $x\in N$ the map%
\begin{eqnarray*}
\pi _{x} &:&\nu _{x}^{1}\left( N\right) \longrightarrow F \\
\pi _{x} &:&v\mapsto \exp _{N}\left( \frac{\pi }{2}v\right)
\end{eqnarray*}%
is a Riemannian submersion whose fibers are connected and have positive
dimension.
\end{theorem}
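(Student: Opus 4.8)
The plan is to deduce the theorem by assembling the results of Sections~\ref{dist from N sect}--\ref{simply connected sect} under the standing hypothesis \eqref{dim sum hyp}, carrying along two cases in parallel: the generic case $\mathrm{codim}(N)\ge 3$, and the degenerate case in which $F$ is a single point.

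First I would settle Part~1 and the dimension bookkeeping. Since $M$ is complete and simply connected, ``$M$ has constant curvature $1$'' is equivalent to ``$M$ is isometric to the unit sphere''; so the assumption that $M$ is not the unit sphere, together with Theorem~\ref{const curv thm}, forces $F$ to be connected and $\dim F+\dim N\le n-2$, which is Part~1. From this and $\dim N\ge k\ge 1$ I read off $\mathrm{codim}(F)=n-\dim F\ge 2+\dim N\ge 3$, and I observe that if $\mathrm{codim}(N)=2$ then $\dim N=n-2$, whence $\dim F\le 0$ and $F$ is a point; thus either $\mathrm{codim}(N)\ge 3$ or $F$ is a point. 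The claim in Part~2 that $N$ is totally geodesic is Part~2 of Theorem~\ref{Intermeadiate Ricci Thm} (equivalently Part~1 of Proposition~\ref{Jacobi Rigidity prop}), the claim in Part~3 that $F$ is totally geodesic is Lemma~\ref{Focal F is pi/2 lemma}, and injectivity of $\exp_N^\perp$ on $B(N_0,\frac{\pi}{2})$ in Part~4 is Corollary~\ref{exp tilde prop}.

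Next I would carry out the bootstrap that yields Part~5 and the remaining half of Part~4. By Lemma~\ref{Focal F is pi/2 lemma}, $F$ is closed with focal radius $\frac{\pi}{2}$; and since $M$ is simply connected, Theorem~\ref{Hebda 3.1} together with the duality of Lemmas~\ref{duality lemma copy(1)} and~\ref{F equals F_reg lemma} identifies the geodesics leaving $F$ orthogonally at time $0$ with the reversals $t\mapsto\gamma(\frac{\pi}{2}-t)$ of the geodesics $\gamma$ leaving $N$ orthogonally at time $0$. Part~5 of Proposition~\ref{Jacobi Rigidity prop} then shows all radial sectional curvatures along these reversed geodesics are $\ge 1$, so in particular $\Ric_1(\dot\gamma,\cdot)\ge 1$ along each of them; hence Proposition~\ref{Jacobi Rigidity prop} applies with $N$ replaced by $F$ and $k=1$, $\kappa=1$, which is Part~5 (trivial if $F$ is a point). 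This in turn shows $F$ has a very regular first focal locus---its first focal multiplicity $n-1-\dim F$ is constant and the Gauss Lemma supplies the kernel condition---so the proof of Corollary~\ref{exp tilde prop} applies verbatim with $N$ replaced by $F$, the surjection-of-fundamental-groups hypothesis of Theorem~\ref{Hebda 3.1} being automatic because $\pi_1(M)=0$; this gives Part~4 for $F$.

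Finally I would obtain Parts~6 and~7 and complete Parts~2 and~3. The maps $\pi_x\colon\nu_x^1(N)\to F$ ($x\in N$) and $\pi_x\colon\nu_x^1(F)\to N$ ($x\in F$) are Riemannian submersions by the last Lemma of Section~\ref{F structure}, using Part~5 above for the second; their fibers have positive dimension since $\dim\nu_x^1(N)=n-1-\dim N>\dim F$ and $\dim\nu_x^1(F)=n-1-\dim F>\dim N$ by \eqref{dim sum hyp}. For connectedness, $\mathrm{codim}(F)\ge 3$ makes $N$ simply connected by Part~1 of Corollary~\ref{pi_1 lemma}, so the homotopy sequence of $\pi_x\colon\nu_x^1(F)\to N$ has $\pi_1(N)=0$ and forces connected fibers; and since either $\mathrm{codim}(N)\ge 3$ (so $F$ is simply connected by Part~2 of Corollary~\ref{pi_1 lemma}) or $F$ is a point, the same argument works for $\pi_x\colon\nu_x^1(N)\to F$. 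This is Parts~6 and~7. Since $\nu_x^1(N)$ and $\nu_x^1(F)$ carry their standard round metrics, the classification of Riemannian submersions of round spheres with connected positive-dimensional fibers identifies $N$ and $F$ (when $F$ is not a point) with $\mathbb{CP}^m$, $\mathbb{HP}^m$, or $\mathbb{OP}^1=S^8$, all of even dimension, completing Parts~2 and~3. The hard part is the bootstrap of Part~5: one must transfer \emph{every} hypothesis of the structure theory from $N$ to $F$---the radial curvature bound is available only through the rigidity already proved for $N$ together with the geodesic-reversal duality---and make the codimension count cover the case $F=\{pt\}$, where $\mathrm{codim}(N)$ can be as small as $2$.
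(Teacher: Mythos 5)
Your proof is essentially correct and follows the same assembly that the paper performs (the paper's ``proof'' is the single sentence preceding the statement, combining the codimension count with Lemma \ref{Focal F is pi/2 lemma}, Corollary \ref{exp tilde prop}, and Lemma \ref{connected fibers}); you have merely filled in the bookkeeping in more detail. One small slip: the first focal multiplicity of $F$ along the reversed geodesics is $\dim\mathcal{Z}_N=(n-1)-\dim N$, not $n-1-\dim F$ as you wrote (that latter quantity is the first focal multiplicity of $N$); the multiplicity is in any case constant, which is all Hebda's theorem needs, so your conclusion stands.
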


\section{Rigidity in the Sectional Curvature Case \label{M is not a sphere}}

In this section, we complete the proof of Theorem \ref{inter Ricci Rigidity
thm} in the case when the sectional curvature of $M$ is $\geq 1$. Since $M$
is simply connected, by Part 4 of Theorem \ref{to date thm}, the diameter of 
$M$ is $\geq \frac{\pi }{2}.$ So combining the Diameter Rigidity Theorem
with our dimension hypothesis (\ref{dim sum hyp}) and Theorem \ref{to date
thm} gives us the following.

\begin{proposition}
\label{codim 3 prop}If $M$ does not have constant curvature $1,$ then the
following hold.

\noindent 1. $M$ is isometric to a compact, rank one, symmetric space or is
homeomorphic to $S^{n}.$

\noindent 2. $N$ is even dimensional and isometric to the base of a Hopf
fibration.

\noindent 3. $F$ is either a point or is isometric to the base of a Hopf
fibration and is even dimensional
\end{proposition}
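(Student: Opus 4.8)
The plan is essentially the program announced just before the statement: feed Theorem~\ref{to date thm} into the Diameter Rigidity Theorem for Part~1, and read off Parts~2 and~3 from the submersions of the unit normal spheres provided by Theorem~\ref{to date thm} together with the rigidity of metric fibrations of round spheres. First I would establish that the diameter of $M$ is at least $\frac{\pi}{2}$. Since $N$ has focal radius $\frac{\pi}{2}$, some unit normal $v$ has $\frac{\pi}{2}v$ as a first-tangent focal point, so the focal set $F=\exp_{N}^{\perp}(S(N_{0},\frac{\pi}{2}))$ is nonempty; by Part~4 of Theorem~\ref{to date thm} the normal exponential map of $N$ is injective on $B(N_{0},\frac{\pi}{2})$, and combined with Theorem~\ref{Hebda 3.1} (which identifies $F$ with the cut locus of $N$) this forces $\dist(q,N)=\frac{\pi}{2}$ for every $q\in F$, whence the diameter of $M$ is at least $\frac{\pi}{2}$. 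Since $M$ is compact (recall $\Ric_{k}\geq k$ forces $\Ric\geq n-1$, so Myers applies), simply connected (by the reduction in Section~\ref{simply connected sect}), and has $\mathrm{sec}\geq 1$ throughout this section, the Diameter Rigidity Theorem of \cite{GrovGrom} and \cite{Wilk} gives that $M$ is homeomorphic to $S^{n}$ or isometric to a compact rank one symmetric space, which is Part~1. The alternative ``homeomorphic to $S^{n}$'' is kept in the statement even though the round metric is excluded by hypothesis, since eliminating the remaining topological spheres is carried out only afterward.

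For Parts~2 and~3 I would invoke Parts~6 and~7 of Theorem~\ref{to date thm}: for $x\in F$ the map $\pi_{x}:\nu_{x}^{1}(F)\to N$, $v\mapsto\exp_{F}^{\perp}(\frac{\pi}{2}v)$, and for $x\in N$ the map $\pi_{x}:\nu_{x}^{1}(N)\to F$, $v\mapsto\exp_{N}^{\perp}(\frac{\pi}{2}v)$, are Riemannian submersions with connected fibers of positive dimension, and their domains are round spheres of curvature $1$, since the unit normal sphere of a totally geodesic submanifold carries the round metric. By \eqref{dim sum hyp} these are genuinely nontrivial: when $\dim F\geq 1$ the second submersion has fiber dimension $n-1-\dim N>\dim F$ and the first has fiber dimension $n-1-\dim F>\dim N$, while when $\dim F=0$, writing $F=\{p\}$, the first is a metric fibration $S^{n-1}\to N$. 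The classification of metric fibrations of round spheres --- due to Gromoll--Grove, with the $S^{15}\to S^{8}$ case completed by Wilking \cite{Wilk} --- shows that every such fibration is metrically congruent to a Hopf fibration. Hence $N$ is isometric to the base of a Hopf fibration, and $F$ is either a point (when $\dim F=0$) or likewise isometric to the base of a Hopf fibration. Since the bases of Hopf fibrations --- $S^{2}$, $S^{4}$, $S^{8}$, and the complex and quaternionic projective spaces --- are all even dimensional, $N$ and $F$ (when not a point) are even dimensional, which gives Parts~2 and~3.

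The diameter estimate and the bookkeeping that the two submersions are nontrivial are routine; the weight of the argument rests on the two deep inputs --- Diameter Rigidity and the Hopf-fibration rigidity of metric fibrations of spheres --- so the main point requiring care is simply that all of their hypotheses (compactness, simple connectivity, $\mathrm{sec}\geq 1$, diameter $\geq\frac{\pi}{2}$, and connectedness with positive dimension of the submersion fibers) have been secured, which is exactly what Theorem~\ref{to date thm} delivers.
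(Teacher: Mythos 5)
Your proposal is correct and follows essentially the same route as the paper, which is extremely terse here: the paper simply cites Part~4 of Theorem~\ref{to date thm} for the diameter bound, invokes the Diameter Rigidity Theorem for Part~1, and reads Parts~2 and~3 directly off Parts~2 and~3 of Theorem~\ref{to date thm} (which already assert that $N$ and $F$ are even-dimensional CROSSes, equivalently bases of Hopf fibrations). Your re-derivation of Parts~2 and~3 from Parts~6 and~7 of Theorem~\ref{to date thm} via the Gromoll--Grove--Wilking classification is strictly redundant once Theorem~\ref{to date thm} is in hand, but it is the correct underlying justification and is consistent with the paper; the only blemish is a small wording slip where you call $n-1-\dim N$ the ``fiber dimension'' of $\nu_x^1(N)\to F$ when it is the domain dimension (the fiber dimension is $n-1-\dim N-\dim F\geq 1$ by \eqref{dim sum hyp}), but your conclusion that the fibers have positive dimension is correct.
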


To conclude the proof of Theorem \ref{inter Ricci Rigidity thm}, we show
that conclusions 2 and 3 are not compatible with $M$ being a topological
sphere.

If $M$ is a sphere, the long exact homology sequence of the pair $\left(
M,F\right) $ gives 
\begin{equation*}
H_{q}\left( M,F\right) \cong H_{q-1}^{\#}\left( F\right)
\end{equation*}%
for $q\leq n-1.$

Use Theorem \ref{Hebda 3.1} to write 
\begin{equation*}
M=D_{N}\cup _{\varphi }D_{F}.
\end{equation*}%
By excision,%
\begin{equation*}
H_{q}\left( M,F\right) \cong H_{q}\left( D_{N},\partial D_{N}\right) .
\end{equation*}%
Thus for $q\leq n-1,$ \addtocounter{algorithm}{1} 
\begin{equation}
H_{q-1}^{\#}\left( F\right) \cong H_{q}\left( D_{N},\partial D_{N}\right) .
\label{hom eqn}
\end{equation}%
By Proposition \ref{codim 3 prop}, $F$ is either an even dimensional CROSS
or a point, and $N$ is an even dimensional CROSS. It follows from Equation (%
\ref{hom eqn}) that $H_{q}\left( D_{N},\partial D_{N}\right) \cong 0$ if $q$
is even and $\leq n-1.$ If $q$ is odd and $q+1\leq n-1$, then the sequence
of the pair $\left( D_{N},\partial D_{N}\right) $ gives 
\begin{equation*}
0=H_{q+1}\left( D_{N},\partial D_{N}\right) \longrightarrow H_{q}\left(
\partial D_{N}\right) \longrightarrow H_{q}\left( N\right) =0,
\end{equation*}%
since $N$ is an even dimensional CROSS. Thus \addtocounter{algorithm}{1} 
\begin{equation}
H_{q}\left( \partial D_{N}\right) \cong 0\text{ if }q\text{ is odd and }\leq
n-2.  \label{odd hom vanish}
\end{equation}%
Since $N$ is isometric to the base of a Hopf fibration with connected
fibers, $\dim \left( N\right) \geq 2.$ Since $\dim \left( N\right) +\dim
\left( F\right) \leq n-2,$ we get $n\geq 4.$ So $\dim \left( \partial
D_{N}\right) \geq 3.$ Since $\partial D_{N}$ is a connected, compact, odd
dimensional manifold, \eqref{odd hom vanish} implies, via Poincar\'{e}
duality, that $\partial D_{N}$ is a $\mathbb{Z}_{2}$--homology sphere of
dimension $\geq 3$ . Thus the Mayer--Vietoris sequence with $q\in \left\{
2,\ldots ,n-2\right\} $ yields%
\begin{equation*}
0=H_{q}\left( \partial D_{N}\right) \longrightarrow H_{q}\left( D_{N}\right)
\oplus H_{q}\left( D_{F}\right) \longrightarrow H_{q}\left( M\right)
\longrightarrow H_{q}\left( \partial D_{N}\right) =0.
\end{equation*}%
Since $D_{N}$ has the homotopy type of the CROSS $N,$ and $\dim \left(
M\right) \geq \dim \left( N\right) +2\geq 4,$ $M$ cannot be homeomorphic to
a sphere.

\section{Rigidity and Intermediate Ricci\label{Rigid and Inter ric sect}}

In this section, we complete the proof of Theorem \ref{inter Ricci Rigidity
thm}. This is achieved by analyzing the radial geometry from $N$ and $F.$
Proposition \ref{Jacobi Rigidity prop}, Lemma \ref{duality lemma copy(1)},
and Lemma \ref{Focal F is pi/2 lemma} give us rigid radial geometry along
the distribution spanned by the Jacobi fields in $\mathcal{T}_{N}$ and $%
\mathcal{T}_{F}.$ To prove rigidity for the $\mathcal{Z}$--Jacobi fields, we
show, in Proposition \ref{Jacobi stru prop}, that as in the proof of the
Diameter Rigidity Theorem, there are enough other dual pairs in $M$ to force
the $\mathcal{Z}$--Jacobi fields to span projective lines. This is achieved
via the next three results, wherein the hypotheses that $N$ is connected and 
$M$ is simply connected are still in force.

\begin{proposition}
\label{inj p prop}\noindent 1. For any $p\in N$ the cut point along any
geodesic emanating from $p$ is at distance $\frac{\pi }{2}$ from $p.$

\noindent 2. For any $p,q\in N,$ any minimal geodesic of $M$ between $p$ and 
$q$ lies entirely in $N.$
\end{proposition}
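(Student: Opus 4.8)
The plan is to base both parts on the radial structure already extracted: by Theorem \ref{to date thm}, Theorem \ref{Hebda 3.1} and Corollary \ref{exp tilde prop}, we have $M=D_N\cup_{\varphi}D_F$, the normal exponential maps $\exp_N^{\perp}$ and $\exp_F^{\perp}$ are diffeomorphisms on the open $\tfrac{\pi}{2}$--balls about their zero sections, $F$ is the cut locus of $N$ and vice versa, so $N\cap F=\emptyset$, $\dist(N,F)=\tfrac{\pi}{2}$ and $\dist(\cdot,N)+\dist(\cdot,F)\equiv\tfrac{\pi}{2}$, and $N$ and $F$ are totally geodesic CROSSes of diameter $\tfrac{\pi}{2}$, with $F$ possibly a point. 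The one further ingredient is Part 4 of Proposition \ref{Jacobi Rigidity prop}: for a unit speed geodesic $c$ in $N$ and a parallel unit normal field $V$ along $c$, the map $\Phi(s,t)=\exp^{\perp}_{c(s)}(tV(s))$ is a totally geodesic immersion of constant curvature $1$. Since $c$ is a closed geodesic of length $\pi$ (as $N$ is a CROSS), and since $\exp_N^{\perp}$ is injective on $B(N_0,\tfrac{\pi}{2})$ -- so that each $\Phi(s,t)$ with $0<t<\tfrac{\pi}{2}$ has $\dist(\Phi(s,t),N)=t$ -- together with $N\cap F=\emptyset$, the closure $\bar{\Sigma}$ of $\mathrm{image}\,\Phi$ is a compact totally geodesic surface of constant curvature $1$ meeting $N$ exactly in $c$ and meeting $F$ in a single point. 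Being a compact curvature $1$ surface containing a closed geodesic of length $\pi$, $\bar{\Sigma}$ is isometric to $\mathbb{R}P^2$; in particular any two distinct geodesics of $\bar{\Sigma}$ meet at exactly one point, and $\bar{\Sigma}$ has no geodesic loop of length $<\pi$.

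For Part 2, let $p\neq q$ in $N$ and $\sigma:[0,L]\to M$ a minimal geodesic; since $N$ is a CROSS of diameter $\tfrac{\pi}{2}$, $L=\dist_M(p,q)\leq\tfrac{\pi}{2}$. Suppose $\sigma\not\subset N$. If $\dot{\sigma}(0)\perp N$ then $\dist(\sigma(t),N)=t$ for $t\leq\tfrac{\pi}{2}$, which is impossible since $\sigma(L)=q\in N$ and $L>0$. Hence $\dot{\sigma}(0)=\cos\theta\,a+\sin\theta\,b$ with $a\in T_pN$, $b\in\nu_p(N)$ unit, and $\theta\in(0,\tfrac{\pi}{2})$. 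Take $c$ to be the geodesic of $N$ with $\dot{c}(0)=a$ and $V$ the parallel normal field with $V(0)=b$, and form $\bar{\Sigma}$ as above; then $T_p\bar{\Sigma}=\spann\{a,b\}$ contains $\dot{\sigma}(0)$, so $\sigma\subset\bar{\Sigma}$. Now $\sigma$ and $c$ are distinct geodesics of $\bar{\Sigma}\cong\mathbb{R}P^2$ through $p$, hence meet only at $p$; but $q=\sigma(L)\in\bar{\Sigma}\cap N=c$, forcing $q=p$, a contradiction. Therefore $\sigma\subset N$.

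For Part 1, let $p\in N$ and $\sigma$ a geodesic from $p$. If $\dot{\sigma}(0)\in T_pN$ then $\sigma\subset N$; the cut point of $p$ along $\sigma$ inside the CROSS $N$ is at distance $\tfrac{\pi}{2}$, and by Part 2 every minimal $M$--geodesic between points of $N$ lies in $N$, so $\dist_M(p,\sigma(t))=\dist_N(p,\sigma(t))$ and the $M$--cut point is again at $\tfrac{\pi}{2}$. If $\dot{\sigma}(0)\notin T_pN$, write $\dot{\sigma}(0)=\cos\theta\,a+\sin\theta\,b$ with $\theta\in(0,\tfrac{\pi}{2}]$ and embed $\sigma$ in a totally geodesic $\mathbb{R}P^2$--surface $\bar{\Sigma}$ as in Part 2 (when $\theta=\tfrac{\pi}{2}$ one builds $\bar{\Sigma}$ from any geodesic $c$ of $N$ through $p$ and the normal field with $V(0)=b$). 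Inside $\bar{\Sigma}$ the geodesic $\sigma$ is closed of length $\pi$, so $\dist_{\bar{\Sigma}}(p,\sigma(t))=\min(t,\pi-t)$; hence for $t\in(\tfrac{\pi}{2},\pi)$ we get $\dist_M(p,\sigma(t))\leq\pi-t<t$, so the cut point of $p$ along $\sigma$ is at most $\tfrac{\pi}{2}$. For the reverse inequality, i.e. that $\sigma|_{[0,\pi/2)}$ is $M$--minimal, one argues $\mathrm{inj}(p)\geq\tfrac{\pi}{2}$ through Klingenberg's lemma: $p$ has no geodesic loop of length $<\pi$ -- such a loop tangent to $N$ lies in the CROSS $N$, and such a loop not tangent to $N$ lies in some totally geodesic $\mathbb{R}P^2$--surface, and neither admits a loop shorter than $\pi$ -- and, moreover, $p$ has no conjugate point before distance $\tfrac{\pi}{2}$. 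Combining the two bounds, the cut point of $p$ along every geodesic is exactly $\tfrac{\pi}{2}$.

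The main obstacle is the absence of conjugate points of $p$ before distance $\tfrac{\pi}{2}$ (the second half of the injectivity estimate in Part 1): decomposing a Jacobi field along $\sigma$ that vanishes at $0$ and at $t_c<\tfrac{\pi}{2}$ into components tangent and normal to $\bar{\Sigma}$, the tangent component must vanish identically (curvature $1$ forces its next zero at $\pi$), so the obstruction is a Jacobi field normal to $\bar{\Sigma}$, and there is no ambient upper curvature bound at hand with which to push its zero out to $\tfrac{\pi}{2}$. Here one must bring in the finer dual structure of $M$ -- that, as in the proof of the Diameter Rigidity Theorem and as will be exploited in Proposition \ref{Jacobi stru prop}, there are enough further dual pairs through $p$, each of focal radius $\tfrac{\pi}{2}$ -- in order to see that such a $t_c<\tfrac{\pi}{2}$ would place $\sigma(t_c)$ on the first focal locus of a submanifold containing $p$, contradicting that that locus lies at distance $\tfrac{\pi}{2}$.
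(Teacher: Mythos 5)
Your Part~2 argument and your Part~1 upper bound (cut time $\leq\tfrac{\pi}{2}$) are in the same spirit as the paper's: you build the totally geodesic curvature--$1$ surface of Proposition~\ref{Jacobi Rigidity prop}~(4) through a generic direction at $p$, recognize it as $\mathbb{R}P^2$ (a nice observation), and use that two distinct $\mathbb{R}P^2$--geodesics meet in a single point while the surface meets $N$ only in $c$. The paper packages the same information in terms of the ``circular sectors'' $\mathrm{Sect}(v^T,v^\perp,\tfrac{\pi}{2})$, obtaining the upper bound by exhibiting two distinct sectors (built from $v^T\neq w^T$ with $\gamma_{v^T}(\tfrac{\pi}{2})=\gamma_{w^T}(\tfrac{\pi}{2})$) whose images share a boundary arc in $\exp_N(tu)$, so that points on that arc are reached from $p$ by two geodesics of length $\tfrac{\pi}{2}$. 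These are cosmetically different but essentially the same idea. One caveat in your version: you take for granted that the closure $\bar\Sigma$ of the image of $\Phi$ is a smooth, closed, embedded totally geodesic surface; $\Phi$ is a priori only an immersion, so this should be argued (it does follow, using that $\mathcal{T}_N$--fields $\cos t\,E$ all vanish at $t=\tfrac{\pi}{2}$ so that $s\mapsto\Phi(s,\tfrac{\pi}{2})$ is constant, together with injectivity of $\exp_N^\perp$ on $B(N_0,\tfrac{\pi}{2})$).

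The genuine gap is exactly where you flag it: your Part~1 lower bound, cut time $\geq\tfrac{\pi}{2}$, is run through Klingenberg's lemma and therefore needs both the no-short-loops estimate and a conjugate-point estimate for the point $p$. You supply the former but cannot supply the latter, and you admit as much. The decomposition into $\bar\Sigma$--tangent and $\bar\Sigma$--normal components only controls the tangent piece; without an upper curvature bound there is nothing to keep a $\bar\Sigma$--normal Jacobi field vanishing at $0$ from vanishing again before $\tfrac{\pi}{2}$. The paper avoids Klingenberg entirely. Its lower bound is obtained directly from Corollary~\ref{exp tilde prop}: each sector $\mathrm{Sect}(v^T,v^\perp,\tfrac{\pi}{2})$ is the $\exp_N^\perp$--image of a subset of $B(N_0,\tfrac{\pi}{2})$ and hence \emph{embeds}, and distinct sectors have disjoint interiors (again by injectivity of $\exp_N^\perp$), while together they cover a punctured $\tfrac{\pi}{2}$--ball about $p$. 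In particular $\exp_p$ is injective and regular on $B(0,\tfrac{\pi}{2})$ without any conjugate-point discussion; the conjugate-point control comes for free from the already-established radial rigidity of Proposition~\ref{Jacobi Rigidity prop}, not from an ambient upper curvature bound. So the missing step in your proposal is not a deficiency of the result but of the route: replacing the Klingenberg argument by the sector-covering argument, which is what the paper does, closes the gap.

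One smaller remark: in your Part~1, the case $\dot\sigma(0)\in\nu_p(N)$ should be handled by the focal radius hypothesis directly (as the paper does), rather than by trying to build a surface from ``any geodesic $c$ of $N$ through $p$''; since $\mathcal{Z}\neq 0$ when $M$ is not a round sphere (Theorem~\ref{const curv thm}), the normal geodesic $\gamma_{v^\perp}$ already has a conjugate point to $p$ at $\tfrac{\pi}{2}$, so its cut time is at most $\tfrac{\pi}{2}$ without further construction.
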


\begin{proof}
Let $v\in T_{p}M\setminus \left\{ T_{p}N,\nu _{p}\left( N\right) \right\} $
be any unit vector. Let $v^{T}$ and $v^{\perp }$ be the unit vectors that
point in the same directions as the projections of $v$ onto $T_{p}N$ and $%
\nu _{p}\left( N\right) ,$ respectively. By Part 4 of Proposition \ref%
{Jacobi Rigidity prop}, \textrm{span}$\left\{ v^{T},v^{\perp }\right\} $
exponentiates to a totally geodesic immersed surface $\Sigma $ of constant
curvature $1$ that contains $\gamma _{v}.$ By Corollary \ref{exp tilde prop}%
, the restriction of $\exp _{p}$ to the interior of the circular sector 
\begin{equation*}
\mathrm{Sect}\left( v^{T},v^{\perp },\frac{\pi }{2}\right) \equiv \left\{
\left. \exp _{p}\left( t\left( v^{T}\cos s+v^{\perp }\sin s\right) \right) 
\text{ }\right\vert \text{ }t,s\in \left[ 0,\frac{\pi }{2}\right] \right\}
\end{equation*}%
of radius and angle $\frac{\pi }{2}$ spanned by $v^{T}$ and $v^{\perp }$ is
an embedding. If $w\in T_{p}M$ is not in $T_{p}N,\nu _{p}\left( N\right) ,$
or $\mathrm{span}\left\{ v^{T},v^{\perp }\right\} ,$ then Corollary \ref{exp
tilde prop} gives that interiors of $\mathrm{Sect}\left( v^{T},v^{\perp },%
\frac{\pi }{2}\right) $ and $\mathrm{Sect}\left( w^{T},w^{\perp },\frac{\pi 
}{2}\right) $ are disjoint. It follows that the cut-time of any unit $v\in
T_{p}M\setminus \left\{ T_{p}N,\nu _{p}\left( N\right) \right\} $ is $\geq 
\frac{\pi }{2},$ and by continuity, the cut-time of any unit $v\in T_{p}M$
is $\geq \frac{\pi }{2}.$

Since the focal radius of $N$ is $\frac{\pi }{2},$ to complete the proof of
Part 1, it suffices to check that any unit $v\in T_{p}M\setminus \nu
_{p}\left( N\right) $ has a cut point at time $\frac{\pi }{2}.$ Since $N$ is
a CROSS with curvature in $\left[ 1,4\right] ,$ there is a unit vector $%
w^{T}\in T_{p}\left( N\right) $ with%
\begin{equation*}
\gamma _{w^{T}}\left( \frac{\pi }{2}\right) =\gamma _{v^{T}}\left( \frac{\pi 
}{2}\right) \text{ and }w^{T}\neq v^{T}.
\end{equation*}

Let $u\in \nu _{\gamma _{v^{T}}\left( \frac{\pi }{2}\right) }\left( N\right) 
$ be a unit vector. Let $U_{v}$ and $U_{w}$ be the backwards parallel
transports of $u$ along $\gamma _{v^{T}}$ and $\gamma _{w^{T}}.$ Let $\Sigma
_{v^{T}}$ and $\Sigma _{w^{T}}$ be the spherical sectors of radius and angle 
$\frac{\pi }{2}$ obtained via Part 4 of Proposition \ref{Jacobi Rigidity
prop} by exponentiating $U_{v}$ and $U_{w}.$ That is%
\begin{equation*}
\Sigma _{v^{T}}\equiv \left\{ \left. \exp _{N}\left( tU_{v}\left( s\right)
\right) \text{ }\right\vert \text{ }s,t\in \left[ 0,\frac{\pi }{2}\right]
\right\}
\end{equation*}%
and 
\begin{equation*}
\Sigma _{w^{T}}\equiv \left\{ \left. \exp _{N}\left( tU_{w}\left( s\right)
\right) \text{ }\right\vert \text{ }s,t\in \left[ 0,\frac{\pi }{2}\right]
\right\} .
\end{equation*}

Then $\Sigma _{w^{T}}$ and $\Sigma _{v^{T}}$ are different surfaces, but
since 
\begin{equation*}
U_{v}\left( \frac{\pi }{2}\right) =U_{w}\left( \frac{\pi }{2}\right) =u\in
\nu _{\gamma _{v^{T}}\left( \frac{\pi }{2}\right) }\left( N\right) ,
\end{equation*}%
$\Sigma _{w^{T}}$ and $\Sigma _{v^{T}}$ intersect along $\exp _{N}\left(
tu\right) .$ So every cut point from $p$ occurs at distance $\frac{\pi }{2}$
from $p.$

By Corollary \ref{exp tilde prop}, the intersection of ~$N$ with any of the
sectors $\mathrm{Sect}\left( v^{T},v^{\perp },\frac{\pi }{2}\right) $ is
precisely $\gamma _{v^{T}}\left[ 0,\frac{\pi }{2}\right] \subset N.$ Part 2
follows.
\end{proof}

\begin{proposition}
\label{A(p) prop}For any $p\in N,$ the set of points in $M$ at distance $%
\frac{\pi }{2}$ from $p,$ 
\begin{equation*}
A\left( p\right) \equiv S\left( p,\frac{\pi }{2}\right) ,
\end{equation*}%
is a closed submanifold of dimension 
\begin{equation*}
\dim \left( N\right) +\dim F
\end{equation*}%
and focal radius $\frac{\pi }{2}$.
\end{proposition}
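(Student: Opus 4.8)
The plan is to realize $A(p)=S(p,\tfrac\pi2)$ as the cut locus of $p$, to verify that the point $p$ has a \emph{very regular} first focal locus in Hebda's sense, and then to invoke Theorem~\ref{Hebda 3.1} with the submanifold $\{p\}$; the dimension of $A(p)$ will fall out of a Jacobi field count, and the assertion on its focal radius from a duality between $p$ and $A(p)$ of the same flavor as Lemma~\ref{duality lemma copy(1)}.

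By Part~1 of Proposition~\ref{inj p prop}, the cut distance from $p$ along every geodesic equals $\tfrac\pi2$; hence $\exp_p$ is a diffeomorphism of $B(0,\tfrac\pi2)\subset T_pM$ onto $B(p,\tfrac\pi2)$, every unit speed geodesic from $p$ is minimal on $[0,\tfrac\pi2]$, and $A(p)$ is precisely the cut locus of $p$. The heart of the argument is the claim that along \emph{every} unit speed geodesic $\gamma$ leaving $p$ at time $0$, the space $\Lambda_p$ of normal Jacobi fields vanishing at time $0$ splits orthogonally on $[0,\tfrac\pi2]$ as $\mathcal R\oplus\mathcal Z$, where $\mathcal Z=\{J\in\Lambda_p\mid J(\tfrac\pi2)=0\}$ and $\mathcal R$ consists of fields $\sin t\,E$ with $E$ parallel, and that $\dim\mathcal Z=(n-1)-\dim N-\dim F=:m_0$, independently of $\gamma$. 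Writing $\gamma'(0)=\cos\theta\,v^{T}+\sin\theta\,v^{\perp}$ with $v^{T}\in T_pN$ and $v^{\perp}\in\nu_p(N)$ unit, the cases $\theta=\tfrac\pi2$ and $\theta=0$ follow respectively from Proposition~\ref{Jacobi Rigidity prop} together with Lemma~\ref{duality lemma copy(1)}, and from the fact that $N$ is a totally geodesic CROSS (Theorem~\ref{to date thm}); for intermediate $\theta$ one uses that Part~4 of Proposition~\ref{Jacobi Rigidity prop} supplies a totally geodesic constant curvature $1$ surface tangent to $\mathrm{span}\{v^{T},v^{\perp}\}$, forcing the in-surface Jacobi field along $\gamma$ to be of the form $\sin t\,E$, and plays this off against the radial rigidity from $N$ (Proposition~\ref{Jacobi Rigidity prop}) and from $F$ (Theorem~\ref{to date thm}). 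By the standing hypothesis \eqref{dim hypoth eqn}, $m_0\ge 1$, so the claim says that along every geodesic from $p$ the point $\gamma(\tfrac\pi2)$ is conjugate to $p$ with constant multiplicity $m_0$, and (by the Gauss Lemma) when $m_0=1$ the kernel of $D\exp_p$ there lies in the tangent focal locus. Thus $p$ has a very regular first focal locus; since $M$ is simply connected (Section~\ref{simply connected sect}), $\{p\}\hookrightarrow M$ is $\pi_1$--surjective, and Theorem~\ref{Hebda 3.1} applies: $A(p)$ is a closed submanifold of codimension $m_0+1=n-\dim N-\dim F$, hence $\dim A(p)=\dim N+\dim F$, and $M=D_p\cup_\varphi D_{A(p)}$.

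For the focal radius, fix $q\in A(p)$ and a minimal geodesic $\gamma\colon[0,\tfrac\pi2]\to M$ from $p$ to $q$. The disk bundle structure of Theorem~\ref{Hebda 3.1} and the Gauss Lemma show that $\gamma$ meets $A(p)$ orthogonally at $q$, and the same holds for every geodesic in a variation of $\gamma$ through $p$ (all minimal on $[0,\tfrac\pi2]$, the cut time being constant); hence each $J\in\Lambda_p$ is also an $A(p)$--Jacobi field along $\gamma$, so $\Lambda_p\subseteq\Lambda_{A(p)}$, and equality follows by dimension. A focal point of $A(p)$ along $\gamma$ in $(0,\tfrac\pi2)$ would then be a zero of some $J\in\Lambda_{A(p)}=\Lambda_p$ in $(0,\tfrac\pi2)$, i.e.\ a conjugate point of $p$ before its cut point --- impossible; while every $J\in\Lambda_p$ vanishes at $p=\gamma(0)$, so $p$ is focal for $A(p)$ at time exactly $\tfrac\pi2$. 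Since, by the diffeomorphism $\varphi\colon\partial D_p\to\partial D_{A(p)}$, every geodesic leaving $A(p)$ orthogonally arises this way, the focal radius of $A(p)$ is $\tfrac\pi2$ (the upper bound $\tfrac\pi2$ is in any case immediate from Theorem~\ref{Intermeadiate Ricci Thm}, as $\dim A(p)\ge\dim N\ge k$); and $A(p)=S(p,\tfrac\pi2)$ is closed in the compact manifold $M$.

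I expect the main obstacle to be the constancy of the conjugate multiplicity in the second paragraph --- that $\dim\mathcal Z=m_0$ for \emph{every} direction $\gamma'(0)$, in particular for directions neither tangent nor normal to $N$. For those there is no submanifold off which $\gamma$ departs orthogonally, so Proposition~\ref{Jacobi Rigidity prop} cannot be quoted directly; instead one must combine the constant curvature $1$ surfaces of Part~4 of that proposition with the radial rigidity along the normal geodesics of $N$ and of $F$, and check that these two families of rigid ($\sin t$) Jacobi fields are mutually orthogonal and, together with the $\mathcal Z$--part, exhaust $\Lambda_p$.
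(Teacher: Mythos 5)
Your strategy is genuinely different from the paper's, and the difference is exactly where the gap sits. You propose to treat $\{p\}$ as a submanifold with very regular first focal (i.e.\ conjugate) locus, and then invoke Theorem~\ref{Hebda 3.1} to manufacture the submanifold $A(p)$ and its dimension in one stroke. The paper does not prove that $p$ has a very regular conjugate locus at this stage; instead it writes down $A(p)$ explicitly in two ways --- as $\exp_N^{\perp}$ of rays over $A_N(p)\subset N$, and as $\exp_F^{\perp}$ of rays over the fibers of $\pi_x:\nu^1_x(F)\to N$ --- uses Corollary~\ref{exp tilde prop} to see smoothness off $N\cup F$ and the first (resp.\ second) description for smoothness near $N$ (resp.\ $F$), and then reads off the dimension from a join decomposition of $\nu_x^1(F)$. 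That explicit construction sidesteps any claim about conjugate multiplicity along arbitrary rays from $p$.

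The gap in your version is the one you flag yourself, and it is a real one. For a unit direction $w=\cos\theta\,v^T+\sin\theta\,v^\perp$ with $\theta\in(0,\tfrac\pi2)$, $\gamma_w$ is neither tangent nor normal to $N$, so none of Proposition~\ref{Jacobi Rigidity prop}, Lemma~\ref{duality lemma copy(1)}, or the rigidity from $F$ applies directly to the Lagrangian $\Lambda_p$ along $\gamma_w$. Part~4 of Proposition~\ref{Jacobi Rigidity prop} only gives the one in-surface $\sin t$--Jacobi field along $\gamma_w$; you still need $\dim N+\dim F-1$ further linearly independent $\sin t$--fields, and you need them to be mutually orthogonal to the in-surface one and to each other so that the remaining $m_0$ fields are forced to vanish at $\tfrac\pi2$. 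Constructing that full parallel, $R$--invariant splitting along $\gamma_w$ is not a corollary of the cited results --- in fact, even for $\theta=0$ (a geodesic inside $N$) Theorem~\ref{to date thm} only controls the fields tangent to $N$; the Jacobi fields in $\Lambda_p$ normal to $N$ along a geodesic \emph{inside} $N$ are not covered by Proposition~\ref{Jacobi Rigidity prop}, which concerns geodesics leaving $N$ orthogonally. Until you close this, Hebda's theorem cannot be invoked. (Your focal-radius paragraph is fine once the submanifold structure of $A(p)$ is granted: the identification $\Lambda_p=\Lambda_{A(p)}$ via first variation, and the observation that a focal point before $\tfrac\pi2$ would be a conjugate point of $p$ before the cut point, matches the paper's own argument.)

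A concrete way to repair the second paragraph in the spirit of the paper would be to abandon the Hebda route and instead prove directly that $A(p)$ is smooth and compute its dimension using the two parametrizations of $A(p)$ by $N$ and by $F$, as the paper does. Alternatively, if you insist on the Hebda route, you would need to establish the constant conjugate multiplicity $m_0$ along \emph{every} ray from $p$, which amounts to the orthogonal splitting of $\Lambda_p$ you sketch; that is essentially the content of Proposition~\ref{Jacobi stru prop}, which the paper proves only \emph{after} Proposition~\ref{A(p) prop} and Proposition~\ref{Cross again} are available, so quoting it here would be circular.
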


\begin{proof}
Let 
\begin{equation*}
A_{N}\left( p\right) \equiv \left\{ \left. x\in N\text{ }\right\vert \text{ 
\textrm{dist}}\left( p,x\right) =\frac{\pi }{2}\right\} .
\end{equation*}%
Using the rigid hinges of Part 4 of Proposition \ref{Jacobi Rigidity prop}
and the fact that $A\left( p\right) \equiv S\left( p,\frac{\pi }{2}\right) $
is the cut locus of $p$ we see that we can describe $A\left( p\right) $ in
two ways: 
\begin{equation*}
A\left( p\right) =\left\{ \left. \exp _{N}^{\perp }\left( tv\right) \text{ }%
\right\vert \text{ }v\in \nu ^{1}\left( N\right) |_{A_{N}\left( p\right) }%
\text{ and }t\in \left[ 0,\frac{\pi }{2}\right] \right\} ,
\end{equation*}%
and \addtocounter{algorithm}{1} 
\begin{equation}
A\left( p\right) =\left\{ \left. \exp _{F}^{\perp }\left( tv\right) \text{ }%
\right\vert \text{ }v\in \nu ^{1}\left( F\right) \text{, }\exp _{F}\left( 
\frac{\pi }{2}v\right) \in A_{N}\left( p\right) ,\text{ and }t\in \left[ 0,%
\frac{\pi }{2}\right] \right\} .  \label{2nd descr}
\end{equation}%
Either description shows $A\left( p\right) $ is compact. Since both $N$ and $%
F$ have focal radius $\frac{\pi }{2},$ both descriptions show, via Corollary %
\ref{exp tilde prop}, that $A\left( p\right) \setminus \left\{ F\cup
N\right\} $ is a manifold. The first description shows that $A\left(
p\right) $ is smooth near $N$.

Recall that for every $x\in F,$ the map%
\begin{eqnarray*}
\pi _{x} &:&\nu _{x}^{1}\left( F\right) \longrightarrow N, \\
\pi _{x} &:&v\mapsto \exp _{N}\left( \frac{\pi }{2}v\right)
\end{eqnarray*}%
is a Riemannian submersion with connected fibers. Combining this with Part 4
of Proposition \ref{Jacobi Rigidity prop} and Proposition \ref{inj p prop},
we rewrite \eqref{2nd descr} as \addtocounter{algorithm}{1} 
\begin{equation}
A\left( p\right) =\cup _{x\in F}\left\{ \left. \exp _{F}^{\perp }\left(
tv\right) \text{ }\right\vert \text{ }v\in \nu _{x}^{1}\left( F\right) \text{%
, }v\perp \pi _{x}^{-1}\left( p\right) ,\text{ and }t\in \left[ 0,\frac{\pi 
}{2}\right] \right\} ,  \label{A(p) 2 eqn}
\end{equation}%
where $\left\{ v\right\} $ and $\pi _{x}^{-1}\left( p\right) $ are subsets
of $\nu _{x}\left( F\right) ,$ and the notion of perpendicular comes from
the inner product that $g$ induces on $\nu _{x}\left( F\right) .$ This shows 
$A\left( p\right) $ is smooth near $F.$

Next, we decompose $\nu _{x}^{1}\left( F\right) $ as a join 
\begin{eqnarray*}
\nu _{x}^{1}\left( F\right) &=&\pi _{x}^{-1}\left( p\right) \ast \left( \pi
_{x}^{-1}\left( p\right) \right) ^{\perp },\text{ where } \\
\left( \pi _{x}^{-1}\left( p\right) \right) ^{\perp } &\equiv &\left\{
\left. v\in \nu _{x}^{1}\left( F\right) \text{ }\right\vert \text{ }v\perp
\pi _{x}^{-1}\left( p\right) \right\} .\text{ }
\end{eqnarray*}%
Note that \eqref{A(p) 2 eqn} gives that for any $x\in F$, %
\addtocounter{algorithm}{1} 
\begin{equation}
\dim \left( A\left( p\right) \right) =\dim F+\dim \left( \pi _{x}^{-1}\left(
p\right) \right) ^{\perp }+1.  \label{dim A-p 1}
\end{equation}%
Since $\pi _{x}:\nu _{x}^{1}\left( F\right) \longrightarrow N$ is a
Riemannian submersion, \addtocounter{algorithm}{1} 
\begin{equation}
\dim \left( N\right) =\dim \left( H_{v}\left( \pi _{x}^{-1}\left( p\right)
\right) \right) ,  \label{dim N}
\end{equation}%
where $H_{v}\left( \pi _{x}^{-1}\left( p\right) \right) $ is the horizontal
space for $\pi _{x}$ at any $v\in \pi _{x}^{-1}\left( p\right) \subset \nu
_{x}^{1}\left( F\right) .$

The join decomposition $\nu _{x}^{1}\left( F\right) =\pi _{x}^{-1}\left(
p\right) \ast \left( \pi _{x}^{-1}\left( p\right) \right) ^{\perp }$
identifies $\left( \pi _{x}^{-1}\left( p\right) \right) ^{\perp }$ with the
unit vectors in $H_{v}\left( \pi _{x}^{-1}\left( p\right) \right) .$ So%
\begin{equation*}
\dim \left( \pi _{x}^{-1}\left( p\right) \right) ^{\perp }=\dim \left(
H_{v}\left( \pi _{x}^{-1}\left( p\right) \right) \right) -1.
\end{equation*}%
Combining with Equations \eqref{dim A-p 1} and \eqref{dim N}, we get%
\begin{equation*}
\dim \left( A\left( p\right) \right) =\dim F+\dim \left( N\right) -1+1.
\end{equation*}

Since $A\left( p\right) =S\left( p,\frac{\pi }{2}\right) $ is a smooth
submanifold, and every geodesic that leaves $p$ has cut point at distance $%
\frac{\pi }{2}$ from $p,$ it follows from $1^{st}$--variation that every
geodesic leaving $p$ arrives orthogonally at $A\left( p\right) $ at time $%
\frac{\pi }{2}.$ This identifies the unit tangent sphere at $p,$ $S_{p},$
with the unit normal bundle of $A\left( p\right) ,$ $\nu ^{1}\left( A\left(
p\right) \right) .$ Combined with Proposition \ref{inj p prop}, it follows
that the focal radius of $A\left( p\right) $ along any normal geodesic is
greater than or equal to $\frac{\pi }{2}$ . So it follows from Theorem \ref%
{Intermeadiate Ricci Thm} that the focal radius of $A\left( p\right) $ is
exactly $\frac{\pi }{2}.$
\end{proof}

It follows that Theorem \ref{to date thm} applies with $N^{p}=A\left(
p\right) $ and $F^{p}=p.$

Next, we apply Propositions \ref{inj p prop} and \ref{A(p) prop} to $q\in
A\left( p\right) $ and, with a further application of Theorem \ref{to date
thm}, get the following result.

\begin{proposition}
\label{Cross again}\noindent 1. Every cut point from $q$ occurs at distance $%
\frac{\pi }{2}$ from $q.$

\noindent 2. The set of points in $M$ at distance $\frac{\pi }{2}$ from $q,$ 
\begin{equation*}
A\left( q\right) \equiv S\left( q,\frac{\pi }{2}\right) ,
\end{equation*}%
is a closed submanifold with focal radius $\frac{\pi }{2}$ and dimension $%
\dim N+\dim F.$

\noindent 3. $A\left( q\right) $ is totally geodesic and isometric to a
CROSS.

\noindent 4. For any $a,b\in A\left( q\right) ,$ any minimal geodesic of $M$
between $a$ and $b$ lies entirely in $A\left( q\right) .$
\end{proposition}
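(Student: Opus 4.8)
The plan is to re-run the machinery of Sections \ref{dist from N sect}--\ref{simply connected sect}, now with $A(p)$ playing the role of $N$. As noted just before the statement, Theorem \ref{to date thm} applies with $A(p)$ in the role of $N$ and $\{p\}$ in the role of its focal set $F$; in particular $A(p)$ is a closed submanifold of dimension $\dim N + \dim F \ge \dim N \ge k$ with focal radius $\frac{\pi}{2}$. It is connected, being the image of the connected unit tangent sphere $S_p$ under $v \mapsto \exp_p\!\left(\tfrac{\pi}{2}v\right)$ (by the proof of Proposition \ref{A(p) prop}, every geodesic from $p$ has cut time $\frac{\pi}{2}$, so $A(p)=S(p,\frac{\pi}{2})$ is exactly this image). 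Since $M$ is simply connected and not isometric to the unit sphere -- which we may assume, as otherwise the conclusion of Theorem \ref{inter Ricci Rigidity thm} already holds, and which is in any case forced by the standing hypothesis $\dim F + \dim N \le n-2$ -- the submanifold $A(p)$ satisfies every hypothesis imposed on $N$ throughout Sections \ref{dist from N sect}--\ref{simply connected sect}.

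Because $q \in A(p)$, I would apply Proposition \ref{inj p prop} with $(A(p),\{p\})$ in place of $(N,F)$ and $q$ in place of $p$: Part 1 of that proposition is exactly Part 1 here. Next I would apply Proposition \ref{A(p) prop} with the same substitution, obtaining that $A(q)=S(q,\frac{\pi}{2})$ is a closed submanifold of dimension $\dim A(p) + \dim\{p\} = \dim N + \dim F$ and focal radius $\frac{\pi}{2}$; this is Part 2. Arguing as in the first paragraph, $A(q)$ is connected (it is the image of the connected $S_q$ under $v \mapsto \exp_q\!\left(\tfrac{\pi}{2}v\right)$) and $\dim A(q) = \dim N + \dim F \ge k$, so Theorem \ref{to date thm} applies with $A(q)$ in place of $N$; its Part 2 yields that $A(q)$ is totally geodesic and isometric to a CROSS, which is Part 3. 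Finally, $A(q)$ now satisfies the standing hypotheses imposed on $N$, so Part 2 of Proposition \ref{inj p prop}, applied with $A(q)$ in place of $N$, gives that every minimal geodesic of $M$ between two points of $A(q)$ lies entirely in $A(q)$, which is Part 4.

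The only genuine content is the bookkeeping in the first paragraph: checking that $A(p)$, and then $A(q)$, really does inherit every standing hypothesis on $N$. Closedness and focal radius $\frac{\pi}{2}$ are handed over by Proposition \ref{A(p) prop}; the dimension bound $\ge k$ is immediate from $\dim N \ge k$ and $\dim F \ge 0$; simple connectedness and non-sphericity of $M$ are unchanged; and connectedness of $A(p)$ (resp. $A(q)$) is the one point requiring a short argument, supplied by the connectedness of the unit tangent sphere at $p$ (resp. $q$) together with Part 1 of Proposition \ref{inj p prop} / the present Part 1. Once this is in hand, each of the four assertions is an immediate instance of a result already established, so I expect the main obstacle to be purely one of making these reductions precise rather than any new geometric input.
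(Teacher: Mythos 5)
Your proposal is correct and follows the paper's route exactly: the paper's entire justification of Proposition \ref{Cross again} is the two-line remark preceding it, which observes that Theorem \ref{to date thm} applies with $N^{p}=A(p)$, $F^{p}=\{p\}$, and then that Propositions \ref{inj p prop} and \ref{A(p) prop} together with another pass through Theorem \ref{to date thm} deliver all four parts. You have simply made the bookkeeping (connectedness of $A(p)$ and $A(q)$, the dimension bound, non-sphericity) explicit, which is exactly what the paper leaves implicit.
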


Returning $N^{p}=A\left( p\right) $ and $F^{p}=p,$ we now have the following
refinement of Proposition \ref{Jacobi Rigidity prop}.

\begin{proposition}
\label{Jacobi stru prop}Let $\gamma $ be a unit speed geodesic that leaves $%
N^{p}$ orthogonally at time $0$ and let $\Lambda _{N^{p}},$ $\mathcal{T}%
_{N^{p}}$ and $\mathcal{Z}_{N^{p}}$ be as in (\ref{Lambda dfn}).

\noindent 1. $\mathcal{Z}_{N^{p}}\oplus \mathcal{T}_{N^{p}}$ is a parallel,
orthogonal splitting of $\Lambda _{N^{p}}$ along $\left( 0,\frac{\pi }{2}%
\right) .$

\noindent 2. $\mathcal{T}_{N^{p}}$ and $\mathcal{Z}_{N^{p}}$ have the forms 
\begin{eqnarray*}
\mathcal{T}_{N^{p}} &\equiv &\left\{ \cos tE|\text{ }E\text{ is parallel and
tangent to }N^{p}\text{ at time }0\right\} , \\
\mathcal{Z}_{N^{p}} &\equiv &\left\{ \frac{1}{2}\sin 2tE|\text{ }E\text{ is
parallel and orthogonal to }N^{p}\text{ at time }0\right\} .
\end{eqnarray*}
\end{proposition}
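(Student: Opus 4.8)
The plan is to get Part~1 and the formula for $\mathcal{T}_{N^p}$ directly from Proposition~\ref{Jacobi Rigidity prop}, and to obtain the formula for $\mathcal{Z}_{N^p}$ by producing, for each nonzero $J\in\mathcal{Z}_{N^p}$, a totally geodesic $2$--sphere of constant curvature $4$ through $\gamma$ to which $J$ is tangent, built from the dual--pair structure around $N^p=A(p)$. First, since $A(p)$ is a closed submanifold of $M$ with $\dim A(p)=\dim N+\dim F\ge k$ and focal radius $\frac{\pi}{2}$ (Proposition~\ref{A(p) prop}) and $M$ has $\Ric_k\ge k$, Proposition~\ref{Jacobi Rigidity prop} applies with $N$ replaced by $N^p=A(p)$. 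As $N^p$ is moreover totally geodesic (Part~2 of Theorem~\ref{to date thm}), its shape operator vanishes, so Parts~1 and~2 of Proposition~\ref{Jacobi Rigidity prop} yield Part~1 of the present proposition together with $\mathcal{T}_{N^p}=\{\cos t\,E\mid E\text{ parallel and tangent to }N^p\text{ at time }0\}$; in particular $R(\cdot,\dot\gamma)\dot\gamma$ preserves the parallel splitting $\mathcal{Z}_{N^p}(t)\oplus\mathcal{T}_{N^p}(t)$.

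For $\mathcal{Z}_{N^p}$ I would first record that Theorem~\ref{to date thm} applies with $N^p=A(p)$ and $F^p=\{p\}$, so the focal set of $A(p)$ is the single point $p$. As in Lemma~\ref{duality lemma copy(1)} (whose ``$\mathcal{T}_F$''--summand is now trivial), this means on one hand that $\exp_x\!\big(\tfrac{\pi}{2}v\big)=p$ for every unit $v\in\nu_x A(p)$, where $x=\gamma(0)$, and on the other that $\mathcal{Z}_{N^p}$ is exactly the space of $N^p$--Jacobi fields along $\gamma$ vanishing at both $x=\gamma(0)$ and $p=\gamma\!\big(\tfrac{\pi}{2}\big)$. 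Now fix $0\ne J\in\mathcal{Z}_{N^p}$, put $w=J'(0)$ (a nonzero vector of $\nu_x A(p)$ orthogonal to $\gamma'(0)$), and set $P=\spann\{\gamma'(0),w\}\subset\nu_x A(p)$. Since $\exp_x\!\big(\tfrac{\pi}{2}u\big)=p$ for every unit $u\in P$, while $\exp_x$ is an embedding on the $\tfrac{\pi}{2}$--ball of $T_xM$ (the cut distance from $x$ is $\tfrac{\pi}{2}$ by Proposition~\ref{Cross again}), the set $S\equiv\exp_x\!\big(B_P(0,\tfrac{\pi}{2})\big)$ is an embedded topological $2$--sphere with ``poles'' $x$ and $p$ whose meridians are precisely the minimizing geodesics $t\mapsto\exp_x(tu)$, $u\in P$ a unit vector, each of length $\tfrac{\pi}{2}$ from $x$ to $p$. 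The variation of $\gamma$ through these meridians has variation field $J$, so $J$ is tangent to $S$.

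The main work is then to show that $S$ has constant curvature $4$. One checks first that $S$ is totally geodesic in $M$: the radial field $\partial_t$ is geodesic, so $\mathrm{II}(\partial_t,\cdot)\equiv 0$, and $\mathrm{II}(\partial_\theta,\partial_\theta)\equiv 0$ follows by the same lifting argument used in Part~4 of Proposition~\ref{Jacobi Rigidity prop}, applied to the Riemannian submersion $\nu_p^1(\{p\})\to A(p)$, $v\mapsto\exp_p\!\big(\tfrac{\pi}{2}v\big)$, furnished by the dual--pair structure (Part~6 of Theorem~\ref{to date thm} with the roles of $N$ and $F$ played by $A(p)$ and $\{p\}$; see also the lemma preceding Section~\ref{simply connected sect}), of which the meridians of $S$ are horizontal lifts. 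Next, continuing any meridian of $S$ past $p$: its velocity at $p$ again lies in $\nu_x A(p)$, so by the same focal--set identity it returns to $x$ after a further time $\tfrac{\pi}{2}$, and using the great--circle description of the fibers of $\nu_p^1(\{p\})\to A(p)$ one sees that the resulting loop closes up smoothly. Thus every geodesic of $S$ through $x$ — and, symmetrically, through $p$ — is a closed geodesic of length $\pi$. By the classification of surfaces all of whose geodesics through a fixed point are closed of a fixed length (Green's solution of the Blaschke conjecture in dimension two, exactly as it enters the proof of the Diameter Rigidity Theorem), $S$ is isometric to a round sphere, and the refocusing distance $\tfrac{\pi}{2}$ pins its curvature to $4$; hence $\mathrm{sec}_M(\gamma',J)\equiv 4$ since $S$ is totally geodesic.

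It follows that the Jacobi field $J$, which vanishes at $0$ and is tangent to $S$, has the form $J(t)=\tfrac12\sin(2t)\,E(t)$ with $E$ the parallel normal field of $\gamma$ inside $S$ — hence parallel in $M$ — and with $E(0)=w=J'(0)\in\nu_x N^p$. Since every nonzero element of $\mathcal{Z}_{N^p}$ is obtained in this way and, conversely, every such field lies in $\mathcal{Z}_{N^p}$, we conclude $\mathcal{Z}_{N^p}=\{\tfrac12\sin 2t\,E\mid E\text{ parallel and orthogonal to }N^p\text{ at time }0\}$, completing Part~2. The crux of the argument is the curvature--$4$ rigidity: the radial lower bound $\mathrm{sec}\ge 1$ alone does not force the value $4$, so one must genuinely use the refocusing (Blaschke--type) structure produced by the abundance of dual pairs $A(q)$, in the same spirit as in the proof of the Diameter Rigidity Theorem.
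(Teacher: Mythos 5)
Your treatment of Part~1 and of the formula for $\mathcal{T}_{N^p}$ agrees with the paper: both are read off directly from Proposition~\ref{Jacobi Rigidity prop} applied to $A(p)$. For the formula for $\mathcal{Z}_{N^p}$, however, you take a genuinely different route, and this route has gaps. The paper's proof picks an auxiliary point $q\in A(p)$ with $\mathrm{dist}(x,q)=\frac{\pi}{2}$, so that $x,p\in A(q)$. By Part~4 of Proposition~\ref{Cross again} the geodesic $\gamma$ lies in $A(q)$, which by Part~3 of that proposition is already a totally geodesic CROSS of diameter $\frac{\pi}{2}$; since $\nu_xA(p)\subset T_xA(q)$ (every unit $v\in\nu_xA(p)$ gives a minimal geodesic from $x$ to $p$, hence one lying in $A(q)$), the normal bundle to $A(q)$ along $\gamma$ sits inside $\mathcal{T}_{N^p}$, so each $J\in\mathcal{Z}_{N^p}$ is tangent to $A(q)$ and the form $\frac12\sin 2t\,E$ is immediate from the explicit Jacobi fields of a CROSS. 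This is short and uses nothing beyond what has already been established.

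Your alternative -- exponentiating a $2$--plane $P\subset\nu_xA(p)$ to a surface $S$ and running a Blaschke/Green rigidity argument -- bypasses the crucial auxiliary point $q$ and, as written, does not close. First, the claim that $S$ is totally geodesic is not justified. The lifting mechanism in Part~4 of Proposition~\ref{Jacobi Rigidity prop} applies to a variation by geodesics leaving a submanifold orthogonally from a \emph{varying} base point $c(s)$, with $\partial\Phi/\partial s$ a $\pi$--basic \emph{horizontal} field; here $S$ is swept out by a pencil of geodesics all emanating from the single point $x$ and all ending at $p$, and the curve $s\mapsto-\gamma_{u(s)}'(\frac{\pi}{2})$ lies entirely in the fiber $\pi_p^{-1}(x)$ of $\pi_p\colon\nu_p^1(\{p\})\to A(p)$ -- i.e.\ it is \emph{vertical}, not horizontal -- so the argument that $\mathrm{II}(\partial_\theta,\partial_\theta)=0$ does not transfer. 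Second, the step that the meridians close up after length $\pi$ requires the fiber of $\pi_p$ to be a great subsphere (so that it is symmetric under $v\mapsto -v$); that is precisely the Gromoll--Grove--Wilking classification, which the paper invokes only in the final paragraph, after the present proposition. Third, the smoothness of $S$ at the pole $p$ -- that the collapsed boundary circle produces a smooth embedded sphere rather than merely a topological one -- is asserted without argument. Finally, Green's two--dimensional Blaschke theorem is foreign to the paper's toolkit and is unnecessary once one realizes, as the paper does, that $\gamma$ already lies inside the known totally geodesic CROSS $A(q)$.
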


\begin{proof}
Apart from the second equation in Part 2, this is a repeat of Parts 1 and 2
of Proposition \ref{Jacobi Rigidity prop}. The second equation in Part 2
follows from Part 2 of Proposition \ref{Cross again}. Indeed, let $\gamma $
be a unit speed geodesic from $x\in N^{p}$ to $p.$ Choose $q\in N^{p}$ at
distance $\frac{\pi }{2}$ from $x,$ and apply Proposition \ref{Cross again}
to $q.$ It follows that $\gamma \subset A\left( q\right) ,$ and $A\left(
q\right) $ is a totally geodesic CROSS. In particular, the Jacobi fields
along $\gamma $ have the indicated form if they are tangent to $A\left(
q\right) .$ Since the normal space to $A\left( q\right) $ along $\gamma $ is
spanned by a subspace of $\mathcal{T}_{N^{p}},$ the result follows.
\end{proof}

We finish the proof of Theorem \ref{inter Ricci Rigidity thm} along the
lines of the proof of Theorem 4.3 in \cite{GrovGrom} by using Cartan's
Theorem (Theorem 2.1, page 157 of \cite{doCarm}).

Since Theorem \ref{to date thm} applies to $N^{p}=A\left( p\right) ,$ there
is a CROSS $P$ with 
\begin{equation*}
\dim \left( P\right) =\dim \left( M\right)
\end{equation*}%
and 
\begin{equation*}
\mathrm{\dim }\left( \mathcal{Z}_{N^{p}}\right) =\dim \left( \mathbb{F}%
\right) -1,
\end{equation*}%
where $\mathbb{F}$ is the division algebra that defines $P.$

Choose a point $\tilde{p}\in P.$ Since $P$ is a CROSS, we have a Riemannian
submersion 
\begin{equation*}
\tilde{\pi}_{\tilde{p}}:S_{\tilde{p}}\longrightarrow A\left( \tilde{p}%
\right) \equiv \left\{ \left. x\in P\text{ }\right\vert \text{ \textrm{dist}}%
\left( x,P\right) =\frac{\pi }{2}\right\}
\end{equation*}%
that is isometrically equivalent to a Hopf Fibration. Since $\mathrm{\dim }%
\left( \mathcal{Z}_{N^{p}}\right) =\dim \left( \mathbb{F}\right) -1,$ we
have, using \cite{GrvGrm1} and \cite{Wilk}, that $\tilde{\pi}_{\tilde{p}}$
is isometrically equivalent to%
\begin{equation*}
\pi _{p}:S_{p}\longrightarrow N^{p}\equiv A\left( p\right) .
\end{equation*}%
Let 
\begin{equation*}
I:S_{\tilde{p}}\longrightarrow S_{p}
\end{equation*}%
be a linear isometric equivalence between $\tilde{\pi}_{\tilde{p}}$ and $\pi
_{p}.$ Then we have a commutative diagram%
\begin{equation*}
\begin{array}{lll}
S_{\tilde{p}} & \overset{I}{\longrightarrow } & S_{p} \\ 
\downarrow \tilde{\pi}_{\tilde{p}} &  & \downarrow \pi _{p} \\ 
A\left( \tilde{p}\right) & \overset{\hat{I}}{\longrightarrow } & A\left(
p\right)%
\end{array}%
\end{equation*}%
Since $\tilde{\pi}_{\tilde{p}}$ and $\pi _{p}$ are Riemannian submersions
and $I$ is an isometry, $\hat{I}$ is an isometry.

Via the Cartan Theorem and Proposition \ref{Jacobi stru prop}, we see that $%
\iota \equiv \exp _{p}\circ I\circ \exp _{\tilde{p}}^{-1}$ defines an
isometry between $P\setminus $ $A\left( \tilde{p}\right) $ and $M\setminus
A\left( p\right) $ that induces the isometry $\hat{I}:A\left( \tilde{p}%
\right) \longrightarrow A\left( p\right) ,$ and thus $\iota $ extends to an
isometry $P\longrightarrow M.$ This completes the proof of Theorem \ref%
{inter Ricci Rigidity thm}.

\end{document}